\documentclass[11pt]{article}

\usepackage{silence}
\usepackage[left=1in,top=1in,right=1in,bottom=1in,head=.1in]{geometry}

\usepackage{mystyle}
\usepackage[most]{tcolorbox}

\renewcommand{\bm}[1]{#1}

\renewcommand{\mathbf}[1]{#1}
\renewcommand{\boxed}{}

\newcommand{\ha}{\hat{\alpha}}
\newcommand{\hb}{\hat{\beta}}
\newcommand{\hl}{\bar{\lambda}}
\newcommand{\m}{\mathsf{pmsd}}

\usepackage{tocbibind}
\usepackage{microtype}
\usepackage{titletoc}
\usepackage{placeins}

\newcommand{\titletext}{Prediction-Only Distillation in Linear and Logistic Regression}

\title{\titletext}

\author{
    Hien Dang\footremember{utsds}{Department of Statistics and Data Sciences, University of Texas, Austin, TX 78712, USA.} \\ {\footnotesize \texttt{\url{hiendang@utexas.edu}}}
    \and
    Pratik Patil\footrecall{utsds} \\ {\footnotesize \texttt{\url{pratikpatil@utexas.edu}}} 
    \and
    Alessandro Rinaldo\footrecall{utsds} \\ {\footnotesize \texttt{\url{alessandrorinaldo@utexas.edu}}} 
}

\date{\vspace{-15pt}}

\begin{document}
\maketitle

\begin{abstract}
   Self-distillation (SD) is typically studied when the student is retrained on the teacher’s original training inputs.
   In many practical deployments, however, the labeled training data are no longer available, and one has access only to the trained predictor and fresh unlabeled covariates.
   We study SD in this \emph{prediction-only} regime through a fresh-$X$ prediction-mixed scheme: a pure-distilled student is trained on fresh covariates pseudo-labeled by the teacher, and the final predictor is an affine combination of the teacher and student predictions.
   For ridge regression under proportional asymptotics, we derive deterministic equivalents for the optimally mixed prediction risk under general anisotropic covariance and deterministic signal.
   We show that this risk is \emph{strictly smaller} than the teacher risk for almost every pair of teacher and student regularization levels, including when the fresh covariates are out-of-distribution and even when their covariance is isotropic.
   We further show that the optimal mixing weight cannot be identified from unlabeled data alone, but can be consistently estimated in a single post-training step using a small independent labeled calibration set, without additional model fitting.
   Finally, for binary logistic regression, we show that prediction mixing can outperform both the teacher and the pure-distilled classifier.
\end{abstract}

\section{Introduction}
\label{sec:intro}

Knowledge distillation (KD) has become a fundamental technique in modern machine learning \citep{bucilua2006model, ba2014deep, hinton2015distilling}.
In its original formulation, KD transfers knowledge from a high-capacity teacher to a more compact student and allows the student to achieve comparable performance with fewer parameters.
This paradigm has since been extended to \emph{self-distillation}, in which a model is retrained using its own predictions while retaining the same architecture \citep{furlanello2018born, zhang2021self}.
Extensive empirical evidence suggests that self-distillation can improve generalization \citep{ahn2019variational, chen2017learning, chen2022knowledge, gou21knowledge}.

Self-distillation (SD) is typically studied when the student is retrained on the teacher's original training data.
In many practical deployments, however, those labeled data are no longer available: one can query a trained predictor but cannot access its training data or parameters.
We instead assume access to a stream of fresh unlabeled covariates, which may be out-of-distribution (OOD) relative to the teacher's training covariates.
We refer to this regime as the \emph{prediction-only} setting.
It arises naturally in post-deployment adaptation and model reuse, particularly when the original training data cannot be retained or shared because of privacy, safety, or storage constraints
\citep{nayak2019zeroshot, liang2022dine}.
This setting raises a basic question:
\begin{tcolorbox}[
    colback=gray!10,
    colframe=gray!50,
]
\centering
Can fresh, potentially OOD, unlabeled data be used to improve a fixed predictor,\\even when its original training data are unavailable?
\end{tcolorbox}

\bigskip

A natural approach is to train a student on fresh unlabeled covariates using the teacher's predictions as pseudo-labels.
Because no ground-truth labels are used, this procedure yields a \emph{pure-distilled} (PD) student \citep{lopez2015unifying}.
Pure-distillation has been studied in several settings, including high-dimensional ridgeless regression \citep{ildiz2025high}, ridge regression \citep{moniri2025on}, random-feature ridge regression \citep{wu2026improved}, and Gaussian mixture models \citep{carmon2019unlabeled, takanami2025effect, saglietti2022solvable}.
Perhaps surprisingly, the PD student can outperform its teacher.
Such gains are not guaranteed, however, and the PD student can instead perform substantially worse; see, for example, \Cref{fig:teaser}.

\begin{figure}[t]
    \centering
\includegraphics[width=0.95\columnwidth]{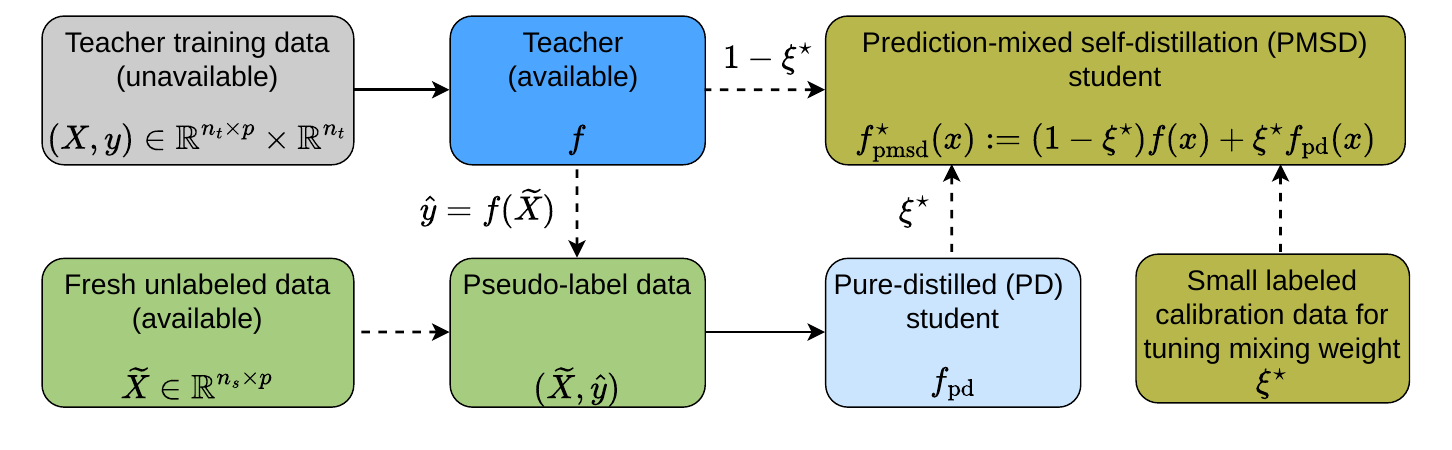}
    \caption{Schematic of the prediction-only setting and fresh-$X$ prediction-mixed SD. Solid arrows indicate training steps; dashed arrows indicate other information flows.}
    \label{fig:setup}
\end{figure}

Recent work on same-$X$ ridge SD establishes strict-improvement guarantees \citep{das2023understanding, pareek2024understanding, dang2026optimal, lecoiu2026self}.
In ridge regression, the optimally mixed student lies on the affine path between the teacher and the PD student; this observation suggests directly combining their predictions as the final output.
We call this affine combination predictor the \emph{prediction-mixed self-distillation} (PMSD) student.
Its mixing weight is real-valued and need not lie in $[0,1]$, so the combination need not be convex.
The method is illustrated in \Cref{fig:setup}.

This approach is operationally attractive: it requires no access to the original labeled data and reduces adaptation to a one-dimensional post hoc aggregation problem, with no additional model fitting.
Establishing an analogous strict-improvement guarantee in the fresh-$X$ setting is theoretically nontrivial.
The same-$X$ result relies on geometric identities induced by the shared design, and these identities disappear when the student is trained on a fresh design independent of the teacher's design.
Before turning to theory, \Cref{fig:teaser} provides an empirical preview of the fresh-$X$ setting on the UCI Blog Feedback dataset.
In both the in-distribution and out-of-distribution settings, the optimally mixed PMSD student outperforms the teacher and the PD student.

\begin{figure*}[t]
   \centering
    \begin{subfigure}[t]{0.48\textwidth}
    \centering
    \includegraphics[width=\textwidth]{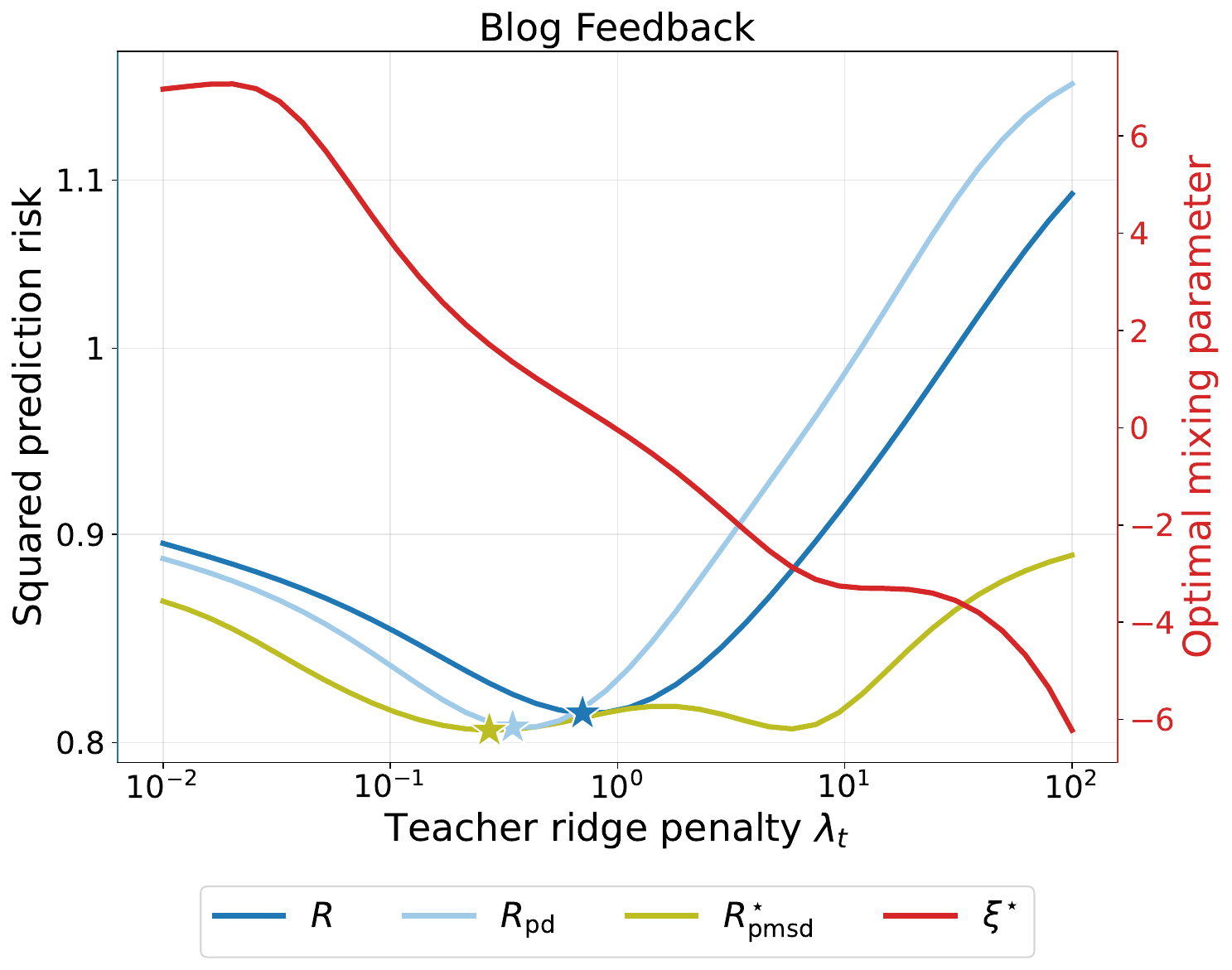}
    \caption{Fresh unlabeled covariates follow the teacher's training distribution.}
    \label{fig:teaser_a}
  \end{subfigure}
  \quad
    \begin{subfigure}[t]{0.48\textwidth}
    \centering
    \includegraphics[width=\textwidth]{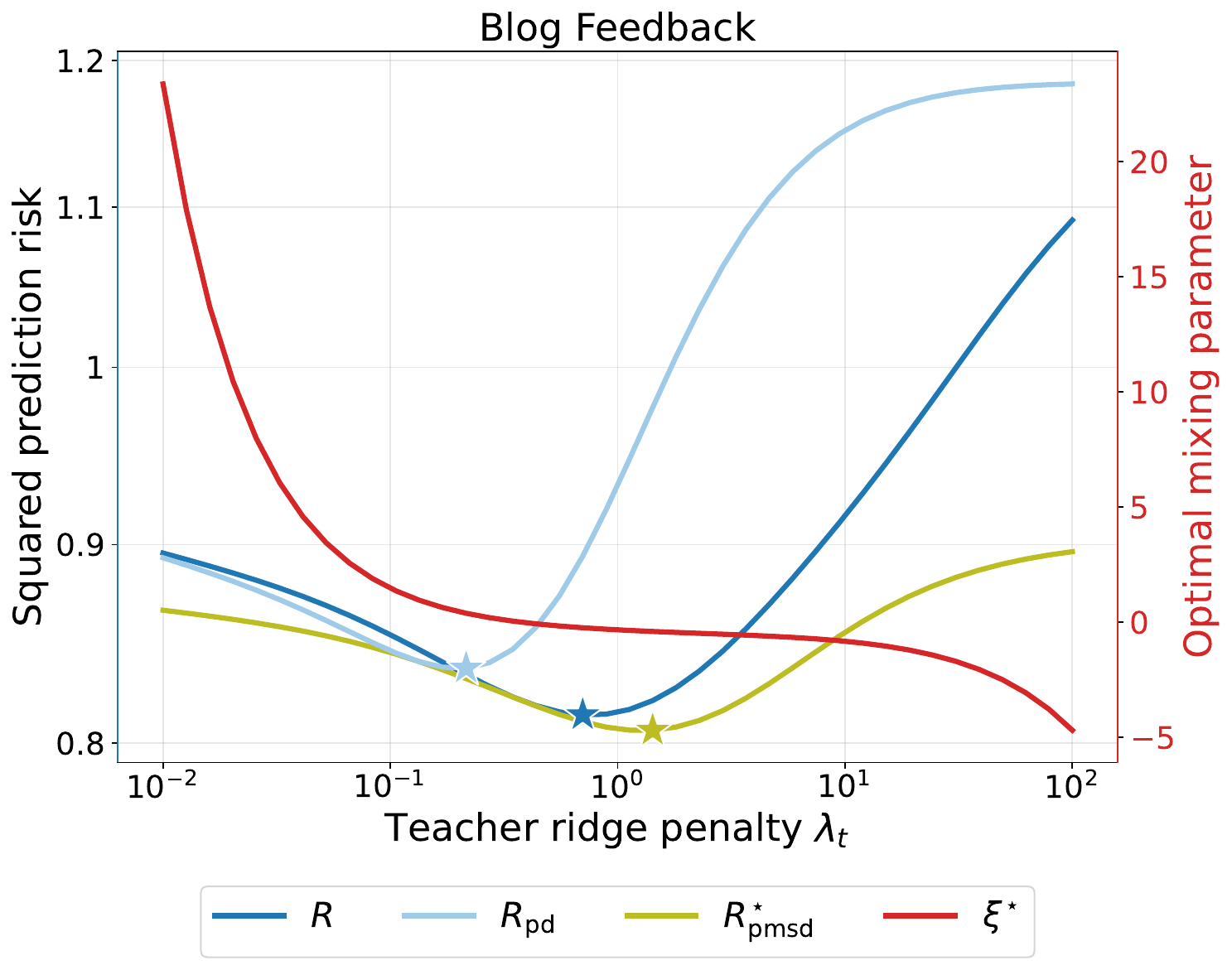}
    \caption{Fresh unlabeled covariates are sampled i.i.d.\ from the isotropic Gaussian distribution $\mathcal{N}(0, I_p)$.}
  \end{subfigure}
    \caption{\textbf{Strict improvement of the PMSD student.} 
    Test squared prediction risks of the ridge teacher ($R$, in {\color{pyblue} blue}), the fresh-$X$ pure-distilled student ($R_{\pd}$, in {\color{pylightblue} light blue}), and the optimally mixed PMSD student ($R_{\m}$, in {\color{pyolive} olive}), plotted as functions of the teacher's ridge penalty $\lambda_t$.
    The oracle optimal mixing weight $\xi^{\star}$ is shown in {\color{pyred} red}. Setting: UCI Blog Feedback data with $n_t = 2619$, $n_s = 5240$, and $p = 280$. We use the representative choice $\lambda_s = \lambda_t$. Experiments with alternative choices of $\lambda_s$, including values optimally tuned over a grid, are presented in \Cref{sec:additional_exp_regression}.}
    \label{fig:teaser}
\end{figure*}

This empirical preview motivates three theoretical and practical questions:
\begin{enumerate}[leftmargin=7mm]
    \item Under what conditions can the PMSD student improve upon the teacher and the PD student?
    \item How does its performance depend on the amount of fresh unlabeled data?
    \item Can the optimal mixing weight be tuned when the teacher's training data are unavailable?
\end{enumerate}

Our main contributions, which also provide an outline of the paper, are as follows:

\begin{itemize}[leftmargin=7mm]
\item \textbf{High-dimensional characterization for ridge regression (\Cref{sec:ridge}).}
We first derive exact oracle identities for the optimal PMSD mixing weight (\Cref{prop:oracle_affine}).
Under proportional asymptotics, we then characterize the PMSD risk and optimal mixing weight under general anisotropic covariance and deterministic signal (\Cref{thm:general_ridge}).

\item \textbf{Strict improvement and non-monotonicity (\Cref{sec:strict_nonmonotonicity}).}
Using these deterministic equivalents, we show that the PMSD student strictly improves upon the teacher for almost every pair of regularization levels $(\lambda_t, \lambda_s)$ used by the teacher and the PD student (\Cref{prop:teacher_bad_set,prop:strict_improv_ood}).
Thus, strict improvement is generic even when $\lambda_t$ is unknown or $\lambda_s$ is not optimally tuned, provided that the mixing weight is chosen optimally.

In the same-$\lambda$ isotropic setting, we also highlight a phenomenon absent from same-$X$ self-distillation: at fixed regularization, the PMSD risk is generically \emph{non-monotone} in the unlabeled aspect ratio, so more unlabeled data need not improve performance (\Cref{sec:ridge_mono_main}).

\item \textbf{Consistent data-dependent tuning (\Cref{sec:tuning_ridge}).}
We show that the teacher's outputs and fresh unlabeled covariates alone do not identify the oracle mixing weight.
We then prove that a small independent labeled calibration sample enables consistent estimation in a single post-training step, without additional model fitting (\Cref{thm:calibration_consistency}).

\item \textbf{Gains in logistic regression (\Cref{sec:logistic}).}
Finally, we extend the analysis beyond regression under squared loss by studying binary logistic regression.
We show that prediction mixing can strictly outperform both the teacher and the PD student (\Cref{thm:class_mixing,thm:class_mixing_2,thm:class_mixing_large_p}).
\end{itemize}

\section{Related work}
\label{sec:related_works}

\textbf{Same-$X$ self-distillation.}
A recent line of work studies self-distillation (SD) in regression settings where the original labeled data are available and the student is retrained on the same design.
\citet{mobahi2020self} interpret repeated SD as a regularization mechanism in Hilbert space.
\citet{das2023understanding} and \citet{pareek2024understanding} analyze one-step and repeated SD for linear models under fixed-design assumptions.
Under random design, \citet{dang2026optimal} establish pointwise strict improvement guarantees, proportional asymptotic results, and a tuning method for ridge regression.
\citet{lecoiu2026self} further show that, for spiked covariance matrices with $s$ spikes, $s$ rounds of self-distillation attain optimal performance among spectral shrinkage estimators.
Their results motivate using a different regularization level in each round of self-distillation.
Our paper studies a different regime: the \emph{prediction-only fresh-$X$} setting, where the original labeled sample is unavailable and the same-design geometric identities no longer apply.
We instead ask whether prediction mixing can improve the teacher when its training data and regularization level $\lambda_t$ are unknown and the student penalty $\lambda_s > 0$ is arbitrary.
Such pointwise guarantees are important because finding the optimal ridge penalty can be challenging.
For example, \citet{stephenson2021can} show that the leave-one-out cross-validation loss is generally neither convex nor quasi-convex as a function of the ridge penalty $\lambda$.

\textbf{Fresh-$X$ pseudo-labeling and pure-distillation.}
Empirical work shows that training on unlabeled data using pseudo-labels generated by the teacher can improve performance \citep{xie2020noisystudent, pham2021meta}.
On the theoretical side, pure-distillation on fresh unlabeled covariates has been studied in high-dimensional ridgeless regression \citep{ildiz2025high} and ridge regression \citep{moniri2025on}.
\citet{wu2026improved} analyze scaling laws for the teacher and the PD student in random feature ridge regression.
These works often consider a student that is stronger than the teacher, for example because it is more overparameterized or has a better regularization mechanism; this regime is known as ``weak-to-strong'' distillation \citep{burns2023weak}.
Although the PD student can outperform the teacher, such gains are not guaranteed.
For example, in high-dimensional ridge regression, \citet{moniri2025on} show that when the teacher is over-regularized, the PD student cannot improve upon it and may perform worse; see \Cref{fig:teaser} for this phenomenon.
These works focus primarily on the benefits of pure-distillation in teacher–student transfer, whereas we ask whether combining predictions from the teacher and the PD student can guarantee an improvement over the teacher.
We also study how the PMSD risk depends on the fresh unlabeled sample size and how to calibrate the optimal mixing weight post hoc.

\textbf{Theory of self-distillation in classification.}
A substantial literature studies the benefits of retraining on the teacher's pseudo-labels in classification.
This includes linear probing on frozen feature backbones \citep{das2023understanding, jeong2025rethinking} and logistic regression on Gaussian mixture data \citep{carmon2019unlabeled, javanmard2025self, das2024retraining, takanami2025effect, saglietti2022solvable}.
In the Gaussian mixture setting, \citet{das2024retraining} show that retraining with the teacher's hard labels can improve accuracy under certain conditions on the label noise rate and sample size.
In a different direction, \citet{carmon2019unlabeled} show that training a student with the teacher's hard labels on unlabeled data can improve robustness to adversarial examples.
For binary and multiclass classification with frozen feature backbones (i.e., linear probing), \citet{das2023understanding, jeong2025rethinking} show that the pure-distilled student, trained only on the teacher's soft labels, can tolerate higher label noise rates and improve accuracy over the teacher.
Our logistic regression analysis and linear probing experiments (see \Cref{sec:logistic}) are complementary: we study a prediction mixing mechanism that can improve upon both the teacher and the pure-distilled student.

\section{Ridge regression}
\label{sec:ridge}

In this section, we introduce prediction-mixed distillation for ridge regression, derive its structural risk decomposition, and obtain deterministic equivalents under proportional asymptotics.

\subsection{Setup and preliminaries}

Let $f_{\lambda_t}: \RR^p \to\R$ be the teacher ridge predictor trained on an \emph{unknown} labeled dataset $\cD_{\rm lab} := (X, y) \in \R^{n_t \times p} \times \R^{n_t}$.
Here, $X \in \mathbb{R}^{n_t \times p}$ is the design matrix with i.i.d. $p$-dimensional observations $x_i \in \mathbb{R}^p$ as its rows, and $y \in \mathbb{R}^{n_t}$ is the vector of corresponding scalar ground-truth labels $y_i$, for $i = 1, \ldots, n_t$.
Denote the regularization parameter used by the teacher by $\lambda_t > 0$ (potentially depending on $\cD_{\rm lab}$).
That is, the teacher ridge predictor is the random function 
\begin{equation}
\label{eq:teacher}
x \in \RR^p \mapsto f_{\lambda_t}(x) := x^\top \argmin_{\beta \in \RR^p}\big\{ \| y - X \beta \|_2^2 / n_t + \lambda_t \| \beta \|_2^2\big\}.
\end{equation}
In the prediction-only regime considered here, we can query the teacher at any input but need not know the explicit form of $f_{\lambda_t}$ or its regularization parameter $\lambda_t$.
Thus, we observe only the teacher's predictions at the queried points.

Given a fresh sample of unlabeled covariates $\cD_{\rm unlab} := \tilde X \in \RR^{n_s \times p}$ consisting of $n_s$ i.i.d.\ observations that may be \emph{out-of-distribution} (OOD) relative to the teacher's training distribution, we query the teacher on this unlabeled set and denote its predictions by $\hat y_{\lambda_t} := f_{\lambda_t}(\tilde X) \in \RR^{n_s}$.
The pure-distilled (PD) student $f_{\pd, \lambda_s}$ is the ridge predictor trained on $(\tilde X, \hat y_{\lambda_t})$ with penalty $\lambda_s > 0$\footnote{For brevity, we omit the dependence on $\lambda_t$ in the notation for the PD student when the context is clear.},
\begin{equation}
\label{eq:pd_student}
x \in \RR^p \mapsto f_{\pd, \lambda_s}(x) := x^\top \argmin_{\beta \in \RR^p}\big\{ \| \hat y_{\lambda_t} - \tilde X \beta \|_2^2 / n_s + \lambda_s \| \beta \|_2^2\big\}.
\end{equation}

A natural approach to improving the student's predictive performance is to combine the teacher's queried predictions with the student's own PD predictions.
We focus on \emph{affine mixing strategies}, in which the student deploys an affine combination of the available predictions.
Specifically, for a mixing parameter $\xi \in \RR$, the prediction-mixed self-distillation (PMSD) student predictor is given by
\begin{equation}
\label{eq:affine_path}
 x \in \RR^p \mapsto    f_{\m, \lambda_t, \lambda_s, \xi}(x):=(1-\xi)f^{\te}_{\lambda_t}(x)+\xi f_{\pd, \lambda_s}(x).
\end{equation}
We consider affine rather than convex combinations because negative weights or weights exceeding one may be preferable.
Indeed, empirical and theoretical work shows that the optimal mixing weight lies outside the unit interval in many settings \citep{das2023understanding, pareek2024understanding, dang2026optimal}.

Let $(x_0,y_0)$ denote an independent test point drawn from the teacher's training distribution, corresponding to the out-of-sample in-distribution setting.
For any predictor $f$, define the conditional squared prediction risk
\begin{equation}
\label{eq:freshX_pred_risk}
  R(f):=\E\big[(y_0-f(x_0))^2\mid \cD_{\rm lab},\cD_{\rm unlab}\big].
\end{equation}
For convenience, denote $R_{\te}(\lambda_t):=R(f_{\lambda_t}^{\te})$, $R_{\pd}(\lambda_s):=R(f_{\pd, \lambda_s})$, and $R_{\m}(\lambda_t, \lambda_s, \xi):=R(f_{\m, \lambda_t, \lambda_s, \xi})$.
When the regularization parameters are clear from context, we use the shorthand $R_{\te}$, $R_{\pd}$, and $R_{\m}$.

\subsection{Optimal prediction-mixed risk decomposition}

We first decompose the optimally mixed PMSD risk in terms of three key quantities: the teacher risk $R_{\te}(\lambda_t)$, the PD student risk $R_{\pd}(\lambda_s)$, and the teacher-student residual correlation
\[
C(\lambda_t, \lambda_s):=\E\!\bigl[(y_0-f^{\te}_{\lambda_t}(x_0))(y_0-f_{\pd, \lambda_s}(x_0))\mid \cD_{\rm lab},\cD_{\rm unlab}\bigr].
\]
Let $\xi^\star(\lambda_t, \lambda_s) \in \argmin_{\xi \in \RR} R_{\m}(\lambda_t, \lambda_s,\xi)$ denote the optimal mixing weight, and let $R_{\m}^\star(\lambda_t, \lambda_s) := R_{\m}(\lambda_t, \lambda_s, \xi^{\star})$ denote the corresponding optimal PMSD risk.

\begin{proposition}[Optimal prediction-mixed decomposition]
\label{prop:oracle_affine}
Suppose that $\lambda_t$, $\lambda_s$, and $D(\lambda_t, \lambda_s) := R_{\te}(\lambda_t) + R_{\pd}(\lambda_s) - 2 C(\lambda_t, \lambda_s)$ are strictly positive.
Then,
\begin{align}
\label{eq:oracle_risk}
\xi^{\star}(\lambda_t, \lambda_s)&=\frac{R_{\te}(\lambda_t) - C(\lambda_t, \lambda_s)}{D(\lambda_t, \lambda_s)}, \\
R_{\m}^{\star}(\lambda_t, \lambda_s) &= R_{\te}(\lambda_t) -\frac{(R_{\te}(\lambda_t) - C(\lambda_t, \lambda_s))^2}{D(\lambda_t, \lambda_s)} = 
R_{\pd}(\lambda_s) -\frac{(R_{\pd}(\lambda_s) - C(\lambda_t, \lambda_s))^2}{D(\lambda_t, \lambda_s)}. \nonumber
\end{align}
\end{proposition}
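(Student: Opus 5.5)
The plan is to expand the mixed residual and minimize a one-dimensional quadratic in $\xi$. Write $e_{\te} := y_0 - f^{\te}_{\lambda_t}(x_0)$ and $e_{\pd} := y_0 - f_{\pd,\lambda_s}(x_0)$. By \eqref{eq:affine_path}, the residual of the PMSD predictor is
\[
y_0 - f_{\m,\lambda_t,\lambda_s,\xi}(x_0) = (1-\xi)e_{\te} + \xi e_{\pd} = e_{\te} - \xi(e_{\te} - e_{\pd}).
\]
Both $f^{\te}_{\lambda_t}$ and $f_{\pd,\lambda_s}$ are deterministic given $(\cD_{\rm lab},\cD_{\rm unlab})$, and $\xi$ is a deterministic scalar. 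Squaring and taking the conditional expectation, using linearity of conditional expectation, gives
\[
R_{\m}(\lambda_t,\lambda_s,\xi) = R_{\te} - 2\xi\,(R_{\te} - C) + \xi^2 D .
\]
Here $\E[(e_{\te}-e_{\pd})^2 \mid \cdot\,] = R_{\te} + R_{\pd} - 2C = D$ and $\E[e_{\te}(e_{\te}-e_{\pd}) \mid \cdot\,] = R_{\te} - C$. Finiteness of these moments follows from the finite second moments implicit in the definition \eqref{eq:freshX_pred_risk}, which I would assume as the paper does.

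Since $D>0$ by hypothesis, this is a strictly convex quadratic in $\xi$. It therefore has the unique minimizer $\xi^\star = (R_{\te}-C)/D$. Substituting back gives $R^\star_{\m} = R_{\te} - (R_{\te}-C)^2/D$.

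For the second expression, I would expand around the student instead. The residual can also be written as $e_{\pd} + (1-\xi)(e_{\te}-e_{\pd})$, so the same computation in the variable $1-\xi$ yields the minimum value $R_{\pd} - (R_{\pd}-C)^2/D$. Alternatively, one can verify the identity directly: $R_{\te}D - (R_{\te}-C)^2 = R_{\te}R_{\pd} - C^2$, and this expression is symmetric in $R_{\te}$ and $R_{\pd}$.

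The only mild obstacle is justifying that $D>0$ makes the minimizer unique, so that the argmin in the definition of $\xi^\star$ is a single point. This is immediate from the strict convexity of the quadratic. The positivity of $\lambda_t$ and $\lambda_s$ only ensures that the ridge predictors are well defined.
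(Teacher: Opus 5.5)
Your proposal is correct and follows essentially the same route as the paper: you rewrite the residual as $(1-\xi)e_{\te}+\xi e_{\pd}$, reduce the risk to the strictly convex quadratic $R_{\te}-2\xi(R_{\te}-C)+\xi^2 D$, minimize it, and get the second form by expanding around $\xi=1$. Your additional check that $R_{\te}D-(R_{\te}-C)^2=R_{\te}R_{\pd}-C^2$ is symmetric is a valid alternative way to obtain the last identity.
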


The decomposition implies that $R^{\star}_{\m} \leq \min\{R_{\te}, R_{\pd}\}$.
The gain over the teacher is determined by the magnitude of $R_{\te}(\lambda_t)-C(\lambda_t, \lambda_s)$, while the sign of this difference determines the sign of $\xi^\star$, which may therefore be negative.
The denominator is the mean squared difference between the teacher and the PD student:
\begin{equation}
    D(\lambda_t, \lambda_s) =
    R_{\te}(\lambda_t) + R_{\pd}(\lambda_s) - 2 C(\lambda_t, \lambda_s) = \EE\big[(f_{\lambda_t}(x_0)-f_{\pd,\lambda_s}(x_0))^2\mid \cD_{\rm lab},\cD_{\rm unlab} \big] \geq 0.
    \nonumber
\end{equation}
It is zero if and only if $f_{\lambda_t}$ and $f_{\pd,\lambda_s}$ coincide almost surely on the test distribution, in which case $R_{\m} \equiv R_{\te} \equiv R_{\pd}$ for all $\xi$, so any $\xi$ is trivially optimal.
Hence the assumption $D > 0$ is mild.

\subsection{Proportional asymptotics under general anisotropic covariance}

In this subsection, we derive deterministic equivalents for the optimal mixing weight and the conditional PMSD risk in the proportional asymptotic regime, where $n_t, n_s, p \to \infty$ at the same rate.

Throughout, we assume that the teacher's training data $\{(x_i,y_i)\}_{i=1}^{n_t}$ are i.i.d.\ samples from a distribution satisfying the standard assumptions used in the random matrix analysis of high-dimensional regression \citep{hastie2022surprises, bartlett2021deep}.
We further assume that the fresh unlabeled covariates $\{\tilde{x}_i\}_{i=1}^{n_s}$ are i.i.d.\ samples whose population covariance matrix is \emph{simultaneously diagonalizable} with the population covariance matrix of the teacher's covariates $\{x_i\}_{i=1}^{n_t}$; equivalently, the two covariance matrices commute.
This assumption of simultaneous diagonalizability across covariate sources has also been considered in the transfer learning literature \citep{mallinar2024minimum, song2024generalization}.

\begin{assumption}[Data distribution] 
\label{def:dist}
We assume the following about the data distribution.
    \begin{enumerate}[leftmargin=7mm]
        \item [(a)] The training covariates $X \sim P_x^{\otimes n_t}$ admit the representation $X=Z\Sigma_t^{1/2}$, where $\Sigma_t\in\RR^{p\times p}$ is deterministic and positive definite, with eigenvalues bounded away from $0$ and $\infty$, and $Z\in\RR^{n_t\times p}$ has i.i.d.\ entries with mean $0$, variance $1$, and uniformly bounded $(4+\mu)$-th moment for some $\mu>0$.
        \item [(b)] The fresh unlabeled covariates $\tilde X\sim \tilde P_x^{\otimes n_s}$ admit the representation $\tilde X=\tilde Z\Sigma_s^{1/2}$, where $\Sigma_s\in\RR^{p\times p}$ is deterministic and positive definite, with eigenvalues bounded away from $0$ and $\infty$.
        Moreover, $\tilde Z\in\RR^{n_s\times p}$ has i.i.d.\ entries with mean $0$, variance $1$, and uniformly bounded $(4+\tilde\mu)$-th moment for some $\tilde\mu>0$.
        \item [(c)] $\Sigma_t$ and $\Sigma_s$ are simultaneously diagonalizable, i.e., there exists an orthogonal matrix $U \in \RR^{p \times p}$ such that $\Sigma_t = U^{\top} \operatorname{diag}(\sigma_1,\ldots,\sigma_p) U$ and $\Sigma_s = U^{\top} \operatorname{diag}(\tilde\sigma_1,\ldots,\tilde\sigma_p) U$.
        \item [(d)] The training labels $y\sim P_y$ have mean $0$ and uniformly bounded $(4+\nu)$-th moment for some $\nu>0$.
    \end{enumerate}
\end{assumption}

\begin{figure*}[!t]
  \centering
    \begin{subfigure}[t]{0.6\textwidth}
    \centering
    \includegraphics[width=\textwidth]{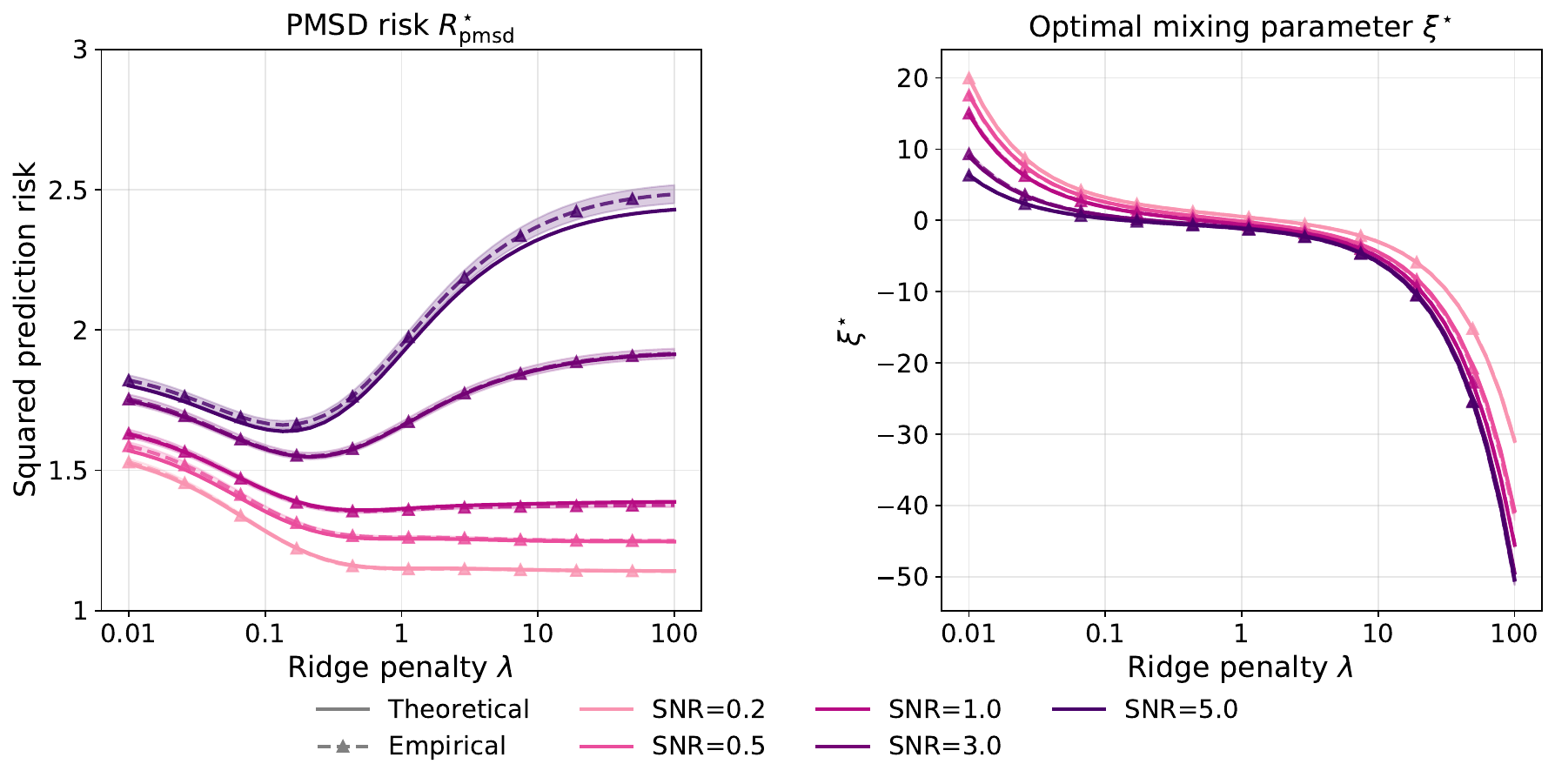}
    \caption{Theoretical versus empirical risks and mixing weights.}
    \label{fig:over_snr}
  \end{subfigure}
  \hfill
    \begin{subfigure}[t]{0.38\textwidth}
    \centering
    \includegraphics[width=\textwidth]{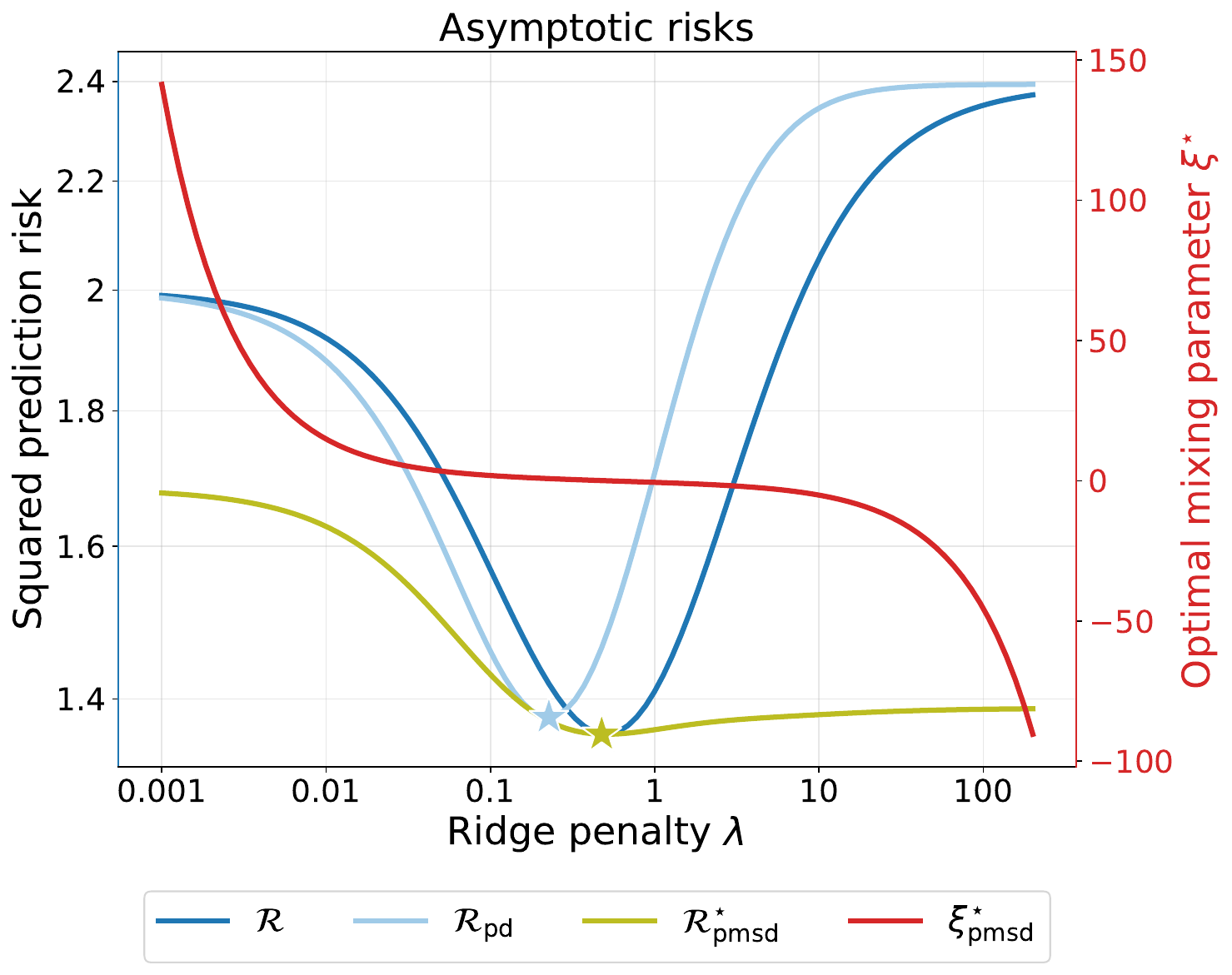}
    \caption{Same-$\lambda$ balanced regime with $\gamma_t = \gamma_s$.}
    \label{fig:asymp_curve}
  \end{subfigure}
  \caption{Setting: $\Sigma_t = \Sigma_s$ and $\lambda_t = \lambda_s = \lambda$. 
  Both panels use an AR1 covariance design, and the true signal $\beta$ aligns with the top eigenvectors of $\Sigma_t$. We set $p = 200$, $n_t = n_s = 400$, and $\sigma^2 = 1$ in both panels; the right panel has $r^2 = 1$. Similar figures with $\Sigma_t \neq \Sigma_s$ and $\lambda_t \neq \lambda_s$ appear in \Cref{sec:additional_exps_synthetic_ridge}.}
  \label{fig:synthetic_ridge}
\end{figure*}

\textbf{Key quantities and notation.} We define the population projection and variance parameters as
$\beta:=\Sigma_t^{-1}\E[xy]$ and $\sigma^2:=\Var(y-x^\top\beta)$, respectively.
The signal energy is $r^2 := \| \beta \|_2^2$ and $\SNR:=r^2/\sigma^2$.
Denote the labeled and unlabeled data aspect ratios $\gamma_t := p/n_t$ and $\gamma_s := p/n_s$, respectively.
We introduce several scalar quantities to characterize the limiting risk.
Let $\otr(A):=\tr(A)/p$ denote the normalized trace.
For any fixed $\lambda_t > 0$ and $\lambda_s > 0$, define $\kappa_t = \kappa_t(\lambda_t) > 0$ and $\kappa_s = \kappa_s(\lambda_s) > 0$ as the unique solutions to the corresponding fixed-point equations,
\begin{equation}
\label{eq:kappa_fp_mainpaper}
\kappa_t = \lambda_t + \gamma_t \kappa_t \,\otr\!\big(\Sigma_t (\Sigma_t +\kappa_t I_p)^{-1}\big) \quad \text{and} \quad
\kappa_s = \lambda_s + \gamma_s \kappa_s \,\otr\!\big(\Sigma_s (\Sigma_s +\kappa_s I_p)^{-1}\big),
\end{equation}
respectively, and let $G_t = (\Sigma_t + \kappa_t I_p)^{-1}$ and $G_s = (\Sigma_s + \kappa_s I_p)^{-1}$ be the associated resolvents.
For fixed $(\Sigma_t, \gamma_t)$ and $(\Sigma_s, \gamma_s)$, the maps $\lambda_t \mapsto \kappa_t$ and $\lambda_s \mapsto \kappa_s$ are both one-to-one on $(0,\infty)$.
Finally, for $i, j \geq 0$, define the mixed signal-covariance alignment and trace functionals as
\begin{equation}
    q_{i,j}:=\beta^\top G_t^i G_s^j \Sigma_t \,\beta,
    \qquad 
    \tilde q_{i,j}:=\beta^\top G_t^i G_s^j \Sigma_s \,\beta,
  \qquad
  t_{i,j}:=\gamma_t \,\otr(\Sigma_t^2 G_t^i G_s^j),
\end{equation}
and set
$
  b_t := (1 - t_{2,0})^{-1}
$
and
$
  b_s := \big(\gamma_s \otr(\Sigma_s \Sigma_t G_s^2) \big) / \big(1 - \gamma_s \otr(\Sigma_s^2 G_s^2) \big).
$

Our first result states that the three oracle ingredients in \Cref{prop:oracle_affine} have deterministic equivalents, in the sense defined in \Cref{app:DE_Q_U-preliminaries}, under general anisotropic covariance and deterministic signal.
The exact closed forms are more algebraically involved than those for same-$X$ ridge distillation in \cite{dang2026optimal} because they involve two coupled resolvents, but remain explicit in terms of finitely many traces of functions of $(G_t, G_s, \Sigma_t, \Sigma_s)$ and finitely many quadratic forms $\beta^\top f(\Sigma_t, \Sigma_s)\beta$.

\begin{theorem}[General anisotropic risk asymptotics]
\label{thm:general_ridge}
Under \Cref{def:dist}, for fixed $\lambda_t > 0$ and $\lambda_s > 0$, as $n_t, n_s, p \to \infty$ with $0 < \liminf \gamma_t \leq \limsup \gamma_t < \infty$ and $0 < \liminf \gamma_s \leq \limsup \gamma_s < \infty$, we have
\begin{align}
  R_{\te}(\lambda_t) \pto \mathcal R_{\te}(\lambda_t),
\qquad
R_{\pd}(\lambda_s) \pto \mathcal R_{\pd}(\lambda_s),
\qquad
C(\lambda_t, \lambda_s)\pto \mathcal C(\lambda_t, \lambda_s),   
\end{align}
where
\begin{align}
    \sR(\lambda_t) &= \sigma^2 + \kappa_t^2 b_t q_{2,0} + \sigma^2 b_t t_{2,0}, \nonumber \\
    \sC(\lambda_t, \lambda_s) &= \sigma^2 + \kappa_t^2 b_t q_{2,0} + \sigma^2 b_t t_{2,0} - \kappa_s (
        -\kappa_t q_{1,1}
        +\kappa_t^2 q_{2,1}
        +\kappa_t^2 b_t t_{2,1} q_{2,0}
        +\sigma^2 b_t t_{2,1}
    ) \nonumber,
    \\
\sR_{\pd}(\lambda_s) &=   \kappa_s^2
 \Big(
 q_{0,2} + b_s \tilde q_{0,2} - 2 \kappa_t (q_{1,2} + b_s \tilde q_{1,2}) + \kappa_t^2 
 (q_{2,2} + b_s \tilde q_{2,2})
 \nonumber \\
 &\hspace{3em}
 + b_t (\kappa_t^2 q_{2,0} + \sigma^2) \big(t_{2,2} + b_s \gamma_t \otr(\Sigma_t \Sigma_s G_t^2 G_s^2) \big)
 \Big) -
 \sR(\lambda_t) + 2\sC(\lambda_t, \lambda_s). \nonumber
\end{align}
In the nontrivial case $r^2+\sigma^2>0$, the limiting denominator $\mathcal R_{\te}(\lambda_t)+\mathcal R_{\pd}(\lambda_s)-2\mathcal C(\lambda_t,\lambda_s)$ is strictly larger than 0, and
\begin{align}
    \label{eq:xi_star_asymp}
   &\xi^{\star}(\lambda_t, \lambda_s)\pto
\frac{\mathcal R_{\te}(\lambda_t)-\mathcal C(\lambda_t, \lambda_s)}{\mathcal R_{\te}(\lambda_t)+\mathcal R_{\pd}(\lambda_s) - 2 \mathcal C(\lambda_t, \lambda_s)}, 
\\
\label{eq:R_star_asymp}
&R_{\m}^{\star}(\lambda_t, \lambda_s) \pto \sR_{\m}^{\star}(\lambda_t, \lambda_s) :=
\mathcal R_{\te}(\lambda_t)
-
\frac{\bigl(\mathcal R_{\te}(\lambda_t)-\mathcal C(\lambda_t, \lambda_s)\bigr)^2}{\mathcal R_{\te}(\lambda_t)+\mathcal R_{\pd}(\lambda_s) - 2 \mathcal C(\lambda_t, \lambda_s)}. 
\end{align}
\end{theorem}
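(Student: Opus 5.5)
The plan is to write each of the three conditional quantities as a quadratic form in the teacher and student estimators, and then evaluate these forms with nested deterministic equivalents, conditioning first on the teacher's data.

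\textbf{Step 1: closed forms.} Let $\hat\beta_t=(X^\top X/n_t+\lambda_t I)^{-1}X^\top y/n_t$ and write $y=X\beta+\varepsilon$, where $\varepsilon$ is uncorrelated with $X$ and $\E[\varepsilon_i^2]=\sigma^2$. Define the student resolvent $\tilde S=(\tilde X^\top\tilde X/n_s+\lambda_s I)^{-1}$ and $\hat\Sigma_s=\tilde X^\top\tilde X/n_s$. Then the PD student is
\[
\hat\beta_s=\tilde S\hat\Sigma_s\hat\beta_t=(I-\lambda_s\tilde S)\hat\beta_t.
\]
With test point $x_0$ in-distribution, we have $R_{\te}=\sigma^2+\|\hat\beta_t-\beta\|_{\Sigma_t}^2$ and $R_{\pd}=\sigma^2+\|\hat\beta_s-\beta\|_{\Sigma_t}^2$. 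The cross term is
\[
C=\sigma^2+(\hat\beta_t-\beta)^\top\Sigma_t(\hat\beta_s-\beta)=R_{\te}-\lambda_s(\hat\beta_t-\beta)^\top\Sigma_t\tilde S\hat\beta_t.
\]
This already explains the structure of $\sC$: the teacher risk minus a $\kappa_s$-weighted correction.

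\textbf{Step 2: student randomness.} Conditioning on $\cD_{\rm lab}$, the vector $\hat\beta_t$ is fixed and independent of $\tilde X$. We then apply the standard deterministic equivalents for ridge resolvents with deterministic test matrices:
\[
\lambda_s\tilde S\asymp\kappa_s G_s,
\qquad
\lambda_s^2\tilde S\Sigma_t\tilde S\asymp\kappa_s^2\bigl(G_s\Sigma_tG_s+b_s\,G_s\Sigma_sG_s\bigr).
\]
The second is the usual second-order equivalent, whose variance inflation factor $b_s$ is exactly as defined. These hold in the sense of \Cref{app:DE_Q_U-preliminaries}, uniformly over vectors of bounded norm; boundedness of $\|\hat\beta_t\|$ follows from Assumption~\ref{def:dist}. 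Plugging in reduces $C$ and $R_{\pd}$ to quadratic forms in $\hat\beta_t$ and $\hat\beta_t-\beta$ with deterministic matrices that are polynomials in $G_s,\Sigma_t,\Sigma_s$. The commutation in Assumption~\ref{def:dist}(c) keeps these matrices simultaneously diagonal with $\Sigma_t$.

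\textbf{Step 3: teacher randomness.} Write $\hat\beta_t-\beta=-\lambda_t S_t\beta+S_tX^\top\varepsilon/n_t$, where $S_t$ is the teacher resolvent. For a deterministic matrix $A$ commuting with $\Sigma_t$, we need $(\hat\beta_t-\beta)^\top A\,\hat\beta_t$ and $\hat\beta_t^\top A\hat\beta_t$. We use the following equivalents:
\begin{itemize}
\item the first-order equivalent $\lambda_tS_t\asymp\kappa_tG_t$;
\item the second-order equivalent $\lambda_t^2S_tAS_t\asymp\kappa_t^2\bigl(G_tAG_t+b_t\,\gamma_t\otr(\Sigma_tAG_t^2)\,G_t\Sigma_tG_t\bigr)$;
\item the analogous equivalent for the noise term, $\otr$-type traces giving $\sigma^2b_t\gamma_t\otr(\Sigma_t AG_t^2\Sigma_t)$.
\end{itemize}
Cross terms between signal and noise vanish by independence and concentration. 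Taking $A=\Sigma_t$ recovers $\sR$, which is the classical formula, with $t_{2,0}$ and $q_{2,0}$. Taking $A=\Sigma_tG_s$ yields the $q_{1,1}$, $q_{2,1}$ and $t_{2,1}$ terms of $\sC$. Taking $A=\Sigma_tG_s^2$ and $A=\Sigma_sG_s^2$ yields $\sR_{\pd}$, after noting that
\[
\|\hat\beta_s-\beta\|^2_{\Sigma_t}=\|\hat\beta_t-\beta-\lambda_s\tilde S\hat\beta_t\|^2_{\Sigma_t},
\]
which expands to $R_{\te}-\sigma^2-2(R_{\te}-C)+\lambda_s^2\hat\beta_t^\top\tilde S\Sigma_t\tilde S\hat\beta_t$. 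This explains the $-\sR+2\sC$ tail in the stated formula.

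\textbf{Step 4: optimal mixing.} The convergence of $\xi^\star$ and $R_{\m}^\star$ then follows from \Cref{prop:oracle_affine} and the continuous mapping theorem, once the limit of $D$ is shown to be positive. The limit of $D$ equals $\kappa_s^2\,\hat\beta_t^\top(G_s\Sigma_tG_s+b_sG_s\Sigma_sG_s)\hat\beta_t$ in the limit. This is at least $\kappa_s^2\lambda_{\min}(G_s\Sigma_tG_s)$ times the limit of $\|\hat\beta_t\|^2$, and the latter is positive whenever $r^2+\sigma^2>0$: the noise part contributes at least $\sigma^2\gamma_t\otr(\Sigma_tG_t^2)\cdot b_t>0$, and the signal part contributes $\beta^\top\Sigma_tG_t^2\Sigma_t\beta$-type terms.

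\textbf{Main obstacle.} I expect Step 3 to be the hardest, specifically justifying the nested deterministic equivalents. The matrix from Step 2 is only an asymptotic surrogate, so the error must be controlled uniformly. I would do this by conditioning, using boundedness of $\|\hat\beta_t\|$, and applying the composition rule for deterministic equivalents. The rest is careful bookkeeping of the commuting traces.
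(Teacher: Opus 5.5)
Your proposal follows the paper's proof essentially step for step. It uses the same identities $R_{\te}-C=\lambda_s(\hat\beta_t-\beta)^\top\Sigma_t\tilde S\hat\beta_t$ and $D=\lambda_s^2\hat\beta_t^\top\tilde S\Sigma_t\tilde S\hat\beta_t$, conditions on $\cD_{\rm lab}$ to replace the fresh-side resolvents by $\kappa_sG_s$ and $\kappa_s^2(\Sigma_tG_s^2+b_sG_s\Sigma_sG_s)$, applies teacher-side insertion equivalents for commuting $A$, recovers $\sR_{\pd}$ as $\sD-\sR+2\sC$, and finishes with the continuous mapping theorem. Your positivity argument via $\lambda_{\min}(G_s\Sigma_tG_s)$ and the limit of $\|\hat\beta_t\|^2$ is a valid, slightly coarser substitute for the paper's term-by-term nonnegativity check.

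One slip to fix: the noise-term equivalent should be $\sigma^2b_t\gamma_t\otr(\Sigma_tAG_t^2)$, the same trace as in your signal correction, without the trailing $\Sigma_t$. As written, $A=\Sigma_t$ would give $\gamma_t\otr(\Sigma_t^3G_t^2)$ instead of $t_{2,0}$, and $A=\Sigma_tG_s$ would not produce $t_{2,1}$.
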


As shown in \Cref{fig:over_snr}, the theoretical predictions closely align with the empirical risks even at moderate values of $n_t$, $n_s$, and $p$. 

\section{Strict improvement and non-monotonicity}
\label{sec:strict_nonmonotonicity}

Using the risk characterization from \Cref{sec:ridge}, we now study two qualitative questions: when the PMSD student strictly improves upon the teacher and the pure-distilled student, and how its optimal risk depends on the amount of fresh unlabeled data.

\subsection{Strict improvement over the teacher}
\label{sec:strict_improv}

Leveraging the deterministic equivalents from \Cref{thm:general_ridge}, we study when the PMSD student strictly improves upon the teacher.
Equation \eqref{eq:R_star_asymp} shows that the PMSD risk is asymptotically strictly less than the teacher risk whenever
\begin{equation}
   \mathcal R_{\te}(\lambda_t)-\mathcal C(\lambda_t, \lambda_s) \neq 0, 
\end{equation}
and the two risks coincide otherwise.
Thus, characterizing when the PMSD student fails to improve upon the teacher amounts to understanding when $\mathcal R_{\te}(\lambda_t)-\mathcal C(\lambda_t, \lambda_s)$ vanishes.

Since $\Sigma_t$ and $\Sigma_s$ are simultaneously diagonalizable, let $\Sigma_t = U^{\top} \operatorname{diag}(\sigma_1, \ldots, \sigma_p) U$ and $\Sigma_s = U^{\top} \operatorname{diag}(\tilde \sigma_1, \ldots, \tilde \sigma_p) U$ be their eigendecompositions.
Define the squared signal-covariance alignment by $v_i := ((U\beta)_i)^2$ for $i \in [p]$.
Let $m$ and $\tilde m$ ($1 \le m, \tilde m \le p$) denote the number of distinct eigenvalues of $\Sigma_t$ and $\Sigma_s$, respectively. Write $\{\tilde \sigma_{(1)}, \ldots, \tilde \sigma_{(\tilde m)}\}$ for the corresponding set of distinct eigenvalues of $\Sigma_s$, with $\tilde \sigma_{(1)} \leq \cdots \leq \tilde \sigma_{(\tilde m)}$.
The term $\mathcal R_{\te}(\lambda_t)-\mathcal C(\lambda_t, \lambda_s)$ can be rewritten as follows (see \Cref{sec:proof_of_strict_improvement}):
\begin{align}
\label{eq:R-C}
\mathcal R_{\te}(\lambda_t)-\mathcal C(\lambda_t,\lambda_s)
&= \sum_{i=1}^{p} \frac{c_i}{\tilde \sigma_i+\kappa_s} = \sum_{j=1}^{\tilde m} \frac{d_j}{\tilde \sigma_{(j)}+\kappa_s},
\end{align}
where
\begin{align}
c_i
&:= \frac{\kappa_t \sigma_i^2}{(\sigma_i+\kappa_t)^2}
\biggl(
\frac{\kappa_t^2 b_t q_{2,0}+\sigma^2 b_t}{\kappa_t n_t}
- v_i
\biggr),
\qquad i=1,\ldots,p, \label{eq:c_i_def} \\
d_j
&:= \sum_{k:\,\tilde \sigma_k=\tilde\sigma_{(j)}} c_k,
\qquad j=1,\ldots,\tilde m. \label{eq:d_j_def}
\end{align}
For a fixed and unknown $\lambda_t > 0$ used by the teacher, we are interested in the choices of $\lambda_s > 0$ for which the PMSD student strictly improves upon the teacher.
Hence, we study the cardinality of the set of $\lambda_s$ for which there is no strict improvement at a given $\lambda_t$, which we call the ``student-tie'' set:
\[
\Lambda_{s}(\lambda_t) := \{ \lambda_s > 0: \mathcal R_{\te}(\lambda_t)-\mathcal C(\lambda_t, \lambda_s) = 0\}.
\]

Fix $\lambda_t$ and suppose that $\mathcal R_{\te}(\lambda_t)-\mathcal C(\lambda_t, \lambda_s)$ is not identically zero as a function of $\lambda_s$. Setting the right-hand side of \eqref{eq:R-C} to zero and clearing the denominators gives
\[
\sum_{j=1}^{\tilde m} \, d_j \prod_{k \neq j} (\tilde \sigma_{(k)} + \kappa_s) = 0,
\]
which is a polynomial equation in $\kappa_s$ of degree at most $\tilde m-1$.
The one-to-one correspondence between $\lambda_s$ and $\kappa_s$ in equation \eqref{eq:kappa_fp_mainpaper} implies that there are at most $\tilde m-1$ values of $\lambda_s$ for which $\mathcal R_{\te}(\lambda_t)-\mathcal C(\lambda_t, \lambda_s) = 0$.
Therefore, $|\Lambda_{s}(\lambda_t)| \leq \tilde m - 1$ in this case.

On the other hand, if, for a given $\lambda_t$, the gap $\mathcal R_{\te}(\lambda_t)-\mathcal C(\lambda_t, \lambda_s)$ is identically zero as a function of $\lambda_s$, then the PMSD student cannot improve upon the teacher for any choice of $\lambda_s$.
Equivalently, $\Lambda_{s}(\lambda_t)=\RR^+$.
We call this the ``teacher-degeneracy'' set:
\[
\Lambda_{t} := \{ \lambda_t > 0: \mathcal R_{\te}(\lambda_t)-\mathcal C(\lambda_t, \lambda_s) = 0 \text{ for all } \lambda_s>0\}.
\]

\begin{proposition}[Upper bound on teacher-degeneracy set size]
\label{prop:teacher_bad_set}
$\lambda_t$ belongs to $\Lambda_t$ if and only if $d_j=0$ for every $j\in[\tilde m]$, where the $d_j$ are defined in \eqref{eq:d_j_def}. Moreover, in the nontrivial case $r^2+\sigma^2>0$, a crude upper bound on $|\Lambda_t|$ is $4m-1$, where $m$ is the number of distinct eigenvalues of $\Sigma_t$.
\end{proposition}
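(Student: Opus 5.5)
The plan has two parts: first reduce membership in $\Lambda_t$ to the vanishing of the $d_j$, then show that this vanishing forces $\kappa_t$ to be a root of a nonzero polynomial of degree at most $4m-1$.

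\textbf{Characterization of $\Lambda_t$.} I will first check from \eqref{eq:c_i_def} that each $c_i$, and hence each $d_j$, depends only on $\lambda_t$ (through $\kappa_t$, $b_t$, $q_{2,0}$) and not on $\lambda_s$. By \eqref{eq:R-C}, $\mathcal R_{\te}(\lambda_t)-\mathcal C(\lambda_t,\lambda_s)=g(\kappa_s)$ with $g(z):=\sum_{j=1}^{\tilde m} d_j/(\tilde\sigma_{(j)}+z)$. The map $\lambda_s\mapsto\kappa_s$ is continuous and one-to-one on $(0,\infty)$, so its image is a nondegenerate interval. Hence $\lambda_t\in\Lambda_t$ iff the rational function $g$ vanishes on an interval, which happens iff $g\equiv 0$. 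Because the poles $-\tilde\sigma_{(j)}$ are distinct, the partial-fraction representation is unique; equivalently, the residue of $g$ at $-\tilde\sigma_{(j)}$ is $d_j$. So $g\equiv0$ iff every $d_j=0$. The converse direction is immediate.

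\textbf{Reduction to one scalar equation.} If all $d_j=0$, then summing over $j$ gives $\sum_{i=1}^p c_i=0$. Set
\[
A:=\frac{\kappa_t^2 b_t q_{2,0}+\sigma^2 b_t}{\kappa_t n_t},
\]
which does not depend on $i$. Dividing out $\kappa_t>0$ and grouping indices by the $m$ distinct eigenvalues $\sigma_{(\ell)}$ of $\Sigma_t$, with multiplicities $n_\ell$ and aggregated alignments $V_\ell:=\sum_{i:\sigma_i=\sigma_{(\ell)}}v_i$, the condition becomes
\[
\sum_{\ell=1}^m \frac{\sigma_{(\ell)}^2}{(\sigma_{(\ell)}+\kappa_t)^2}\bigl(n_\ell A - V_\ell\bigr)=0 .
\]
Next I multiply by $\kappa_t n_t(1-t_{2,0})=\kappa_t n_t/b_t$, which is positive since $b_t>0$. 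This turns $n_\ell A$ into $n_\ell(\kappa_t^2 q_{2,0}+\sigma^2)$, and turns $V_\ell$ into $V_\ell\,\kappa_t n_t(1-t_{2,0})$. Both $q_{2,0}=\sum_\ell V_\ell\sigma_{(\ell)}/(\sigma_{(\ell)}+\kappa_t)^2$ and $t_{2,0}=\gamma_t p^{-1}\sum_\ell n_\ell\sigma_{(\ell)}^2/(\sigma_{(\ell)}+\kappa_t)^2$ are rational in $\kappa_t$ with denominator dividing $\Pi^2$, where $\Pi:=\prod_\ell(\sigma_{(\ell)}+\kappa_t)$.

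\textbf{Polynomial and degree count.} Multiplying the equation by $\Pi^4$ gives a polynomial $P(\kappa_t)$. I expect the following degree count:
\begin{itemize}
\item the $\sigma^2$ and $\kappa_t^2q_{2,0}$ contributions have degree at most $(2m-2)+2m=4m-2$;
\item the $V_\ell\kappa_t(1-t_{2,0})$ contribution has degree at most $(2m-2)+1+2m=4m-1$.
\end{itemize}
Since $\lambda_t\mapsto\kappa_t$ is injective, $|\Lambda_t|$ is then bounded by the number of positive roots of $P$. This is at most $4m-1$, provided $P\not\equiv 0$.

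\textbf{Main obstacle: $P\not\equiv0$.} This is the step I expect to be hardest, and it is where $r^2+\sigma^2>0$ enters. I will split into two cases.
\begin{itemize}
\item If $\sigma^2>0$, evaluate at $\kappa_t=0$. The $V_\ell$ terms vanish because of the factor $\kappa_t$, and so does the $\kappa_t^2q_{2,0}$ term. What remains is $\sigma^2\prod_k\sigma_{(k)}^2\sum_\ell n_\ell\sigma_{(\ell)}^2\prod_{k\ne\ell}\sigma_{(k)}^2>0$.
\item If $\sigma^2=0$, then $r^2>0$, so some $V_\ell>0$. Look at the top coefficient as $\kappa_t\to\infty$. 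There $t_{2,0}\to0$ and $\kappa_t^2q_{2,0}$ stays bounded, so only the $V_\ell$ part reaches degree $4m-1$. Its coefficient is $-n_t\sum_\ell V_\ell\sigma_{(\ell)}^2<0$.
\end{itemize}
In either case $P\not\equiv0$, which gives $|\Lambda_t|\le 4m-1$. The algebra here needs care: I must confirm that no cancellation occurs among the lower-degree terms and that the factor $\kappa_t/b_t$ introduces no spurious roots. These checks are routine but tedious.
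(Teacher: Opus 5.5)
Your proof is correct and follows the paper's argument essentially step for step. You reduce $\Lambda_t$ to the vanishing of all $d_j$ using the interval image of $\lambda_s\mapsto\kappa_s$ and the distinct poles $-\tilde\sigma_{(j)}$, and you get the crude bound from the weaker condition $\sum_i c_i=0$; your cleared polynomial is exactly $n_t$ times the paper's, with leading coefficient $-n_t\sum_i\sigma_i^2v_i$. The only differences are cosmetic: you certify nonvanishing for $\sigma^2>0$ by evaluating at $\kappa_t=0$ rather than treating $\beta=0$ separately, and the ``spurious roots'' check you flag is unnecessary, since only an upper bound on the number of roots is needed.
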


Generally, this degeneracy condition requires $\lambda_t$ to satisfy a specific system of $\tilde m$ equations determined jointly by the covariance structure of $\Sigma_t$, the signal-covariance alignment, and the noise variance $\sigma^2$. The bound on $|\Lambda_t|$ is crude, as it uses only the weaker necessary condition
\[
\sum_{j=1}^{\tilde m} d_j = 0,  
\] 
which holds whenever $\lambda_t\in\Lambda_t$. When $\lambda_t \notin \Lambda_{t}$, we further bound the size of the ``student-tie'' set $\Lambda_{s}(\lambda_t)$ as follows.

\begin{proposition}[Upper bound on student-tie set size]
\label{prop:strict_improv_ood}
For any $\lambda_t \notin \Lambda_t$, the cardinality of $\Lambda_{s}(\lambda_t)$ is bounded above by the number of sign changes in the sequence $\{d_1,\ldots,d_{\tilde m}\}$ after deleting zero entries, where the $d_j$ are defined in \eqref{eq:d_j_def}.
\end{proposition}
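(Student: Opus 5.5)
The plan is to work directly with the representation \eqref{eq:R-C}. For fixed $\lambda_t$, define
\[
g(\kappa) := \sum_{j=1}^{\tilde m} \frac{d_j}{\tilde\sigma_{(j)}+\kappa}, \qquad \kappa>0,
\]
so that $\mathcal R_{\te}(\lambda_t)-\mathcal C(\lambda_t,\lambda_s) = g(\kappa_s(\lambda_s))$. The coefficients $d_j$ depend only on $\lambda_t$, through $\kappa_t$, $b_t$, $q_{2,0}$ and the $v_i$, and not on $\lambda_s$.

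Because $\lambda_s\mapsto\kappa_s$ is one-to-one on $(0,\infty)$, the cardinality of $\Lambda_s(\lambda_t)$ is at most the number of zeros of $g$ on $(0,\infty)$. It therefore suffices to show that $g$ has at most $S$ positive zeros, where $S$ is the number of sign changes of $(d_1,\ldots,d_{\tilde m})$ after zero entries are deleted. The hypothesis $\lambda_t\notin\Lambda_t$, combined with \Cref{prop:teacher_bad_set}, ensures that not all $d_j$ vanish. Since the $\tilde\sigma_{(j)}$ are distinct, $g$ is then not identically zero, and its zeros are finite in number.

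The key tool is the standard fact that a function of the form $\sum_j d_j\,\phi_j(\kappa)$, with kernel $\phi_j(\kappa)=1/(a_j+\kappa)$ and $a_1<\cdots<a_{\tilde m}$, has at most $S$ zeros. This holds because the Cauchy kernel $1/(a+\kappa)$ is (extended) totally positive in $(a,\kappa)$, which is the variation-diminishing property in the sense of Karlin and of Pólya--Szegő.

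To keep the argument self-contained, I would instead prove this fact by induction on $S$ using a Laplace-transform representation. Write $1/(a+\kappa)=\int_0^\infty e^{-(a+\kappa)u}\,du$, so that
\[
g(\kappa)=\int_0^\infty e^{-\kappa u}\,h(u)\,du, \qquad h(u):=\sum_j d_j e^{-\tilde\sigma_{(j)}u}.
\]
First, by the generalized Descartes rule of signs for exponential sums, proved by the classical Rolle-type induction of multiplying by $e^{a u}$ and differentiating, $h$ has at most $S$ sign changes on $(0,\infty)$. Second, the Laplace transform does not increase the number of sign changes. If $h$ changes sign at most $S$ times, then $g$ has at most $S$ zeros counting multiplicity; this is again the total positivity of $e^{-\kappa u}$.

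An elementary alternative avoids both facts by working with the cleared polynomial
\[
P(\kappa)=\sum_j d_j\prod_{k\ne j}(\tilde\sigma_{(k)}+\kappa).
\]
One then uses a Rolle argument directly on $g$: multiply $g$ by $(\tilde\sigma_{(1)}+\kappa)$ and differentiate to remove one term, then induct on the number of nonzero $d_j$.

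I expect the main obstacle to be this direct Rolle step. After multiplying by $(a_1+\kappa)$, the derivative of $d_j(a_1+\kappa)/(a_j+\kappa)$ is $d_j(a_j-a_1)/(a_j+\kappa)^2$. The squared denominators break the exact form of the induction hypothesis. I would therefore prove a slightly stronger statement that covers kernels $(a_j+\kappa)^{-r}$ for every $r\ge1$. The Laplace representation handles this uniformly, since $(a+\kappa)^{-r}=\Gamma(r)^{-1}\int_0^\infty u^{r-1}e^{-(a+\kappa)u}\,du$. For this reason I would take the Laplace route as the primary argument.

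Finally, one must count zeros rather than sign changes of $g$. Since $g$ is real-analytic and not identically zero, the variation-diminishing bound gives at most $S$ zeros counting multiplicity, which yields $|\Lambda_s(\lambda_t)|\le S$.
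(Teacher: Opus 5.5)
Your proposal is correct but takes a different route from the paper. The paper's proof is a short duality argument. Assume $g$ has $r>M$ distinct positive roots $x_1,\dots,x_r$, where $M$ is the number of sign changes. The paper builds a polynomial $Q$ of degree $M\le r-1$ whose roots lie between the consecutive eigenvalues at which the nonzero $d_j$ change sign, so that $\operatorname{sign}Q(\tilde\sigma_{(j)})=\operatorname{sign}d_j$. It then expands $Q(t)/\prod_\ell(t+x_\ell)=\sum_\ell y_\ell/(t+x_\ell)$ in partial fractions and swaps the roles of $\tilde\sigma_{(j)}$ and $x_\ell$ in the Cauchy kernel. This gives $0<\sum_j d_j Q(\tilde\sigma_{(j)})/\prod_\ell(\tilde\sigma_{(j)}+x_\ell)=\sum_\ell y_\ell\, g(x_\ell)=0$, a contradiction. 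You instead factor $1/(a+\kappa)=\int_0^\infty e^{-au}e^{-\kappa u}\,du$ and chain two variation-diminishing steps: Laguerre's rule of signs for $h$, then the sign-change property of the Laplace transform. You cite the second step rather than prove it. Its Rolle proof multiplies $g$ by $e^{u_1\kappa}$, with $u_1$ the first sign change of $h$, and differentiates; the derivative is $e^{u_1\kappa}\int_0^\infty e^{-\kappa u}(u_1-u)h(u)\,du$, and $(u_1-u)h(u)$ has one fewer sign change. The paper's route is shorter and purely algebraic, and its count of distinct roots is all the proposition needs. Yours is longer and rests on two classical facts, but it counts zeros with multiplicity and extends directly to kernels $(a+\kappa)^{-r}$ and to any kernel with such a totally positive factorization.

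On your ``elementary alternative'': multiplying by $(\tilde\sigma_{(1)}+\kappa)$ and inducting on the number of nonzero $d_j$ only recovers the trivial bound (number of nonzero $d_j$) $-1$. Removing the first term need not remove a sign change, so the squared denominators are not the only obstacle. The fix is to multiply by $(c+\kappa)^r$, with $c>0$ strictly between two consecutive nonzero-coefficient eigenvalues at which a sign change occurs. Since $\frac{d}{d\kappa}\bigl[(c+\kappa)^r(a_j+\kappa)^{-r}\bigr]=r(c+\kappa)^{r-1}(a_j-c)(a_j+\kappa)^{-r-1}$, the coefficients flip sign only for $a_j<c$, which removes exactly that one sign change. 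With your strengthened kernel class $(a_j+\kappa)^{-r}$, this gives a fully elementary Rolle induction on $M$.
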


Thus, under the mild nondegeneracy condition $\lambda_t \notin \Lambda_t$, the deterministic equivalent of the optimal PMSD risk is \emph{strictly smaller} than that of the teacher for almost every $\lambda_s$.
Equivalently, the two risks can tie only at finitely many values of $\lambda_s$ for each fixed $p$. When $\Sigma_s$ has $p$ distinct eigenvalues,
\[d_j = c_j = \frac{\kappa_t \sigma_j^2}{(\sigma_j + \kappa_t)^2} 
\biggl( \frac{\kappa_t^2 b_t q_{2,0} + \sigma^2 b_t}{\kappa_t n_t} - v_j \biggr), \quad j = 1, \ldots, p,
\]
Thus, many sign changes in the sequence $\{ d_1, \ldots, d_p \}$ can occur only if the sequence $\{ v_1, \ldots, v_p\}$ oscillates repeatedly across the \emph{exact} threshold $\frac{\kappa_t^2 b_t q_{2,0} + \sigma^2 b_t}{\kappa_t n_t}$, which is typically a restrictive configuration.
For common covariance-signal models, we obtain the following stronger guarantees.

\subsubsection{In-distribution fresh unlabeled covariates}

First, consider fresh unlabeled covariates drawn from the teacher's training distribution, so that $\Sigma_s = \Sigma_t$.

\begin{corollary}
\label{cor:strict_improv_special_cases}
Assume $\Sigma_s = \Sigma_t$. The following guarantees of strict improvement hold.
\begin{enumerate}[leftmargin=7mm]
      \item [(a)] For an isotropic design $\Sigma_t = I_p$ with $r^2>0$, when
      \[\lambda_t \notin \Lambda_t = \{\gamma_t \sigma^2 / r^2\},
      \]
      we have $\sR_{\m}^{\star}(\lambda_t, \lambda_s) < \sR(\lambda_t)$ for all $\lambda_s > 0$.
    \item [(b)] For a spiked covariance model with $s$ spikes, when
    \[\lambda_t \notin \Lambda_t = \{ \lambda: d_{j}(\lambda) = 0 \text{ for every } j \in [s + 1] \},
    \]
    the inequality $\sR_{\m}^{\star}(\lambda_t, \lambda_s) < \sR(\lambda_t)$ fails for at most $s$ values of $\lambda_s > 0$.
    \item [(c)] For $\Sigma_t=U^{\top}\operatorname{diag}(\sigma_1,\ldots,\sigma_p)U$ with $p$ distinct eigenvalues satisfying $\sigma_1<\cdots<\sigma_p$, suppose the signal alignment is monotone along the spectrum and that there exists an $i$ such that
    \[
    v_i \neq \frac{\kappa_t^2 b_t q_{2,0} + \sigma^2 b_t}{\kappa_t n_t}.
    \]
    Then $\lambda_t\notin\Lambda_t$, and $\sR_{\m}^{\star}(\lambda_t,\lambda_s)<\sR(\lambda_t)$ fails for at most one $\lambda_s>0$.
\end{enumerate}

\end{corollary}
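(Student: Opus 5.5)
With $\Sigma_s=\Sigma_t$, the groupings defining the $d_j$ in \eqref{eq:d_j_def} coincide with the distinct eigenvalues of $\Sigma_t$. The plan is to specialize the representation \eqref{eq:R-C} of $\mathcal R_{\te}-\mathcal C$ case by case and apply \Cref{prop:teacher_bad_set,prop:strict_improv_ood}. Write $\tau:=(\kappa_t^2 b_t q_{2,0}+\sigma^2 b_t)/(\kappa_t n_t)$ for the common threshold, so that $c_i=\kappa_t\sigma_i^2(\sigma_i+\kappa_t)^{-2}(\tau-v_i)$.

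\textbf{Part (a).} Here $\tilde m=1$, so $\mathcal R_{\te}-\mathcal C=d_1/(1+\kappa_s)$ with
\[
d_1=\frac{\kappa_t}{(1+\kappa_t)^2}\,\bigl(p\tau-r^2\bigr).
\]
By \Cref{prop:teacher_bad_set}, $\lambda_t\in\Lambda_t$ iff $d_1=0$, i.e.\ iff $p\tau=r^2$; otherwise the gap is nonzero for every $\lambda_s$, and \eqref{eq:R_star_asymp} gives strict improvement. It remains to solve $p\tau=r^2$ explicitly. With $\Sigma_t=I_p$ we have
\[
q_{2,0}=\frac{r^2}{(1+\kappa_t)^2},\qquad t_{2,0}=\frac{\gamma_t}{(1+\kappa_t)^2},
\]
and the fixed point \eqref{eq:kappa_fp_mainpaper} reads $\kappa_t=\lambda_t+\gamma_t\kappa_t/(1+\kappa_t)$. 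Substituting, the condition becomes
\[
\gamma_t b_t\,\frac{\kappa_t^2 r^2/(1+\kappa_t)^2+\sigma^2}{\kappa_t}=r^2 .
\]
Clearing $b_t=(1-t_{2,0})^{-1}$ and using the fixed point, this should reduce to $\lambda_t=\gamma_t\sigma^2/r^2$; this is the familiar fact that the isotropic optimal ridge is the unique zero of the risk derivative. The main obstacle is this algebraic reduction. I would first check it by substituting $\lambda_t=\gamma_t\sigma^2/r^2$ into the fixed point and verifying the identity directly. Uniqueness then follows because the left-hand side minus $r^2$ is, after clearing denominators, affine in $\lambda_t$ once $\kappa_t$ is eliminated via $\lambda_t=\kappa_t-\gamma_t\kappa_t/(1+\kappa_t)$.

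\textbf{Part (b).} A spiked model with $s$ spikes has at most $s+1$ distinct eigenvalues (the $s$ spikes plus the bulk), so $\tilde m\le s+1$. The characterization of $\Lambda_t$ is then exactly \Cref{prop:teacher_bad_set}. For $\lambda_t\notin\Lambda_t$, \Cref{prop:strict_improv_ood} bounds $|\Lambda_s(\lambda_t)|$ by the number of sign changes in a sequence of length at most $s+1$, which is at most $s$.

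\textbf{Part (c).} Each $d_j=c_j$ is a positive factor times $\tau-v_j$. Since some $v_i\ne\tau$, some $d_j\neq0$, so $\lambda_t\notin\Lambda_t$ by \Cref{prop:teacher_bad_set}. If $(v_j)$ is monotone in $j$, then $(\tau-v_j)$ is monotone and changes sign at most once after zeros are deleted; the positive prefactors do not alter signs. Hence \Cref{prop:strict_improv_ood} gives $|\Lambda_s(\lambda_t)|\le1$.
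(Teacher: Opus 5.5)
Your proposal is correct and follows the paper's proof essentially step for step: $\tilde m=1$ with an explicit solution of $d_1=0$ in (a), the bound $\tilde m\le s+1$ fed into \Cref{prop:strict_improv_ood} in (b), and the monotone-threshold sign-change count in (c). The algebra you left open in (a) closes as you predicted: dividing $\kappa_t r^2[(1+\kappa_t)^2-\gamma_t]=\gamma_t\{r^2\kappa_t^2+\sigma^2(1+\kappa_t)^2\}$ by $(1+\kappa_t)^2$ gives $r^2\bigl(\kappa_t-\gamma_t\kappa_t/(1+\kappa_t)\bigr)=\gamma_t\sigma^2$, i.e.\ $r^2\lambda_t=\gamma_t\sigma^2$ by the fixed point.
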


For example, in the isotropic design setting, the ``teacher-degeneracy'' set is the singleton $\Lambda_t = \{\lambda_t^{\star}\}$, where $\lambda_t^{\star}=\gamma_t\sigma^2/r^2$ is the ridge-optimal value.
Consequently, the PMSD student does not strictly improve upon the optimally tuned teacher, but it does improve for every other $\lambda_t>0$ and any choice of $\lambda_s>0$.
\Cref{fig:teaser_a} illustrates this property on the Blog Feedback dataset, using fresh unlabeled covariates taken from the same data pool as the teacher's data (but with different samples).

As another consequence of \Cref{thm:general_ridge}, when the fresh sample size matches the teacher's sample size and the same regularization is used, fresh-$X$ prediction mixing recovers the tangent identity that drives strict improvement in same-$X$, same-$\lambda$ ridge self-distillation in \cite{dang2026optimal}.

\begin{corollary}[Same-$\lambda$ balanced regime]
\label{cor:balanced_regime}
Assume $\Sigma_s = \Sigma_t$.
If $\lambda_t = \lambda_s = \lambda$ and $\gamma_s = \gamma_t$, then $\kappa_s = \kappa_t = \kappa$ and the fresh-$X$ optimal mixing asymptotically matches the same-$X$ ridge geometry.
In particular,
\begin{equation}
\label{eq:balanced_tangent}
\mathcal R_{\te}(\lambda)-\mathcal C(\lambda, \lambda) =-\frac{\kappa}{2b_t^2}\,\mathcal R_{\te}'(\lambda).
\end{equation}
Hence, whenever $\mathcal R_{\te}'(\lambda)\neq 0$, $\sR_{\m}^{\star}(\lambda, \lambda) < \sR(\lambda)$
and $\sign\!\big(\xi^{\star}(\lambda, \lambda)\big) = -\sign\!\big({\sR}'(\lambda)\big)$.
\end{corollary}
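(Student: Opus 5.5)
Under $\Sigma_s=\Sigma_t=:\Sigma$, $\gamma_s=\gamma_t$ and $\lambda_s=\lambda_t=\lambda$, the fixed-point equations \eqref{eq:kappa_fp_mainpaper} coincide, so uniqueness of their positive solutions gives $\kappa_s=\kappa_t=\kappa$ and $G_s=G_t=G:=(\Sigma+\kappa I_p)^{-1}$. All mixed functionals then collapse: $q_{i,j}=q_{i+j}:=\beta^\top G^{i+j}\Sigma\beta$ and $t_{i,j}=t_{i+j}:=\gamma\,\otr(\Sigma^2G^{i+j})$. In this notation the expression from \Cref{thm:general_ridge} becomes
\[
\mathcal R_{\te}(\lambda)-\mathcal C(\lambda,\lambda)=\kappa\bigl(-\kappa q_2+\kappa^2 q_3+\kappa^2 b_t t_3 q_2+\sigma^2 b_t t_3\bigr).
\]
Using $\kappa G=I-\Sigma G$, we get $-\kappa q_2+\kappa^2 q_3=-\kappa\,\beta^\top\Sigma^2G^3\beta$. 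Hence
\[
\mathcal R_{\te}-\mathcal C=\kappa\Bigl(-\kappa\,\beta^\top\Sigma^2G^3\beta+b_t t_3(\kappa^2q_2+\sigma^2)\Bigr).
\]

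Next I would differentiate $\mathcal R_{\te}(\lambda)=\sigma^2+b_t(\kappa^2q_2+\sigma^2 t_2)$ in $\lambda$ through $\kappa$. The needed derivatives are:
\begin{itemize}
\item $\partial_\kappa q_2=-2q_3$;
\item $\partial_\kappa t_2=-2t_3$;
\item $\partial_\kappa(\kappa^2q_2)=2\kappa q_2-2\kappa^2q_3=2\kappa\,\beta^\top\Sigma^2G^3\beta$, by the same identity as above;
\item $\partial_\kappa b_t=b_t^2\,\partial_\kappa t_2=-2b_t^2t_3$.
\end{itemize}
Collecting terms,
\[
\partial_\kappa\mathcal R_{\te}=2\kappa b_t\,\beta^\top\Sigma^2G^3\beta-2b_t^2t_3(\kappa^2q_2+\sigma^2 t_2)-2\sigma^2b_t t_3 .
\]
Since $b_t(1-t_2)=1$, we have $-2b_t^2t_3\sigma^2t_2-2\sigma^2b_tt_3=-2\sigma^2b_t^2t_3$. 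Therefore
\[
\partial_\kappa\mathcal R_{\te}=2b_t\Bigl(\kappa\,\beta^\top\Sigma^2G^3\beta-b_tt_3(\kappa^2q_2+\sigma^2)\Bigr)=-\frac{2b_t}{\kappa}\bigl(\mathcal R_{\te}-\mathcal C\bigr).
\]

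It remains to get $d\kappa/d\lambda$. Write the fixed point as $\lambda=\kappa\bigl(1-\gamma\,\otr(\Sigma G)\bigr)$. Differentiating, and using $\partial_\kappa\bigl(\kappa\,\otr(\Sigma G)\bigr)=\otr(\Sigma^2G^2)$, gives $d\lambda/d\kappa=1-t_2=1/b_t$, so $d\kappa/d\lambda=b_t$. Combining with the previous display,
\[
\mathcal R_{\te}'(\lambda)=b_t\,\partial_\kappa\mathcal R_{\te}=-\frac{2b_t^2}{\kappa}\bigl(\mathcal R_{\te}-\mathcal C\bigr),
\]
which is \eqref{eq:balanced_tangent}.

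The consequences follow from \Cref{thm:general_ridge}. The limiting denominator $D$ is positive, so by \eqref{eq:R_star_asymp} we get $\mathcal R^\star_{\m}<\mathcal R_{\te}$ exactly when $\mathcal R_{\te}-\mathcal C\neq0$, i.e. when $\mathcal R_{\te}'\ne0$. By \eqref{eq:xi_star_asymp}, the sign of the limit of $\xi^\star$ equals the sign of $\mathcal R_{\te}-\mathcal C$, which is $-\sign(\mathcal R_{\te}')$ because $\kappa,b_t>0$. The main risk in this plan is bookkeeping in the derivative step, in particular the $b_t$ identity that merges the $\sigma^2$ terms. I would double-check that step against the same-$X$ formula of \cite{dang2026optimal}.
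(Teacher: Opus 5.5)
Your proposal is correct and follows essentially the same route as the paper. Both arguments specialize the closed form of $\mathcal R_{\te}-\mathcal C$ from \Cref{lem:fresh_num_DE_general} to $G_s=G_t$. Both then differentiate $\mathcal R_{\te}$ through $\kappa$, using $\kappa'(\lambda)=b_t$ and $\partial b_t=b_t^2\,\partial t_{2,0}$, and read off the strict-improvement and sign claims from the oracle decomposition with the positive limiting denominator. The only difference is cosmetic: you differentiate in $\kappa$ and simplify with $I-\kappa G=\Sigma G$, whereas the paper writes the $\lambda$-derivatives directly (e.g., $q_{2,0}'=-2b_tq_{3,0}$ and $b_t'=-2b_t^3t_{3,0}$).
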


The deterministic limiting risks and the sign rule for the mixing weight in this same-$\lambda$ balanced regime are illustrated in \Cref{fig:asymp_curve}: (i) the PMSD and teacher risks coincide only at the teacher's stationary point, and (ii) the sign of the optimal mixing weight is opposite to the sign of the derivative of the teacher risk curve.

\subsubsection{Out-of-distribution fresh unlabeled covariates}

\begin{figure*}[t]
   \centering
    \begin{subfigure}[t]{0.48\textwidth}
    \centering
    \includegraphics[width=\textwidth]{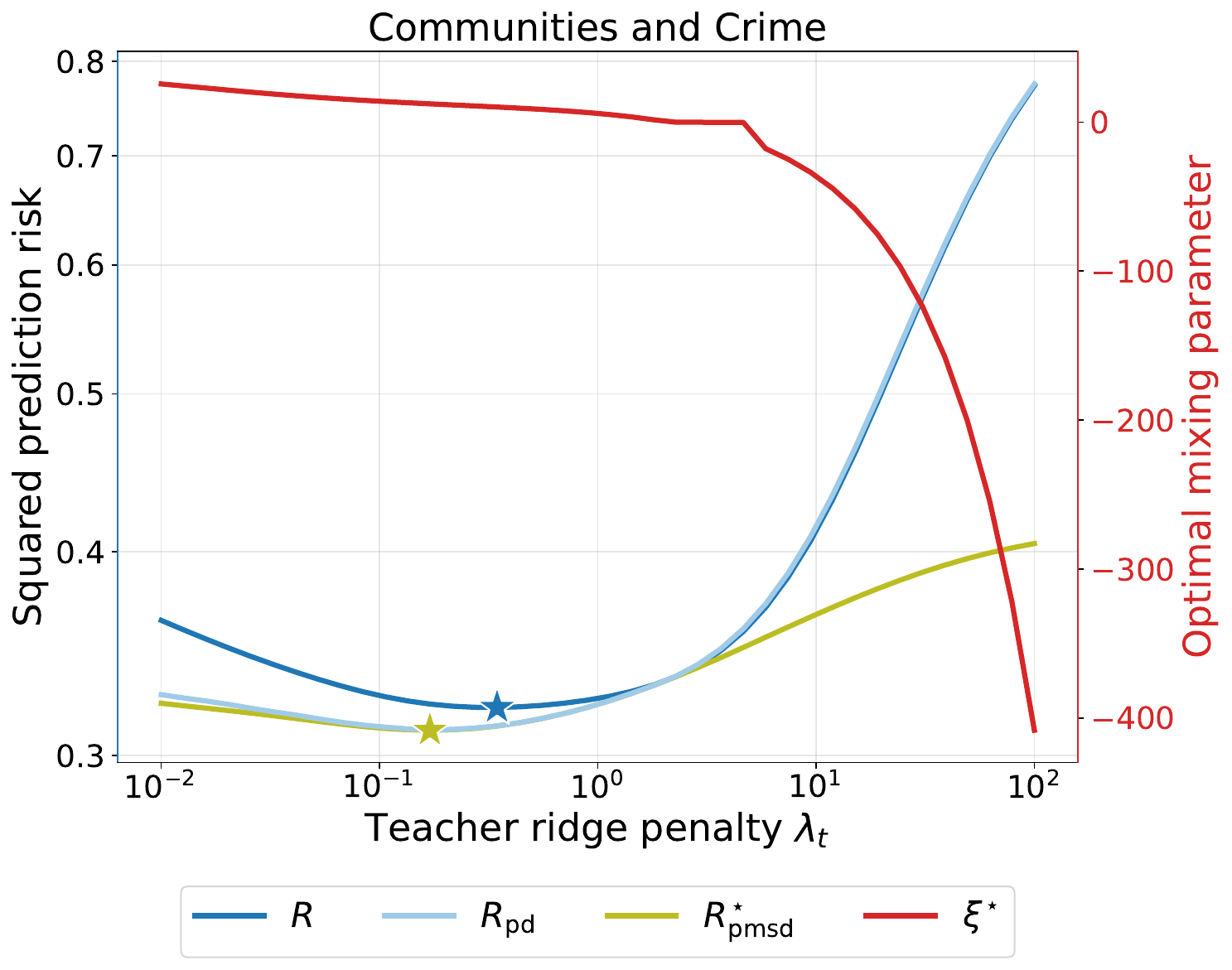}
    \caption{$n_t = 400$, $n_s = 800$, $p = 99$.}
  \end{subfigure}
  \quad
    \begin{subfigure}[t]{0.48\textwidth}
    \centering
    \includegraphics[width=\textwidth]{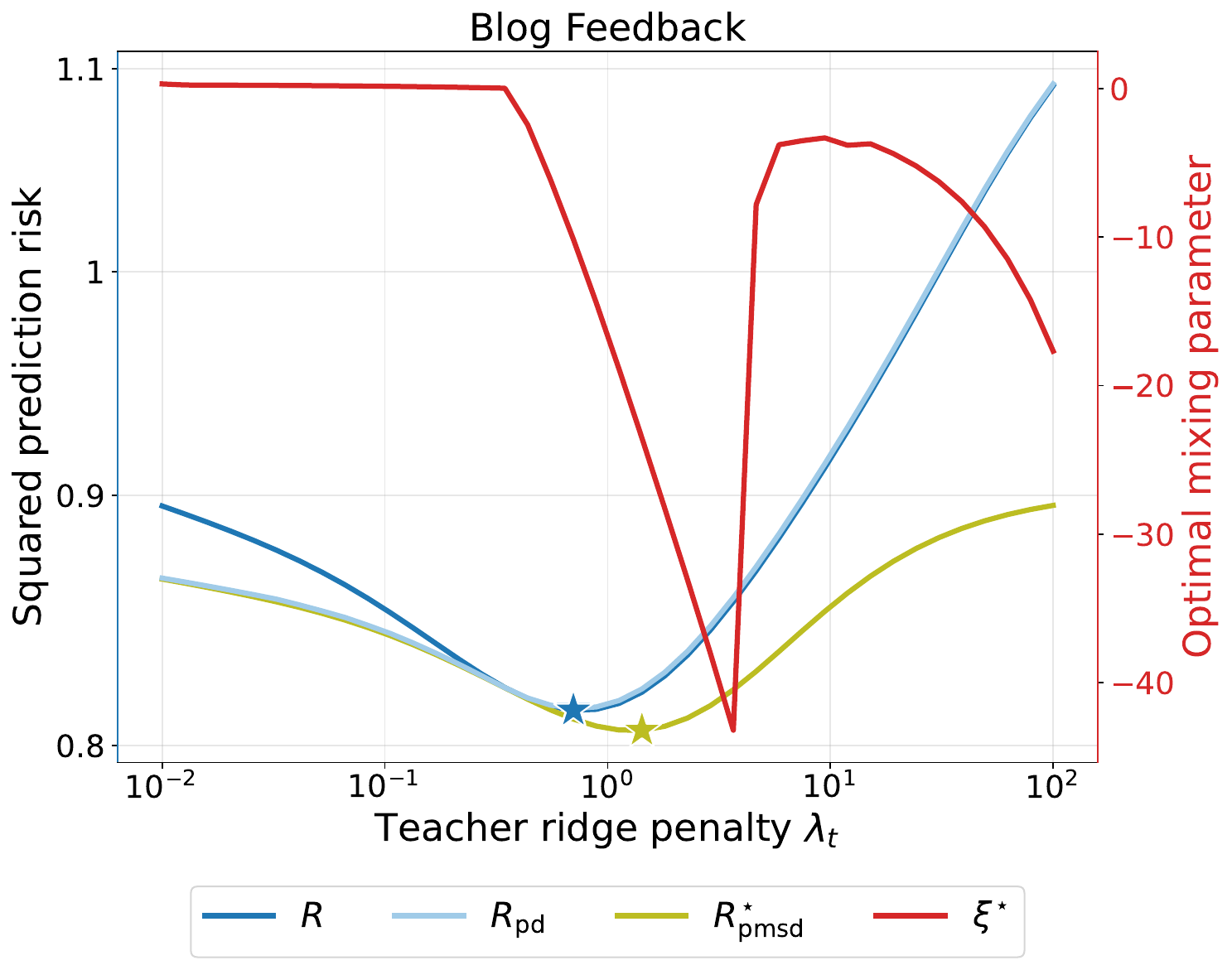}
    \caption{$n_t = 2619$, $n_s = 5240$, $p = 280$.}
  \end{subfigure}
    \caption{\textbf{PMSD using fresh covariates from an isotropic Gaussian distribution on real-world regression tasks.} At each value of $\lambda_t$, the student penalty $\lambda_s$ is tuned separately over a grid for the PD and PMSD students.}
    \label{fig:fresh_isotropic_real_world_same_lambda}
\end{figure*}

In many settings, we may not know $\Sigma_t$ or have access to fresh covariates from the same distribution as the teacher's training data.
A simple and practical alternative is to sample the fresh unlabeled covariates from an isotropic distribution, which commutes with any $\Sigma_t$.
In this case, we obtain the following strict improvement guarantee.

\begin{corollary}
    \label{corr:strict_improv_isotropic_fresh}
    Assume $\Sigma_s = I_p$. For any $\lambda_t > 0$ such that $\sum_{i=1}^{p} c_i \neq 0$, we have $\sR_{\m}^{\star}(\lambda_t, \lambda_s) < \sR(\lambda_t)$ for all $\lambda_s > 0$.
\end{corollary}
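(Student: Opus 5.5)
When $\Sigma_s = I_p$, every eigenvalue of $\Sigma_s$ equals $1$, so $\tilde m = 1$ and $\tilde\sigma_{(1)} = 1$. The plan is therefore to specialize the representation \eqref{eq:R-C} to this case and then invoke \eqref{eq:R_star_asymp}.

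First, we check that \Cref{def:dist}(c) holds. The identity commutes with any $\Sigma_t$, so we may take $U$ to be the eigenvector matrix of $\Sigma_t$. This gives $\Sigma_s = U^\top I_p U$ with $\tilde\sigma_i = 1$ for all $i$, and the $v_i$, $c_i$ are defined with respect to this $U$.

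Next, we specialize \eqref{eq:R-C}. Since all $\tilde\sigma_i = 1$, it collapses to
\[
\mathcal R_{\te}(\lambda_t)-\mathcal C(\lambda_t,\lambda_s) = \frac{1}{1+\kappa_s}\sum_{i=1}^p c_i = \frac{d_1}{1+\kappa_s}.
\]
The coefficients $c_i$ depend only on $\lambda_t$ (through $\kappa_t$, $b_t$, $q_{2,0}$, $\sigma_i$, $v_i$) and not on $\lambda_s$. This is the crucial structural fact. It holds because in \eqref{eq:c_i_def} the $\lambda_s$-dependence has been entirely factored into the denominators $\tilde\sigma_i+\kappa_s$. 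Since $\kappa_s>0$ is finite for every $\lambda_s>0$, the hypothesis $\sum_i c_i \ne 0$ gives $\mathcal R_{\te}-\mathcal C \neq 0$ for every $\lambda_s>0$. In the language of \Cref{prop:strict_improv_ood}, the single-entry sequence $\{d_1\}$ has no sign changes, so $\Lambda_s(\lambda_t)=\emptyset$. The condition $d_1\neq 0$ is also exactly $\lambda_t\notin\Lambda_t$ by \Cref{prop:teacher_bad_set}.

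Finally, we conclude using \Cref{thm:general_ridge}. In the nontrivial case the limiting denominator $\mathcal R_{\te}+\mathcal R_{\pd}-2\mathcal C$ is strictly positive and finite. If $r^2+\sigma^2=0$, then $\beta=0$, so every $v_i=0$ and $\sigma^2=0$, hence $c_i=0$ for all $i$; the hypothesis therefore excludes this trivial case. Then \eqref{eq:R_star_asymp} gives
\[
\sR_{\m}^\star(\lambda_t,\lambda_s)=\sR(\lambda_t)-\frac{(\mathcal R_{\te}-\mathcal C)^2}{\mathcal R_{\te}+\mathcal R_{\pd}-2\mathcal C} < \sR(\lambda_t),
\]
because the numerator is strictly positive and the denominator is finite and positive.

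The only step needing care is the derivation of \eqref{eq:R-C} itself, namely that the $\lambda_s$-dependence of $\mathcal R_{\te}-\mathcal C$ enters only through the factors $1/(\tilde\sigma_i+\kappa_s)$. This identity is stated earlier in the paper and is taken as given here. To confirm it directly one would expand $\kappa_s(-\kappa_t q_{1,1}+\kappa_t^2 q_{2,1}+\kappa_t^2 b_t t_{2,1}q_{2,0}+\sigma^2 b_t t_{2,1})$ in the common eigenbasis. With $G_s=(1+\kappa_s)^{-1}I$ each term is proportional to $\kappa_s/(1+\kappa_s)$, and the residual $\kappa_s$ factor must be absorbed using the form of $c_i$. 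Given the earlier result, the corollary is immediate.
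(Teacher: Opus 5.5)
Your argument matches the paper's proof. With $\Sigma_s=I_p$ every $\tilde\sigma_i=1$, so the tie function collapses to $\sum_{i=1}^p c_i/(1+\kappa_s)$. This cannot vanish for any $\lambda_s>0$ when $\sum_i c_i\neq 0$, and \eqref{eq:R_star_asymp} then gives strict improvement.

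One small correction to your last paragraph: the leftover factor $\kappa_s$ is not absorbed into the $\lambda_s$-free coefficients $c_i$. By \Cref{lem:fresh_num_DE_general} one actually has $\mathcal R_{\te}-\mathcal C=\kappa_s\sum_i c_i/(\tilde\sigma_i+\kappa_s)$, and the paper's proof simply divides out $\kappa_s>0$. This extra factor is harmless for your conclusion.
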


To see this, setting $\Sigma_s = I_p$ in equation \eqref{eq:R-C} gives
\begin{equation}
    \mathcal R_{\te}(\lambda_t)-\mathcal C(\lambda_t, \lambda_s) 
    = \frac{\sum_{i=1}^{p} c_i}{1 + \kappa_s}.
\end{equation}  

Thus, if $d_1 = \sum_{i=1}^{p} c_i \neq 0$, strict improvement holds for every $\lambda_s > 0$.
If $d_1 = 0$, the two risks tie for all $\lambda_s$.
For example, \Cref{cor:strict_improv_special_cases} shows that when $\Sigma_t = I_p$, this degeneracy occurs only at the ridge-optimal value $\lambda_t^{\star}$.
More generally, the ``teacher-degeneracy'' condition can be rewritten as
\begin{align}
    \label{eq:ood_degeneracy}
    \sum_{i=1}^{p} \frac{ \kappa_t \sigma_i^2}{(\sigma_i + \kappa_t)^2} 
\biggl(\frac{\kappa_t^2 b_t q_{2,0} + \sigma^2 b_t}{\kappa_t n_t} - v_i \biggr) = 0,
\end{align}

which is a polynomial in $\kappa_t$ with degree at most $4m-1$ (see \Cref{sec:proof_teacher_bad_set}).

We now compare the ``teacher-degeneracy'' condition with the stationary points of the teacher risk function to see whether isotropic fresh covariates can improve upon a tuned teacher.
By equation \eqref{eq:R_prime}, the stationary condition $\sR'(\lambda_t)=0$ is equivalent to
\begin{align}
 \label{eq:ood_R_prime}
  \sum_{i=1}^{p}
 \frac{\kappa_t \sigma_i^2}{(\sigma_i + \kappa_t)^3}
 \biggl(\frac{\kappa_t^2 b_t q_{2,0} + \sigma^2 b_t}{\kappa_t n_t} - v_i \biggr) = 0.
\end{align}

Therefore, strict improvement over a teacher at its stationary point fails if and only if both equations \eqref{eq:ood_degeneracy} and \eqref{eq:ood_R_prime} hold.
Equivalently,
\begin{align} 
    \ddfrac{\sum_{i=1}^p \frac{\kappa_t \sigma_i^2}{(\sigma_i + \kappa_t)^2} v_i}{\sum_{i=1}^p \frac{\kappa_t \sigma_i^2}{(\sigma_i + \kappa_t)^2}}
    = 
    \ddfrac{\sum_{i=1}^p \frac{\kappa_t \sigma_i^2}{(\sigma_i + \kappa_t)^3} v_i}{\sum_{i=1}^p \frac{\kappa_t \sigma_i^2}{(\sigma_i + \kappa_t)^3}}
    = 
    \frac{\kappa_t^2 b_t q_{2,0} + \sigma^2 b_t}{\kappa_t n_t}.
\end{align}
In general, this imposes a strong alignment condition: two different weighted averages of the squared signal-alignment values $\{v_i\}_{i=1}^p$ must equal the same threshold.
Thus, we expect this simultaneous equality to be non-generic unless the covariance or signal alignment has special structure.

We illustrate the benefit of the PMSD student using isotropic fresh unlabeled covariates on real-world regression tasks in \Cref{fig:fresh_isotropic_real_world_same_lambda}. Additional experiments appear in \Cref{sec:additional_exp_regression}.

\subsection{Strict improvement over the pure-distilled student}

To compare the PMSD student with the pure-distilled student, we use a decomposition analogous to equation \eqref{eq:R_star_asymp}:
\begin{align}
    \sR^{\star}_{\m}(\lambda_t, \lambda_s) = \mathcal R_{\pd}(\lambda_s)
    -
    \frac{\bigl(\mathcal R_{\pd}(\lambda_s)-\mathcal C(\lambda_t, \lambda_s)\bigr)^2}{\mathcal R_{\te}(\lambda_t)+\mathcal R_{\pd}(\lambda_s) - 2 \mathcal C(\lambda_t, \lambda_s)}.
\end{align}
Thus, the PMSD student strictly improves upon the PD student whenever
\[\mathcal R_{\pd}(\lambda_s)-\mathcal C(\lambda_t, \lambda_s) \neq 0.\]

Since this condition is quite involved, we specialize to the isotropic design setting, where it admits a clean characterization.

\begin{proposition}[Strict improvement over the PD student]
\label{prop:strict_improv_pd}
Assume $\Sigma_t = \Sigma_s = I_p$ and $r^2+\sigma^2>0$.
Then, for any $\lambda_t > 0$, there are at most two values of $\lambda_s > 0$ satisfying $\mathcal R_{\pd}(\lambda_s)-\mathcal C(\lambda_t, \lambda_s) = 0$.
\end{proposition}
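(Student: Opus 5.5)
Specializing \Cref{thm:general_ridge} to $\Sigma_t=\Sigma_s=I_p$ makes every functional explicit in $(\kappa_t,\kappa_s)$. We have $G_t=(1+\kappa_t)^{-1}I_p$ and $G_s=(1+\kappa_s)^{-1}I_p$, so $q_{i,j}=\tilde q_{i,j}=r^2(1+\kappa_t)^{-i}(1+\kappa_s)^{-j}$ and $t_{i,j}=\gamma_t(1+\kappa_t)^{-i}(1+\kappa_s)^{-j}$. The constant $b_t$ depends only on $\kappa_t$, and $b_s=\gamma_s(1+\kappa_s)^{-2}/(1-\gamma_s(1+\kappa_s)^{-2})$.

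The fixed-point equation \eqref{eq:kappa_fp_mainpaper} gives $\lambda_s=\kappa_s(1-\gamma_s/(1+\kappa_s))$. I will use the identity $1-\gamma_s(1+\kappa_s)^{-2}=\bigl((1+\kappa_s)^2-\gamma_s\bigr)/(1+\kappa_s)^2$. This makes $1+b_s=(1+\kappa_s)^2/\bigl((1+\kappa_s)^2-\gamma_s\bigr)$, which is positive since $\kappa_s>\gamma_s-1$ whenever $\lambda_s>0$. Fix $\lambda_t$, hence $\kappa_t$, and treat everything as a function of the single variable $u:=1+\kappa_s$. Since $\lambda_s\mapsto\kappa_s$ is one-to-one, it suffices to show that $\mathcal R_{\pd}-\mathcal C$, as a function of $u$ on the admissible range, has at most two zeros.

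Next I compute $\mathcal R_{\pd}-\mathcal C=\kappa_s^2(\cdots)-\mathcal R_{\te}+\mathcal C$. In the isotropic case the bracket in $\mathcal R_{\pd}$ collapses to
\[
\kappa_s^2(1+b_s)u^{-2}\Bigl(r^2\bigl(1-\tfrac{\kappa_t}{1+\kappa_t}\bigr)^2+b_t(\kappa_t^2 q_{2,0}+\sigma^2)\gamma_t(1+\kappa_t)^{-2}\Bigr)=\frac{A(u-1)^2}{u^2-\gamma_s},
\]
where $A>0$ depends only on $\kappa_t$. Similarly, $\mathcal R_{\te}-\mathcal C=\kappa_s(\kappa_t q_{1,1}-\kappa_t^2q_{2,1}-\cdots)=B(u-1)/u$, with $B$ depending only on $\kappa_t$ (this matches \eqref{eq:R-C} with one eigenvalue). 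So
\[
\mathcal R_{\pd}-\mathcal C=\frac{A(u-1)^2}{u^2-\gamma_s}-\frac{B(u-1)}{u}.
\]
Clearing the positive denominators $u(u^2-\gamma_s)$ reduces the zero set to that of $(u-1)\bigl[Au(u-1)-B(u^2-\gamma_s)\bigr]$. The factor $u=1$ corresponds to $\kappa_s=0$, which is excluded because $\lambda_s>0$ forces $\kappa_s>0$. The bracket is a polynomial of degree at most $2$, namely $(A-B)u^2-Au+B\gamma_s$. It is not identically zero because its linear coefficient is $-A\neq0$; here I need $A>0$, which follows from $r^2+\sigma^2>0$ (the $r^2/(1+\kappa_t)^2$ term or the $\sigma^2$ term is positive). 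Hence it has at most two roots, giving at most two admissible $\kappa_s$ and therefore at most two $\lambda_s$.

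The main obstacle is the algebra verifying that $\mathcal R_{\pd}-\mathcal C$ really has the claimed rational form, in particular that the $\kappa_s$-dependence of the $\mathcal R_{\pd}$ bracket factors exactly as $(1+b_s)u^{-2}$ times a $\kappa_s$-free constant. This requires checking that the coefficient multiplying $b_s$ matches the non-$b_s$ part for each term when $\Sigma_t=\Sigma_s$, including the trace term $\gamma_t\otr(\Sigma_t\Sigma_sG_t^2G_s^2)=t_{2,2}$. If the factorization turns out slightly different, the fallback is the same argument with a higher but still bounded degree, and then a sign analysis to pin the count at two.
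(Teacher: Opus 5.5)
Your proposal is correct and is essentially the paper's proof. The paper likewise writes $\mathcal R_{\pd}-\mathcal C=\mathcal D-(\mathcal R_{\te}-\mathcal C)$ and uses $\tilde q_{i,j}=q_{i,j}$ and $1+b_s=(1-\gamma_s\otr(\Sigma_t^2G_s^2))^{-1}$ to reach your form $A(u-1)^2/(u^2-\gamma_s)-B(u-1)/u$; after dividing out $\kappa_s$, it obtains a quadratic in $\kappa_s$ that is exactly $(1+\kappa_t)^2$ times your $(A-B)u^2-Au+B\gamma_s$ with $u=1+\kappa_s$. The differences are cosmetic: your check that the linear coefficient $-A$ is nonzero handles $r^2>0$ and $r^2=0<\sigma^2$ together, whereas the paper checks the $\kappa_s^2$ or $\kappa_s$ coefficient case by case; and your parenthetical for $\mathcal R_{\te}-\mathcal C$ has its sign flipped relative to \Cref{lem:fresh_num_DE_general}, which is harmless because the argument never uses the sign of $B$.
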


Consequently, in the isotropic setting, the PMSD student strictly improves upon the PD student for all but at most two choices of $\lambda_s > 0$.

\subsection{Non-monotonicity in the amount of unlabeled data}
\label{sec:ridge_mono_main}

\begin{figure*}[t]
   \centering
    \begin{subfigure}[t]{0.3\textwidth}
    \centering
    \includegraphics[width=\textwidth]{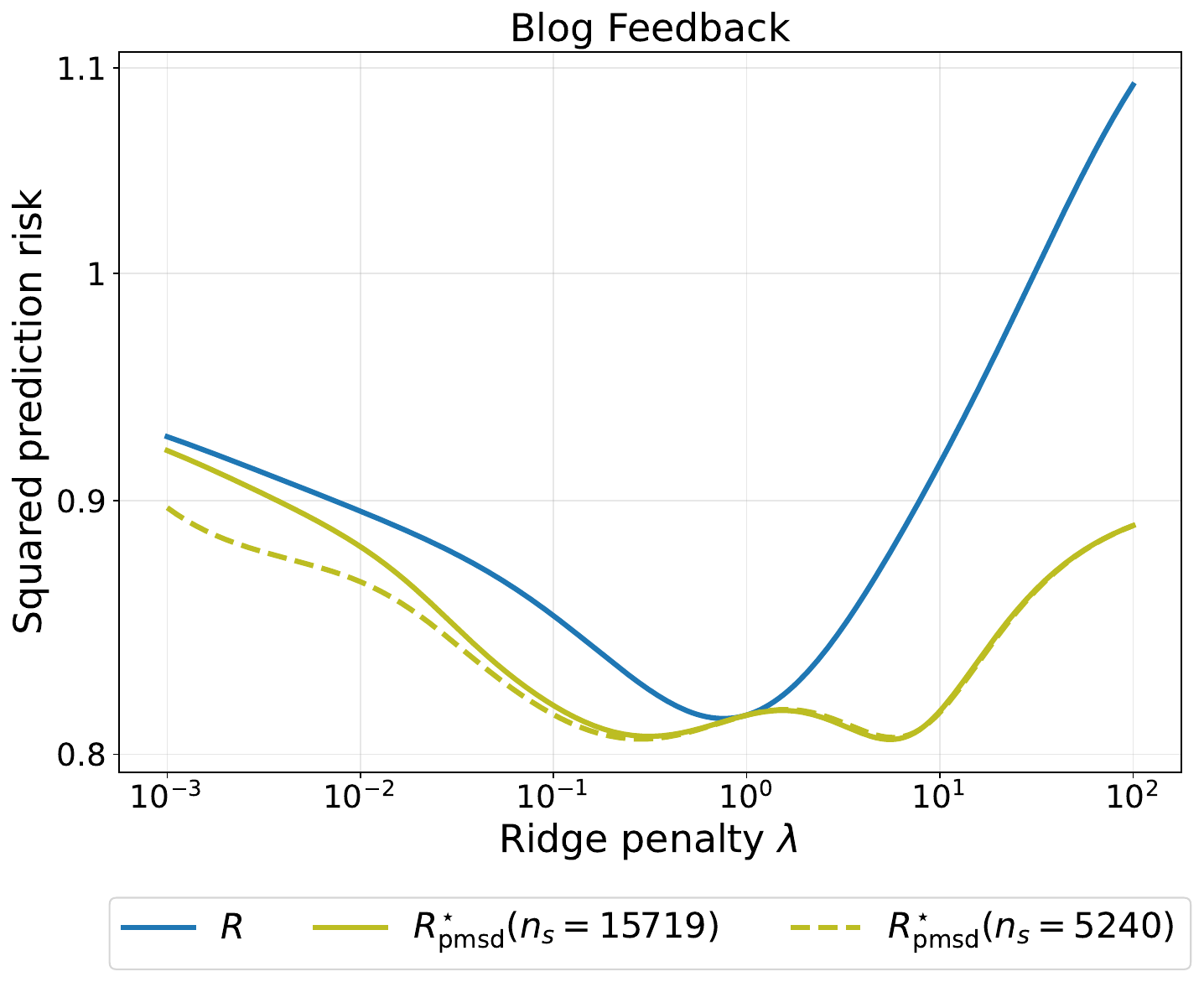}
    \caption{Ridge regression on the Blog Feedback dataset with $p = 280$, $n_t = 2619$.}
  \end{subfigure}
  \hfill
    \begin{subfigure}[t]{0.3\textwidth}
    \centering
    \includegraphics[width=\textwidth]{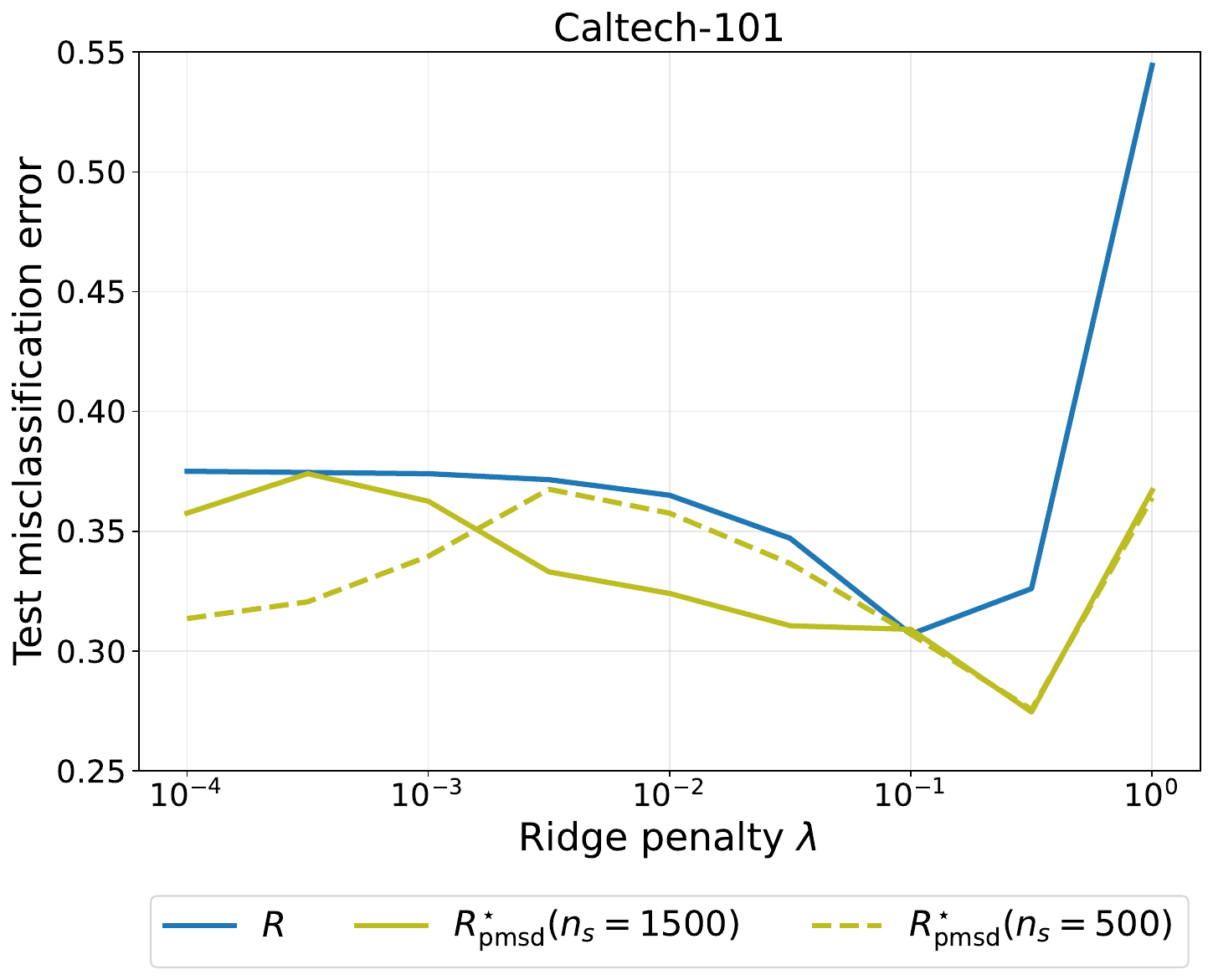}
    \caption{Linear probing on Caltech-101 using pretrained ResNet-34 features with $p = 512$, $n_t = 1500$, $n_{\mathrm{cal}} = 500$, and corruption rate $\rho = 0.4$; see \Cref{sec:logistic} for definitions.}
  \end{subfigure}
    \hfill
  \begin{subfigure}[t]{0.3\textwidth}
    \centering
    \includegraphics[width=\textwidth]{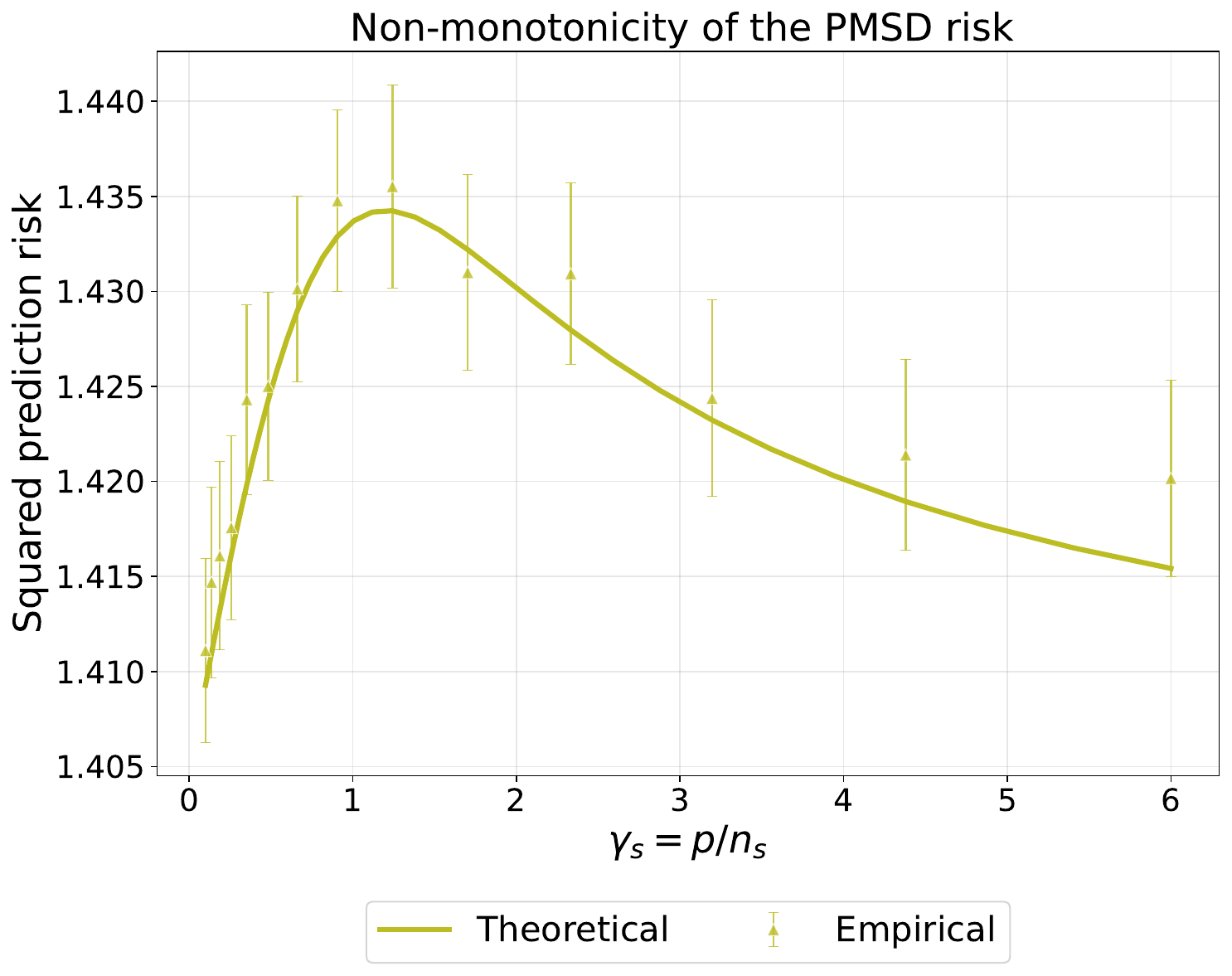}
    \caption{Same-$\lambda$ isotropic design and isotropic signal. Setting: $\lambda = 0.2$, $p = 400$, $n_t = 800$, and $r^2 = \sigma^2 = 1$.}
  \end{subfigure}
    \caption{Non-monotonicity of the PMSD risk with the amount of fresh unlabeled data in the same-$\lambda$, in-distribution setting.}
    \label{fig:monotonicity}
\end{figure*}

A natural question is whether increasing the amount of fresh unlabeled data always improves the PMSD student.
In the proportional regime, this amounts to fixing the labeled aspect ratio $\gamma_t$ and studying the optimal PMSD risk as a function of $\gamma_s$, where smaller $\gamma_s$ corresponds to more unlabeled data.
We specialize to the same-$\lambda$ isotropic setting for this analysis.
Surprisingly, for a fixed $\lambda_t = \lambda_s = \lambda$, the optimal PMSD risk is \emph{not} globally monotone in $\gamma_s$.
Non-monotone risk profiles as functions of aspect ratio are well documented in high-dimensional prediction, and subsampling- and aggregation-based procedures can sometimes restore monotonicity \citep{patil2022mitigating, patil2023bagging}.
The phenomenon here is different: the labeled training sample and teacher remain fixed, while only the amount of fresh unlabeled data varies.
Even so, the optimal PMSD risk has a simple one-dimensional structure and is unimodal in $\gamma_s$.
See \Cref{sec:appendix_monotonicity} for the formal result and \Cref{fig:monotonicity} for illustrations from synthetic and real data experiments.

\section{Data-dependent tuning}
\label{sec:tuning_ridge}

The results in \Cref{sec:strict_nonmonotonicity} demonstrate the benefits of fresh-$X$ distillation with optimal prediction mixing $\xi^{\star}$.
However, the deterministic equivalents in \Cref{thm:general_ridge} depend on population quantities that are unknown in practice, such as the covariance spectrum and signal-covariance alignment.
In this section, we study how to tune the mixing weight $\xi$. We show that fresh unlabeled samples alone are insufficient, whereas an independent labeled calibration set facilitates consistent tuning.

Recall that the optimal \emph{oracle} mixing weight from \Cref{prop:oracle_affine} depends on residual correlations with the unknown response.
Thus, in the fully prediction-only regime, unlabeled covariates and teacher outputs alone are insufficient to identify $\xi^\star$.
This is an information-theoretic limitation, not a computational one.
To see this, fix the observed teacher predictor $f^{\te}$ and the induced fresh-$X$ PD student $f_{\pd}$.
These predictors are observable in the prediction-only setting, while the response law is not.
Consider two response models with the same covariate distribution: $y=f^{\te}(x)+\varepsilon$ and $y=f_{\pd}(x)+\varepsilon$, where $\varepsilon$ is mean-zero noise independent of $x$.
Their oracle mixing weights are $\xi^\star=0$ and $\xi^\star=1$, respectively.
Therefore, no estimator based only on unlabeled samples and teacher queries can be uniformly consistent for $\xi^\star$.
We give a formal non-identifiability statement in \Cref{sec:appendix_tuning}.

However, $\xi^\star$ can be consistently estimated using an independent labeled calibration set $\cD_{\rm cal}:=\{(x_i^{\rm cal},y_i^{\rm cal})\}_{i=1}^{n_{\rm cal}}$, which is used only for evaluation and not retraining.
Conditional on the teacher predictor and fresh unlabeled data, both $f^{\te}$ and $f_{\pd}$ are fixed predictors.
For an independent test point $(x,y)$ drawn from the target distribution, define the residuals $e^{\te}:=y-f^{\te}(x)$ and $e^{\pd}:=y-f_{\pd}(x)$.
Define the calibration residuals
\[
  \hat e_i^{\te}:=y_i^{\rm cal}-f^{\te}(x_i^{\rm cal})
  \quad \text{and} \quad
  \hat e_i^{\pd}:=y_i^{\rm cal}-f_{\pd}(x_i^{\rm cal}),
\]
and estimate the three oracle quantities by
\[
  \widehat R_{\te}^{\rm cal}:=\frac1{n_{\rm cal}}\sum_{i=1}^{n_{\rm cal}}(\hat e_i^{\te})^2,
  \qquad
  \widehat R_{\pd}^{\rm cal}:=\frac1{n_{\rm cal}}\sum_{i=1}^{n_{\rm cal}}(\hat e_i^{\pd})^2,
  \qquad
  \widehat C^{\rm cal}:=\frac1{n_{\rm cal}}\sum_{i=1}^{n_{\rm cal}}\hat e_i^{\te}\hat e_i^{\pd}.
\]
Define the one-shot plug-in calibration estimators of $\xi^\star$ and $R_{\m}^\star$ by
\begin{equation}
\label{eq:cal_xi}
  \widehat\xi^\star_{\rm cal}
  :=
  \frac{\widehat R_{\te}^{\rm cal}-\widehat C^{\rm cal}}
  {\widehat R_{\te}^{\rm cal}+\widehat R_{\pd}^{\rm cal}-2\widehat C^{\rm cal}},
  \quad \text{and} \quad
  \widehat R_{\m,{\rm cal}}^\star
  :=
  \widehat R_{\te}^{\rm cal}
  -
  \frac{(\widehat R_{\te}^{\rm cal}-\widehat C^{\rm cal})^2}
  {\widehat R_{\te}^{\rm cal}+\widehat R_{\pd}^{\rm cal}-2\widehat C^{\rm cal}}.
\end{equation}
The next result establishes their consistency.

\begin{theorem}[Calibration consistency]
\label{thm:calibration_consistency}
Fix the teacher and the fresh unlabeled sample, and suppose the oracle denominator $D:=R_{\te}+R_{\pd}-2C$ is bounded away from zero.
If the calibration sample is independent, $n_{\rm cal}\to\infty$, and the conditional variances of $(e^{\te})^2$, $(e^{\pd})^2$, and $e^{\te}e^{\pd}$ are uniformly bounded, then
\[
  \widehat\xi^\star_{\rm cal}-\xi^\star \pto 0,
  \qquad
  \widehat R_{\m,{\rm cal}}^\star-R_{\m}^\star \pto 0.
\]
Moreover, the same convergence holds uniformly over any fixed finite collection of candidate students under the corresponding uniform conditions.
\end{theorem}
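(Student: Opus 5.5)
The plan is to work conditionally on $(\cD_{\rm lab},\cD_{\rm unlab})$. Under this conditioning, $f^{\te}$ and $f_{\pd}$ are fixed measurable functions. The calibration pairs $(x_i^{\rm cal},y_i^{\rm cal})$ are independent of them and i.i.d.\ from the target law. It follows that the triples $\bigl((\hat e_i^{\te})^2,(\hat e_i^{\pd})^2,\hat e_i^{\te}\hat e_i^{\pd}\bigr)$, $i\in[n_{\rm cal}]$, are conditionally i.i.d.\ copies of $\bigl((e^{\te})^2,(e^{\pd})^2,e^{\te}e^{\pd}\bigr)$, with conditional means $R_{\te}$, $R_{\pd}$ and $C$, respectively.

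The first step is a conditional Chebyshev bound. Let $V$ be a uniform bound on the three conditional variances. Then, for any $\epsilon>0$,
\[
\PP\bigl(|\widehat R_{\te}^{\rm cal}-R_{\te}|>\epsilon \,\big|\, \cD_{\rm lab},\cD_{\rm unlab}\bigr)\le \frac{V}{n_{\rm cal}\epsilon^2},
\]
and the same bound holds for $\widehat R_{\pd}^{\rm cal}$ and $\widehat C^{\rm cal}$. Because $V$ does not depend on the conditioning, taking expectations gives the unconditional bound. This yields joint convergence in probability of $(\widehat R_{\te}^{\rm cal},\widehat R_{\pd}^{\rm cal},\widehat C^{\rm cal})-(R_{\te},R_{\pd},C)$ to $0$. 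The argument still works if $R_{\te}$, $R_{\pd}$ and $C$ drift with the dimension, since the bound is uniform. Only independence and second moments are needed, so no law of large numbers in a triangular-array form is required beyond this.

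The second step is a continuous-mapping argument, and this is the only delicate point because the targets are not fixed. I will use the map
\[
g(a,b,c)=\Bigl(\frac{a-c}{a+b-2c},\; a-\frac{(a-c)^2}{a+b-2c}\Bigr).
\]
By assumption there is $\delta>0$ with $D=R_{\te}+R_{\pd}-2C\ge\delta$. On the event where all three estimation errors are below $\epsilon\le\delta/8$, the estimated denominator $\widehat D$ satisfies $|\widehat D-D|\le 4\epsilon\le\delta/2$, so $\widehat D\ge\delta/2$.

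On this event I need boundedness of the numerator. By Cauchy--Schwarz, $|C|\le\sqrt{R_{\te}R_{\pd}}$. The bounded-variance hypothesis should be read, as I intend it, as also bounding the conditional means, and in the ridge setting these converge to finite limits by \Cref{thm:general_ridge}. Hence $|R_{\te}-C|$ stays bounded as well.

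Given these bounds, $g$ is Lipschitz on the region $\{a+b-2c\ge\delta/2,\ |a|,|b|,|c|\le M\}$, with a constant depending only on $(\delta,M)$. An explicit bound is
\[
|\hat\xi-\xi|\le \frac{|\Delta N|}{\widehat D}+\frac{|N|\,|\Delta D|}{\widehat D\, D},
\]
where $N=R_{\te}-C$ and $\Delta N$, $\Delta D$ are the estimation errors in the numerator and denominator. A similar bound holds for the risk. Combining this with the first step gives $\widehat\xi^\star_{\rm cal}-\xi^\star\pto0$ and $\widehat R^\star_{\m,{\rm cal}}-R^\star_{\m}\pto0$.

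For the uniform statement over a fixed finite collection of $K$ candidate students, I apply the previous argument to each candidate and take a union bound. The failure probability is at most $3KV/(n_{\rm cal}\epsilon^2)\to0$, so the maximum error over the collection tends to $0$ in probability.

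The main obstacle I expect is controlling the denominator and the magnitude of the targets uniformly in the conditioning, as opposed to proving convergence of the sample averages. I handle this with the good event on which $\widehat D\ge\delta/2$, together with the Cauchy--Schwarz bound on $|C|$.
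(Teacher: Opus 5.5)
Your proposal is correct and follows essentially the same route as the paper: conditional Chebyshev bounds for the three calibration averages, a union bound over the finite candidate set, the event that $\widehat D$ exceeds half the lower bound on $D$ holding with probability tending to one, and then continuity of the plug-in map. The paper invokes the continuous mapping theorem at this last step, whereas you make it quantitative with an explicit perturbation bound. Your additional reading of the variance hypothesis as also bounding the conditional means is not needed for the stated result. Once the teacher and fresh sample are fixed, $R_{\te}$, $R_{\pd}$, and $C$ are fixed finite numbers, finite because the squared residuals have finite variance, so the ordinary continuous mapping argument already applies.
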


\begin{figure*}[t]
  \centering
    \begin{subfigure}[t]{0.32\textwidth}
    \centering
    \includegraphics[width=\textwidth]{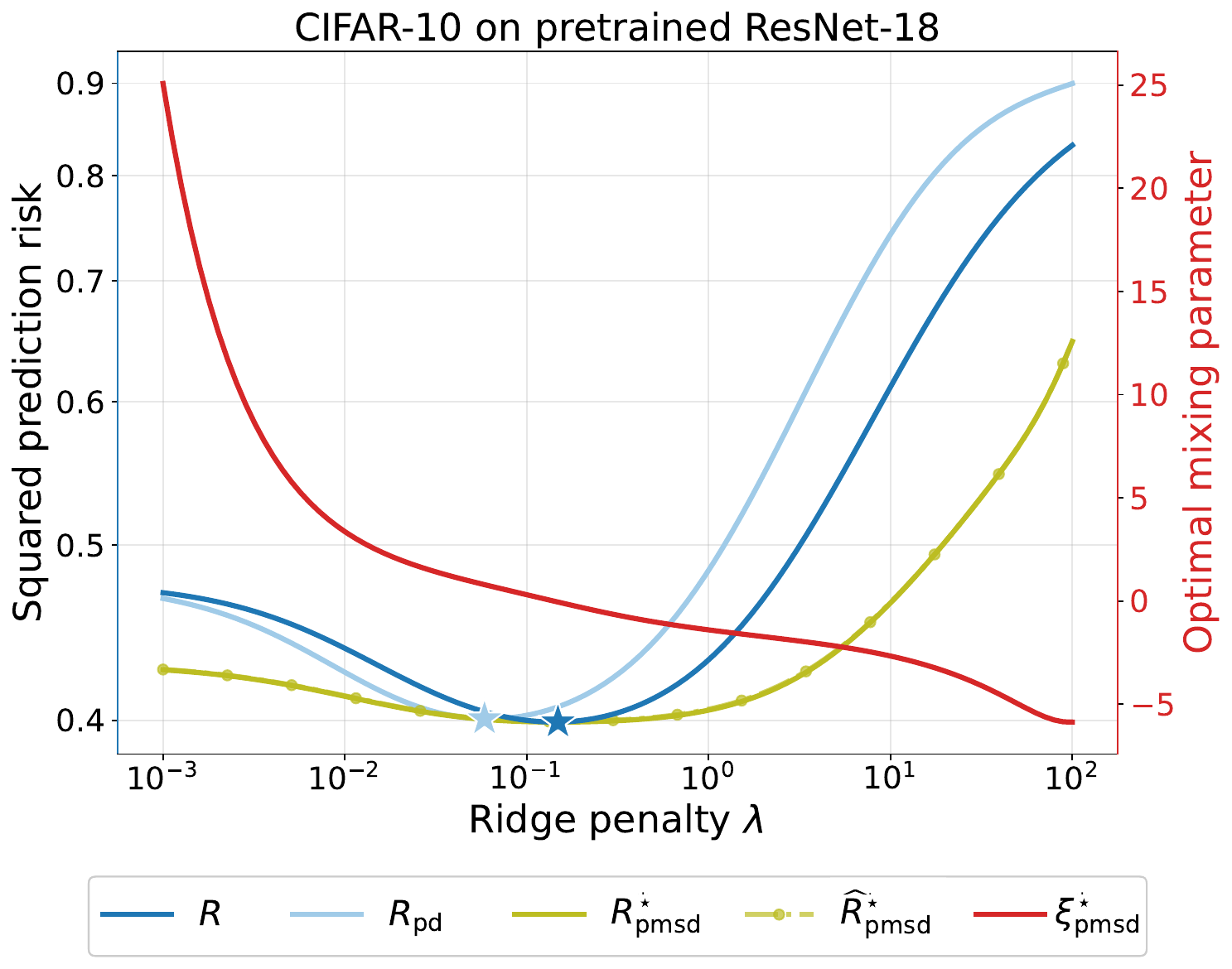}
    \caption{$p = 512$, $n_t = 2000$, $n_s = 4000$, $n_{\mathrm{cal}} = 200$.}
  \end{subfigure}
  \hfill
    \begin{subfigure}[t]{0.32\textwidth}
    \centering
    \includegraphics[width=\textwidth]{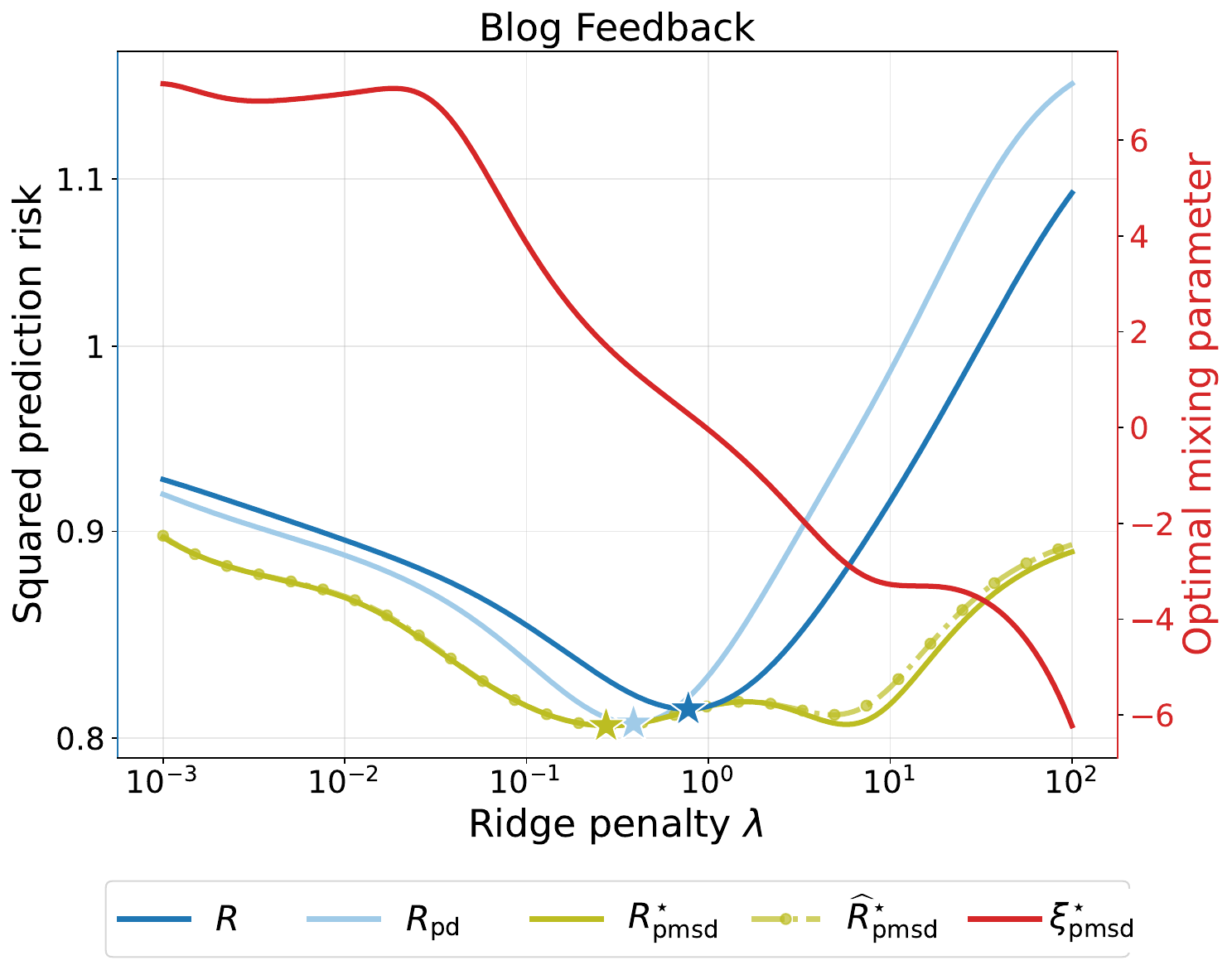}
    \caption{$p = 280$, $n_t = 2619$, $n_s = 5240$, $n_{\mathrm{cal}} = 524$.}
  \end{subfigure}
  \hfill
   \begin{subfigure}[t]{0.32\textwidth}
    \centering
    \includegraphics[width=\textwidth]{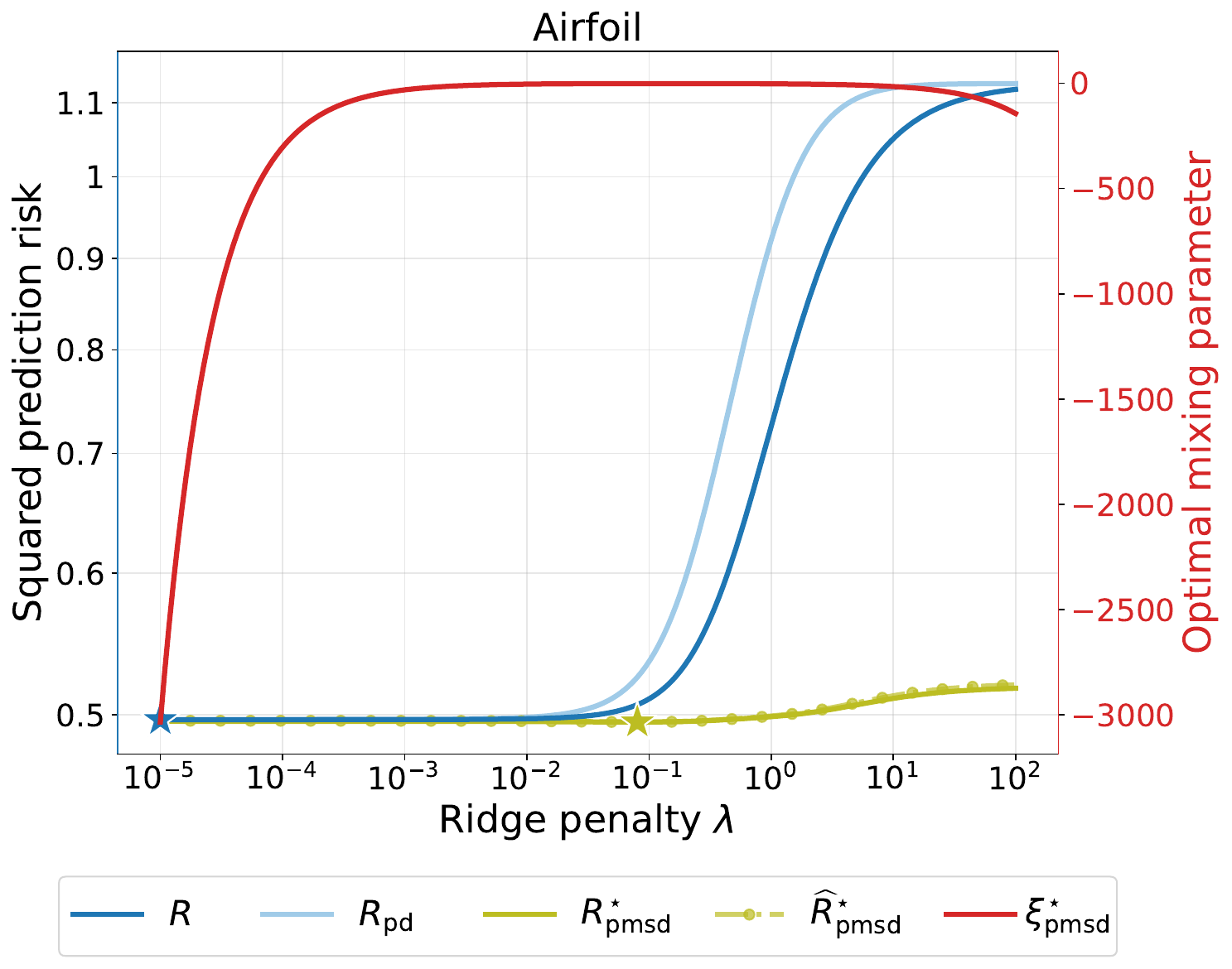}
    \caption{$p = 5$, $n_t = 150$, $n_s = 150$, $n_{\mathrm{cal}} = 76$.}
  \end{subfigure}
  \caption{PMSD risk estimated using a small calibration set of size $n_{\mathrm{cal}}$ on real datasets. We use the same-$\lambda$ setting with $\lambda_t = \lambda_s = \lambda$.}
  \label{fig:ridge_tuning}
\end{figure*}

The assumptions required for consistency are minimal.
Because the calibration set estimates only the risk and residual correlation of fixed predictors, no asymptotic condition involving $p/n_{\rm cal}$ is required. For consistency, $n_{\rm cal}$ need only diverge, regardless of the growth rate of $p$.
Computationally, tuning $\xi$ is inexpensive: after training the PD student once, \eqref{eq:cal_xi} requires only evaluating the two predictors on the calibration points and computing three empirical averages.
The uniform finite-candidate statement also allows the same calibration set to choose among finitely many student penalties $\lambda_s$ or unlabeled sample sizes $n_s$.
\Cref{fig:ridge_tuning} illustrates calibration-based tuning with small labeled samples on real datasets.
Reducing the label cost of this calibration step through alternative sampling or model-assisted evaluation schemes is a natural direction for future work \citep{fogliato2024framework}.

\section{Logistic regression}
\label{sec:logistic}

We now turn to the fresh-$X$ prediction-only setting for logistic regression with cross-entropy loss. Our goal is to show that the PMSD student can strictly outperform both the teacher and the PD student.

\textbf{Setting.} A theoretical analysis of self-distillation for classification is more challenging than for regression. To streamline the presentation, we adopt the setting of \citet{das2023understanding, jeong2025rethinking}, which we review here. We study a binary classification problem in which each instance $x \in \cX$ has a ground-truth label $y(x) \in \{0,1\}$. Let $\cP$ denote the underlying distribution on $\cX$. We have access to an unlabeled dataset $\cD_{\rm unlab} = \{x_i\}_{i=1}^{2n}$ consisting of $2n$ i.i.d.\ draws from $\cP$. The corresponding labels $y_i:=y(x_i)$ are deterministic functions of the instances but are unobserved. We assume that the dataset is balanced and index the observations so that $y_i = 1$ for $i \in \mathcal{S}_1 := \{1, \ldots, n\}$ and $y_i = 0$ for $i \in \mathcal{S}_0 := \{n+1, \ldots, 2n\}$.

\textbf{Teacher.} We have access to a teacher classifier that returns a hard label $\hat y(x) \in \{0,1\}$ for any $x\in\cX$. Although its form is unknown, we can query its predictions. Suppose that its population classification error is the same within each class and equals an unknown $\rho\in(0,1)$:
\[
\PP_{x\sim\cP}\bigl(\hat y(x)\neq y(x)\mid y(x)=0\bigr)
=
\PP_{x\sim\cP}\bigl(\hat y(x)\neq y(x)\mid y(x)=1\bigr)
=\rho.
\]
Following \citet{das2023understanding}, let $\hat n:=\lfloor n\rho\rfloor$ and assume that the teacher makes $\hat n$ errors in each class. After reindexing the observations within each class, write the teacher's labels on $\cD_{\rm unlab}$ as
\[
\hat{y}_i := \hat y(x_i) =
\begin{cases}
1 - y_i & \text{for } i \in \underbrace{\{1, \ldots, \hat{n} \}}_{\mathcal{S}_{1,\text{bad}}} \, \cup \, \underbrace{\{n+1,\ldots,n+\hat{n}\}}_{\mathcal{S}_{0,\text{bad}}},
\\
y_i & \text{for } i \in 
\underbrace{\{\hat{n}+1,\ldots,n\}}_{\mathcal{S}_{1,\text{good}}} \, \cup \, \underbrace{\{n+\hat{n}+1,\ldots,2n\}}_{\mathcal{S}_{0,\text{good}}}.
\end{cases}
\]

Thus, $|\mathcal{S}_{1,\text{bad}}|=|\mathcal{S}_{0,\text{bad}}|=\hat n$, and the fraction of incorrect labels in each class converges to $\rho$ as $n\to\infty$.

\textbf{Students.} We assume a feature map $\phi:\cX\to\cH$, where $\cH$ is a Hilbert space, and keep $\phi$ fixed throughout. We train a linear classifier in $\cH$ on the teacher's hard pseudo-labels $\{\hat y_i\}_{i=1}^{2n}$ using $\ell_2$-regularized logistic loss:
\begin{align}
    \label{eq:class_training_loss}
    \theta_{\pd} := \argmin_{\theta}
    \frac{1}{2n} \sum_{i=1}^{2n}
    \text{CE}\big(\hat y_i, \sigma( \langle \theta, \phi(x_i) \rangle) \big) + \frac{\lambda}{2} \| \theta \|^2,
\end{align}
where $\text{CE}(q, \hat q) := -q\log(\hat q)-(1-q)\log(1-\hat q)$ is the binary cross-entropy loss and $\sigma(z):=(1+e^{-z})^{-1}$ is the sigmoid function. The resulting PD student's soft prediction is $y_{\pd}(x) := \sigma(\langle \theta_{\pd}, \phi(x) \rangle) \in (0,1)$. This setting corresponds to linear probing in neural networks, where a learnable linear layer and sigmoid activation are placed on top of a frozen feature backbone. Given a mixing weight $\xi \in \RR$, the PMSD classifier is
\begin{equation}
\label{eq:logistic_affine}
y_{\m, \xi}(x)=\II \bigl\{(1-\xi)\hat y(x)+\xi y_{\pd}(x)\ge 0.5 \bigr\}.
\end{equation}

We make the following assumption on the feature map $\phi$ and the unlabeled dataset $\cD_{\rm unlab}$.

\begin{assumption}[Feature norm and correlation structure]
    \label{assp:feature_corr}
The features have unit norm, $\|\phi(x)\|=1$ for all $x\in\cX$. Let $\Phi\in\mathbb{R}^{2n\times 2n}$ denote the Gram matrix on $\cD_{\rm unlab}$, with $[\Phi]_{ij}:=\langle\phi(x_i),\phi(x_j)\rangle$. We assume that $[\Phi]_{ij}=0$ when $y_i\neq y_j$ and $[\Phi]_{ij}=c\in(0,1)$ when $y_i=y_j$ and $i\neq j$.
\end{assumption}

This assumption, formulated by \citet{das2023understanding}, is technically convenient because it makes the data separable with respect to their unobserved ground-truth classes, thereby ensuring the existence of a perfect classifier. Moreover, \citet{jeong2025rethinking} empirically validate this assumption on six real datasets: CIFAR-100, Caltech-101, Caltech-256, Flowers-102, Food-101, and Stanford Cars. They use features from a ResNet-34 pretrained on ImageNet; see their Figure 1 and Appendix D.2.

\textbf{Goal.} Given a soft prediction rule $y_{\theta}:\cX\to[0,1]$, an instance $x$ is classified correctly when $y(x)=\II\{y_{\theta}(x)\geq 0.5\}$. By construction, the teacher's accuracy on $\cD_{\rm unlab}$ converges to $100(1-\rho)\%$. We show in \Cref{thm:class_mixing} that, when $\rho$ is sufficiently close to $0.5$ from below, the PD student trained by \eqref{eq:class_training_loss} also achieves only $100(1-\rho)\%$ accuracy as $n\to\infty$. Our goal is to construct a PMSD classifier as in \eqref{eq:logistic_affine} that achieves asymptotic $100\%$ accuracy and thus perfectly recovers the true labels in the limit.

Prior work \citep{das2023understanding, jeong2025rethinking} characterizes conditions on $\rho$ and $\lambda$ under which the PD student $y_{\pd}$ itself achieves this goal. These works refer to the result as ``$100\%$ population accuracy'' because the generalization gap vanishes for finite-dimensional linear classifiers as $n\to\infty$ \citep{kakade2008complexity, koltchinskii2002empirical}.

\begin{theorem}
    \label{thm:class_mixing}
    Let $\hl_n := 2n \lambda$ and suppose that $\hl_n \to \hl \in (0,\infty)$ as $n \to \infty$. Under \Cref{assp:feature_corr}, there exists a threshold $\rho_0 = \rho_0(c, \hl) \in (0,0.5)$ such that, for every $\rho \in (\rho_0, 0.5)$, the PD student $y_{\pd}$ has $100(1-\rho)\%$ population accuracy. On the other hand, for every $\rho < 0.5$, there exists $\xi \geq 0$ such that the PMSD student $y_{\m, \xi}$ achieves $100\%$ population accuracy.
\end{theorem}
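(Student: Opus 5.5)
The plan is to solve the regularized logistic problem \eqref{eq:class_training_loss} in closed form up to two scalar limits, using the symmetry of the Gram matrix in \Cref{assp:feature_corr}. Both claims then reduce to sign statements about two limiting logits.

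\textbf{Step 1: representer form and symmetry reduction.} The objective is strictly convex, and its stationarity condition is
\[
\theta_{\pd}=\hl_n^{-1}\sum_i(\hat y_i-\sigma(z_i))\phi(x_i), \qquad z_i:=\langle\theta_{\pd},\phi(x_i)\rangle .
\]
So $\theta_{\pd}=\sum_i\alpha_i\phi(x_i)$ with $\alpha_i=(\hat y_i-\sigma(z_i))/\hl_n$. The Gram matrix is invariant under permutations within each of the four groups $\mathcal S_{1,\rm good},\mathcal S_{1,\rm bad},\mathcal S_{0,\rm good},\mathcal S_{0,\rm bad}$. It is also invariant under the class swap combined with flipping all labels. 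Uniqueness of the minimizer then forces $z$ to be constant on each group, with $z_{0,\rm good}=-z_{1,\rm good}=:-z_g$ and $z_{0,\rm bad}=-z_{1,\rm bad}=:-z_b$. Writing out $z_i=\sum_j\alpha_j\Phi_{ij}$ for the two class-$1$ groups gives, with $S_n:=c\,\hl_n^{-1}\bigl[(n-\hat n)(1-\sigma(z_g))-\hat n\,\sigma(z_b)\bigr]$,
\[
z_g=\frac{(1-c)(1-\sigma(z_g))}{\hl_n}+S_n,\qquad z_b=-\frac{(1-c)\sigma(z_b)}{\hl_n}+S_n .
\]

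\textbf{Step 2: the $n\to\infty$ limit.} I would first bound $|z_g|,|z_b|$ uniformly in $n$, for instance by comparing the objective at $\theta_{\pd}$ with its value at $\theta=0$. Then $S_n=z_g-(1-c)(1-\sigma(z_g))/\hl_n$ stays bounded. Dividing the definition of $S_n$ by $n$ gives the balance equation in the limit:
\[
(1-\rho)(1-\sigma(z_g))=\rho\,\sigma(z_b).
\]
So every limit point $(z_g,z_b)$ solves
\[
z_g-\frac{(1-c)(1-\sigma(z_g))}{\hl}=z_b+\frac{(1-c)\sigma(z_b)}{\hl}
\]
together with the balance equation. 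Both sides of this last equation are strictly increasing in their arguments, so the system has a unique solution. Hence $(z_g,z_b)$ converges. This compactness-plus-uniqueness argument is the main technical obstacle. One must make sure the $O(1/n)$ corrections coming from $\hat n=\lfloor n\rho\rfloor$ and from the diagonal entries do not destroy the limit, and that the limiting logits stay away from $0$ so that accuracies converge.

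\textbf{Step 3: the PD student.} A class-$1$ point is classified as $1$ by the PD student iff its logit is positive; class $0$ is symmetric. If $z_g>0>z_b$, the PD student has exactly $100(1-\rho)\%$ accuracy. At $\rho=1/2$ the system has the solution $z_g=z_b=0$. I would differentiate the system implicitly in $\rho$ at $\rho=1/2$ to show that $z_g$ increases and $z_b$ decreases as $\rho$ decreases below $1/2$. This gives $z_b<0<z_g$ on some interval $(\rho_0,0.5)$, with $\rho_0$ depending only on $(c,\hl)$.

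\textbf{Step 4: the PMSD classifier.} Plugging $\hat y$ and $y_{\pd}=\sigma(\pm z)$ into \eqref{eq:logistic_affine} yields four conditions, one per group. Class-$1$ good points are correct iff $1-\xi(1-\sigma(z_g))\ge 1/2$. Class-$1$ bad points are correct iff $\xi\sigma(z_b)\ge 1/2$. The class-$0$ conditions coincide with these by symmetry, with strict inequalities. So a valid $\xi\ge0$ exists iff
\[
\frac{1}{2\sigma(z_b)}<\xi\le\frac{1}{2(1-\sigma(z_g))} .
\]
By the balance equation, $\sigma(z_b)/(1-\sigma(z_g))=(1-\rho)/\rho>1$ whenever $\rho<1/2$, so this interval is nonempty. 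Choosing $\xi$ strictly inside it gives positive margins in the limit. Together with the convergence in Step 2, every point is then classified correctly for all large $n$, giving $100\%$ population accuracy for every $\rho<0.5$.
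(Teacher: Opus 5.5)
Your Steps 1, 2 and 4 are sound and amount to the paper's argument in logit coordinates. The dictionary is $z_g=(1-c)\alpha_n+cnT_n$ and $z_b=-(1-c)\hat\alpha_n+cnT_n$. Your balance equation is the paper's relation $\alpha=r\hat\alpha$ with $r=\rho/(1-\rho)$. Your interval $1/(2\sigma(z_b))<\xi<1/(2(1-\sigma(z_g)))$ is its condition $\xi\bar\lambda\hat\alpha>1/2>\xi\bar\lambda\alpha$. So the mixing claim goes through. One fix is needed in Step 2. Comparing with $\theta=0$ through the penalty only yields $\|\theta\|=O(\sqrt n)$, i.e.\ $|z_i|=O(\sqrt n)$. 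Use the loss terms instead, which give $z_g\ge -C$ and $z_b\le C$. Combine these with $0<z_g-z_b<2(1-c)/\bar\lambda_n$ from your Step 1 equations to get $O(1)$ logits.

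The genuine gap is Step 3, whose starting point is false. At $\rho=1/2$ your balance equation gives $z_b=-z_g$. Adding your two logit equations then forces $S=0$, leaving $z_g=(1-c)(1-\sigma(z_g))/\bar\lambda$. Its unique root $z^\star$ is strictly positive, since the difference of the two sides is increasing and equals $-(1-c)/(2\bar\lambda)$ at $0$. So at $\rho=1/2$ the limit is $z_g=-z_b=z^\star>0$, not $z_g=z_b=0$. The latter is what you get by discarding the self-influence term $(1-c)/\bar\lambda_n$. That term is negligible only in the balance equation after dividing by $n$. In the logit equations it is of order one, and it is the whole mechanism behind the first claim: without it, $z_g=z_b$ for every $\rho$ and the PD student would be perfect for all $\rho<1/2$.

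The monotonicity you planned to derive is also reversed. Differentiating your first limiting equation in $\rho$ shows that $z_g'$ and $z_b'$ have the same sign. The balance equation gives $(1-\rho)\sigma'(z_g)z_g'+\rho\,\sigma'(z_b)z_b'=-(1-\sigma(z_g))-\sigma(z_b)<0$. Hence both logits \emph{increase} as $\rho$ decreases, and $z_b$ climbs from $-z^\star$ toward $0$. That is exactly why the conclusion holds only on $(\rho_0,0.5)$.

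What you need instead is the paper's argument. First, strict signs at $\rho=1/2$: in its notation, $N_1=0$ and $\bar\lambda h_1=\sigma(-(1-c)h_1)<1/2$. Second, continuity of the unique limiting solution as $\rho\uparrow 1/2$. You can get this from the same monotone structure you used for uniqueness. Alternatively, use the implicit function theorem, since the Jacobian of your system in $(z_g,z_b)$ is nonsingular for $\rho\in(0,1)$.
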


Self-distillation has been observed to induce label averaging among highly correlated instances \citep{jeong2025rethinking}. When $\rho$ is small, the PD student can therefore correct mislabeled samples by aggregating labels from correctly classified neighbors, although incorrect neighbors also reduce its confidence on correctly labeled samples. Relative to the teacher, the student can thus trade confidence for accuracy. As $\rho$ approaches $0.5$ from below, however, incorrectly labeled neighbors increasingly mislead the student, and its predictions no longer cross the classification threshold on the mislabeled samples. Choosing $\xi>1$ extrapolates beyond the PD student's soft prediction and can amplify this corrective direction, although an excessively large $\xi$ may hurt performance. We illustrate this behavior with synthetic experiments in \Cref{fig:synthetic_constant_small,fig:synthetic_uniform_small}.

\subsection{Soft teacher–student prediction mixing}
\label{sec:extensions_logistic}

\Cref{thm:class_mixing} mixes the teacher's hard label with the PD student's soft prediction and strictly improves upon both when $\rho_0 < \rho < 0.5$. We now consider mixing soft predictions from both models. Specifically, we treat the PD student $y_{\pd}(\cdot) = \sigma(\langle \theta_{\pd}, \phi(\cdot) \rangle)$ defined by \eqref{eq:class_training_loss} as a teacher that produces soft labels. Using its predictions on $\cD_{\rm unlab}$, we train a second PD student $y_{\pd}^{(2)}(\cdot) = \sigma(\langle \theta_{\pd}^{(2)}, \phi(\cdot) \rangle)$ with the same regularization parameter $\lambda$:
\begin{align}
    \label{eq:class_training_loss_2}
    \theta_{\pd}^{(2)} &:= \argmin_{\theta}
    \frac{1}{2n} \sum_{i=1}^{2n}
    \text{CE} \big(y_{\pd}(x_i), \sigma( \langle \theta, \phi(x_i) \rangle) \big) + \frac{\lambda}{2} \| \theta \|^2.
\end{align}
For a mixing weight $\xi\in\RR$, the resulting PMSD classifier thresholds the affine combination of the two soft predictions:
\begin{equation}
\label{eq:logistic_affine_2}
y_{\m, \xi}^{(2)}(x)=\II \bigl\{(1-\xi) y_{\pd}(x) + \xi y_{\pd}^{(2)}(x)\ge 0.5 \bigr\}.
\end{equation}

Previous results (Theorem 5 of \citet{das2023understanding} and Theorem 4.1 of \citet{jeong2025rethinking}) show that, as $n\to\infty$, the second student $y_{\pd}^{(2)}$ can tolerate a higher label noise rate $\rho$ than its teacher $y_{\pd}$, subject to $\rho<0.5$. Specifically, there exists a threshold $\rho_1$ with $0 < \rho_0 < \rho_1 < 0.5$ such that (i) for $\rho_0 < \rho < \rho_1$, the student $y_{\pd}^{(2)}$ attains $100\%$ accuracy while the teacher $y_{\pd}$ has $100(1-\rho)\%$ accuracy, and (ii) for $\rho_1 < \rho < 0.5$, both achieve $100(1-\rho)\%$ accuracy. In contrast, we show that, for every $\rho<0.5$, an appropriate mixing parameter $\xi\geq0$ allows the PMSD classifier $y_{\m, \xi}^{(2)}$ in \eqref{eq:logistic_affine_2} to attain $100\%$ accuracy.

\begin{theorem}
    \label{thm:class_mixing_2}
    Let $\hl_n := 2n \lambda$ and suppose that $\hl_n \to \hl \in (0,\infty)$ as $n \to \infty$. Under \Cref{assp:feature_corr}, for every $\rho < 0.5$, there exists $\xi \geq 0$ such that the PMSD student $y_{\m, \xi}^{(2)}$ achieves $100\%$ population accuracy.
\end{theorem}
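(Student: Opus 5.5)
The plan is to reduce everything to a small number of scalar quantities by exploiting the symmetry of the Gram matrix in \Cref{assp:feature_corr}, exactly as in the analysis behind \Cref{thm:class_mixing} (and Theorem 5 of \citet{das2023understanding}, Theorem 4.1 of \citet{jeong2025rethinking}).

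\textbf{Step 1: reduce the two students to four scalars.} By the representer theorem, $\theta_{\pd}=\sum_i \alpha_i\phi(x_i)$. Strict convexity of \eqref{eq:class_training_loss} and the invariance of the problem under permutations within each of the four blocks $\mathcal S_{1,\rm good},\mathcal S_{1,\rm bad},\mathcal S_{0,\rm good},\mathcal S_{0,\rm bad}$ force $\alpha$ to be constant on each block. The label flip $\hat y\mapsto 1-\hat y$ combined with swapping the classes shows the class-$0$ logits are the negatives of the class-$1$ logits. So the first student is described by two limiting soft predictions on class $1$: $p_g$ on good points and $p_b$ on bad points. The same argument applied to \eqref{eq:class_training_loss_2}, whose targets are also block-constant, gives two limiting values $q_g,q_b$ for $y_{\pd}^{(2)}$. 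With $\hl_n\to\hl$, the stationarity equations converge to a fixed finite system in these scalars, as in the cited works; I will take the limiting equations from there.

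\textbf{Step 2: reduce to a two-point problem.} By the class symmetry, it suffices to find $\xi\ge0$ such that both extrapolated values $(1-\xi)p_g+\xi q_g$ and $(1-\xi)p_b+\xi q_b$ strictly exceed $1/2$. The class-$0$ points then fall strictly below $1/2$ automatically.

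\textbf{Step 3: show distillation contracts the within-class gap.} The second student averages its soft targets over correlated same-class neighbours, and the cross-class Gram entries vanish. Using the stationarity conditions together with monotonicity of $\sigma$, I would show two things:
\begin{itemize}
\item $0\le q_g-q_b<p_g-p_b$, i.e.\ the gap strictly contracts;
\item the block-weighted class mean moves toward the majority label in the right direction.
\end{itemize}
The idea is to subtract the good- and bad-block equations: the off-diagonal contribution $c$ cancels, leaving the gap determined only by the diagonal term $(1-c)$ and $\hl$. This should yield a contraction factor $s:=(q_g-q_b)/(p_g-p_b)\in[0,1)$.

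\textbf{Step 4: extrapolate to the point where the gap closes.} Set $\xi^\dagger:=1/(1-s)\ge 1$. At $\xi^\dagger$ the two extrapolated values coincide at the common value
\[
m=p_b+\xi^\dagger(q_b-p_b).
\]
By continuity in $\xi$, it then suffices to prove $m>1/2$; taking $\xi$ near $\xi^\dagger$ keeps both values above $1/2$.

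\textbf{Step 5 (main obstacle): prove $m>1/2$.} This is the step I expect to be hardest. The block-weighted averages $(1-\rho)p_g+\rho p_b$ and $(1-\rho)q_g+\rho q_b$ are both pulled above $1/2$ because $\rho<1/2$. The difficulty is that the logistic map is nonlinear, so averaging in logit space and averaging in probability space disagree. My first attempt would be to write $m$ as an affine combination of the two students' block means and use the summed stationarity equation for each block. That equation ties the mean logit to $(1-2\rho)$ times a positive quantity. I would then combine it with the odd symmetry $\sigma(-z)=1-\sigma(z)$ to show that the combined mean is strictly above $1/2$ for every $\rho<0.5$.

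If this direct inequality fails, I would fall back to a less demanding target. Instead of $\xi^\dagger$, it is enough to find any $\xi$ for which both the bad-point value exceeds $1/2$ and the good-point value remains above $1/2$. This reduces to comparing the two linear functions of $\xi$ through their slopes $q_b-p_b$ and $q_g-p_g$.
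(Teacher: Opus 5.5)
Your Steps 1--4 are a sound reduction, and Step 3 does go through by your subtraction idea. Within class 1, each student's good and bad logits differ only by the diagonal term. So $q_g-q_b$ and $(p_g-p_b)-(q_g-q_b)=\bar\lambda(\beta+\hat\beta)$ have the same sign, and since they sum to $p_g-p_b>0$, both are positive.

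\textbf{The gap is Step 5, which is the whole content of the theorem.} Your plan for it does not work as stated. Knowing only that the block means $\bar p:=(1-\rho)p_g+\rho p_b$ and $\bar q:=(1-\rho)q_g+\rho q_b$ exceed $1/2$ is not enough. The reason is that $m=(1-\xi^\dagger)\bar p+\xi^\dagger\bar q$ with $\xi^\dagger\ge 1$ puts nonpositive weight on $\bar p$; for example $\bar p=0.9$, $\bar q=0.6$, $\xi^\dagger=2$ gives $m=0.3$. The logit-averaging and odd-symmetry route does not repair this.

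\textbf{The missing fact is an exact identity, not an inequality:} in the limit, $\bar p=\bar q=1-\rho$.
\begin{itemize}
\item Summing the stationarity relation $y_{\pd}(x_i)=\hat y_i-\bar\lambda_n\alpha(x_i,\hat y_i)$ over $\mathcal S_1$ gives $\bar p_n=1-\rho-\bar\lambda_nT_n$, where $T_n=(1-\rho)\alpha_n-\rho\hat\alpha_n$.
\item In the same way, $\bar q_n=\bar p_n-\bar\lambda_nW_n$, where $W_n=(1-\rho)\beta_n-\rho\hat\beta_n$.
\item The nontrivial input is $T_n\to0$ and $W_n\to0$, which the paper proves by a saturation argument. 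Both class-1 logits share the term $cnT_n$, so if $T\neq0$ they diverge to the same $\pm\infty$. This forces $\bar\lambda\hat\alpha=1-\bar\lambda\alpha\in\{0,1\}$, which contradicts the sign of $T$. The same argument works for $W$.
\end{itemize}
This yields $\alpha=r\hat\alpha$ and $\beta=r\hat\beta$ with $r=\rho/(1-\rho)$; in the paper's notation, $Q=rP$. The mean logit on class 1 is $(cn+1-c)T_n$, and the real point is that it stays bounded. That forces $T_n=O(1/n)$, so the nonlinearity worry about averaging in logit space versus probability space never arises.

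With that identity, Step 5 is one line. The affine mixture preserves the block mean, so the common value at $\xi^\dagger$ is $m=1-\rho>1/2$. Indeed $\xi^\dagger=(1-P-Q)/\bigl(\bar\lambda(1+r)\hat\beta\bigr)$ lies inside the paper's interval $\bigl((0.5-P)/(\bar\lambda\hat\beta),\,(0.5-Q)/(\bar\lambda r\hat\beta)\bigr)$.

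Without the identity, your fallback of comparing slopes only restates the goal. That interval is nonempty exactly because the slopes are in ratio $r$ and the intercepts satisfy $Q=rP$, which together make nonemptiness equivalent to $r<1$.

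Two smaller points:
\begin{itemize}
\item The second bullet of your Step 3 is misdirected. The block mean does not move toward the majority label; it is conserved exactly across both distillation rounds, so any attempt to prove a strict shift would fail.
\item Deferring the limiting system to the cited works skips precisely the step that is needed: boundedness of $nT_n$ and $nW_n$, together with $T=W=0$.
\end{itemize}
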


When $\rho > 0.5$, incorrectly labeled neighbors outnumber correctly labeled ones. The resulting label averaging prevents the student from correcting misclassified samples and also causes it to misclassify samples that the teacher labeled correctly. In fact, both $y_{\pd}$ and $y_{\pd}^{(2)}$ achieve $0\%$ population accuracy as $n \to \infty$. Nevertheless, the PMSD student $y_{\m, \xi}^{(2)}$ can achieve asymptotic $100\%$ population accuracy. See \Cref{fig:synthetic_constant_large,fig:synthetic_uniform_large} for synthetic illustrations with $\rho > 0.5$.

\begin{theorem}
    \label{thm:class_mixing_large_p}
    Under \Cref{assp:feature_corr}, let $\rho>0.5$ and take the regularization parameter to be a sequence $\lambda_n=\Theta(1)$. Then, as $n \to \infty$, both $y_{\pd}$ and $y_{\pd}^{(2)}$ achieve $0\%$ population accuracy. On the other hand, there exists $\xi > 1$ such that the PMSD student $y_{\m, \xi}^{(2)}$ achieves $100\%$ population accuracy.
\end{theorem}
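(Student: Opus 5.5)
The plan is to reduce both logistic fits to a pair of scalar fixed-point equations by using the block structure of $\Phi$ from \Cref{assp:feature_corr}. I then read off the signs of the limiting logits and pick $\xi$ explicitly.

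\textbf{Step 1: reduction of the first student.} By the representer theorem, $\theta_{\pd}=\sum_j \alpha_j\phi(x_j)$ and the training logits are $z=\Phi\alpha$. The first-order condition of \eqref{eq:class_training_loss} gives
\[
\alpha_i=\frac{\hat y_i-\sigma(z_i)}{2n\lambda_n}.
\]
Hence, for $i\in\mathcal S_1$,
\[
z_i=(1-c)\alpha_i+c\sum_{j\in\mathcal S_1}\alpha_j=\frac{(1-c)(\hat y_i-\sigma(z_i))}{2n\lambda_n}+\frac{c}{2\lambda_n}\cdot\frac1n\sum_{j\in\mathcal S_1}\bigl(\hat y_j-\sigma(z_j)\bigr),
\]
and analogously for $\mathcal S_0$. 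The objective is strictly convex and symmetric under permutations within $\mathcal S_{1,\rm bad}$, $\mathcal S_{1,\rm good}$, $\mathcal S_{0,\rm bad}$, $\mathcal S_{0,\rm good}$. Therefore the minimizer takes only four logit values, and within each class these differ by $O(1/n)$ because $\lambda_n=\Theta(1)$. Passing to a convergent subsequence of $\lambda_n$ (or simply tracking the $n$-dependent solution), the common class-$1$ logit $z^{(1)}$ satisfies
\[
z^{(1)}=\frac{c}{2\lambda_n}\bigl(1-\rho-\sigma(z^{(1)})\bigr)+o(1).
\]
The map $z\mapsto z-\tfrac{c}{2\lambda_n}(1-\rho-\sigma(z))$ is strictly increasing, so this equation has a unique root. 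Since $1-\rho<1/2$, the root satisfies $\sigma(z^{(1)})<1-\rho<1/2$. Evaluating the equation at $z=0$ shows that $z^{(1)}<0$ with a margin bounded away from $0$. The $O(1/n)$ within-class perturbations therefore cannot flip signs, and every class-$1$ point is classified as $0$. By the symmetry $\hat y\mapsto 1-\hat y$, $z\mapsto -z$, the class-$0$ logit is $-z^{(1)}>0$, so every class-$0$ point is classified as $1$. This gives $0\%$ accuracy for $y_{\pd}$.

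\textbf{Step 2: the second student.} The same argument applied to \eqref{eq:class_training_loss_2}, with soft targets $y_{\pd}(x_j)=\sigma(z^{(1)})+O(1/n)$ on class $1$, yields a class-$1$ logit satisfying
\[
z^{(2)}=\frac{c}{2\lambda_n}\bigl(\sigma(z^{(1)})-\sigma(z^{(2)})\bigr)+o(1).
\]
Comparing the monotone map at $z=z^{(1)}$ and at $z=0$ shows $z^{(1)}<z^{(2)}<0$, again with margins bounded away from zero uniformly in $n$. The strict inequality $z^{(2)}-z^{(1)}>0$ holds because $z^{(1)}\neq 0$, which forces the right-hand side at $z^{(1)}$ to be nonzero. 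Thus $y^{(2)}_{\pd}$ also has $0\%$ accuracy, with class $0$ following by symmetry. I expect this quantitative step to be the main obstacle. One must show that the gap $\sigma(z^{(2)})-\sigma(z^{(1)})$ and the distance of $\sigma(z^{(2)})$ from $1/2$ stay bounded below uniformly in $n$ while $\lambda_n$ merely stays in a compact subset of $(0,\infty)$, and that the $O(1/n)$ within-class corrections are uniformly dominated. I would handle this by an implicit-function or compactness argument on the scalar equations over $\lambda\in[\underline\lambda,\bar\lambda]$.

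\textbf{Step 3: choice of $\xi$.} On class $1$, the mixed score is
\[
(1-\xi)\sigma(z^{(1)})+\xi\sigma(z^{(2)})+O(1/n)=\sigma(z^{(1)})+\xi\bigl(\sigma(z^{(2)})-\sigma(z^{(1)})\bigr)+O(1/n).
\]
Choose any fixed
\[
\xi>\frac{1/2-\sigma(z^{(1)})}{\sigma(z^{(2)})-\sigma(z^{(1)})},
\]
which exceeds $1$ because $\sigma(z^{(2)})<1/2$ (take the supremum over the compact range of $\lambda_n$). The score is then above $1/2$ with a margin that is eventually larger than the $O(1/n)$ error, so all class-$1$ points are classified correctly. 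By the symmetry $\sigma(-z)=1-\sigma(z)$, the same $\xi$ places all class-$0$ scores below $1/2$. Finally, as in \citet{das2023understanding, jeong2025rethinking}, accuracy on $\cD_{\rm unlab}$ transfers to population accuracy as $n\to\infty$ for a linear classifier in a fixed feature space. Hence $y^{(2)}_{\m,\xi}$ attains $100\%$ population accuracy.
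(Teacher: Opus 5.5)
Your proof is correct and is essentially the paper's argument in logit coordinates. Your limiting logits $z^{(1)}$ and $z^{(2)}$ are the paper's $cN$ and $cM$, and your two monotone scalar equations are exactly what its limiting system collapses to: $\widehat A=\sigma(cN)$, $A+\widehat A=1$, $2\lambda_\star N=(1-\rho)A-\rho\widehat A$, and then $\widehat A+\widehat B=\sigma(cM)$, $B+\widehat B=0$, $2\lambda_\star M=B$. Monotonicity therefore replaces the paper's sign-by-contradiction case analysis. The uniform-margin step you flag is routine, because both maps have derivative at least $1$ and their roots depend continuously on $\lambda$.

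One correction: since $z^{(1)}<0$, the fixed-point equation forces $1-\rho<\sigma(z^{(1)})<1/2$, not $\sigma(z^{(1)})<1-\rho$. Nothing downstream uses that aside, because your evaluation of the map at $z=0$ already gives the needed sign.
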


In this regime, the first PD student $y_{\pd}$ misclassifies both the samples that the original teacher labeled correctly and those it labeled incorrectly. Although the soft predictions of the second student $y_{\pd}^{(2)}$ move in the correct direction relative to those of $y_{\pd}$, the shift is insufficient to alter the induced hard predictions. With an appropriate mixing weight $\xi > 1$, the PMSD student amplifies this corrective signal and extrapolates far enough to change the final predictions.

\begin{table}[!t]
\caption{Test classification accuracy on CIFAR-100 with hierarchical corruption $\rho = 0.2$. The mixing weight is tuned over the grid $[-20, 20]$ for all linear probing experiments.}
\label{tab:cifar100_0.2_hierarchical}
\centering
\resizebox{\textwidth}{!}{
\begin{tabular}{lccccccccc}
\toprule
Regularization level $\lambda$ & $10^{-4}$ & $10^{-3.5}$ & $10^{-3}$ & $10^{-2.5}$ & $10^{-2}$ & $10^{-1.5}$ & $10^{-1}$ & $10^{-0.5}$ & $10^{0}$\\
\midrule \arrayrulecolor{black!50}
Teacher (\%) & $52.8$ & $52.9$ & $52.9$ & $53.2$ & $54.0$ & $56.1$ & $57.7$ & $50.4$ & $35.9$ \\
PD student (\%) & $53.1$ & $53.8$ & $55.0$ & $56.1$ & $57.3$ & $56.8$ & $47.3$ & $20.8$ & $2.3$ \\
\midrule \arrayrulecolor{black}
Optimal PMSD student (\%) & $54.5$ & $54.9$ & $55.6$ & $56.6$ & $57.4$ & $57.3$ & $59.0$ & $54.1$ & $43.1$ \\
Estimated optimal mixing weight $\hat\xi^{\star}$ & $10.5$ & $4.1$ & $2.1$ & $1.3$ & $0.9$ & $0.7$ & $-17.8$ & $-15.0$ & $-20.0$
\\
\bottomrule
\end{tabular}%
}
\end{table}

\subsection{Linear probing experiments on real datasets}

To illustrate the performance of PMSD, we consider multiclass classification with $\ell_2$-regularized cross-entropy loss on Caltech-101, Caltech-256 \citep{griffin2007caltech}, and CIFAR-100 \citep{krizhevsky2009learning}. For each dataset, we train a linear classifier with a softmax output on frozen features from a ResNet-34 pretrained on ImageNet. We use this setup for both the teacher and the pure-distilled student: the teacher is trained on corrupted one-hot labels, whereas the student is trained on the teacher's soft pseudo-labels using the same $\lambda$. \Cref{tab:cifar100_0.2_hierarchical} reports the test accuracies of the teacher, PD student, and PMSD student. We estimate the mixing weight $\xi$ on a small calibration set using a fine grid over $[-20, 20]$. Additional results across datasets and corruption levels appear in \Cref{sec:linear_probing_tables}; experimental details are provided in \Cref{sec:experiment_details}.

\section{Discussion}
\label{sec:extensions}

In this section, we discuss several possible extensions, compare the PMSD student with the same-$X$ loss-mixed SD student, and examine the limitations of our proposed PMSD approach in the prediction-only regime, with a focus on high-dimensional ridge regression.
This comparison clarifies both the advantages and the limitations of prediction-only distillation.

\subsection{Negative regularization}

Our analysis extends readily to negative regularization parameters $\lambda_t$ and $\lambda_s$, provided they are bounded below to prevent the risk from diverging.
Negative regularization can be optimal on real datasets \citep{kobak2020optimal} and has since been studied in the high-dimensional ridge regression literature \citep{wu2020optimal, patil2021uniform, patil2024optimal}.
Under this extension, the strict improvement results in \Cref{sec:strict_improv} remain essentially unchanged.
In particular, \Cref{prop:teacher_bad_set} and \Cref{prop:strict_improv_ood} hold regardless of the signs of $\lambda_t$ and $\lambda_s$: the cardinality of the student-tie set is controlled by the number of \emph{real-valued} solutions to a polynomial equation in $\kappa_s$.
Although the correspondence between $\lambda_s$ and $\kappa_s$ need not be one-to-one under negative regularization, each value of $\kappa_s$ determines at most one value of $\lambda_s$ through \eqref{eq:kappa_fp_mainpaper}.

\subsection{Fresh-X prediction-mixed versus same-X loss-mixed SD}

\begin{figure*}[t]
  \centering
    \begin{subfigure}[t]{0.45\textwidth}
    \centering
    \includegraphics[width=\textwidth]{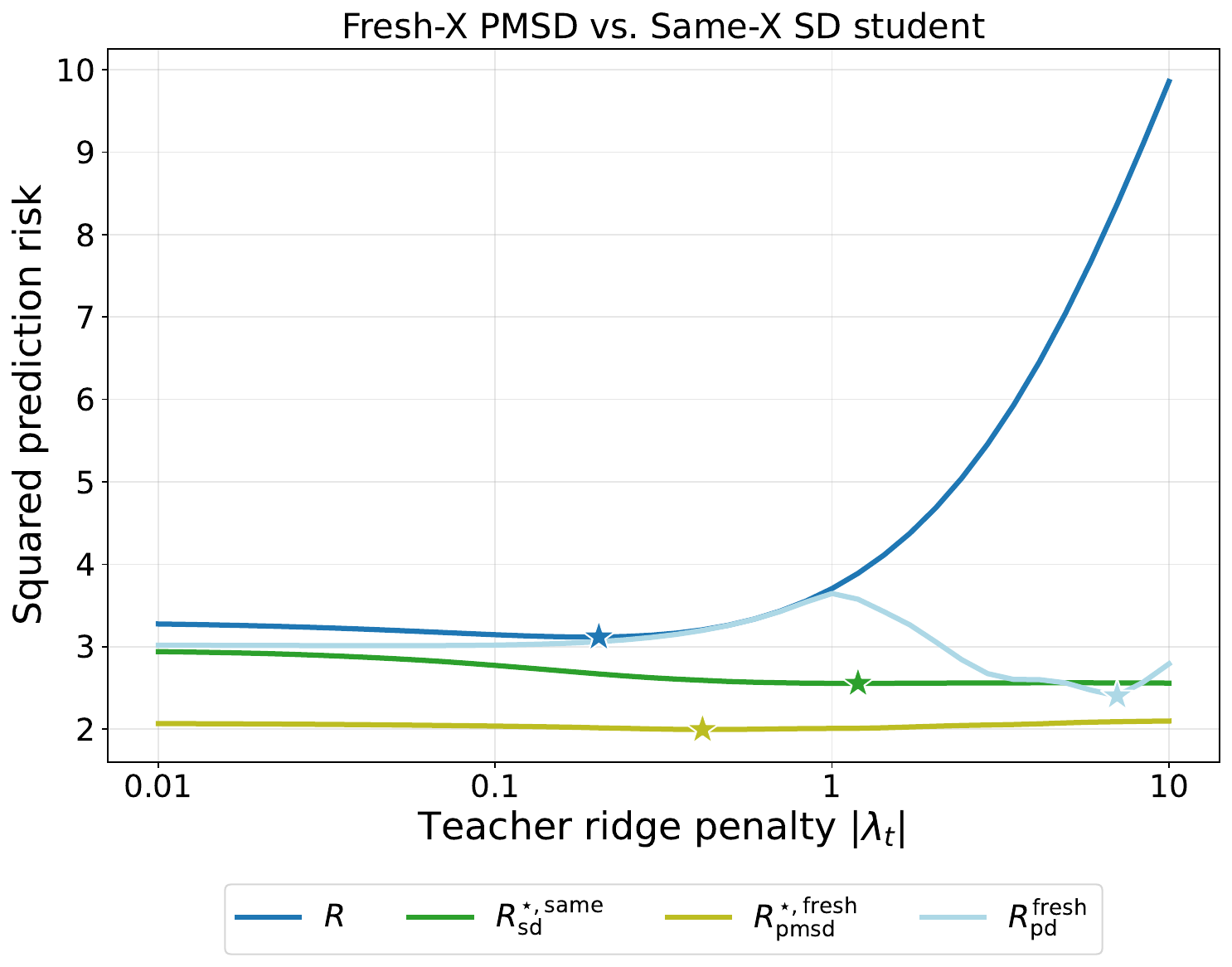}
    \caption{One-spike covariance model.}
     \label{fig:vs_same_x_spike}
  \end{subfigure}
  \hfill
    \begin{subfigure}[t]{0.45\textwidth}
    \centering
    \includegraphics[width=\textwidth]{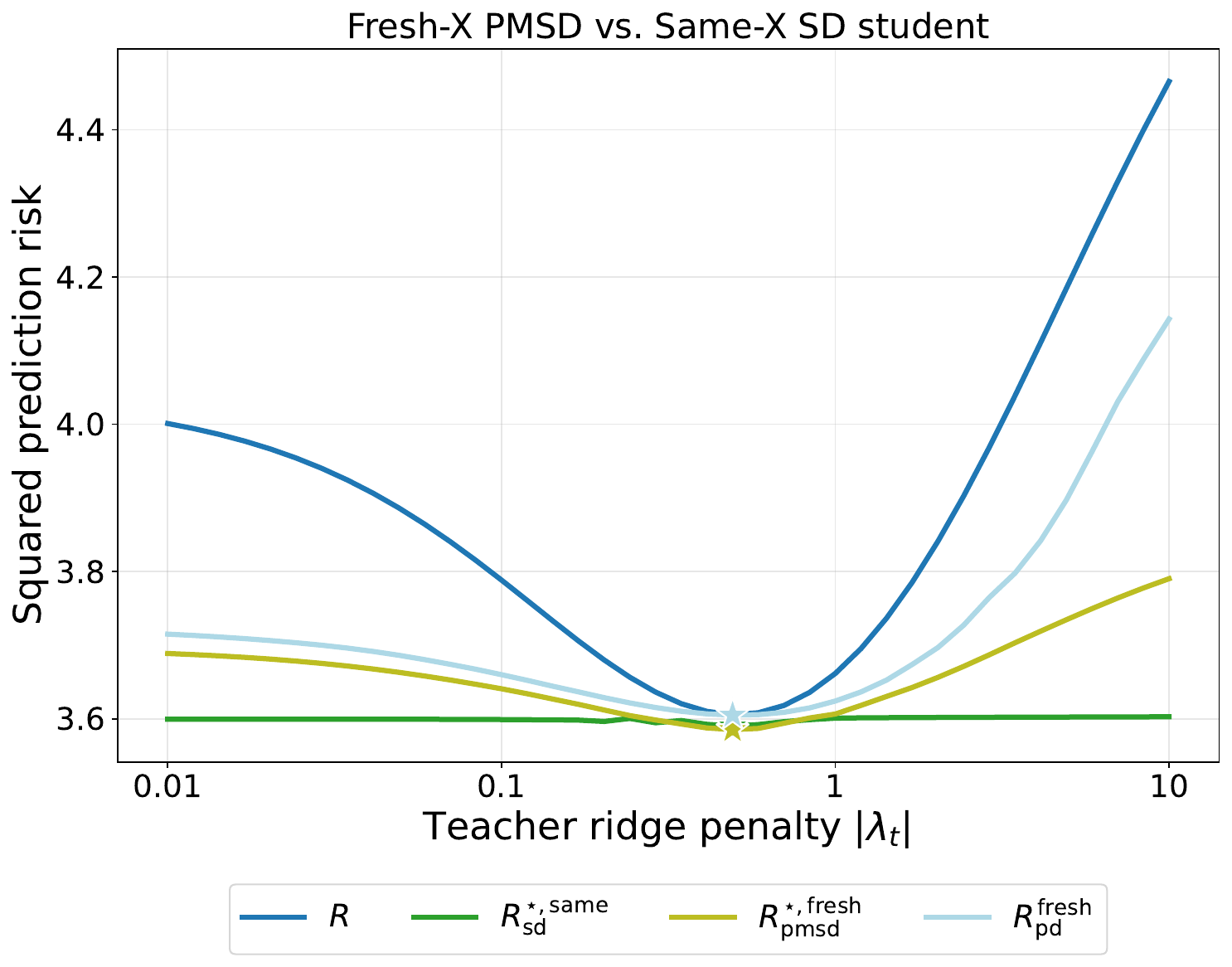}
    \caption{Isotropic covariance model.}
     \label{fig:vs_same_x_iso}
  \end{subfigure}
  \caption{\textbf{Fresh-$X$ PMSD versus same-$X$ loss-mixed SD.} Setting: $p = 400$, $n_t = 200$, $n_s = 4000$, and $\Sigma_t = \Sigma_s$. The $n_s$ fresh unlabeled samples are used only by the fresh-$X$ PD and PMSD students. For each value of $|\lambda_t|$ on the x-axis, we evaluate both signs of the teacher regularization, tune $\lambda_s$ over a grid on $[-100, 100]$ in each case, and report the smaller risk. The signal is $\beta = a u_1 + \sqrt{r^2-a^2}v$, where $a = 1.7$, $r = 2$, $u_1$ is the principal direction of $\Sigma_t$, and $v$ is orthogonal to $u_1$.}
  \label{fig:vs_same_x}
\end{figure*}

Fresh-$X$ PMSD and the same-$X$ loss-mixed SD methods studied by \citet{das2023understanding, pareek2024understanding, dang2026optimal, lecoiu2026self} operate with different information and therefore have different performance limits.
Consider a teacher whose regularization parameter $\lambda_t$ is mistuned for the target risk.
A same-$X$ student can reuse the original labeled sample to correct the teacher, whereas prediction-only PMSD can use only the teacher's predictions on fresh unlabeled covariates.
Consequently, fresh unlabeled data need not recover information lost in the teacher, regardless of the fresh sample size.
\Cref{fig:vs_same_x} illustrates both the benefits and the limitations of PMSD relative to same-$X$ loss mixing.

In the one-spike covariance model, \citet{lecoiu2026self} show that same-$X$ loss-mixed SD, with jointly optimized $\lambda_t$, $\lambda_s$, and $\xi \in \mathbb{R}$ and with negative regularization allowed, is optimal within the spectral shrinkage class given the labeled sample $\mathcal{D}_{\text{lab}}=(X, y)$.
Once additional fresh unlabeled data $\mathcal{D}_{\text{unlab}}$ are available, the fresh-$X$ PMSD student, which uses only the teacher's ridge predictions, can nevertheless outperform this optimal same-$X$ student; see \Cref{fig:vs_same_x_spike}.
The gains are especially pronounced in the overparameterized regime $p>n_t$.

The comparison reverses in the isotropic design setting, where the optimally tuned teacher is optimal within the spectral shrinkage class.
Because the same-$X$ loss-mixed student is itself a spectral shrinkage estimator \citep{lecoiu2026self}, it cannot outperform the optimally tuned teacher.
If the given teacher is mistuned, however, the same-$X$ student can attain the optimal ridge risk by setting $\lambda_s=\lambda_t^\star$ and effectively ignoring the teacher's predictions while using the original training data.
In contrast, PMSD relies only on the mistuned teacher's predictions on fresh unlabeled covariates.
It can substantially improve upon this teacher but generally cannot attain the optimal ridge risk or outperform the optimally tuned teacher; see \Cref{fig:vs_same_x_iso}.
\Cref{fig:vs_same_x_ar1} presents a similar comparison for an AR1 design.

\subsection[Optimal fresh-X covariance structure]{Optimal fresh-$X$ covariance structure}

Our framework also raises a natural design question: given the teacher predictor, from which distribution should the fresh unlabeled covariates be sampled to minimize the PMSD risk?
Within the class of covariances $\Sigma_s$ that are simultaneously diagonalizable with $\Sigma_t$, this amounts to maximizing the teacher-to-PMSD risk reduction in \Cref{thm:general_ridge},
\begin{equation}
\label{eq:risk_reduction}
\mathcal R_{\te}(\lambda_t) - \sR_{\m}^{\star}(\lambda_t, \lambda_s)
= \frac{\bigl(\mathcal R_{\te}(\lambda_t)-\mathcal C(\lambda_t, \lambda_s)\bigr)^2}{\mathcal R_{\te}(\lambda_t)+\mathcal R_{\pd}(\lambda_s) - 2 \mathcal C(\lambda_t, \lambda_s)}.
\end{equation}

Because this quantity also depends on $\lambda_t$ and $\lambda_s$, the optimal choice $\Sigma_s^{\star}$ generally varies with the regularization parameters.
Jointly optimizing $\Sigma_s$ and $\lambda_s$ for a given teacher is considerably more involved, and we leave this problem for future work.

We instead consider the underparameterized regime $\gamma_t, \gamma_s < 1$ and the post-asymptotic ridgeless limit $\lambda_t, \lambda_s \to 0$.
Recall that $\Sigma_t = U^{\top} \operatorname{diag}(\sigma_1, \ldots, \sigma_p) U$, $\Sigma_s = U^{\top} \operatorname{diag}(\tilde \sigma_1, \ldots, \tilde \sigma_p) U$, and $v_i = ((U\beta)_i)^2$.
For each $i \in [p]$, define $w_i = \tilde \sigma_i^{-1}$ and let $w = (w_1, \ldots, w_p)^{\top}$.
The limiting risk reduction in \eqref{eq:risk_reduction} is invariant under the rescaling $\Sigma_s\mapsto c\Sigma_s$ for $c > 0$, so we impose the normalization $p^{-1}\sum_{i=1}^p w_i=1$.
Define
\[
s_i := \sigma_i v_i + \frac{\sigma^2 \gamma_t}{p(1 - \gamma_t)},
\quad
t_i := \frac{\gamma_s}{2p(1-\gamma_s)} \biggl( v_i + \frac{\sigma^2 \gamma_t}{p \sigma_i (1 - \gamma_t)} \biggr), \quad i = 1, \ldots, p,
\]
and
\[
S = \operatorname{diag}(s_1, \ldots, s_p), 
\quad t = (t_1, \ldots, t_p)^{\top},
\quad \theta = (\sigma_1, \ldots, \sigma_p)^{\top}.
\]

\begin{proposition}
\label{prop:optimize_Sigma_s}
    Under \Cref{def:dist}, further assume that $\gamma_t, \gamma_s < 1$ and $\sigma^2>0$, and consider the ridgeless limit $\lambda_t, \lambda_s \to 0$.
    Then maximizing the risk reduction in \eqref{eq:risk_reduction} over the spectrum $\{ \tilde \sigma_1, \ldots, \tilde \sigma_p \}$ of the positive definite matrix $\Sigma_s$ is equivalent, up to positive rescaling, to the optimization problem
    \begin{align}
    \underset{w\in\mathbb R_{+}^p}{\operatorname{minimize}}
    \quad&
    w^{\top}
    \bigl(
    S+t\theta^{\top}+\theta t^{\top}
    \bigr)w,\\
    \operatorname{subject\ to}
    \quad&
    \mathbf 1_p^{\top}w=p, \nonumber
    \end{align}
    where $w_i=\tilde \sigma_i^{-1}$ for $i \in [p]$.
\end{proposition}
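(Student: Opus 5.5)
The plan is to take the deterministic equivalents of \Cref{thm:general_ridge} and pass to the ridgeless limit. In that limit both the numerator and the denominator of the risk reduction \eqref{eq:risk_reduction} vanish, so I will expand each to leading order in $\kappa_s$ (and $\kappa_t$). The ratio of leading coefficients should have the form $(\mathbf 1_p^\top w)^2/(w^\top M w)$ times a positive factor independent of $w$. Under the normalization $\mathbf 1_p^\top w=p$, maximizing the ratio is then the same as minimizing $w^\top M w$, and it remains to identify $M\propto S+t\theta^\top+\theta t^\top$.

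\textbf{Step 1 (ridgeless fixed points).} For $\gamma_t,\gamma_s<1$, the fixed-point equations \eqref{eq:kappa_fp_mainpaper} give $\kappa_t,\kappa_s\to0$ as $\lambda_t,\lambda_s\to0$. To first order,
\[
\kappa_t=\lambda_t/(1-\gamma_t)+o(\lambda_t),\qquad \kappa_s=\lambda_s/(1-\gamma_s)+o(\lambda_s).
\]
Likewise $t_{2,0}\to\gamma_t$, so $b_t\to(1-\gamma_t)^{-1}$, and $b_s\to\gamma_s\,\otr(\Sigma_t\Sigma_s^{-1})/(1-\gamma_s)$. These limits are where the constants $\gamma_t/(1-\gamma_t)$ and $\gamma_s/(1-\gamma_s)$ in $s_i,t_i$ come from. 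I will write all resolvent quantities spectrally in the common eigenbasis $U$, using $G_s\approx \operatorname{diag}(w_i)-\kappa_s\operatorname{diag}(w_i^2)$, $G_t\approx\operatorname{diag}(\sigma_i^{-1})$, and $v_i$.

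\textbf{Step 2 (numerator).} From \eqref{eq:R-C}, $\mathcal R_{\te}-\mathcal C=\sum_i c_i/(\tilde\sigma_i+\kappa_s)$. In the limit, $c_i\to \sigma^2 b_t/n_t=\sigma^2\gamma_t/(p(1-\gamma_t))$, up to terms vanishing with $\kappa_t$. Hence the numerator is proportional to $(\sum_i w_i)^2=p^2$ times a factor free of $w$, at least at leading order.

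I need to be careful with the order of limits. In the exact ridgeless limit the PD student is OLS on noiseless linear pseudo-labels, so it reproduces the teacher exactly. This makes $D\to0$ and forces the $\kappa_s$-order terms to be tracked. I will therefore expand $\mathcal C$ to first order in $\kappa_s$, which brings in the $q_{1,1},q_{2,1},t_{2,1}$ terms. I expect the numerator to be $\kappa_s^2\big(\tfrac{\sigma^2\gamma_t}{p(1-\gamma_t)}\mathbf 1^\top w\big)^2(1+o(1))$, after sending $\kappa_t\to0$ first or jointly. I will check that the $\kappa_t$-dependent corrections do not contribute at leading order.

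\textbf{Step 3 (denominator), the main obstacle.} Here $D=\mathcal R_{\te}+\mathcal R_{\pd}-2\mathcal C$ equals the bracketed $\kappa_s^2(\cdots)$ expression in $\sR_{\pd}$ from \Cref{thm:general_ridge}, so $D=\kappa_s^2\,\Psi$. I then evaluate $\Psi$ at $\kappa_t=0,\kappa_s=0$.

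The bias part $q_{0,2}-2\kappa_t q_{1,2}+\kappa_t^2q_{2,2}$ together with the teacher-noise part $b_t\sigma^2 t_{2,2}$ gives $\sum_i w_i^2(\sigma_i v_i+\sigma^2\gamma_t/(p(1-\gamma_t)))=w^\top S w$. The $b_s$-weighted terms, namely $\tilde q_{0,2}+b_t\sigma^2\gamma_t\otr(\Sigma_t\Sigma_sG_t^2G_s^2)$, are each of the form (a sum linear in $\sigma_i w_i$) times (a sum linear in $w_i(v_i+\sigma^2\gamma_t/(p\sigma_i(1-\gamma_t)))$). These should assemble into $2(\theta^\top w)(t^\top w)=w^\top(t\theta^\top+\theta t^\top)w$, and the factor $\tfrac12$ in $t_i$ accounts for the symmetrization.

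This bookkeeping is the step I expect to be most delicate: matching each trace and quadratic form to the spectral sums, and confirming that no $w$-independent or linear-in-$w$ leftovers survive. I will verify it first in the isotropic case $\Sigma_t=\Sigma_s=I_p$, where the expressions are explicit.

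\textbf{Step 4 (conclusion).} The limiting reduction is $\big(\tfrac{\sigma^2\gamma_t}{p(1-\gamma_t)}\big)^2(\mathbf 1^\top w)^2/(w^\top(S+t\theta^\top+\theta t^\top)w)$. The factor $\kappa_s^2$ cancels, which is why $\sigma^2>0$ is needed to keep the numerator nonzero. This expression is invariant under $w\mapsto cw$, which justifies the normalization $\mathbf 1_p^\top w=p$. With that constraint, maximizing the reduction over positive spectra is exactly minimizing the quadratic form over $w\in\mathbb R_+^p$.
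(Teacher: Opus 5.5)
Your proposal is correct and follows essentially the same route as the paper's proof. It sends $\kappa_t,\kappa_s\to0$ using $b_t\to(1-\gamma_t)^{-1}$ and $b_s\to\frac{\gamma_s}{1-\gamma_s}\otr(\Sigma_t\Sigma_s^{-1})$, shows $(\mathcal R_{\te}-\mathcal C)/\kappa_s\to\frac{\sigma^2\gamma_t}{1-\gamma_t}\otr(\Sigma_s^{-1})$ and that the denominator divided by $\kappa_s^2$ tends to $w^\top(S+t\theta^\top+\theta t^\top)w$ spectrally, and then normalizes via degree-two homogeneity in $w$. The one slip is in your Step 2: the identity should read $\mathcal R_{\te}-\mathcal C=\kappa_s\sum_i c_i/(\tilde\sigma_i+\kappa_s)$, as in \Cref{lem:fresh_num_DE_general}, because the display \eqref{eq:R-C} drops this factor. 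Your statement that the squared numerator is $\kappa_s^2(\cdots)^2$ already uses the correct form.
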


This is a linearly constrained quadratic program in $w$ and can be solved numerically using standard quadratic programming methods.
In general, the spectrum of the optimal $\Sigma_s^\star$ depends on the teacher covariance, the signal-covariance alignment, the noise level, and the data aspect ratios.
In particular, matching the teacher covariance need not be optimal: $\Sigma_s^\star$ need not be proportional to $\Sigma_t$.
This differs from a recent result on optimal \emph{labeled}-data selection for ridgeless estimators \citep{rezaei2025high}, reflecting the different role of the fresh sample in our prediction-only setting.

Together with our calibration and logistic regression results, these observations outline the scope of PMSD: prediction mixing can exploit fresh unlabeled data to improve a fixed teacher, and a small labeled sample can make this improvement operational, but PMSD cannot in general recover the information in the original labeled sample.
Optimizing the fresh covariate distribution beyond the simultaneously diagonalizable ridgeless setting is a natural direction for future work.

\section*{Acknowledgments}
We thank Sujay Sanghavi, Adel Javanmard, Peter M\"{u}eller, Antonio Linero, Edward George, and participants in the Best of Statistical Science (``BOSS'') Workshop for helpful discussions.

\bibliographystyle{plainnat}
\bibliography{references-v2}

\clearpage
\appendix

\begin{center}
\Large
{\bf \framebox{Supplement}}
\end{center}

\bigskip

This supplement accompanies the paper ``Prediction-Only Distillation in Linear and Logistic Regression''.
It contains additional details, illustrations, and proofs of all the theoretical results.
The supplement is organized as follows.

\addcontentsline{toc}{section}{Appendix} 
\startcontents
\printcontents{}{1}{\setcounter{tocdepth}{2}}

\clearpage
\section{Notation}
\label{sec:notation}

\textbf{General.}
We denote vectors by lowercase letters and matrices by uppercase letters.
We use blackboard letters for standard number systems: $\NN$ denotes the positive integers, $\RR$ the real numbers, and $\overline{\RR}$ the extended real line.

For a positive integer $n$, we write $[n]:=\{1,\ldots,n\}$.
For $x,y\in\RR$, we write $x \wedge y:=\min\{x,y\}$ and $x \vee y:=\max\{x,y\}$.
For an event or set $A$, $\ind_A$ denotes its indicator.

\textbf{Linear algebra.}
For a vector $x$, $\|x\|_2$ denotes its Euclidean norm; when the subscript is omitted for simplicity, $\|x\|$ has the same meaning.
For a matrix $X \in \RR^{n \times p}$, $X^\top \in \RR^{p \times n}$ denotes its transpose and $X^{\dagger} \in \RR^{p \times n}$ its Moore--Penrose pseudoinverse.
For a square matrix $A \in \RR^{p \times p}$, $\tr(A)$ denotes its trace and $\otr(A):=\tr(A)/p$ its normalized trace.
If $A$ is invertible, $A^{-1}$ denotes its inverse.
For a symmetric positive semidefinite matrix $\Sigma$, $\Sigma^{1/2}$ denotes its principal square root.
The $p \times p$ identity matrix is denoted by $I_p$, or simply by $I$ when its dimension is clear.
For a matrix $X$, $\|X\|_{\mathrm{op}}$ denotes the operator norm induced by the Euclidean vector norm, and $\|X\|_F$ denotes the Frobenius norm.
For a matrix $M$, $\|M\|_{\tr}$ denotes its trace norm, equal to the sum of its singular values.

\textbf{Asymptotics.}
For $Y\geq 0$, we write $X=O_\alpha(Y)$ if $|X|\leq C_\alpha Y$, where $C_\alpha$ may depend on the ambient parameter $\alpha$ but not on the other parameters under consideration.
We use $O_p$ for the corresponding probabilistic order notation.
The notation $\Theta(1)$ denotes a quantity whose absolute value is bounded above and away from zero by constants independent of the asymptotic index.
We write $X_n\pto X$ for convergence in probability and $X_n\asto X$ for almost sure convergence.
When the mode of convergence is clear from context, we write $X_n \to Y_n$ to mean $|X_n-Y_n|\to 0$ in that mode.
We use $C$ and $C'$ for generic positive absolute constants whose values may change from line to line.

\vspace{-0.5em}
\section{Further related work}
\label{sec:further_rw}

\textbf{Retraining on synthetic labels and model collapse.}
Beyond a single round of pure-distillation, recent work studies how recursively training on model-generated data can degrade performance, a phenomenon known as \emph{model collapse} \citep{shumailov2024ai, alemohammad2023self, dohmatob2024model, gerstgrasser2024model, schaeffer2025position, dohmatob2024strong}.
\citet{he2025golden} and \citet{garg2025preventing} study infinitely repeated schemes for ridge and ridgeless regression that mix ground-truth and synthetic labels with a fixed weight $w \in [0,1]$ to prevent such degradation. \citet{bakshi2026collapse} studies an extension of this iterative training framework to the next-token prediction setting, using a mixture of real and synthetic data at the population level.
\citet{dohmatob2024strong} use the term ``strong model collapse'' when a model trained on synthetic data has worse population risk than one trained only on ground-truth data.
Our setting differs in two respects: we study a fixed, finite distillation procedure using fresh unlabeled covariates, and the final predictor mixes predictions rather than recursively replacing ground-truth data with synthetic data.
Accordingly, our results concern the benefits of post hoc prediction mixing, rather than the long-run stability of repeated synthetic training; see \Cref{sec:strict_nonmonotonicity,sec:logistic}.

\textbf{Risk estimation and tuning.}
Our calibration analysis in \Cref{sec:tuning_ridge} connects to the literature on out-of-sample risk estimation for high-dimensional ridge regression and related models, including analyses based on leave-one-out and generalized cross-validation \citep{rad2020scalable, patil2021uniform, patil2022estimating, wei2022more}.
Risk estimation has also been studied for finite ensembles of penalized estimators and subagged regularized $M$-estimators \citep{bellec2025corrected, koriyama2024precise}.
Unlike those settings, PMSD combines two predictors linked through pseudo-labeling, and our goal is to estimate their residual correlation and optimal mixing weight using a small independent labeled calibration sample, without retraining either model.

\enlargethispage{\baselineskip}

\clearpage

\section{Proofs in Section~\ref{sec:ridge}}
\subsection{Preliminaries}

Define the empirical covariance matrices for the teacher's training covariates and the student's fresh covariates, together with their resolvents, by
\begin{align}
\widehat\Sigma:=\frac{1}{n_t} X^\top X,\qquad \widetilde\Sigma:=\frac{1}{n_s} \tilde X^\top\tilde X,
\qquad
Q_{\lambda_t}:=(\widehat\Sigma+\lambda_t I_p)^{-1},\qquad
\widetilde Q_{\lambda_s}:=(\widetilde\Sigma+\lambda_s I_p)^{-1}. 
\nonumber
\end{align}

The teacher coefficient is $\beta_{\lambda_t}:=Q_{\lambda_t} X^\top y/n_t$.
The \emph{fresh-$X$ pure-distilled} (PD) coefficient is
\[
\beta_{\pd,\lambda_s}
:=\widetilde Q_{\lambda_s} \,\widetilde\Sigma\,\beta_{\lambda_t}
=\big(I_p - \lambda_s \widetilde Q_{\lambda_s} \big)\beta_{\lambda_t}.
\]
The \emph{prediction-mixed} student family is
\[
f_{\m,\lambda_t, \lambda_s, \xi}(x)=(1-\xi)f_{\lambda_t}(x)+\xi f_{\pd,\lambda_s}(x),
\qquad
f_{\lambda_t}(x)=x^\top\beta_{\lambda_t},\ \ f_{\pd,\lambda_s}(x)=x^\top\beta_{\pd,\lambda_s}.
\]
Recall the in-distribution risks and correlation
\begin{align}
  R_{\te}(\lambda_t) &:=\E[(y_0-f_{\lambda_t}(x_0))^2\mid \cD_{\rm lab}],
  \qquad
  R_{\pd}(\lambda_s):=\E[(y_0-f_{\pd,\lambda_s}(x_0))^2 \mid \cD_{\rm lab}, \cD_{\rm unlab}],
  \nonumber \\
  C(\lambda_t, \lambda_s)&:=\E[(y_0 - f_{\lambda_t}(x_0)) (y_0 -f_{\pd,\lambda_s}(x_0))\mid\cD_{\rm lab}, \cD_{\rm unlab}], \nonumber
  \\
  R_{\m}(\lambda_t, \lambda_s, \xi) &= 
  \E[(y_0-f_{\m,\lambda_t, \lambda_s, \xi}(x_0))^2 \mid \cD_{\rm lab}, \cD_{\rm unlab}],
  \nonumber
\end{align}
Denote the deterministic equivalents of the first three quantities by $\sR(\lambda_t)$, $\sR_{\pd}(\lambda_s)$, and $\sC(\lambda_t, \lambda_s)$, respectively.
Also define $D(\lambda_t, \lambda_s) := R_{\te}(\lambda_t) + R_{\pd}(\lambda_s) - 2C(\lambda_t, \lambda_s) \geq 0$.

Recall the random-design model in \Cref{def:dist}.
The population $L_2$-projection coefficient, residual, and residual variance are
\[
  \beta := \E[xx^\top]^{-1}\E[xy] = \Sigma_t^{-1}\E[xy],
  \qquad
  \varepsilon := y - x^\top\beta,
  \qquad
  \sigma^2 := \E[\varepsilon^2].
\]
Then $\E[x\varepsilon]=0$ (equivalently, $\E[\bz\,\varepsilon]=0$) and $\E[\varepsilon]=0$.
Throughout, we consider in-distribution prediction: the test pair $(x_0,y_0)$ is an independent copy of $(x,y)$, so $y_0=x_0^\top\beta+\varepsilon_0$, with $\E[x_0\varepsilon_0]=0$ and $\E[\varepsilon_0^2]=\sigma^2$.

\subsection{Proof of \Cref{prop:oracle_affine}}

\begin{proof}[Proof of \Cref{prop:oracle_affine}]
Let $e_{\te}:=y_0-f_{\lambda_t}^{\te}(x_0)$ and $e_{\pd}:=y_0-f_{\pd,\lambda_s}(x_0)$.
Since $y_0 - f_{\m, \lambda_t, \lambda_s, \xi}(x_0) = (1-\xi)e_{\te} + \xi e_{\pd}$, we have
\begin{align}
   R_{\m}(\lambda_t, \lambda_s, \xi)
  &=
  (1-\xi)^2R_{\te}(\lambda_t)+\xi^2R_{\pd}(\lambda_s)+2\xi(1-\xi)C(\lambda_t, \lambda_s)
  \nonumber \\
  &=
  R_{\te}(\lambda_t) - 2 \xi (R_{\te}(\lambda_t)-C(\lambda_t,\lambda_s)) + \xi^2 D(\lambda_t, \lambda_s). 
\end{align}
If $D(\lambda_t, \lambda_s)>0$, this is a strictly convex quadratic in $\xi$ with unique minimizer
$\xi^\star(\lambda_t, \lambda_s)=(R_{\te}(\lambda_t)-C(\lambda_t, \lambda_s))/D(\lambda_t, \lambda_s)$.
Substitution gives
\begin{equation}
    R_{\m}^{\star}(\lambda_t, \lambda_s) = R_{\te}(\lambda_t) -\frac{(R_{\te}(\lambda_t) - C(\lambda_t, \lambda_s))^2}{D(\lambda_t, \lambda_s)}.
\end{equation}
Equivalently, expanding the same quadratic around $\xi=1$ gives
\[
R_{\m}^{\star}(\lambda_t, \lambda_s)
=R_{\pd}(\lambda_s)-\frac{(R_{\pd}(\lambda_s)-C(\lambda_t,\lambda_s))^2}{D(\lambda_t,\lambda_s)},
\]
which proves both identities in \Cref{prop:oracle_affine}.
\end{proof}

\subsection{Deterministic equivalents for the proof of \Cref{thm:general_ridge}}

Fix $\lambda_t > 0$.
Let $\kappa_t=\kappa_t(\lambda_t)>0$ denote the teacher fixed-point solution
\begin{equation}
  \kappa_t
  =
  \lambda_t +\gamma_t \kappa_t \,\otr\!\big(\Sigma_t (\Sigma_t +\kappa_t I_p)^{-1}\big),
\end{equation}
and define
\[
  G_t:=(\Sigma_t +\kappa_t I_p)^{-1}.
\]
Likewise, let $\kappa_s=\kappa_s(\lambda_s)>0$ denote the PD student fixed-point solution
\begin{equation}
  \kappa_s
  =
  \lambda_s + \gamma_s \kappa_s\,\otr\!\big(\Sigma_s (\Sigma_s +\kappa_s I_p)^{-1}\big),
\end{equation}
and define
\[
  G_s:=(\Sigma_s +\kappa_s I_p)^{-1}.
\]

For $i, j \geq 0$, define the mixed signal-covariance alignment and trace functionals by
\begin{equation}
q_{i,j}:=\beta^\top G_t^i G_s^j \Sigma_t \,\beta,
    \qquad 
    \tilde q_{i,j}:=\beta^\top G_t^i G_s^j \Sigma_s \,\beta,
  \qquad
  t_{i,j}:=\gamma_t \,\otr(\Sigma_t^2 G_t^i G_s^j),
\end{equation}
and set 
\[
  b_t := (1 - t_{2,0})^{-1}, 
  \quad
 b_s := \frac{\gamma_s \otr(\Sigma_s \Sigma_t G_s^2) } {1 - \gamma_s \otr(\Sigma_s^2 G_s^2) }.
\]

\begin{lemma}[Deterministic equivalents with a deterministic insertion]
\label{lem:fresh_insertion_DE}
Let $A \in\RR^{p\times p}$ be deterministic, symmetric, uniformly bounded in operator norm, and commuting with $\Sigma_t$.
Define
\[
  \tau_A:=\gamma_t \,\otr(\Sigma_t G_t A G_t).
\]
Then, for each fixed $\lambda_t>0$,
\begin{align}
\label{eq:QA_DE}
  \lambda_t Q_{\lambda_t} A
  &\asymp
  \kappa_t \, G_t A,
  \\
\label{eq:QAQ_DE}
  \lambda_t^2 Q_{\lambda_t} A Q_{\lambda_t}
  &\asymp
  \kappa_t^2 G_t A G_t
  +
  \kappa_t^2 b_t\,\tau_A\, G_t \Sigma_t G_t,
\end{align}
and
\begin{equation}
\frac{1}{n_t} \tr(A\widehat\Sigma Q_{\lambda_t}^2)
  \pto
  b_t\,\tau_A,
\end{equation}
where $b_t := (1 - t_{2,0})^{-1}$ and $t_{2,0} = \gamma_t \otr(\Sigma_t^2 G_t^2)$.
\end{lemma}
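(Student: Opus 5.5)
The plan is to reduce all three claims to the standard first- and second-order deterministic equivalents for the ridge resolvent under \Cref{def:dist}(a), in the sense of \Cref{app:DE_Q_U-preliminaries}. These equivalents are known from the random matrix literature on ridge regression and are assumed here.

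\textbf{First-order equivalent.} We invoke the anisotropic first-order equivalent $\lambda_t Q_{\lambda_t}\asymp \kappa_t G_t$. Here $\kappa_t$ solves the stated fixed point, i.e.\ $\kappa_t = \lambda_t/(1-\gamma_t\otr(\Sigma_t G_t))$. Since deterministic equivalence is preserved under right multiplication by deterministic matrices of bounded operator norm, this immediately gives \eqref{eq:QA_DE}.

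\textbf{Second-order equivalent.} For \eqref{eq:QAQ_DE} we use the general second-order identity for ridge resolvents:
\[
\lambda_t^2 Q_{\lambda_t} A Q_{\lambda_t} \asymp \kappa_t^2 G_t A G_t + \kappa_t^2\,\frac{\gamma_t\otr(A\,\kappa_t^{-2}\cdots)}{1-t_{2,0}}\,G_t\Sigma_t G_t.
\]
To pin down the scalar, I would derive it directly rather than rely on memory. Write $Q A Q = -\partial_z (\widehat\Sigma + \lambda_t I - zA)^{-1}|_{z=0}$. The perturbed resolvent $(\widehat\Sigma-zA+\lambda_t I)^{-1}$ has a first-order equivalent with perturbed fixed point $\kappa(z)$, obtained by the same leave-one-out / Sherman--Morrison argument with the deterministic shift $-zA$. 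Differentiating at $z=0$ yields $G_tAG_t$ plus a correction $\partial_z\kappa\cdot G_t^2$-type term.

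\textbf{Main obstacle.} The main difficulty is bookkeeping this correction into the form $\kappa_t^2 b_t\tau_A\,G_t\Sigma_tG_t$, with $\tau_A=\gamma_t\otr(\Sigma_tG_tAG_t)$ and the denominator $1-t_{2,0}$ arising from differentiating the fixed-point equation. Commutativity of $A$ with $\Sigma_t$ lets everything be written spectrally, so the correction collapses to a scalar multiple of $G_t\Sigma_tG_t$ (more precisely, to $\kappa_t^2 G_t\Sigma_t G_t$ times $\partial_z$ of the self-energy). I would verify the constant by checking the special case $A=I$ against the known identity $\lambda^2Q^2\asymp \kappa^2 G^2 + \kappa^2 b_t t_{1,1}$-type formula, i.e.\ against $\partial_\lambda$ of the first-order equivalent, which gives $\kappa_t'(\lambda_t)=b_t$-type factors.

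\textbf{Trace limit.} For the trace statement, use $\widehat\Sigma Q = I-\lambda_t Q$, so that
\[
\widehat\Sigma Q^2 = Q-\lambda_t Q^2.
\]
Then
\[
\frac1{n_t}\tr(A\widehat\Sigma Q^2)=\gamma_t\,\otr(AQ)-\gamma_t\lambda_t\,\otr(AQ^2).
\]
Apply \eqref{eq:QA_DE} and \eqref{eq:QAQ_DE} with $A$ (noting $\lambda_t^{-1}$ scalings) together with the trace functional: deterministic equivalence implies $\otr(M\,\cdot)$ convergence for bounded-trace-norm $M=I/p$-type tests. This gives
\[
\gamma_t\kappa_t\lambda_t^{-1}\otr(AG_t)-\gamma_t\kappa_t^2\lambda_t^{-1}\bigl[\otr(AG_t^2)+b_t\tau_A\otr(G_t\Sigma_tG_t)\bigr].
\]
Using $\kappa_tG_t = I-\Sigma_tG_t$ and the fixed-point relation $\lambda_t=\kappa_t(1-\gamma_t\otr(\Sigma_tG_t))$, this simplifies algebraically to $b_t\tau_A$. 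That simplification is a routine but careful computation, which I would carry out last.
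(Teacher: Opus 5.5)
Your argument for \eqref{eq:QA_DE} matches the paper's. For \eqref{eq:QAQ_DE}, your one-parameter shift is the paper's $2\times2$ block linearization in disguise. Indeed, $\mathbb H(J)$ is orthogonally equivalent to $\operatorname{diag}(\widehat\Sigma+\lambda_tI+JA,\,\widehat\Sigma+\lambda_tI-JA)$, so differentiating its off-diagonal block at $J=0$ amounts to differentiating $(\widehat\Sigma+\lambda_tI\mp JA)^{-1}$.

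Your route does yield the stated constant. The shifted resolvent has equivalent $(\mu(z)\Sigma_t+\lambda_tI-zA)^{-1}$, where $1/\mu(z)=1+\gamma_t\otr\bigl(\Sigma_t(\mu(z)\Sigma_t+\lambda_tI-zA)^{-1}\bigr)$ and $\mu(0)=\lambda_t/\kappa_t$. Differentiating this scalar equation at $0$ gives $\mu'(0)(1-t_{2,0})=-\tau_A$. Hence $\lambda_t^2Q_{\lambda_t}AQ_{\lambda_t}\asymp\kappa_t^2G_t\bigl(A-\mu'(0)\Sigma_t\bigr)G_t$, which is exactly \eqref{eq:QAQ_DE}.

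Three small fixes are needed in this step:
\begin{itemize}
\item The sign: $\partial_z(\widehat\Sigma+\lambda_tI-zA)^{-1}$ at $z=0$ equals $+Q_{\lambda_t}AQ_{\lambda_t}$, not its negative.
\item The collapse onto a multiple of $G_t\Sigma_tG_t$ comes from $G_t-\kappa_tG_t^2=G_t\Sigma_tG_t$. It does not come from commutativity of $A$ with $\Sigma_t$, which this display does not need.
\item Passing $\partial_z$ through the equivalence requires local uniformity, as in \Cref{lem:diff_DE}.
\end{itemize}

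The genuinely different step is the trace limit. The paper bypasses \eqref{eq:QAQ_DE} there. It writes $\frac1{n_t}\tr(A\widehat\Sigma Q_{\lambda_t}^2)=(\lambda_ts_A)'(\lambda_t)$ with $s_A=\gamma_t\otr(AQ_{\lambda_t})$, and differentiates the first-order equivalent $\lambda_ts_A\to\gamma_t\kappa_t\otr(AG_t)$ using $\kappa_t'=b_t$.

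You instead substitute both equivalents and close with the fixed-point equation, and this works:
\begin{itemize}
\item the first two terms give $(\kappa_t/\lambda_t)\tau_A$;
\item the correction term gives $-(\kappa_t/\lambda_t)\tau_A\,b_t\gamma_t\kappa_t\otr(\Sigma_tG_t^2)$;
\item and $1/b_t-\gamma_t\kappa_t\otr(\Sigma_tG_t^2)=1-\gamma_t\otr(\Sigma_tG_t)=\lambda_t/\kappa_t$, so the sum is $b_t\tau_A$.
\end{itemize}

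The paper's route keeps the third claim independent of the second-order correction, needing only the first-order law and its $\lambda_t$-derivative. Yours makes the third claim a corollary of \eqref{eq:QAQ_DE} in normalized-trace form, which is legitimate since $\|\lambda_t^2Q_{\lambda_t}AQ_{\lambda_t}\|_{\mathrm{op}}\le\|A\|_{\mathrm{op}}$. As a bonus, it doubles as a consistency check on the constant $b_t\tau_A$ in the second display.
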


\begin{proof}[Proof of \Cref{lem:fresh_insertion_DE}]
The first display \eqref{eq:QA_DE} follows immediately from the scaled resolvent deterministic equivalent
\[
  \lambda_t Q_{\lambda_t} \asymp \kappa_t G_t
\]
by right-multiplying with the bounded deterministic matrix $A$.

For \eqref{eq:QAQ_DE}, we repeat the $2\times 2$ block linearization used in Lemma B.9 of \citet{dang2026optimal}, with $A$ as the off-diagonal coupling matrix.
Concretely, define
\[
  \mathbb H(J)
  :=
  \begin{pmatrix}
    \widehat\Sigma+\lambda_t I_p & J A\\
    J A & \widehat\Sigma+\lambda_t I_p
  \end{pmatrix},
  \qquad
  \mathbb G(J):=\mathbb H(J)^{-1}.
\]
Differentiating at $J=0$ gives
\[
  -(\mathbb G'(0))_{12}=Q_{\lambda_t} A Q_{\lambda_t}.
\]
On the deterministic-equivalent side, the same linearization yields a self-consistent equation for
the off-diagonal block whose solution is
\[
  \lambda_t^2 Q_{\lambda_t} A Q_{\lambda_t}
  \asymp
  \kappa_t^2 G_t A G_t
  +
  \kappa_t^2 b_t\,\tau_A\, G_t \Sigma_t G_t,
\]
where the scalar correction factor is
\[
  b_t=(1-t_{2,0})^{-1},
  \qquad
  t_{2,0} =\gamma_t \,\otr(\Sigma_t^2 G_t^2).
\]

Finally, using the exact identity
\[
  \widehat\Sigma Q_{\lambda_t}^2 = Q_{\lambda_t} - \lambda_t Q_{\lambda_t}^2,
\]
we have
\[
  \frac{1}{n_t} \tr(A\widehat\Sigma Q_{\lambda_t}^2)
  =
  \gamma_t \,\otr(A Q_{\lambda_t})-\lambda_t \gamma_t \,\otr(A Q_{\lambda_t}^2).
\]
Let
\[
  s_A(\lambda_t):=\gamma_t \,\otr(A Q_{\lambda_t}).
\]
By \eqref{eq:QA_DE},
\[
  s_A(\lambda_t)\pto \gamma_t\,\frac{\kappa_t}{\lambda_t}\,\otr(A G_t).
\]
Differentiating this equivalent and using $\kappa_t'(\lambda_t)=b_t$ and $G_t'(\lambda_t)=-\kappa_t'(\lambda_t)G_t^2=-b_tG_t^2$ gives
\[
  \frac{1}{n_t} \tr(A\widehat\Sigma Q_{\lambda_t}^2)
  =
  s_A(\lambda_t)+\lambda_ts_A'(\lambda_t)
  \pto
  \gamma_t b_t\,\otr\!\big(A G_t (I_p-\kappa_t G_t)\big).
\]
Since $I_p-\kappa_t G_t=\Sigma_t G_t$, and $A$ commutes with $\Sigma_t$ and $G_t$, this becomes
\[
  \gamma_t b_t\,\otr(A G_t \Sigma_t G_t)
  =
  \gamma_t b_t\,\otr(\Sigma_t G_t A G_t)
  =
  b_t\,\tau_A.
\]
\end{proof}

\subsection{Proof of \Cref{thm:general_ridge}}

\begin{proof}[Proof of \Cref{thm:general_ridge}]
    By Theorem 3.1 of \citet{dang2026optimal}, the teacher risk has the deterministic equivalent
    \begin{align}
        \label{eq:teacher_risk}
        \sR(\lambda_t) = \kappa_t^2 b_t q_{2,0} + \sigma^2 b_t t_{2,0} + \sigma^2.
    \end{align}
    
    The next two lemmas give
    \begin{align}
        R_{\te}(\lambda_t) - C(\lambda_t, \lambda_s)
      &\pto \sR_{\te}(\lambda_t) - \sC(\lambda_t, \lambda_s) =
       \kappa_s\big(
        -\kappa_t q_{1,1}
        +\kappa_t^2 q_{2,1}
        +\kappa_t^2 b_t t_{2,1} q_{2,0}
        +\sigma^2 b_t t_{2,1}
      \big), \nonumber \\
        D(\lambda_t, \lambda_s)
  &\pto
   \sD(\lambda_t, \lambda_s)
  =
 \kappa_s^2
 \big(
 q_{0,2} + b_s \tilde q_{0,2} - 2 \kappa_t (q_{1,2} + b_s \tilde q_{1,2}) + \kappa_t^2 
 (q_{2,2} + b_s \tilde q_{2,2})
 \nonumber \\
 &\hspace{9em}
 + b_t (\kappa_t^2 q_{2,0} + \sigma^2) \big(t_{2,2} + b_s \gamma_t \otr(\Sigma_t \Sigma_s G_t^2 G_s^2) \big)
 \big). \nonumber
    \end{align}

    Since $\sD(\lambda_t, \lambda_s) = \sR_{\te}(\lambda_t) + \sR_{\pd}(\lambda_s) - 2 \sC(\lambda_t, \lambda_s)$, these identities yield closed-form expressions for $\sR_{\pd}(\lambda_s)$ and $\sC(\lambda_t, \lambda_s)$:
    \begin{align}
    \sC(\lambda_t, \lambda_s) &= \sigma^2 + \kappa_t^2 b_t q_{2,0} + \sigma^2 b_t t_{2,0} - \kappa_s (
        -\kappa_t q_{1,1}
        +\kappa_t^2 q_{2,1}
        +\kappa_t^2 b_t t_{2,1} q_{2,0}
        +\sigma^2 b_t t_{2,1}
    ),
    \\
    \sR_{\pd}(\lambda_s) &=  \kappa_s^2
 \big(
 q_{0,2} + b_s \tilde q_{0,2} - 2 \kappa_t (q_{1,2} + b_s \tilde q_{1,2}) + \kappa_t^2 
 (q_{2,2} + b_s \tilde q_{2,2})
 \nonumber \\
 &\hspace{3em}
 + b_t (\kappa_t^2 q_{2,0} + \sigma^2) \big(t_{2,2} + b_s \gamma_t \otr(\Sigma_t \Sigma_s G_t^2 G_s^2) \big)
 \big) -
 \sR(\lambda_t) + 2\sC(\lambda_t, \lambda_s). 
    \end{align}

In the nontrivial case $r^2+\sigma^2>0$, we have the scalars $\kappa_t, \kappa_s, b_t, b_s, q_{2,0}, t_{2,2} > 0$, and
\begin{align*}
q_{0,2} - 2 \kappa_t q_{1,2} + \kappa_t^2 q_{2,2}
&= \sum_{i=1}^p
\frac{\sigma_i v_i}{(\tilde{\sigma}_i+\kappa_s)^2}
\left(
1-\frac{2\kappa_t}{\sigma_i+\kappa_t}
+\frac{\kappa_t^2}{(\sigma_i+\kappa_t)^2}
\right) \nonumber \\
&=
\sum_{i=1}^p
\frac{\sigma_i v_i}{(\tilde{\sigma}_i+\kappa_s)^2}
\left(
\frac{\sigma_i}{\sigma_i+\kappa_t}
\right)^2 \nonumber \\
&=
\sum_{i=1}^p
\frac{\sigma_i^3 v_i}
{(\sigma_i+\kappa_t)^2(\tilde{\sigma}_i+\kappa_s)^2}
\geq 0,
\\
\tilde q_{0,2} - 2 \kappa_t \tilde q_{1,2} + \kappa_t^2 \tilde q_{2,2}
&= \sum_{i=1}^p
\frac{\tilde{\sigma}_i v_i}
     {(\tilde{\sigma}_i+\kappa_s)^2}
\left(
1-\frac{2\kappa_t}{\sigma_i+\kappa_t}
+\frac{\kappa_t^2}{(\sigma_i+\kappa_t)^2}
\right)\\
&=
\sum_{i=1}^p
\frac{\tilde{\sigma}_i v_i}
     {(\tilde{\sigma}_i+\kappa_s)^2}
\left(
\frac{\sigma_i}{\sigma_i+\kappa_t}
\right)^2\\
&=
\sum_{i=1}^p
\frac{\sigma_i^2\tilde{\sigma}_i v_i}
     {(\sigma_i+\kappa_t)^2
      (\tilde{\sigma}_i+\kappa_s)^2}
\geq 0.
\end{align*}

Then, the limiting denominator $\sD(\lambda_t, \lambda_s) = \sR_{\te}(\lambda_t) + \sR_{\pd}(\lambda_s) - 2 \sC(\lambda_t, \lambda_s)$ is strictly larger than 0 since
\begin{align}
    &\kappa_s^2
 \Big(
 q_{0,2} + b_s \tilde q_{0,2} - 2 \kappa_t (q_{1,2} + b_s \tilde q_{1,2}) + \kappa_t^2 
 (q_{2,2} + b_s \tilde q_{2,2})
 + b_t (\kappa_t^2 q_{2,0} + \sigma^2) \big(t_{2,2} + b_s \gamma_t \otr(\Sigma_t \Sigma_s G_t^2 G_s^2) \big)
 \Big) \nonumber \\
 &= \underbrace{\kappa_s^2}_{> 0}
 \Big( \underbrace{q_{0,2} - 2 \kappa_t q_{1,2} + \kappa_t^2 q_{2,2}}_{\geq 0} 
 + \underbrace{b_s(\tilde q_{0,2} - 2 \kappa_t \tilde q_{1,2} + \kappa_t^2 \tilde q_{2,2})}_{\geq 0}
 + \underbrace{ b_t (\kappa_t^2 q_{2,0} + \sigma^2) \big(t_{2,2} + b_s \gamma_t \otr(\Sigma_t \Sigma_s G_t^2 G_s^2) \big)}_{> 0}
 \Big).
 \nonumber
\end{align}

Substituting these deterministic equivalents into the oracle identities in \Cref{prop:oracle_affine} gives
\begin{align}
    \xi^{\star}(\lambda_t, \lambda_s)&\pto
\frac{\mathcal R_{\te}(\lambda_t)-\mathcal C(\lambda_t, \lambda_s)}{\mathcal R_{\te}(\lambda_t)+\mathcal R_{\pd}(\lambda_s) - 2 \mathcal C(\lambda_t, \lambda_s)},
\nonumber \\
R_{\m}^{\star}(\lambda_t, \lambda_s) &\pto
\mathcal R_{\te}(\lambda_t)
-
\frac{\bigl(\mathcal R_{\te}(\lambda_t)-\mathcal C(\lambda_t, \lambda_s)\bigr)^2}{\mathcal R_{\te}(\lambda_t)+\mathcal R_{\pd}(\lambda_s) - 2 \mathcal C(\lambda_t, \lambda_s)}. \nonumber
\end{align}
\end{proof}

\begin{lemma}[Deterministic equivalent for $R_{\te}-C$]
\label{lem:fresh_num_DE_general}
Under \Cref{def:dist}, as $n_t, n_s, p \to\infty$ with
\[
  0 < \liminf \gamma_t \leq \limsup \gamma_t < \infty,
  \qquad
  0 < \liminf \gamma_s \leq \limsup \gamma_s < \infty,
\]
for each fixed $\lambda_t>0$ and $\lambda_s>0$,
\begin{equation}
\label{eq:RminusCfresh_DE_final}
  R_{\te}(\lambda_t)- C(\lambda_t, \lambda_s)
  \pto
   \kappa_s\big(
    -\kappa_t q_{1,1}
    +\kappa_t^2 q_{2,1}
    +\kappa_t^2 b_t t_{2,1} q_{2,0}
    +\sigma^2 b_t t_{2,1}
  \big).
\end{equation}
\end{lemma}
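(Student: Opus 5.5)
Because $R_{\te}(\lambda_t)-C(\lambda_t,\lambda_s)=\E[(y_0-f_{\lambda_t}(x_0))(f_{\pd,\lambda_s}(x_0)-f_{\lambda_t}(x_0))\mid\cD_{\rm lab},\cD_{\rm unlab}]$, the plan is to write this difference as an explicit quadratic form in the teacher coefficient. The resolvent averaging is then done in two stages: first over the fresh design $\tilde X$ with $\beta_{\lambda_t}$ frozen, and then over the teacher design $X$ with \Cref{lem:fresh_insertion_DE}.

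\textbf{Exact reduction.} We have $\beta_{\pd,\lambda_s}-\beta_{\lambda_t}=-\lambda_s\widetilde Q_{\lambda_s}\beta_{\lambda_t}$. Also, $y_0=x_0^\top\beta+\varepsilon_0$ with $\E[x_0\varepsilon_0]=0$, and $\E[x_0x_0^\top]=\Sigma_t$. Hence
\[
R_{\te}-C=-\lambda_s(\beta-\beta_{\lambda_t})^\top\Sigma_t\widetilde Q_{\lambda_s}\beta_{\lambda_t}.
\]
$\widetilde Q_{\lambda_s}$ is independent of $(X,y)$, and $\beta_{\lambda_t}$ and $\beta$ have bounded norms with high probability. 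So the anisotropic local law for the fresh design, in the form $\lambda_s\widetilde Q_{\lambda_s}\asymp\kappa_sG_s$, applies conditionally on $\cD_{\rm lab}$. This lets us replace $\lambda_s\widetilde Q_{\lambda_s}$ by $\kappa_sG_s$. Here $\Sigma_tG_s$ is deterministic, bounded, and commutes with $\Sigma_t$ by \Cref{def:dist}(c). We obtain
\[
R_{\te}-C\simeq-\kappa_s\bigl(\beta^\top\Sigma_tG_s\beta_{\lambda_t}-\beta_{\lambda_t}^\top\Sigma_tG_s\beta_{\lambda_t}\bigr),
\]
which depends only on the teacher.

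\textbf{Teacher averaging.} Write $\beta_{\lambda_t}=\beta-\lambda_tQ_{\lambda_t}\beta+Q_{\lambda_t}X^\top\varepsilon/n_t$, and set $A:=\Sigma_tG_s$. The bracket then becomes
\[
\beta^\top A\beta_{\lambda_t}-\beta_{\lambda_t}^\top A\beta_{\lambda_t}
=\beta^\top A(\beta-\beta_{\lambda_t})-\beta_{\lambda_t}^\top A\beta_{\lambda_t},
\]
and expanding this produces five kinds of terms. The linear term $\lambda_t\beta^\top AQ_{\lambda_t}\beta$ is handled by \eqref{eq:QA_DE} and gives $\kappa_t q_{1,1}$, using $\beta^\top G_tG_s\Sigma_t\beta=q_{1,1}$. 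The quadratic bias term $\lambda_t^2\beta^\top Q_{\lambda_t}AQ_{\lambda_t}\beta$ is handled by \eqref{eq:QAQ_DE} with $\tau_A=\gamma_t\otr(\Sigma_t^2G_t^2G_s)=t_{2,1}$, and gives $\kappa_t^2q_{2,1}+\kappa_t^2b_tt_{2,1}q_{2,0}$. The variance term $\varepsilon^\top XQ_{\lambda_t}AQ_{\lambda_t}X^\top\varepsilon/n_t^2$ is treated by conditional concentration of the quadratic form in $\varepsilon$. It reduces to $\sigma^2\tr(A\widehat\Sigma Q_{\lambda_t}^2)/n_t$, which by the last display of \Cref{lem:fresh_insertion_DE} tends to $\sigma^2b_tt_{2,1}$. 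The cross terms that are linear in $\varepsilon$ vanish in probability by standard arguments. The remaining deterministic terms $\beta^\top A\beta$ cancel between the two pieces. Collecting signs, the leading $\beta^\top A\beta$ and the $\kappa_tq_{1,1}$ contributions combine with the quadratic ones to give $\kappa_s(-\kappa_tq_{1,1}+\kappa_t^2q_{2,1}+\kappa_t^2b_tt_{2,1}q_{2,0}+\sigma^2b_tt_{2,1})$. I will check this bookkeeping explicitly, including the sign of $\beta^\top A(\beta-\beta_{\lambda_t})$.

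\textbf{Main obstacle.} The delicate step is the variance and cross terms under the weak moment assumptions, because $\varepsilon$ is only uncorrelated with $x$, not independent of it. Here I would follow the argument of \citet{dang2026optimal}, Theorem 3.1, for the teacher risk, inserting the bounded commuting matrix $A$. That argument uses $\E[x\varepsilon]=0$ together with a leave-one-out decomposition, which yields the $\sigma^2b_t\tau_A$ equivalent; the $(4+\nu)$-moment bounds give the required concentration. A second point is justifying the conditional application of the fresh-design equivalent to vectors that depend on $\cD_{\rm lab}$. This is legitimate because $\widetilde Q_{\lambda_s}$ is independent of $\cD_{\rm lab}$ and the deterministic-equivalence notion holds for arbitrary bounded-norm test vectors.
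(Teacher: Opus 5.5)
Your proposal is correct and follows the paper's proof essentially step for step. Both use the exact reduction to $\lambda_s(\beta_{\lambda_t}-\beta)^\top\Sigma_t\widetilde Q_{\lambda_s}\beta_{\lambda_t}$, the replacement $\lambda_s\widetilde Q_{\lambda_s}\asymp\kappa_sG_s$, and the expansion with $A=\Sigma_tG_s$ fed into \Cref{lem:fresh_insertion_DE}, with linear-in-$\varepsilon$ terms handled by the leave-one-out argument of \citet{dang2026optimal}; your remark on conditioning on $\cD_{\rm lab}$ makes explicit a step the paper leaves implicit.

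The one slip is your displayed identity for the bracket, which should read $(\beta-\beta_{\lambda_t})^\top A\beta_{\lambda_t}=-(\beta_{\lambda_t}-\beta)^\top A\beta-(\beta_{\lambda_t}-\beta)^\top A(\beta_{\lambda_t}-\beta)$. Taken literally, your version leaves an uncancelled $-\beta^\top A\beta$ and triples the $q_{1,1}$ contribution. With the correction, the $\beta^\top A\beta$ terms cancel as you say and your stated limit follows.
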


\begin{proof}[Proof of \Cref{lem:fresh_num_DE_general}]
    Recall that
\[
  \beta_{\lambda_t} := Q_{\lambda_t} X^\top y/n_t,
  \qquad
  \beta_{\pd,\lambda_s}
  :=
  \widetilde Q_{\lambda_s} \widetilde\Sigma\,\beta_{\lambda_t}
  =
  (I_p-\lambda_s \widetilde Q_{\lambda_s})\beta_{\lambda_t},
\]

Therefore,
\begin{equation}
  R_{\te}(\lambda_t)-C(\lambda_t, \lambda_s)
  = (\beta_{\lambda_t} -\beta)^{\top}  \Sigma_t (\beta_{\lambda_t} - \beta_{\pd, \lambda_s})
  =
  \lambda_s \,(\beta_{\lambda_t} -\beta)^\top \Sigma_t \widetilde Q_{\lambda_s} \beta_{\lambda_t}.
\end{equation}

By the fresh sample resolvent deterministic equivalent,
\[
  \lambda_s \widetilde Q_{\lambda_s} \asymp \kappa_s G_s,
\]
hence
\begin{equation}
  R_{\te}(\lambda_t)-C(\lambda_t, \lambda_s)
  =
  \kappa_s\,(\beta_{\lambda_t}-\beta)^\top \Sigma_t G_s\,\beta_{\lambda_t}
  +o_p(1).
\end{equation}

Now expand
\[
  \beta_{\lambda_t} -\beta = -\lambda_t Q_{\lambda_t} \beta + Q_{\lambda_t} X^\top\varepsilon/n_t,
  \qquad
  \beta_{\lambda_t} = (I_p - \lambda_t Q_{\lambda_t})\beta + Q_{\lambda_t} X^\top\varepsilon/n_t.
\]
With $A_s:=\Sigma_t G_s$, this gives
\begin{align}
(\beta_{\lambda_t} -\beta)^\top A_s \beta_{\lambda_t}
&=
-\lambda_t \,\beta^\top Q_{\lambda_t} A_s \beta
+\lambda_t^2 \beta^\top Q_{\lambda_t} A_s Q_{\lambda_t} \beta
+\frac{1}{n_t^2}\varepsilon^\top X Q_{\lambda_t} A_s Q_{\lambda_t} X^\top \varepsilon
+\Lin(\lambda_t),
\end{align}
where $\Lin(\lambda_t)$ is linear in $\varepsilon$.
By the same leave-one-out concentration argument used in
Lemma B.3 of \citet{dang2026optimal}, we have $\Lin(\lambda_t)=o_p(1)$.

Apply \Cref{lem:fresh_insertion_DE} with $A_s=\Sigma_t G_s$.
Because $A_s$ commutes with $\Sigma_t$,
\[
  \tau_{A_s}
  =
  \gamma_t\,\otr(\Sigma_t^2 G_t^2 G_s)
  =
  t_{2,1}.
\]
Also,
\[
  \beta^\top G_t A_s \beta
  =
  \beta^\top G_t \Sigma_t G_s \beta
  =
  q_{1,1},
  \qquad
  \beta^\top G_t A_s G_t \beta
  =
  \beta^\top G_t^2 \Sigma_t G_s \beta
  =
  q_{2,1}.
\]
Therefore,
\[
  -\lambda_t \,\beta^\top Q_{\lambda_t} A_s \beta
  \pto
  -\kappa_t \,q_{1,1},
\]
\[
  \lambda_t^2 \beta^\top Q_{\lambda_t} A_s Q_{\lambda_t} \beta
  \pto
  \kappa_t^2 q_{2,1}
  +
  \kappa_t^2 b_t t_{2,1}\,q_{2,0},
\]
and
\[
  \frac{1}{n_t^2}\varepsilon^\top X Q_{\lambda_t} A_s Q_{\lambda_t} X^\top \varepsilon
  \pto
  \sigma^2\cdot
  \frac{1}{n_t}\tr(A_s\widehat\Sigma Q_{\lambda_t}^2)
  \pto
  \sigma^2 b_t t_{2,1}.
\]
Thus,
\[
  \sR_{\te}(\lambda_t) - \sC(\lambda_t, \lambda_s)
  =
  \kappa_s\big(
    -\kappa_t q_{1,1}
    +\kappa_t^2 q_{2,1}
    +\kappa_t^2 b_t t_{2,1} q_{2,0}
    +\sigma^2 b_t t_{2,1}
  \big).
\]
\end{proof}

\begin{lemma}[Deterministic equivalent for $D$]
\label{lem:freshX_DE_delta_D_Rpd}
Under \Cref{def:dist}, as $n_t, n_s, p \to\infty$ with
\[
  0 < \liminf \gamma_t \leq \limsup \gamma_t < \infty,
  \qquad
  0 < \liminf \gamma_s \leq \limsup \gamma_s < \infty,
\]
for each fixed $\lambda_t>0$ and $\lambda_s>0$,
\begin{align}
  D(\lambda_t, \lambda_s)
  \pto
   \sD(\lambda_t, \lambda_s)
  &=
 \kappa_s^2
 \big(
 q_{0,2} + b_s \tilde q_{0,2} - 2 \kappa_t (q_{1,2} + b_s \tilde q_{1,2}) + \kappa_t^2 
 (q_{2,2} + b_s \tilde q_{2,2})
 \nonumber \\
 &\hspace{3em}
 + b_t (\kappa_t^2 q_{2,0} + \sigma^2) \big(t_{2,2} + b_s \gamma_t \otr(\Sigma_t \Sigma_s G_t^2 G_s^2) \big)
 \big),
 \label{eq:fresh_D_DE}
\end{align}
where $b_s = \big(\gamma_s \otr(\Sigma_s \Sigma_t G_s^2) \big) / \big(1 - \gamma_s \otr(\Sigma_s^2 G_s^2) \big)$.
\end{lemma}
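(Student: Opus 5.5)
Since $f_{\lambda_t}-f_{\pd,\lambda_s}=x^\top(\beta_{\lambda_t}-\beta_{\pd,\lambda_s})$ and $\beta_{\pd,\lambda_s}=(I_p-\lambda_s\widetilde Q_{\lambda_s})\beta_{\lambda_t}$, the starting point is the exact identity
\[
D(\lambda_t,\lambda_s)=\lambda_s^2\,\beta_{\lambda_t}^\top \widetilde Q_{\lambda_s}\Sigma_t\widetilde Q_{\lambda_s}\beta_{\lambda_t}.
\]
The plan is to condition on $\cD_{\rm lab}$ first, so that $\beta_{\lambda_t}$ is a fixed vector independent of $\tilde X$. We then apply the fresh-design analogue of \eqref{eq:QAQ_DE} from \Cref{lem:fresh_insertion_DE}, with $\Sigma_s,\kappa_s,G_s,\gamma_s$ in place of the teacher quantities and insertion $A=\Sigma_t$. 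This requires $\Sigma_t$ to commute with $\Sigma_s$, which is \Cref{def:dist}(c). The second-order resolvent equivalent then reads
\[
\lambda_s^2\widetilde Q_{\lambda_s}\Sigma_t\widetilde Q_{\lambda_s}\asymp \kappa_s^2\bigl(G_s\Sigma_tG_s+b_s\,G_s\Sigma_sG_s\bigr),
\]
where $b_s=\tau/(1-\gamma_s\otr(\Sigma_s^2G_s^2))$ and $\tau=\gamma_s\otr(\Sigma_sG_s\Sigma_tG_s)=\gamma_s\otr(\Sigma_s\Sigma_tG_s^2)$. This is exactly the stated $b_s$. Since the deterministic equivalence holds for quadratic forms in vectors of bounded norm independent of $\tilde X$, and $\|\beta_{\lambda_t}\|$ is $O_p(1)$, conditioning gives
\[
D=\kappa_s^2\,\beta_{\lambda_t}^\top M\beta_{\lambda_t}+o_p(1),\qquad M:=G_s^2\Sigma_t+b_sG_s^2\Sigma_s .
\]
Here $M$ is deterministic, bounded, and commutes with $\Sigma_t$.

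The second step evaluates $\beta_{\lambda_t}^\top M\beta_{\lambda_t}$ over the teacher randomness. Write $\beta_{\lambda_t}=(I-\lambda_tQ_{\lambda_t})\beta+Q_{\lambda_t}X^\top\varepsilon/n_t$ and expand into four pieces.

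\textbf{Deterministic piece.} $\beta^\top M\beta=q_{0,2}+b_s\tilde q_{0,2}$.

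\textbf{Cross piece.} $-2\lambda_t\beta^\top Q_{\lambda_t}M\beta\to-2\kappa_t(q_{1,2}+b_s\tilde q_{1,2})$ by \eqref{eq:QA_DE}.

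\textbf{Quadratic piece.} By \eqref{eq:QAQ_DE} with $A=M$,
\[
\lambda_t^2\beta^\top Q_{\lambda_t}MQ_{\lambda_t}\beta\to\kappa_t^2(q_{2,2}+b_s\tilde q_{2,2})+\kappa_t^2b_t\tau_Mq_{2,0}.
\]

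\textbf{Noise piece.} $\varepsilon^\top XQ_{\lambda_t}MQ_{\lambda_t}X^\top\varepsilon/n_t^2\to\sigma^2\,n_t^{-1}\tr(M\widehat\Sigma Q_{\lambda_t}^2)\to\sigma^2b_t\tau_M$.

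The remaining terms are linear in $\varepsilon$ and are $o_p(1)$ by the same leave-one-out argument as in \Cref{lem:fresh_num_DE_general}. It remains to check that $\tau_M=\gamma_t\otr(\Sigma_tG_tMG_t)=t_{2,2}+b_s\gamma_t\otr(\Sigma_t\Sigma_sG_t^2G_s^2)$, which holds by commutativity. Collecting the four pieces and multiplying by $\kappa_s^2$ gives \eqref{eq:fresh_D_DE}.

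The main obstacle is the first step. One must rigorously justify the two-stage deterministic equivalence: the quadratic form in $\widetilde Q_{\lambda_s}$ has a random vector $\beta_{\lambda_t}$, and the final limit is a further deterministic equivalent of a random quantity. I would handle this in two parts. First, apply the anisotropic local law for $\lambda_s^2\widetilde Q\Sigma_t\widetilde Q$ conditionally on $\cD_{\rm lab}$; its error bound is uniform over unit vectors independent of $\tilde X$, so it passes through the conditioning, using the $(4+\tilde\mu)$ moments. Second, because $M$ is deterministic, the teacher-side step reduces to \Cref{lem:fresh_insertion_DE} directly. Some care is also needed to verify that the block-linearization derivation of \eqref{eq:QAQ_DE} with a non-identity insertion produces the factor $b_s$ with numerator $\gamma_s\otr(\Sigma_s\Sigma_tG_s^2)$, rather than the teacher's $b_t\tau_A$ form. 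This is the same computation as in \Cref{lem:fresh_insertion_DE} with the roles of the two designs swapped.
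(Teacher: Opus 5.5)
Your proposal is correct and follows essentially the same route as the paper. Both start from the exact identity $D=\lambda_s^2\beta_{\lambda_t}^\top\widetilde Q_{\lambda_s}\Sigma_t\widetilde Q_{\lambda_s}\beta_{\lambda_t}$, apply the fresh-design second-order equivalent with insertion $\Sigma_t$ (which produces $b_s$), and then expand $\beta_{\lambda_t}$ on the teacher side using the insertion lemma with the resulting deterministic matrix. Your explicit conditioning on $\cD_{\rm lab}$, used to pass the fresh-side equivalence through the random vector $\beta_{\lambda_t}$, supplies a justification the paper leaves implicit; it is sound because $\beta_{\lambda_t}$ is independent of $\tilde X$ and $\|\beta_{\lambda_t}\|_2=O_p(1)$.
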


\begin{proof}[Proof of \Cref{lem:freshX_DE_delta_D_Rpd}]
We first note the exact identities
\[
  \beta_{\pd,\lambda_s}
  =
  (I_p - \lambda_s \widetilde Q_{\lambda_s}) \beta_{\lambda_t},
  \qquad
  \beta_{\lambda_t} -\beta_{\pd,\lambda_s}
  =
  \lambda_s \widetilde Q_{\lambda_s} \beta_{\lambda_t},
\]
where $\widetilde Q_{\lambda_s}:=(\widetilde\Sigma+\lambda_s I_p)^{-1}$ and
$\widetilde\Sigma=\widetilde X^\top \widetilde X/n_s$.
Writing $e_{\lambda_t}:=\beta_{\lambda_t} -\beta$, we have
\[
  D(\lambda_t, \lambda_s)
  =
  (\beta_{\lambda_t} -\beta_{\pd,\lambda_s})^\top \Sigma_t (\beta_{\lambda_t} -\beta_{\pd,\lambda_s})
  =
  \lambda_s^2\,\beta_{\lambda_t}^\top \widetilde Q_{\lambda_s} \Sigma_t \widetilde Q_{\lambda_s} \beta_{\lambda_t}.
\]

Applying the fresh-sample analogue of \Cref{lem:fresh_insertion_DE} with $A=\Sigma_t$, which commutes with $\Sigma_s$ by \Cref{def:dist}, gives
\[
  \lambda_s \widetilde Q_{\lambda_s} \asymp \kappa_s G_s,
  \qquad
  \lambda_s^2 \widetilde Q_{\lambda_s} \Sigma_t \widetilde Q_{\lambda_s}
  \asymp
  \kappa_s^2 G_s^2 \Sigma_t
  + \kappa_s^2 \frac{\gamma_s \otr(\Sigma_s \Sigma_t G_s^2)}{1 - \gamma_s \otr(\Sigma_s^2 G_s^2)} G_s \Sigma_s G_s.
\]
Hence,
\begin{align}
D(\lambda_t, \lambda_s)
  &=
  \beta_{\lambda_t}^\top A_2 \beta_{\lambda_t} + o_p(1), 
  \nonumber \\
A_2 &:= \kappa_s^2 G_s^2 \Sigma_t + \kappa_s^2 b_s G_s \Sigma_s G_s,
\end{align}
where we denote
\[
b_s := \frac{\gamma_s \otr(\Sigma_s \Sigma_t G_s^2)}{1 - \gamma_s \otr(\Sigma_s^2 G_s^2)}.
\]

Note that
\[
e_{\lambda_t} = \beta_{\lambda_t} - \beta = -\lambda_t Q_{\lambda_t} \beta + Q_{\lambda_t} X^{\top} \varepsilon / n_t.
\]

For any deterministic positive semidefinite insertion $A$ that commutes with $\Sigma_t$, \Cref{lem:fresh_insertion_DE} gives
\[
  e_{\lambda_t}^\top A e_{\lambda_t}
  \pto
  \kappa_t^2 \beta^\top G_t A G_t \beta
  +
  b_t \,\gamma_t\,\otr(A \Sigma_t G_t^2)\,\big(\kappa_t^2 q_{2,0} +\sigma^2\big),
\]
and
\[
  e_{\lambda_t}^\top A \beta
  \pto
  -\kappa_t \,\beta^\top G_t A \beta,
\]
where linear terms in $\varepsilon$ are $o_p(1)$ by the same leave-one-out concentration argument used in
Lemma B.3 of \citet{dang2026optimal}.

Using $\beta_{\lambda_t}=\beta+e_{\lambda_t}$, we therefore obtain
\[
  e_{\lambda_t}^\top A \beta_{\lambda_t}
  =
  e_{\lambda_t}^\top A \beta + e_{\lambda_t}^\top A e_{\lambda_t}
  \pto
  -\kappa_t \,\beta^\top G_t A \beta
  +
  \kappa_t^2 \beta^\top G_t A G_t \beta
  +
  b_t\,\gamma_t \,\otr(A\Sigma_t G_t^2)\,\big(\kappa_t^2 q_{2,0}+\sigma^2\big),
\]
and
\[
  \beta_{\lambda_t}^\top A \beta_{\lambda_t}
  =
  \beta^\top A \beta + 2 e_{\lambda_t}^\top A \beta + e_{\lambda_t}^\top A e_{\lambda_t}
\]
\[
  \pto
  \beta^\top A \beta
  -2\kappa_t \,\beta^\top G_t A \beta
  +
  \kappa_t^2 \beta^\top G_t A G_t \beta
  +
  b_t\,\gamma_t\,\otr(A\Sigma_t G_t^2)\,\big(\kappa_t^2 q_{2,0}+\sigma^2\big).
\]

Substituting $A=A_2$ yields
\[
  \beta^\top A_2 \beta=\kappa_s^2 q_{0,2} + \kappa_s^2 b_s \tilde q_{0,2},
  \qquad
  \beta^\top G_t A_2 \beta=\kappa_s^2 q_{1,2} + \kappa_s^2 b_s \tilde q_{1,2},
  \qquad
  \beta^\top G_t A_2 G_t \beta=\kappa_s^2 q_{2,2} + \kappa_s^2 b_s \tilde q_{2,2},
\]
and
\[
  \gamma_t \otr(A_2 \Sigma_t G_t^2)=\kappa_s^2  t_{2,2} + \kappa_s^2 b_s \gamma_t \otr(\Sigma_t \Sigma_s G_t^2 G_s^2).
\]

Finally,
\begin{align}
 \sD(\lambda_t, \lambda_s) &= \kappa_s^2
 \big(
 q_{0,2} + b_s \tilde q_{0,2} - 2 \kappa_t (q_{1,2} + b_s \tilde q_{1,2}) + \kappa_t^2 
 (q_{2,2} + b_s \tilde q_{2,2})
 \nonumber \\
 &\hspace{3em}
 + b_t (\kappa_t^2 q_{2,0} + \sigma^2) \big(t_{2,2} + b_s \gamma_t \otr(\Sigma_t \Sigma_s G_t^2 G_s^2) \big)
 \big).
 \nonumber
\end{align}
This proves \eqref{eq:fresh_D_DE}.
\end{proof}

\subsection{Background on deterministic equivalents}
\label{app:DE_Q_U-preliminaries}

We use an anisotropic (bilinear-form) notion of deterministic equivalent.

\begin{definition}[Deterministic equivalent]
\label{def:strong_DE}
Let $A=A_p$ be a (possibly random) $p\times p$ matrix and $\bar A=\bar A_p$ a deterministic $p\times p$ matrix.
We write $A\asymp \bar A$ if, for every pair of deterministic vectors $u=u_p$ and $v=v_p$ with $\|u\|_2,\|v\|_2=O(1)$,
\[
  u^\top(A-\bar A)v \pto 0.
\]
If $A(\theta),\bar A(\theta)$ depend on a parameter $\theta$ in an open set $\Theta$, we write
$A(\theta)\asymp \bar A(\theta)$ \emph{uniformly on compact subsets of $\Theta$} if the convergence above holds
uniformly over $\theta$ in any compact $K\subset \Theta$.
\end{definition}

This notion implies trace convergence whenever operator norms are uniformly bounded.
In particular, if $A\asymp \bar A$ and $\|A\|_{\mathrm{op}},\|\bar A\|_{\mathrm{op}}=O_{\PP}(1)$, then
$\otr(A)-\otr(\bar A)\pto 0$.

We will also use that uniform deterministic equivalents can be differentiated.

\begin{lemma}[Differentiate a deterministic equivalent]
\label{lem:diff_DE}
Let $A(z)$ be random and $\bar A(z)$ deterministic, both analytic in a complex neighborhood $U$ of $z_0$.
If $A(z)\asymp \bar A(z)$ locally uniformly on $U$, then
$A'(z_0)\asymp \bar A'(z_0)$.
\end{lemma}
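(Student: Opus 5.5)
The plan is to exploit analyticity. Take $A(z)$ and $\bar A(z)$ analytic on $U$ and fix a closed disk $\overline{B(z_0,r)}\subset U$. For analytic matrix-valued functions, Cauchy's integral formula gives
\[
A'(z_0)=\frac{1}{2\pi i}\oint_{|z-z_0|=r}\frac{A(z)}{(z-z_0)^2}\,dz ,
\]
and the same formula holds for $\bar A$. Fix deterministic $u,v$ with $\|u\|_2,\|v\|_2=O(1)$, and set $f(z):=u^\top(A(z)-\bar A(z))v$. This is a scalar analytic function on $U$, and Cauchy's formula yields
\[
|f'(z_0)|\le \frac{1}{r}\sup_{|z-z_0|=r}|f(z)| .
\]
The circle is a compact subset of $U$. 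By the locally uniform hypothesis of \Cref{def:strong_DE}, the supremum on the right tends to $0$ in probability. Since $r$ is fixed, $u^\top(A'(z_0)-\bar A'(z_0))v=f'(z_0)\pto 0$, which is exactly $A'(z_0)\asymp\bar A'(z_0)$.

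I expect the main obstacle to be what "locally uniformly" means. If it means $\sup_{z\in K}|f(z)|\pto 0$ for each compact $K$, the argument above is complete. If it only means pointwise convergence that is uniform over compacts in some weaker sense, I would instead use the following standard route.

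First, establish a uniform bound, with high probability, on $|f|$ over a slightly larger disk. For resolvent-type objects this comes from bounded operator norms, since $\|(\widehat\Sigma-zI)^{-1}\|_{\mathrm{op}}\le 1/\operatorname{dist}(z,\operatorname{spec})$. The bound makes $\{f\}$ equicontinuous on the circle, by Cauchy estimates on the derivative. Then pointwise convergence on a finite net of the circle upgrades to uniform convergence on the whole circle. The Cauchy bound above then finishes the proof.

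Finally, I would note the random-analyticity subtlety: $A(z)$ is analytic only on the event where $U$ avoids the random spectrum. I would therefore restrict to that event, which has probability tending to one, and this does not affect convergence in probability.
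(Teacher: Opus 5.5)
Your argument is correct and is essentially the paper's proof: it applies Cauchy's integral formula to the scalar function $u^\top(A(z)-\bar A(z))v$ on a small circle around $z_0$ inside $U$, and uses locally uniform convergence, read as in \Cref{def:strong_DE} as a supremum over compacts tending to zero in probability, to make the contour integral $o_p(1)$. Your fallback equicontinuity route and your remark about restricting to the event where $A$ is analytic are not needed under the paper's definition, but they are harmless.
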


\begin{proof}
For any deterministic $u$ and $v$, apply Cauchy's integral formula to
$u^\top(A(z)-\bar A(z))v$ on a sufficiently small circle around $z_0$ contained in $U$.
Local uniform convergence makes the resulting contour integral $o_p(1)$.
\end{proof}

For the next two lemmas, let $\widehat\Sigma=X^\top X/n$ be a generic sample covariance matrix with population covariance $\Sigma$ and aspect ratio $\gamma=p/n$, and let $Q_\lambda=(\widehat\Sigma+\lambda I_p)^{-1}$.
A standard anisotropic local law for sample-covariance resolvents gives the following deterministic equivalent; see, e.g., \citep{rubio_mestre_2011,knowles_yin_2017,patil2023generalized}.
\begin{lemma}[Scaled resolvent deterministic equivalent]
\label{lem:scaled_resolvent_DE}
Under \Cref{def:dist}, for each fixed $\lambda>0$,
\begin{equation}
\label{eq:scaled_resolvent_DE_app}
  \lambda Q_\lambda \ \asymp\ \kappa(\lambda)\,(\Sigma+\kappa(\lambda)I_p)^{-1} \ = \ \kappa G,
\end{equation}
where $\kappa=\kappa(\lambda)>0$ is the unique solution to
\begin{equation}
\label{eq:kappa_fp_app_restate}
  \kappa \;=\; \lambda + \gamma\,\kappa\,\otr\!\big(\Sigma(\Sigma+\kappa I_p)^{-1}\big),
\end{equation}
and $G=(\Sigma+\kappa I_p)^{-1}$.
The equivalence extends locally uniformly to a complex neighborhood of each $\lambda>0$ and, in particular, holds uniformly for $\lambda$ in compact subsets of $(0,\infty)$.
\end{lemma}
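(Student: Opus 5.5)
The statement to prove is the scaled resolvent deterministic equivalent, \Cref{lem:scaled_resolvent_DE}. It is standard, so the plan reduces it to the anisotropic Marchenko--Pastur law rather than re-deriving a local law.

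The first step is to rewrite it in Stieltjes-transform form. Since $\lambda Q_\lambda=\lambda(\widehat\Sigma+\lambda I_p)^{-1}$, the claim $\lambda Q_\lambda\asymp\kappa G$ is equivalent to
\[
(\widehat\Sigma+\lambda I_p)^{-1}\asymp \frac{\kappa}{\lambda}(\Sigma+\kappa I_p)^{-1}.
\]
This is the known anisotropic deterministic equivalent for sample-covariance resolvents at $z=-\lambda$. In the notation of \citet{rubio_mestre_2011}, $(\widehat\Sigma-zI)^{-1}\asymp(x(z)\Sigma-zI)^{-1}$ with $x(z)=1/(1+\gamma\,\otr(\Sigma(x\Sigma-zI)^{-1}))$. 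Setting $z=-\lambda$ and $\kappa:=\lambda/x$, this becomes
\[
(x\Sigma+\lambda I)^{-1}=\frac{\kappa}{\lambda}(\Sigma+\kappa I)^{-1}.
\]
The scalar equation turns into $\lambda/\kappa=1-\gamma\,\otr(\Sigma G)$ after using $x\Sigma+\lambda I=x(\Sigma+\kappa I)$, which is exactly \eqref{eq:kappa_fp_app_restate}.

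The second step is to prove uniqueness and positivity of $\kappa$. The function $h(\kappa)=\kappa\bigl(1-\gamma\,\otr(\Sigma(\Sigma+\kappa I)^{-1})\bigr)$ is continuous and strictly increasing on the region where it is positive, with $h\to\infty$ as $\kappa\to\infty$. Monotonicity follows because $h'(\kappa)=1-\gamma\,\otr(\Sigma^2G^2)$, and this is positive wherever $h>0$, since $\otr(\Sigma^2G^2)<\otr(\Sigma G)$. Hence $h(\kappa)=\lambda$ has a unique positive root.

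The third step is to obtain the bilinear-form statement required by \Cref{def:strong_DE}. The cited anisotropic local laws \citep{knowles_yin_2017} give $u^\top(Q_\lambda-\bar Q_\lambda)v\to0$ for deterministic unit vectors under the moment conditions of \Cref{def:dist}(a). If one wants to avoid the full local law, there is a direct alternative for fixed $\lambda>0$:
\begin{itemize}
\item First show $\mathbb E[u^\top Q_\lambda v]\to u^\top\bar Q_\lambda v$ via the leave-one-out Sherman--Morrison identity, with quadratic-form concentration $z_i^\top\Sigma^{1/2}Q_{-i}\Sigma^{1/2}z_i/n\approx\gamma\,\otr(\Sigma Q)$. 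This yields an approximate fixed-point equation whose stability gives the limit.
\item Then show $\mathrm{Var}(u^\top Q_\lambda v)=O(1/n)$ by an Efron--Stein martingale argument, using $\|Q_\lambda\|_{\mathrm{op}}\le1/\lambda$.
\end{itemize}
Truncation at the $(4+\mu)$-th moment handles the heavy tails.

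The main obstacle is extending the result locally uniformly to a complex neighborhood of $\lambda$ and to compact sets. The plan here is as follows:
\begin{itemize}
\item For $z$ near $-\lambda$ with $\mathrm{Re}(-z)>\lambda/2$, we have $\|(\widehat\Sigma-zI)^{-1}\|\le2/\lambda$, so both sides are bounded analytic functions.
\item Pointwise convergence on a countable dense set, combined with Vitali/Montel equicontinuity (a uniform Lipschitz bound in $z$ from the resolvent identity), upgrades to locally uniform convergence in probability.
\item Analyticity of $\kappa(\cdot)$ follows from the implicit function theorem, since $h'(\kappa)>0$.
\end{itemize}
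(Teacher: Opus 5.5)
Your proposal is correct and follows essentially the same route as the paper, which gives no independent proof and simply invokes the standard anisotropic Marchenko--Pastur deterministic equivalent \citep{rubio_mestre_2011,knowles_yin_2017,patil2023generalized}. Your reparametrization $\kappa=\lambda/x(-\lambda)$, the monotonicity argument for uniqueness of $\kappa$, and the bounded-analytic/Lipschitz upgrade to local uniformity make explicit what the paper leaves implicit. One caveat: the cited results assume stronger moment conditions than the $(4+\mu)$-th moments in \Cref{def:dist}, so the truncation step is needed for the citation route too, not only for your direct leave-one-out/Efron--Stein alternative.
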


Define
\[
  t_2(\lambda):=\gamma \otr(\Sigma^2 G^2),
  \qquad
  b(\lambda):=\frac{1}{1-t_2(\lambda)}.
\]

\begin{lemma}[Derivative of the fixed-point solution]
\label{lem:kappa_prime_app}
The map $\lambda\mapsto \kappa(\lambda)$ is differentiable for $\lambda>0$ and
\[
  \kappa'(\lambda)=b(\lambda)=\frac{1}{1-t_2(\lambda)}.
\]
\end{lemma}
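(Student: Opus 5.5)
The plan is to differentiate the fixed-point equation \eqref{eq:kappa_fp_app_restate} implicitly. Define
\[
F(\kappa,\lambda):=\kappa-\lambda-\gamma\,\kappa\,\otr\!\big(\Sigma(\Sigma+\kappa I_p)^{-1}\big),
\]
so that $\kappa(\lambda)$ is the unique positive root of $F(\cdot,\lambda)=0$. The function $F$ is smooth (indeed real-analytic) in $(\kappa,\lambda)$ on $(0,\infty)^2$. By the implicit function theorem, $\kappa(\cdot)$ is differentiable at every $\lambda>0$ where $\partial_\kappa F(\kappa(\lambda),\lambda)\neq 0$, and then
\[
\kappa'(\lambda)=-\frac{\partial_\lambda F}{\partial_\kappa F}=\frac{1}{\partial_\kappa F}.
\]

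The key computation is $\partial_\kappa F$. Since $\Sigma$ and $G$ commute, we have
\[
\frac{d}{d\kappa}\big[\kappa\Sigma(\Sigma+\kappa I)^{-1}\big]=\Sigma G-\kappa\Sigma G^2=\Sigma G(I-\kappa G)=\Sigma^2G^2,
\]
using $I-\kappa G=\Sigma G$. Hence
\[
\partial_\kappa F=1-\gamma\,\otr(\Sigma^2G^2)=1-t_2(\lambda),
\]
and the claimed formula $\kappa'=1/(1-t_2)=b$ follows once $\partial_\kappa F\neq 0$.

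The main step is showing $1-t_2(\lambda)>0$ at the root. Write the fixed-point equation as
\[
1=\frac{\lambda}{\kappa}+\gamma\,\otr(\Sigma G).
\]
Since $\lambda>0$ and $\kappa>0$, this gives $\gamma\,\otr(\Sigma G)<1$. Entrywise in the eigenbasis, $\sigma_i^2/(\sigma_i+\kappa)^2\le \sigma_i/(\sigma_i+\kappa)$, because $\sigma_i/(\sigma_i+\kappa)\le 1$. Therefore
\[
t_2=\gamma\,\otr(\Sigma^2G^2)\le\gamma\,\otr(\Sigma G)<1,
\]
so $\partial_\kappa F>0$. This also yields $b>0$ and monotonicity of $\lambda\mapsto\kappa(\lambda)$, consistent with the one-to-one claim used earlier.

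For completeness, I would recall existence and uniqueness of the positive root, which is taken as given from Lemma~\ref{lem:scaled_resolvent_DE}. Alternatively, it follows directly: $\kappa\mapsto 1-\gamma\,\otr(\Sigma G)$ is increasing from $1-\gamma$ (as $\kappa\to0$) to $1$ (as $\kappa\to\infty$), while $\lambda/\kappa$ decreases from $\infty$ to $0$. Hence the map $\kappa\mapsto\kappa\,(1-\gamma\,\otr(\Sigma G))-\lambda$ has exactly one sign change on $(0,\infty)$. The positivity argument above already ensures the root is simple, so continuity of $\kappa(\cdot)$ and the implicit function theorem give differentiability globally on $(0,\infty)$.
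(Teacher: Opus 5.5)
Your proposal is correct and follows the same route as the paper: you implicitly differentiate the fixed-point equation and use $I_p-\kappa G=\Sigma G$ to obtain $\kappa'(\lambda)\bigl(1-\gamma\,\otr(\Sigma^2G^2)\bigr)=1$. Your additional check that $t_2\le\gamma\,\otr(\Sigma G)=1-\lambda/\kappa<1$ is what justifies applying the implicit function theorem and dividing by $1-t_2$; the paper's one-line proof leaves this step implicit.
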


\begin{proof}
Differentiate \eqref{eq:kappa_fp_app_restate} and use $I_p-\kappa G=\Sigma G$ to obtain
\[
  \kappa'(\lambda)\bigl(1-\gamma\otr(\Sigma^2G^2)\bigr)=1.
\]
\end{proof}

\section{Proofs in Section~\ref{sec:strict_nonmonotonicity}}

\subsection{Proof of \Cref{prop:teacher_bad_set}}
\label{sec:proof_teacher_bad_set}

\begin{proof}[Proof of \Cref{prop:teacher_bad_set}]
For fixed $\lambda_t$, define
\[
H(\kappa_s):=\sum_{j=1}^{\tilde m}\frac{d_j}{\tilde\sigma_{(j)}+\kappa_s}.
\]
As $\lambda_s$ ranges over $(0,\infty)$, the corresponding fixed-point solution $\kappa_s$ ranges over an interval.
Thus, $\lambda_t\in\Lambda_t$ if and only if $H$ vanishes throughout this interval.
Clearing the denominators gives
\[
\sum_{j=1}^{\tilde m}d_j\prod_{k\ne j}(\tilde\sigma_{(k)}+\kappa_s)=0.
\]
The left-hand side is a polynomial that vanishes on an interval and is therefore identically zero.
For any $j\in[\tilde m]$, evaluating it at $\kappa_s=-\tilde\sigma_{(j)}$ yields
\[
d_j\prod_{k\ne j}(\tilde\sigma_{(k)}-\tilde\sigma_{(j)})=0.
\]
The eigenvalues $\tilde\sigma_{(1)},\ldots,\tilde\sigma_{(\tilde m)}$ are distinct, so $d_j=0$.
Conversely, if every $d_j=0$, then $H$ is identically zero and $\lambda_t\in\Lambda_t$.

We provide a crude upper bound on the cardinality of $\Lambda_t$ using only the weaker necessary condition,
\[
\sum_{j=1}^{\tilde m} d_j = \sum_{i=1}^{p} c_i = 0.
\]
This condition is equivalent to
\begin{align}
    &\sum_{i=1}^{p} \frac{ \kappa_t \sigma_i^2}{(\sigma_i + \kappa_t)^2} 
\biggl(\frac{\kappa_t^2 b_t q_{2,0} + \sigma^2 b_t}{\kappa_t n_t} - v_i \biggr) = 0,
\nonumber \\
&\Leftrightarrow \quad
\frac{b_t}{n_t} (\kappa_t^2 q_{2,0} + \sigma^2) \sum_{i=1}^{p} \frac{\sigma_i^2}{(\sigma_i + \kappa_t)^2} - \kappa_t \sum_{i=1}^{p} \frac{\sigma_i^2 v_i}{(\sigma_i + \kappa_t)^2} = 0.
\end{align}

For convenience, write $\kappa_t$ as $\kappa$.
Let $m$ be the number of distinct eigenvalues of $\Sigma_t$, denoted by $\{\sigma_{(1)},\ldots,\sigma_{(m)}\}$.
Define
\begin{align*}
    P(\kappa)&:=\prod_{j=1}^m (\sigma_{(j)}+\kappa), 
    \\
    Q(\kappa) &:=
\sum_{i=1}^p
\sigma_i^2
\prod_{j: \sigma_{(j)} \neq \sigma_i}
(\sigma_{(j)}+\kappa)^2,
\\
R(\kappa)
&:=
\sum_{i=1}^p
\sigma_i v_i
\prod_{j: \sigma_{(j)} \neq \sigma_i}
(\sigma_{(j)}+\kappa)^2,
\\
S(\kappa) &:= \sum_{i=1}^p
\sigma_i^2 v_i
\prod_{j: \sigma_{(j)} \neq \sigma_i}
(\sigma_{(j)}+\kappa)^2.
\end{align*}

Then
\begin{align*}
    \sum_{i=1}^{p} \frac{\sigma_i^2}{(\sigma_i + \kappa)^2}
    &= \frac{Q(\kappa)}{P(\kappa)^2},
    \quad
    \sum_{i=1}^{p} \frac{\sigma_i^2 v_i}{(\sigma_i + \kappa)^2}
    = \frac{S(\kappa)}{P(\kappa)^2},
    \quad
    q_{2,0} = \sum_{i=1}^{p}
    \frac{\sigma_i v_i}{(\sigma_i + \kappa)^2} = \frac{R(\kappa)}{P(\kappa)^2},
    \\
    &b_t = \frac{1}{1 - t_{2,0}}
    = \frac{1}{1 - \frac{1}{n_t} \sum_{i=1}^{p} \frac{\sigma_i^2}{(\sigma_i + \kappa)^2}} = \frac{P(\kappa)^2}{P(\kappa)^2 - Q(\kappa)/n_t}.
\end{align*}

Substituting these identities into the degeneracy condition and clearing the denominator gives
\begin{align}
    \frac{Q(\kappa)}{n_t} \bigl(\kappa^2 R(\kappa) + \sigma^2 P(\kappa)^2 \bigr)
    - \kappa S(\kappa) (P(\kappa)^2 - Q(\kappa)/n_t) = 0.
\end{align}

By construction, $\deg P=m$ and $\deg Q,\deg R,\deg S\leq 2m-2$, so this polynomial has degree at most $4m-1$.
When $\beta\ne0$, its leading coefficient is $-\sum_{i=1}^p\sigma_i^2v_i<0$.
If $\beta=0$ and $\sigma^2>0$, the polynomial reduces to $\sigma^2Q(\kappa)P(\kappa)^2/n_t$ and is again nonzero.
Thus, in the nontrivial case $r^2+\sigma^2>0$, the degeneracy condition has at most $4m-1$ solutions.
Because $\lambda_t\mapsto\kappa_t$ is one-to-one, the same bound applies to the corresponding values of $\lambda_t$.

\end{proof}

\subsection{Proof of \Cref{prop:strict_improv_ood}}
\label{sec:proof_of_strict_improvement}

\begin{proof}[Proof of \Cref{prop:strict_improv_ood}]
By \Cref{lem:fresh_num_DE_general},
\[
  \sR_{\te}(\lambda_t) - \sC(\lambda_t, \lambda_s)
  =
  \kappa_s\big(
    -\kappa_t q_{1,1}
    +\kappa_t^2 q_{2,1}
    +\kappa_t^2 b_t t_{2,1} q_{2,0}
    +\sigma^2 b_t t_{2,1}
  \big).
\]

The equation $\sR_{\te}(\lambda_t)-\sC(\lambda_t,\lambda_s)=0$ is therefore equivalent to
\begin{equation}
    \label{eq:si_condition}
    -\kappa_t q_{1,1}
    +\kappa_t^2 q_{2,1}
    +\kappa_t^2 b_t t_{2,1} q_{2,0}
    +\sigma^2 b_t t_{2,1} = 0.
\end{equation}

Recall the eigendecompositions $\Sigma_t=U^{\top}\operatorname{diag}(\sigma_1,\ldots,\sigma_p)U$ and $\Sigma_s=U^{\top}\operatorname{diag}(\tilde\sigma_1,\ldots,\tilde\sigma_p)U$, and the squared signal-covariance alignment $v_i=((U\beta)_i)^2$.
Equation \eqref{eq:si_condition} becomes
\begin{align}
    \label{eq:poly_equation_teacher}
    &\sum_{i=1}^{p}
    \biggl(\frac{- \kappa_t \sigma_i v_i}{(\sigma_i + \kappa_t)(\tilde \sigma_i + \kappa_s)} +
    \frac{\kappa_t^2 \sigma_i v_i}{(\sigma_i + \kappa_t)^2(\tilde \sigma_i + \kappa_s)} 
    + (\kappa_t^2 b_t q_{2,0} + \sigma^2 b_t) \frac{1}{n_t} \frac{\sigma_i^2}{(\sigma_i + \kappa_t)^2(\tilde \sigma_i + \kappa_s)} \biggr) = 0
    \nonumber
    \\
    &\Leftrightarrow \:
    \sum_{i=1}^{p}
    \frac{\sigma_i^2 \bigl[-\kappa_t v_i + (\kappa_t^2 b_t q_{2,0} + \sigma^2 b_t) / n_t\bigr]}{(\sigma_i + \kappa_t)^2}
    \frac{1}{\tilde \sigma_i + \kappa_s}= 0. 
\end{align}

For $i\in[p]$, define
$c_i := \frac{\kappa_t \sigma_i^2}{(\sigma_i + \kappa_t)^2}
\big[\frac{\kappa_t^2 b_t q_{2,0} + \sigma^2 b_t}{\kappa_t n_t} - v_i \big]$.
Then \eqref{eq:poly_equation_teacher} becomes
\begin{align}
    \label{eq:poly_c_i_teacher}
    H(\kappa_s) := \sum_{i=1}^{p} \frac{c_i}{\tilde \sigma_i + \kappa_s} =
\sum_{j=1}^{\tilde m} \frac{d_j
}{\tilde \sigma_{(j)} + \kappa_s} = 0,
\end{align}
where $d_j:=\sum_{k:\,\tilde\sigma_k=\tilde\sigma_{(j)}}c_k$.

Let $j_1<\cdots<j_K$ index the nonzero entries of $(d_1,\ldots,d_{\tilde m})$, and let $M$ be the number of sign changes in $(d_{j_1},\ldots,d_{j_K})$.
Because $\lambda_t\notin\Lambda_t$, \Cref{prop:teacher_bad_set} ensures that $K\ge1$.
Suppose, for contradiction, that $H$ has $r>M$ distinct positive roots $0<x_1<\cdots<x_r$.

Let $\mathcal I$ contain the indices $a$ at which $d_{j_a}$ and $d_{j_{a+1}}$ have opposite signs.
For each $a\in\mathcal I$, choose $\eta_a\in(\tilde\sigma_{(j_a)},\tilde\sigma_{(j_{a+1})})$ and set
\[
Q(t):=c_0\prod_{a\in\mathcal I}(t-\eta_a),
\]
where the sign of $c_0\ne0$ is chosen so that
\[
\operatorname{sign}Q(\tilde\sigma_{(j_a)})=\operatorname{sign}d_{j_a},
\qquad a=1,\ldots,K.
\]
This construction gives $\deg(Q)=M\le r-1$.
Define
\[
P(t):=\frac{Q(t)}{\prod_{\ell=1}^r(t+x_\ell)}.
\]
Since $\deg(Q)\le r-1$, $P$ has the partial fraction decomposition
\[
P(t)=\sum_{\ell=1}^r\frac{y_\ell}{t+x_\ell},
\qquad
y_\ell:=\frac{Q(-x_\ell)}{\prod_{k\ne\ell}(x_k-x_\ell)}.
\]
Indeed,
\[
Q(t)=\sum_{\ell=1}^r y_\ell\prod_{k\ne\ell}(t+x_k),
\]
because the two polynomials have degree at most $r-1$ and agree at $t=-x_1,\ldots,-x_r$.

Since $x_\ell>0$, the denominator of $P(\tilde\sigma_{(j)})$ is positive.
Thus, $d_jP(\tilde\sigma_{(j)})>0$ for every nonzero $d_j$, and
\[
\sum_{j=1}^{\tilde m}d_jP(\tilde\sigma_{(j)})>0.
\]
On the other hand,
\begin{align}
\sum_{j=1}^{\tilde m}d_jP(\tilde\sigma_{(j)})
&=\sum_{\ell=1}^r y_\ell\sum_{j=1}^{\tilde m}\frac{d_j}{\tilde\sigma_{(j)}+x_\ell}
=\sum_{\ell=1}^r y_\ell H(x_\ell)=0,
\end{align}
which is a contradiction.
Therefore, $H$ has at most $M$ positive roots.
\end{proof}

\subsection{Proof of \Cref{cor:strict_improv_special_cases}}

\begin{proof}[Proof of \Cref{cor:strict_improv_special_cases}]
We consider the following cases.
\begin{enumerate}[leftmargin=7mm]
    \item [(a)] Isotropic design: equation \eqref{eq:poly_c_i_teacher} has no solution if $\sum_{i=1}^{p} c_i \neq 0$. Consequently, $\sR_{\m}^{\star}(\lambda_t, \lambda_s) < \sR(\lambda_t)$ for all $\lambda_s > 0$.
    
    If $\sum_{i=1}^{p} c_i = 0$, then $H(\kappa_s) \equiv 0$ and $\sR_{\m}^{\star}(\lambda_t, \lambda_s) = \sR(\lambda_t)$ for all $\lambda_s > 0$. Since $\sum_i v_i = \|\beta\|_2^2=r^2$, this degeneracy condition is equivalent to
    \begin{align}
        \label{eq:degeneracy_isotropic}
        &-\kappa_t r^2 +  \gamma_t (\kappa_t^2 b_t q_{2,0} + \sigma^2 b_t) = 0,
    \end{align}
    where
    \begin{align}
        &q_{2,0} = \frac{r^2}{(1 + \kappa_t)^2}, 
        \quad b_t = \frac{(1 + \kappa_t)^2}{(1 + \kappa_t)^2 - \gamma_t},
        \\
        \label{eq:kappa_u_isotropic}
        &\kappa_t = \lambda_t + \gamma_t \frac{\kappa_t}{1 + \kappa_t}.
    \end{align}

    From \eqref{eq:kappa_u_isotropic},
    \begin{align}   
        \label{eq:lambda_t_iso}
        \lambda_t = \kappa_t - \gamma_t \frac{\kappa_t}{1 + \kappa_t} = \frac{\kappa_t(1 + \kappa_t - \gamma_t)}{1 + \kappa_t}.
    \end{align}
    
    Since $(1 + \kappa_t)^2 - \gamma_t > 0$ for $\lambda_t > 0$, equation \eqref{eq:degeneracy_isotropic} can be rewritten as
    \begin{align}
        &-\kappa_t r^2 + \gamma_t \frac{r^2 \kappa_t^2 + \sigma^2 (1 + \kappa_t)^2}{(1 + \kappa_t)^2  - \gamma_t}
        = 0
        \nonumber \\
        &\Leftrightarrow \:
        \kappa_t \big[(1 + \kappa_t)^2 - \gamma_t\big] = \gamma_t \biggl(\kappa_t^2 + \frac{\sigma^2}{r^2} (1 + \kappa_t)^2 \biggr)
        \nonumber \\
        &\Leftrightarrow \:
        \kappa_t (1 + \kappa_t)(1 + \kappa_t - \gamma_t) = \gamma_t \frac{\sigma^2}{r^2} (1 + \kappa_t)^2
        \nonumber \\
        &\Leftrightarrow \:
        \frac{\kappa_t (1 + \kappa_t - \gamma_t)}{1 + \kappa_t} = \gamma_t \frac{\sigma^2}{r^2}.
    \label{eq:kappa_u_degenerate_isotropic}
    \end{align}

    Comparing \eqref{eq:lambda_t_iso} and \eqref{eq:kappa_u_degenerate_isotropic} shows that the only value satisfying the degeneracy condition is the ridge-optimal $\lambda_t^{\star}:=\gamma_t\sigma^2/r^2$.

    \item [(b)] Spiked covariance model with $s$ spikes: here, $m=s+1$, so the sequence $\{d_1,\ldots,d_m\}$ has at most $m-1=s$ sign changes. The claim follows from \Cref{prop:strict_improv_ood}.

    \item [(c)] $\Sigma_t$ with $p$ distinct eigenvalues: we have
\[d_j = c_j = \frac{\kappa_t \sigma_j^2}{(\sigma_j + \kappa_t)^2} 
\Bigg( \underbrace{\frac{\kappa_t^2 b_t q_{2,0} + \sigma^2 b_t}{\kappa_t n_t}}_{v_0} - v_j \Bigg).
\]
Since $(v_1,\ldots,v_p)$ is monotone and at least one $v_i\ne v_0$, not all the $d_j$ vanish, so $\lambda_t\notin\Lambda_t$ by \Cref{prop:teacher_bad_set}.
Moreover, $(v_1,\ldots,v_p)$ can cross the threshold $v_0$ at most once (due to monotonicity), so $(d_1,\ldots,d_p)$ has at most one sign change after its zero entries are removed.
The claim follows from \Cref{prop:strict_improv_ood}.
\end{enumerate}

\end{proof}

\subsection{Proof of \Cref{cor:balanced_regime}}

\begin{proof}[Proof of \Cref{cor:balanced_regime}]

Since $\lambda_t = \lambda_s = \lambda$ and $\gamma_t = \gamma_s$, we have $\kappa_t = \kappa_s = \kappa$ and $G_t = G_s$. Thus, by \Cref{lem:fresh_num_DE_general},
\[
  \sR(\lambda) - \sC(\lambda, \lambda) =
  \kappa
  \big(
    -\kappa q_{2,0}
    +\kappa^2 q_{3,0}
    +b_t\,t_{3,0} \big(\kappa^2 q_{2,0}+\sigma^2\big)
  \big).
\]

Differentiating
\[
  \sR(\lambda)=\sigma^2 + \kappa^2 b_t\,q_{2,0} +\sigma^2 u_2,
  \qquad
  u_2=t_{2,0} b_t,
\]
and using the identities
\[
  \kappa'(\lambda)=b_t,\qquad
  q_{2,0}'(\lambda)=-2 b_t q_{3,0},\qquad
  t_{2,0}'(\lambda)=-2 b_t t_{3,0},\qquad
  b_t'(\lambda)=-2b_t^3 t_{3,0},\qquad
  u_2'(\lambda)=-2b_t^3 t_{3,0},
\]
we obtain
\begin{equation}
\label{eq:R_prime}
\sR'(\lambda)
  =
  -2b_t^2
  \big(
    -\kappa q_{2,0}
    +\kappa^2 q_{3,0}
    +b_t\,t_{3,0}\big(\kappa^2 q_{2,0}+\sigma^2\big)
  \big).
\end{equation}

Therefore,
\[
\mathcal R_{\te}(\lambda)-\mathcal C(\lambda, \lambda) =-\frac{\kappa}{2b_t^2}\,\mathcal R_{\te}'(\lambda).
\]
The strict-improvement and sign claims now follow from \Cref{prop:oracle_affine}.
\end{proof}

\subsection{Proof of \Cref{corr:strict_improv_isotropic_fresh}}
\label{sec:proof_of_strict_improv_isotropic_fresh}

\begin{proof}[Proof of \Cref{corr:strict_improv_isotropic_fresh}]
When $\Sigma_s=I_p$, all $\tilde\sigma_i$ equal $1$, so \eqref{eq:poly_c_i_teacher} becomes
    \[
    H(\kappa_s) = \frac{\sum_{i=1}^p c_i}{1 + \kappa_s}.
    \]

If the numerator is nonzero, $H(\kappa_s)=0$ has no solution; hence, strict improvement holds for every $\lambda_s>0$.
\end{proof}

\subsection{Proof of \Cref{prop:strict_improv_pd}}

\begin{proof}[Proof of \Cref{prop:strict_improv_pd}]
The oracle decomposition in \Cref{prop:oracle_affine} gives
\begin{align}
    \sR^{\star}_{\m}(\lambda_t, \lambda_s) 
    = \mathcal R_{\pd}(\lambda_s)
    -
    \frac{\bigl(\mathcal R_{\pd}(\lambda_s)-\mathcal C(\lambda_t, \lambda_s)\bigr)^2}{\mathcal R_{\te}(\lambda_t)+\mathcal R_{\pd}(\lambda_s) - 2 \mathcal C(\lambda_t, \lambda_s)}.
\end{align}
Thus, $\sR^{\star}_{\m}(\lambda_t,\lambda_s)\leq\mathcal R_{\pd}(\lambda_s)$, with strict inequality if and only if $\mathcal R_{\pd}(\lambda_s)-\mathcal C(\lambda_t,\lambda_s)\ne0$.

We now characterize this difference.
For every $\lambda_t,\lambda_s>0$,
\begin{equation}
\label{eq:Rpd_minus_C_equals_D_minus_RminusC}
  \sR_{\pd}(\lambda_s)-\sC(\lambda_t, \lambda_s)
  =
  \sD(\lambda_t, \lambda_s) -\big(\sR_{\te}(\lambda_t)-\sC(\lambda_t, \lambda_s)\big).
\end{equation}

Subtracting the deterministic equivalent for $R_{\te}-C$ in \Cref{lem:fresh_num_DE_general} from that for $D$ in \Cref{lem:freshX_DE_delta_D_Rpd} gives
\begin{align}
   \mathcal R_{\pd}(\lambda_s)-\mathcal C(\lambda_t, \lambda_s) &= 
   \kappa_s^2
 \big(
 q_{0,2} + b_s \tilde q_{0,2} - 2 \kappa_t (q_{1,2} + b_s \tilde q_{1,2}) + \kappa_t^2 
 (q_{2,2} + b_s \tilde q_{2,2})
 \nonumber \\
 &\hspace{3em}
 + b_t (\kappa_t^2 q_{2,0} + \sigma^2) \big(t_{2,2} + b_s \gamma_t \otr(\Sigma_t \Sigma_s G_t^2 G_s^2) \big)
 \big) \nonumber \\ 
 &\hspace{6em}
 -\kappa_s\big(
        -\kappa_t q_{1,1}
        +\kappa_t^2 q_{2,1}
        +\kappa_t^2 b_t t_{2,1} q_{2,0}
        +\sigma^2 b_t t_{2,1}
      \big).
\end{align}

When $\Sigma_s = \Sigma_t$, we have $\tilde q_{i,j} = q_{i,j}$ and $1 + b_s = 1/(1 - \gamma_s \otr(\Sigma_t^2 G_s^2))$. Hence,
\begin{align}
       \label{eq:R_pd-C}
   \mathcal R_{\pd}(\lambda_s)-\mathcal C(\lambda_t, \lambda_s) &= 
   \kappa_s^2 (1 + b_s)
 \big(
 q_{0,2} - 2 \kappa_t q_{1,2} + \kappa_t^2 
 q_{2,2} + 
 b_t (\kappa_t^2 q_{2,0} + \sigma^2) t_{2,2}
 \big) 
 \nonumber \\
 &\hspace{3em}
  -\kappa_s\big(
        -\kappa_t q_{1,1}
        +\kappa_t^2 q_{2,1}
        +\kappa_t^2 b_t t_{2,1} q_{2,0}
        +\sigma^2 b_t t_{2,1}
      \big).
\end{align}

Setting the right-hand side of \eqref{eq:R_pd-C} to zero and simplifying gives
\begin{align}
    \frac{\kappa_s}{1 - \frac{\gamma_s}{p} \sum_{i=1}^{p} \frac{\sigma_i^2}{(\sigma_i + \kappa_s)^2}} 
    \cdot \sum_{i=1}^{p} 
    \frac{\sigma_i^3 v_i + \frac{\gamma_t}{p} \sigma_i^2 b_t (\kappa_t^2 q_{2,0}+\sigma^2) }{(\sigma_i + \kappa_t)^2 (\sigma_i + \kappa_s)^2} 
    = \sum_{i=1}^{p} \frac{\sigma_i^2}{(\sigma_i + \kappa_t)^2 (\sigma_i + \kappa_s)} \biggl( -\kappa_t v_i +  \frac{\gamma_t}{p} b_t (\kappa_t^2 q_{2,0}+\sigma^2)\biggr).
    \nonumber
\end{align}

Under the isotropic design $\Sigma_t=\Sigma_s=I_p$, this simplifies to
\begin{align}
    &\frac{\kappa_s (1 + \kappa_s)^2}{(1 + \kappa_s)^2 - \gamma_s} 
    \cdot \frac{r^2 + \gamma_t b_t (\kappa_t^2 q_{2,0} + \sigma^2)}{(1 + \kappa_t)^2 (1 + \kappa_s)^2}
    = \frac{-\kappa_t r^2 + \gamma_t b_t (\kappa_t^2 q_{2,0} + \sigma^2)}{(1 + \kappa_t)^2 (1 + \kappa_s)} \nonumber 
    \\
    &\Leftrightarrow \:
    \big[r^2 + \gamma_t b_t (\kappa_t^2 q_{2,0} + \sigma^2)\big] \kappa_s (1 + \kappa_s) = \big[-\kappa_t r^2 + \gamma_t b_t (\kappa_t^2 q_{2,0} + \sigma^2)\big] \big[(1 + \kappa_s)^2 - \gamma_s \big]
    \nonumber 
    \\ 
    &\Leftrightarrow \:
    (1+\kappa_t)r^2\kappa_s^2
    +
    \bigl[
    (1+2\kappa_t)r^2
    -
    \gamma_t b_t\bigl(\kappa_t^2 q_{2,0}+\sigma^2\bigr)
    \bigr]\kappa_s
    +
    (1-\gamma_s)
    \bigl[
    \kappa_t r^2
    -
    \gamma_t b_t\bigl(\kappa_t^2 q_{2,0}+\sigma^2\bigr)
    \bigr]
    =0.
    \nonumber
\end{align}

The left-hand side is a polynomial of degree at most two in $\kappa_s$.
In the nontrivial case $r^2+\sigma^2>0$, it is not identically zero: if $r^2>0$, its quadratic coefficient is $(1+\kappa_t)r^2>0$, while if $r^2=0$ and $\sigma^2>0$, its linear coefficient is $-\gamma_t b_t\sigma^2<0$.
Therefore, for any fixed $\lambda_t>0$, there are at most two values of $\kappa_s$, and hence at most two corresponding choices of $\lambda_s$, for which $\mathcal R_{\pd}(\lambda_s)-\mathcal C(\lambda_t,\lambda_s)=0$.

\end{proof}

\section{Proofs in Section~\ref{sec:tuning_ridge}}

\subsection{Proof of \Cref{thm:calibration_consistency}}
\label{sec:appendix_tuning_calibration}

We prove a slightly more general finite-candidate version of \Cref{thm:calibration_consistency}.
This version is useful when the same calibration set is used to choose among finitely many student regularization parameters $\lambda_s$, unlabeled sample sizes $n_s$, or other finite hyperparameter choices.

The argument is straightforward: once the teacher and fresh-$X$ pure-distilled student are fixed, calibration estimates only the risks and residual correlation of two fixed predictors.
Thus, no high-dimensional condition on $p/n_{\rm cal}$ is needed.

Let $\mathcal G$ denote the sigma-field generated by the deployed teacher, the fresh unlabeled covariates, and all fitted pure-distilled students under consideration.
Conditionally on $\mathcal G$, the teacher and student predictors are fixed functions.
For a single candidate $f_{\pd}$, let $(X_i^{\rm cal},Y_i^{\rm cal})_{i=1}^{n_{\rm cal}}$ be an independent labeled calibration sample drawn from the target test distribution.
Define
\[
  e_i^{\te}:=Y_i^{\rm cal}-f^{\te}(X_i^{\rm cal}),
  \qquad
  e_i^{\pd}:=Y_i^{\rm cal}-f_{\pd}(X_i^{\rm cal}).
\]
The oracle quantities are
\[
  R_{\te}:=\E[(e^{\te})^2\mid \mathcal G],
  \qquad
  R_{\pd}:=\E[(e^{\pd})^2\mid \mathcal G],
  \qquad
  C:=\E[e^{\te}e^{\pd}\mid \mathcal G],
\]
where $(e^{\te},e^{\pd})$ denotes an independent test residual pair. 
The calibration estimates are
\[
  \widehat R_{\te}^{\rm cal}
  :=
  \frac1{n_{\rm cal}}\sum_{i=1}^{n_{\rm cal}}(e_i^{\te})^2,
  \qquad
  \widehat R_{\pd}^{\rm cal}
  :=
  \frac1{n_{\rm cal}}\sum_{i=1}^{n_{\rm cal}}(e_i^{\pd})^2,
  \qquad
  \widehat C^{\rm cal}
  :=
  \frac1{n_{\rm cal}}\sum_{i=1}^{n_{\rm cal}}e_i^{\te}e_i^{\pd}.
\]

We first prove the following finite-candidate result.

\begin{theoremsupp}[Calibration consistency, finite-candidate version]
\label{thm:calibration_consistency_finite_candidates}
Fix a finite collection of candidate pure-distilled students
\[
  f_{\pd}^{(1)},\ldots,f_{\pd}^{(J)}
\]
with \(J<\infty\), and condition on $\mathcal G$.
For each \(j\in[J]\), define
\[
  R_{\pd,j}:=\E[(e_j^{\pd})^2\mid\mathcal G],
  \qquad
  C_j:=\E[e^{\te}e_j^{\pd}\mid\mathcal G],
  \qquad
  D_j:=R_{\te}+R_{\pd,j}-2C_j,
\]
where \(e_j^{\pd}:=Y-f_{\pd}^{(j)}(X)\) for an independent target-distribution pair $(X,Y)$.
For the calibration observations, let
\[
e_{i,j}^{\pd}:=Y_i^{\rm cal}-f_{\pd}^{(j)}(X_i^{\rm cal}),
\]
and define
\[
\widehat R_{\pd,j}^{\rm cal}
:=\frac{1}{n_{\rm cal}}\sum_{i=1}^{n_{\rm cal}}(e_{i,j}^{\pd})^2,
\qquad
\widehat C_j^{\rm cal}
:=\frac{1}{n_{\rm cal}}\sum_{i=1}^{n_{\rm cal}}e_i^{\te}e_{i,j}^{\pd}.
\]
Assume that there exists \(d_0>0\) such that
\[
  \min_{j\in[J]} D_j \ge d_0,
\]
and that the conditional variances of
\[
  (e^{\te})^2,\qquad (e_j^{\pd})^2,\qquad e^{\te}e_j^{\pd},
  \qquad j\in[J],
\]
are bounded uniformly over \(j\in[J]\).
Then, as \(n_{\rm cal}\to\infty\),
\[
  \max_{j\in[J]}
  \bigl|
    \widehat \xi_{{\rm cal},j}^{\star}-\xi_j^\star
  \bigr|
  \pto 0,
  \qquad
  \max_{j\in[J]}
  \bigl|
    \widehat R_{\m,{\rm cal},j}^{\star}
    -
    R_{\m,j}^{\star}
  \bigr|
  \pto 0,
\]
where
\[
  \xi_j^\star
  :=
  \frac{R_{\te}-C_j}{D_j},
  \qquad
  R_{\m,j}^{\star}
  :=
  R_{\te}
  -
  \frac{(R_{\te}-C_j)^2}{D_j},
\]
and \(\widehat \xi_{{\rm cal},j}^{\star}\), \(\widehat R_{\m,{\rm cal},j}^{\star}\) are the corresponding plug-in calibration estimates.
In particular, \Cref{thm:calibration_consistency} follows by taking \(J=1\).
\end{theoremsupp}

\begin{proof}
We first establish consistency of the three empirical quantities.
For the teacher risk estimate, conditional on \(\mathcal G\),
\[
  \E[\widehat R_{\te}^{\rm cal}\mid\mathcal G]=R_{\te},
  \qquad
  \Var(\widehat R_{\te}^{\rm cal}\mid\mathcal G)
  =
  \frac{1}{n_{\rm cal}}\Var((e^{\te})^2\mid\mathcal G).
\]
The assumed variance bound and Chebyshev's inequality give
\[
  \widehat R_{\te}^{\rm cal}-R_{\te}\pto 0.
\]
The same argument, followed by a union bound over the fixed candidate set, gives
\[
  \max_{j\in[J]}
  \bigl|
    \widehat R_{\pd,j}^{\rm cal}-R_{\pd,j}
  \bigr|
  \pto 0,
  \qquad
  \max_{j\in[J]}
  \bigl|
    \widehat C_j^{\rm cal}-C_j
  \bigr|
  \pto 0.
\]
Therefore,
\[
  \max_{j\in[J]}
  \bigl|
    \widehat D_j^{\rm cal}-D_j
  \bigr|
  \pto 0,
  \qquad
  \widehat D_j^{\rm cal}
  :=
  \widehat R_{\te}^{\rm cal}
  +
  \widehat R_{\pd,j}^{\rm cal}
  -
  2\widehat C_j^{\rm cal}.
\]
Since \(\min_j D_j\ge d_0\), we have
\[
  \min_{j\in[J]}\widehat D_j^{\rm cal}\ge d_0/2
\]
with probability tending to one.

Thus, with probability tending to one, every plug-in denominator is bounded away from zero.
Applying the continuous mapping theorem to each candidate and taking the maximum over the fixed finite set yields
\[
  \max_{j\in[J]}
  \bigl|
    \widehat \xi_{{\rm cal},j}^{\star}-\xi_j^\star
  \bigr|
  \pto 0,
  \qquad
  \max_{j\in[J]}
  \bigl|
    \widehat R_{\m,{\rm cal},j}^{\star}
    -
    R_{\m,j}^{\star}
  \bigr|
  \pto 0.
\]
\end{proof}

This finite-candidate consistency statement immediately implies that calibration can also be used for finite hyperparameter selection.

\begin{corollary}[Selecting among finitely many candidates]
\label{cor:calibration_selection_finite_candidates}
Under the assumptions of \Cref{thm:calibration_consistency_finite_candidates}, let
\[
  \widehat j
  \in
  \argmin_{j\in[J]}
  \widehat R_{\m,{\rm cal},j}^{\star}.
\]
Then
\[
  R_{\m,\widehat j}^{\star}
  -
  \min_{j\in[J]} R_{\m,j}^{\star}
  \pto 0.
\]
If the oracle minimizer \(j^\star\in\argmin_j R_{\m,j}^{\star}\) is unique and separated by a positive gap, then
\[
  \PP(\widehat j=j^\star)\to 1.
\]
\end{corollary}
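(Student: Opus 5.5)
This corollary is a standard argmin-consistency argument. Its input is the uniform convergence from \Cref{thm:calibration_consistency_finite_candidates}. Write $\Delta_n := \max_{j\in[J]} |\widehat R_{\m,{\rm cal},j}^{\star} - R_{\m,j}^{\star}|$; that theorem gives $\Delta_n \pto 0$ as $n_{\rm cal}\to\infty$, conditionally on $\mathcal G$. Everything else is deterministic bookkeeping on the event where $\Delta_n$ is small. Note that $J$ is fixed and finite, so no entropy or union-bound issues arise beyond those already handled in the theorem.

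For the excess-risk claim, fix any $j^\star \in \argmin_j R_{\m,j}^{\star}$. I would chain three inequalities:
\[
R_{\m,\widehat j}^{\star} \le \widehat R_{\m,{\rm cal},\widehat j}^{\star} + \Delta_n \le \widehat R_{\m,{\rm cal},j^\star}^{\star} + \Delta_n \le R_{\m,j^\star}^{\star} + 2\Delta_n .
\]
The middle inequality is the definition of $\widehat j$ as an empirical minimizer. Together with the trivial lower bound $R_{\m,\widehat j}^{\star} \ge \min_j R_{\m,j}^{\star}$, this gives
\[
0 \le R_{\m,\widehat j}^{\star} - \min_j R_{\m,j}^{\star} \le 2\Delta_n \pto 0 .
\]
Measurability of $\widehat j$ is not an issue, since it is a selection from a finite set; ties can be broken by any fixed rule.

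For the selection claim, let $\delta := \min_{j \ne j^\star} R_{\m,j}^{\star} - R_{\m,j^\star}^{\star} > 0$ be the assumed gap. On the event $\{\Delta_n < \delta/2\}$, every $j \ne j^\star$ satisfies
\[
\widehat R_{\m,{\rm cal},j}^{\star} > R_{\m,j}^{\star} - \delta/2 \ge R_{\m,j^\star}^{\star} + \delta/2 > \widehat R_{\m,{\rm cal},j^\star}^{\star} .
\]
So $j^\star$ is the unique empirical minimizer on that event, and hence $\PP(\widehat j = j^\star) \ge \PP(\Delta_n < \delta/2) \to 1$.

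The only point needing care is the conditioning. The oracle quantities are $\mathcal G$-measurable and random, so "separated by a positive gap" should be read as a deterministic or $\mathcal G$-measurable $\delta$ that is positive almost surely. I would state the argument conditionally on $\mathcal G$ and then remove the conditioning by dominated convergence applied to the conditional probabilities, which are bounded by one. I expect no genuine obstacle here; the substantive work is already in \Cref{thm:calibration_consistency_finite_candidates}.
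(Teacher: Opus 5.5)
Your proposal is correct and follows the paper's proof essentially line for line: the same three-inequality chain giving the $2\Delta_{\rm cal}$ bound on the excess oracle risk, and the same separation argument on the event $\{\Delta_{\rm cal} < g/2\}$. Your remark about working conditionally on $\mathcal G$ and then integrating out by dominated convergence is a harmless refinement of a point the paper leaves implicit.
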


\begin{proof}
Let \(\Delta_{\rm cal}:=\max_{j\in[J]}|\widehat R_{\m,{\rm cal},j}^{\star}-R_{\m,j}^{\star}|\). 
By \Cref{thm:calibration_consistency_finite_candidates}, \(\Delta_{\rm cal}\pto0\). 
By definition of \(\widehat j\),
\[
  \widehat R_{\m,{\rm cal},\widehat j}^{\star}
  \le
  \widehat R_{\m,{\rm cal},j}^{\star}
  \qquad\text{for every }j\in[J].
\]
Let \(j^\star\in\argmin_j R_{\m,j}^{\star}\). Then
\[
  R_{\m,\widehat j}^{\star}
  \le
  \widehat R_{\m,{\rm cal},\widehat j}^{\star}+\Delta_{\rm cal}
  \le
  \widehat R_{\m,{\rm cal},j^\star}^{\star}+\Delta_{\rm cal}
  \le
  R_{\m,j^\star}^{\star}+2\Delta_{\rm cal}.
\]
Thus
\[
  0
  \le
  R_{\m,\widehat j}^{\star}
  -
  \min_{j\in[J]} R_{\m,j}^{\star}
  \le
  2\Delta_{\rm cal}
  \pto0.
\]
If the minimizer is unique, let
\[
g:=\min_{j\ne j^\star}\bigl(R_{\m,j}^{\star}-R_{\m,j^\star}^{\star}\bigr)>0
\]
denote its separation gap.
The event $\Delta_{\rm cal}<g/2$ then forces \(\widehat j=j^\star\).
This event has probability tending to one.
\end{proof}

\subsection{Vector-valued predictors}
\label{sec:appendix_tuning_vector_valued}

The same calibration method extends directly to vector-valued prediction, which is used in our squared-loss linear-probing experiments.
Suppose \(Y\in\mathbb R^K\) and predictors \(f:\mathcal X\to\mathbb R^K\) are evaluated by squared risk
\[
  R(f):=\E\big[\|Y-f(X)\|_2^2\big].
\]
For two fixed predictors \(f^{\te}\) and \(f_{\pd}\), define vector residuals
\[
  e^{\te}:=Y-f^{\te}(X),
  \qquad
  e^{\pd}:=Y-f_{\pd}(X),
\]
and set
\[
  R_{\te}:=\E[\|e^{\te}\|_2^2],
  \qquad
  R_{\pd}:=\E[\|e^{\pd}\|_2^2],
  \qquad
  C:=\E[\langle e^{\te},e^{\pd}\rangle].
\]
Then the same affine-risk expansion gives
\[
  \xi^\star
  =
  \frac{R_{\te}-C}{R_{\te}+R_{\pd}-2C},
  \qquad
  R_{\m}^{\star}
  =
  R_{\te}
  -
  \frac{(R_{\te}-C)^2}{R_{\te}+R_{\pd}-2C}.
\]
The calibration estimates are obtained by replacing scalar squares and products with squared Euclidean norms and inner products:
\[
  \widehat R_{\te}^{\rm cal}
  =
  \frac1{n_{\rm cal}}\sum_{i=1}^{n_{\rm cal}}
  \|e_i^{\te}\|_2^2,
  \quad
  \widehat R_{\pd}^{\rm cal}
  =
  \frac1{n_{\rm cal}}\sum_{i=1}^{n_{\rm cal}}
  \|e_i^{\pd}\|_2^2,
  \quad
  \widehat C^{\rm cal}
  =
  \frac1{n_{\rm cal}}\sum_{i=1}^{n_{\rm cal}}
  \langle e_i^{\te},e_i^{\pd}\rangle.
\]
The proof of \Cref{thm:calibration_consistency_finite_candidates} applies directly under the corresponding finite-variance assumptions on
\[
  \|e^{\te}\|_2^2,
  \qquad
  \|e^{\pd}\|_2^2,
  \qquad
  \langle e^{\te},e^{\pd}\rangle.
\]

A note on the computational cost:
The calibration estimator is ``one-shot'' in the following sense.
For a fixed candidate student \(f_{\pd}\), after training \(f_{\pd}\) once on fresh pseudo-labels, tuning \(\xi\) requires only
\[
  \big(f^{\te}(x_i^{\rm cal}), f_{\pd}(x_i^{\rm cal}), y_i^{\rm cal}\big)_{i=1}^{n_{\rm cal}},
\]
followed by three empirical averages and the closed-form plug-in formula.
There is no retraining over candidate mixing weights \(\xi\).
If one considers finitely many candidate students, such as different student penalties \(\lambda_s\) or different fresh unlabeled sample sizes \(n_s\), each student must be trained once, but the same calibration set can evaluate all candidates by \Cref{cor:calibration_selection_finite_candidates}.

\section{Proofs in Section~\ref{sec:logistic}}

\subsection{Preliminaries}

For readability, we write $\rho$ in the finite sample systems below.
When $n\rho$ is not an integer, this notation denotes $\rho_n:=\lfloor n\rho\rfloor/n$; since $\rho_n\to\rho$, this does not affect the limiting conclusions.

Recall the training objective for the student classifier based on the teacher's hard labels:
\begin{align}
    \label{eq:class_training_loss_appendix}
    \theta_{\pd} := \argmin_{\theta}
    \underbrace{\frac{1}{2n} \sum_{i=1}^{2n}
    \text{CE}(\hat y_i, \sigma( \langle \theta, \phi(x_i) \rangle)) + \frac{\lambda}{2} \| \theta \|^2}_{\mathcal{L}_1(\theta)}.
\end{align}

The loss can be rewritten as
\begin{align}
    \mathcal{L}_1(\theta) 
    &= \frac{1}{2n} \sum_{i=1}^{2n}
    \Bigl( \log \big(1 + e^{\langle \theta, \phi(x_i) \rangle} \big) - \hat y_i \langle \theta, \phi(x_i) \rangle \Bigr) +  \frac{\lambda}{2} \| \theta \|^2,
    \\
    \nabla \mathcal{L}_1(\theta) 
    &= \frac{1}{2n} \sum_{i=1}^{2n} 
    \Big( \sigma(\langle \theta, \phi(x_i) \rangle) - \hat y_i  \Big) \phi(x_i) + \lambda \theta.
\end{align}

Setting $\nabla \mathcal{L}_1(\theta)=0$ gives
\begin{equation}
    \theta_{\pd} = \sum_{i=1}^{2n} \underbrace{\frac{1}{2n \lambda} \Big( \hat{y}_i - \sigma ( \langle \theta_{\pd}, \phi(x_i) \rangle ) \Big) }_{:=\alpha(x_i, \hat y_i)} \phi(x_i) = \sum_{i=1}^{2n} \alpha(x_i, \hat y_i) \phi(x_i).
\end{equation}

It follows that
\begin{align}
    \boxed{y_{\pd}(x_i) = \sigma ( \langle \theta_{\pd}, \phi(x_i) \rangle ) = \hat y_i - 2 n \lambda \alpha(x_i, \hat y_i).}
\end{align}

The following result characterizes these coefficients.

\begin{lemma}[Adapted from Lemma 2 of \citet{das2023understanding}]
\label{lem:alpha_form}
Under \Cref{assp:feature_corr}, let $\hl_n:=2n\lambda$. Then
\[
\alpha(x_i, \hat y_i) =
\begin{cases}
-\ha_n & \text{for } i \in \mathcal{S}_{1,\mathrm{bad}}, \\
\alpha_n & \text{for } i \in \mathcal{S}_{1,\mathrm{good}}, \\
\ha_n & \text{for } i \in \mathcal{S}_{0,\mathrm{bad}}, \\
-\alpha_n & \text{for } i \in \mathcal{S}_{0,\mathrm{good}}.
\end{cases}.
\]

Here, $\alpha_n\ge0$ and $\ha_n\ge0$ solve
\begin{align}
    \label{eq:equations_alpha_n}
\left\{
\begin{aligned}
\sigma \Big( cn(\alpha_n - (\alpha_n + \ha_n)\rho) - (1-c) \ha_n \Big) &= \hl_n \ha_n \\
\sigma \Big( cn(\alpha_n - (\alpha_n + \ha_n)\rho) + (1-c)\alpha_n \Big) &= 1 - \hl_n \alpha_n
\end{aligned}
\right. . 
\end{align}
\end{lemma}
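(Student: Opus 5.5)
The approach is to solve the first-order (KKT) condition in dual form and then use the block structure of the Gram matrix to reduce it to two scalar equations.

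By the stationarity computation just above, $\theta_{\pd}=\sum_{i}\alpha_i\phi(x_i)$ with $\alpha_i=(\hat y_i-\sigma(\langle\theta_{\pd},\phi(x_i)\rangle))/\hl_n$. Since $\|\phi(x_i)\|=1$, we have $\langle\theta_{\pd},\phi(x_i)\rangle=(\Phi\alpha)_i$. The dual vector $\alpha\in\RR^{2n}$ therefore solves the fixed-point system
\[
\hl_n\alpha_i+\sigma((\Phi\alpha)_i)=\hat y_i,\qquad i\in[2n].
\]
Because $\mathcal L_1$ is strictly convex, $\theta_{\pd}$ is unique, and hence so is every score $(\Phi\alpha)_i$. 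The system then determines each $\alpha_i$ uniquely.

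The uniqueness gives the symmetry reduction. By \Cref{assp:feature_corr}, $\Phi$ is invariant under any permutation that preserves each of the four cells $\mathcal S_{1,\rm bad},\mathcal S_{1,\rm good},\mathcal S_{0,\rm bad},\mathcal S_{0,\rm good}$, because such a permutation preserves true classes. The pseudo-labels $\hat y_i$ are also constant on each cell. So permuting a solution gives another solution, and by uniqueness $\alpha$ is constant on each cell. Call these four values $a_{1b},a_{1g},a_{0b},a_{0g}$.

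Next I use the class-swap symmetry $y\mapsto 1-y$. It maps $\hat y\mapsto 1-\hat y$, preserves $\Phi$, and swaps $\mathcal S_1\leftrightarrow\mathcal S_0$ while keeping the bad/good distinction. Using $\sigma(-z)=1-\sigma(z)$, if $\alpha$ solves the system then the negated, class-swapped vector also solves it. Uniqueness gives $a_{0b}=-a_{1b}$ and $a_{0g}=-a_{1g}$. Setting $\ha_n:=-a_{1b}$ and $\alpha_n:=a_{1g}$ yields exactly the displayed four-case pattern.

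Now I compute the scores. For $i\in\mathcal S_{1,\rm bad}$, only same-class entries of $\Phi$ contribute, so
\[
(\Phi\alpha)_i=-\ha_n+c\bigl[(\hat n-1)(-\ha_n)+(n-\hat n)\alpha_n\bigr]=cn\bigl(\alpha_n-(\alpha_n+\ha_n)\rho\bigr)-(1-c)\ha_n,
\]
with $\rho=\hat n/n$. Plugging this into the fixed-point equation with $\hat y_i=0$ gives the first equation of \eqref{eq:equations_alpha_n}. For $i\in\mathcal S_{1,\rm good}$, the same bookkeeping gives the score $cn(\alpha_n-(\alpha_n+\ha_n)\rho)+(1-c)\alpha_n$, and $\hat y_i=1$ gives the second equation. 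The class-0 cells reproduce the same two equations by the class-swap symmetry.

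Finally, nonnegativity follows from the equations themselves. From the first equation, $\hl_n\ha_n=\sigma(\cdot)>0$, so $\ha_n>0$. From the second, $\hl_n\alpha_n=1-\sigma(\cdot)>0$, so $\alpha_n>0$.

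The step that needs the most care is the symmetry/uniqueness argument: the reduction to four values relies on uniqueness of the dual coefficients, not just of $\theta_{\pd}$. This is handled by noting that $\alpha_i$ is an explicit function of the unique score $(\Phi\alpha)_i=\langle\theta_{\pd},\phi(x_i)\rangle$, so it does not require $\Phi$ to be invertible. Everything else is routine bookkeeping of the same-class sums.
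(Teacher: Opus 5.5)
Your proof is correct and follows the same route the paper relies on; the paper does not reprove the lemma but defers to Lemma 2 of \citet{das2023understanding}, using the same ingredients you use: the stationarity condition $\nabla\mathcal L_1=0$ giving the dual form, uniqueness from strict convexity of $\mathcal L_1$, and the block structure of $\Phi$ reducing the $2n$ equations to two scalar ones. The one step you should write out explicitly is the converse: any solution $\alpha$ of the fixed-point system satisfies $\nabla\mathcal L_1\bigl(\sum_j\alpha_j\phi(x_j)\bigr)=0$, hence $\sum_j\alpha_j\phi(x_j)=\theta_{\pd}$, and this is what forces the permuted and class-swapped vectors to coincide with the original $\alpha$.
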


By \Cref{lem:alpha_form}, the student predictions on the training set are
\begin{equation}
    \label{eq:pd_prediction}
   y_{\pd}(x_i) =
\begin{cases}
\hl_n \ha_n & \text{for } i \in \mathcal{S}_{1,\mathrm{bad}}, \\
1 - \hl_n \alpha_n & \text{for } i \in \mathcal{S}_{1,\mathrm{good}}, \\
1 - \hl_n \ha_n & \text{for } i \in \mathcal{S}_{0,\mathrm{bad}}, \\
\hl_n \alpha_n & \text{for } i \in \mathcal{S}_{0,\mathrm{good}}.
\end{cases} 
\end{equation}

For $y_{\pd}$ to achieve $100\%$ training accuracy with respect to the ground-truth labels, we require $\hl_n\ha_n>0.5$ and $\hl_n\alpha_n<0.5$.

In \Cref{sec:extensions_logistic}, we train a second student on the first student's soft labels $y_{\pd}(\cdot)$ over the same unlabeled set $\cD_{\rm unlab}$. Its training objective is
\begin{align}
    \label{eq:class_training_loss_2_appendix}
    \theta_{\pd}^{(2)} &:= \argmin_{\theta}
    \underbrace{\frac{1}{2n} \sum_{i=1}^{2n}
    \text{CE}(y_{\pd}(x_i), \sigma( \langle \theta, \phi(x_i) \rangle)) + \frac{\lambda}{2} \| \theta \|^2}_{\mathcal{L}_2(\theta)}.
\end{align}

Setting $\nabla\mathcal L_2(\theta)=0$ gives

\begin{equation}
    \theta_{\pd}^{(2)} = \sum_{i=1}^{2n} \underbrace{\frac{1}{2n \lambda} \Big( y_{\pd}(x_i) - \sigma ( \langle \theta_{\pd}^{(2)}, \phi(x_i) \rangle ) \Big) }_{:=\beta_i} \phi(x_i) = \sum_{i=1}^{2n} \beta_i \phi(x_i).
\end{equation}

The following lemma characterizes the coefficients $\beta_i$.

\begin{lemma}[Adapted from Lemma 3 of \citet{das2023understanding}]
    \label{lem:beta_form}
    Under \Cref{assp:feature_corr}, let $\hl_n:=2n\lambda$. Then
    \begin{equation}
        \beta_i =
        \begin{cases}
        -\hb_n & \text{for } i \in \mathcal{S}_{1,\mathrm{bad}}, \\
        \beta_n & \text{for } i \in \mathcal{S}_{1,\mathrm{good}}, \\
        \hb_n & \text{for } i \in \mathcal{S}_{0,\mathrm{bad}}, \\
        -\beta_n & \text{for } i \in \mathcal{S}_{0,\mathrm{good}}
        \end{cases}.
    \end{equation}
    Here, $\beta_n$ and $\hb_n$ solve
    \begin{align}
    \label{eq:equations_beta_n}
     \left\{
    \begin{aligned}
    \sigma\!\bigl(cn(\beta_n - (\beta_n + \hb_n)\rho) - (1-c)\hb_n\bigr) &= \hl_n \ha_n + \hl_n \hb_n
    \\
     \sigma\!\bigl(cn(\beta_n - (\beta_n + \hb_n)\rho) + (1-c)\beta_n \bigr) &= 1 - \hl_n \alpha_n - \hl_n \beta_n,
    \end{aligned}
    \right.
    \end{align}
    where $(\alpha_n,\ha_n)$ solve \eqref{eq:equations_alpha_n}. If $\hl_n\ha_n<0.5$ and $\hl_n\alpha_n<0.5$, then $\beta_n,\hb_n\geq0$.
\end{lemma}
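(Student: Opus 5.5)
The plan follows the template of Lemma~3 of \citet{das2023understanding}, in the same way that \Cref{lem:alpha_form} characterizes the first student. There are three steps: reduce the $2n$ unknown coefficients to two scalars by symmetry, compute the resulting stationarity equations using the Gram structure in \Cref{assp:feature_corr}, and obtain the sign claim by a short case analysis.

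\textbf{Step 1: symmetry reduction.} The objective $\mathcal L_2$ is $\lambda$-strongly convex, so $\theta_{\pd}^{(2)}$ is unique. By \eqref{eq:pd_prediction}, the soft labels $y_{\pd}(x_i)$ are constant on each of $\mathcal S_{1,\mathrm{bad}},\mathcal S_{1,\mathrm{good}},\mathcal S_{0,\mathrm{bad}},\mathcal S_{0,\mathrm{good}}$. The Gram matrix $\Phi$ is invariant under permutations within each class. Hence any permutation of indices within a group (acting on the feature vectors through the induced map on their span) preserves $\mathcal L_2$. Uniqueness then forces the coefficients $\beta_i$ to be constant on each group.

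Next, swap the two classes and send $\theta\mapsto-\theta$. Since $y_{\pd}$ maps group-wise to $1-y_{\pd}$ and $\sigma(-z)=1-\sigma(z)$, the cross-entropy terms are preserved. This forces the antisymmetric pattern $(-\hb_n,\beta_n,\hb_n,-\beta_n)$ on $(\mathcal S_{1,\mathrm{bad}},\mathcal S_{1,\mathrm{good}},\mathcal S_{0,\mathrm{bad}},\mathcal S_{0,\mathrm{good}})$. Here the sign convention is chosen to match \Cref{lem:alpha_form}.

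\textbf{Step 2: the equations.} Using $\|\phi(x_i)\|=1$, $\langle\phi(x_i),\phi(x_j)\rangle=c$ within a class and $0$ across classes, we compute the margins. For $i\in\mathcal S_{1,\mathrm{good}}$,
\[
\langle\theta_{\pd}^{(2)},\phi(x_i)\rangle=\beta_n+c\bigl[(n-\rho n-1)\beta_n-\rho n\hb_n\bigr]=cn\bigl(\beta_n-(\beta_n+\hb_n)\rho\bigr)+(1-c)\beta_n .
\]
For $i\in\mathcal S_{1,\mathrm{bad}}$, the same computation gives $cn(\beta_n-(\beta_n+\hb_n)\rho)-(1-c)\hb_n$. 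Substituting into $\hl_n\beta_i=y_{\pd}(x_i)-\sigma(\langle\theta_{\pd}^{(2)},\phi(x_i)\rangle)$, with $y_{\pd}$ read off from \eqref{eq:pd_prediction}, yields exactly the two equations in \eqref{eq:equations_beta_n}. The class-$0$ equations coincide with these by the antisymmetry of Step 1.

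\textbf{Step 3: nonnegativity.} Write $u:=cn\bigl((1-\rho)\beta_n-\rho\hb_n\bigr)$ and assume $\hl_n\ha_n<0.5$ and $\hl_n\alpha_n<0.5$. We argue by contradiction, splitting into three cases according to which of $\beta_n,\hb_n$ is negative.

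If $\hb_n<0$, the first equation gives $\sigma\bigl(u-(1-c)\hb_n\bigr)=\hl_n(\ha_n+\hb_n)<\hl_n\ha_n<0.5$. Hence $u<(1-c)\hb_n<0$.
\begin{itemize}
\item If also $\beta_n\ge0$, then $u=cn\bigl((1-\rho)\beta_n-\rho\hb_n\bigr)>0$, a contradiction.
\item If also $\beta_n<0$, the second equation gives $\sigma\bigl(u+(1-c)\beta_n\bigr)=1-\hl_n\alpha_n-\hl_n\beta_n>1-\hl_n\alpha_n>0.5$. Hence $u>-(1-c)\beta_n>0$, again a contradiction.
\end{itemize}

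Finally, if $\beta_n<0\le\hb_n$, the second equation again gives $u>0$, whereas $u=cn\bigl((1-\rho)\beta_n-\rho\hb_n\bigr)<0$ (using $\rho<1$), a contradiction.

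\textbf{Main obstacle.} The delicate point is Step 1: justifying the symmetry reduction rigorously in the Hilbert space $\cH$. I would handle it by noting that the minimizer lies in $\operatorname{span}\{\phi(x_i)\}$ and that the loss depends on $\theta$ only through the coefficient vector and $\Phi$. The symmetry argument is then carried out on coefficient vectors, invoking strict convexity of the reduced problem. The remaining steps are direct algebra.
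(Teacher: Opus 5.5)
Your proposal is correct and follows the route of the cited Lemma~3 of \citet{das2023understanding}; the paper invokes that lemma without reproducing a proof. Symmetry plus uniqueness of the strongly convex minimizer collapses the coefficients to the antisymmetric four-valued pattern, the Gram structure of \Cref{assp:feature_corr} turns the stationarity conditions into \eqref{eq:equations_beta_n}, and your three-case sign analysis correctly gives $\beta_n,\hb_n\ge0$. The one ingredient you leave implicit is strict convexity of the reduced coefficient problem; it holds because $\Phi$ is positive definite, since each class block $(1-c)I_n+cJ_n$ (with $J_n$ the all-ones matrix) has eigenvalues $1-c$ and $1-c+cn$.
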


By \Cref{lem:beta_form}, the second student's predictions on the training set are
\begin{equation}
    \label{eq:pd_2_prediction}
    y_{\pd}^{(2)}(x_i) =
    \begin{cases}
    \hl_n \ha_n + \hl_n \hb_n & \text{for } i \in \mathcal{S}_{1,\text{bad}}, \\
    1 - \hl_n \alpha_n - \hl_n \beta_n & \text{for } i \in \mathcal{S}_{1,\text{good}}, \\
    1 - \hl_n \ha_n - \hl_n \hb_n & \text{for } i \in \mathcal{S}_{0,\text{bad}}, \\
    \hl_n \alpha_n + \hl_n \beta_n & \text{for } i \in \mathcal{S}_{0,\text{good}}.
    \end{cases}
\end{equation}

For $y_{\pd}^{(2)}$ to achieve $100\%$ training accuracy, we require $\hl_n(\ha_n+\hb_n)>0.5$ and $\hl_n(\alpha_n+\beta_n)<0.5$.

To distinguish the affine scores from the thresholded classifiers in \eqref{eq:logistic_affine} and \eqref{eq:logistic_affine_2}, define
\[
s_{\m,\xi}(x):=(1-\xi)\hat y(x)+\xi y_{\pd}(x),
\qquad
s_{\m,\xi}^{(2)}(x):=(1-\xi)y_{\pd}(x)+\xi y_{\pd}^{(2)}(x).
\]

\subsection{Proof of \Cref{thm:class_mixing}}

\begin{proof}[Proof of \Cref{thm:class_mixing}]
We first show that when $\hl_n\to\hl\in(0,\infty)$, there exists $\rho_0\in(0,0.5)$ such that $y_{\pd}$ has $100(1-\rho)\%$ population accuracy for every $\rho\in(\rho_0,0.5)$, matching the teacher.

By \Cref{lem:alpha_form},
    \begin{align}
    \label{eq:equations_alpha_n_1}
\left\{
\begin{aligned}
\sigma \Big( cn(\alpha_n - (\alpha_n + \ha_n)\rho) - (1-c) \ha_n \Big) &= \hl_n \ha_n \\
\sigma \Big( cn(\alpha_n - (\alpha_n + \ha_n)\rho) + (1-c)\alpha_n \Big) &= 1 - \hl_n \alpha_n
\end{aligned}
\right.
\end{align}

The regularized logistic objective is strictly convex, and the feature spans for the two classes are orthogonal, so this system has a unique solution $(\alpha_n,\ha_n)$.
Because the sigmoid takes values in $(0,1)$, both $\alpha_n$ and $\ha_n$ are strictly positive.

Let $T_n:=\alpha_n-(\alpha_n+\ha_n)\rho$.
Suppose, for contradiction, that $\ha_n\leq\alpha_n$.
Using $\sigma(z)=1-\sigma(-z)$ and the monotonicity of the sigmoid,
\begin{align}
    &\sigma(cn T_n - (1-c)\ha_n) = \hl_n \ha_n \leq \hl_n \alpha_n = \sigma(- cn T_n - (1-c) \alpha_n) 
    \nonumber \\ 
    &\Rightarrow \quad
    cn T_n - (1 - c) \ha_n \leq - cn T_n - (1 - c) \alpha_n
    \nonumber \\
    &\Rightarrow \quad
    2 c n T_n \leq (1 - c)(\ha_n - \alpha_n) \leq 0.
\end{align}
This is impossible because $\rho<0.5$ and $\ha_n\leq\alpha_n$ imply $T_n>0$.
Hence, $\ha_n>\alpha_n$.
Moreover,
\[
\hl_n(\ha_n+\alpha_n)
=\sigma(cnT_n-(1-c)\ha_n)+1-\sigma(cnT_n+(1-c)\alpha_n)
\leq1,
\]
because $\alpha_n,\ha_n\geq0$.

Since $\hl_n\to\hl>0$ and $\hl_n\alpha_n,\hl_n\ha_n\in(0,1)$, the sequence $(\alpha_n,\ha_n)$ is bounded.
Fix $\rho\in(0,0.5)$ and let
\[
r:=\frac{\rho}{1-\rho}\in(0,1).
\]
Pass to an arbitrary convergent subsequence, still indexed by $n$, with limit $(\alpha,\ha)$.
Then $0\leq\alpha\leq\ha$, $\hl(\alpha+\ha)\leq1$, and
\[
T:=\lim_nT_n=\alpha-(\alpha+\ha)\rho.
\]
We claim that $T=0$.
\begin{itemize}[leftmargin=7mm]
    \item If $T>0$, both sigmoid arguments in \eqref{eq:equations_alpha_n_1} diverge to $+\infty$. Hence, $\hl\ha=1-\hl\alpha=1$, so $\ha>0$ and $\alpha=0$, contradicting $T>0$.

    \item If $T<0$, both arguments diverge to $-\infty$. Hence, $\hl\ha=1-\hl\alpha=0$, so $\ha=0$ and $\alpha=1/\hl>0$, contradicting $0 \leq \alpha\leq\ha$.

\end{itemize}
Therefore, $T=0$, and every subsequential limit satisfies
\begin{equation}
\label{eq:alpha_hat_alpha_relation}
\alpha=r\ha.
\end{equation}

The sequence $nT_n$ is also bounded.
Otherwise, pass to a subsequence along which it diverges to $+\infty$ or $-\infty$ and then to a further subsequence along which $(\alpha_n,\ha_n)$ converges.
In the first case, \eqref{eq:equations_alpha_n_1} gives $\hl\ha=1$ and $\hl\alpha=0$; in the second, it gives $\hl\ha=0$ and $\hl\alpha=1$.
Both conclusions contradict \eqref{eq:alpha_hat_alpha_relation} because $r>0$.

Consequently, every subsequence of $(\alpha_n,\ha_n,nT_n)$ has a further convergent subsequence.
Let $(\alpha,\ha,N)$ be any such subsequential limit.
Taking limits in \eqref{eq:equations_alpha_n_1} gives
\begin{align}
    \label{eq:equations_alpha_1}
\left\{
\begin{aligned}
\sigma \Big( cN - (1-c) \ha \Big) &= \hl \ha \\
\sigma \Big( cN + (1-c)\alpha \Big) &= 1 - \hl \alpha
\end{aligned}
\right.
\quad
\Leftrightarrow
\quad
\left\{
\begin{aligned}
\sigma \Big( cN - (1-c) \ha \Big) &= \hl \ha \\
\sigma \Big( -cN - (1-c)\alpha \Big) &= \hl \alpha.
\end{aligned}
\right.
\end{align}

We now show that this subsequential limit is unique.
Let $\operatorname{logit}(u):=\log\{u/(1-u)\}$.
The first equation in \eqref{eq:equations_alpha_1} implies $0<\hl\ha<1$, so the logit below is well defined.
The first equation in \eqref{eq:equations_alpha_1} gives
\[
cN=\operatorname{logit}(\hl\ha)+(1-c)\ha.
\]
Substituting this identity and $\alpha=r\ha$ into the second equation gives
\begin{equation}
\label{eq:alpha_scalar_limit}
g_r(\ha):=
\sigma\!\bigl(
-\operatorname{logit}(\hl\ha)-(1-c)(1+r)\ha
\bigr)
-\hl r\ha
=0.
\end{equation}
On $h\in(0,1/\hl)$, the function $g_r(h)$ is strictly decreasing, with
\[
\lim_{h\downarrow0}g_r(h)=1,
\qquad
\lim_{h\uparrow1/\hl}g_r(h)=-r.
\]
Thus, \eqref{eq:alpha_scalar_limit} has a unique solution, denoted $h_r$.
It follows that every subsequential limit is the same:
\[
\ha=h_r,
\qquad
\alpha=r h_r,
\qquad
N=N_r:=\frac{\operatorname{logit}(\hl h_r)+(1-c)h_r}{c}.
\]
Therefore, the full sequence $(\alpha_n,\ha_n,nT_n)$ converges to $(r h_r,h_r,N_r)$.

It remains to study the limit as $\rho\uparrow0.5$, or equivalently as $r\uparrow1$.
The unique zero $h_r$ of $g_r$ depends continuously on $r\in(0,1]$, because $g_r(h)$ is continuous in $(r,h)$ and strictly decreasing in $h$.
Indeed, if $r_k\to r$, the endpoint limits above ensure that every subsequential limit of $h_{r_k}$ lies in $(0,1/\hl)$ and solves $g_r(h)=0$; uniqueness then forces that limit to equal $h_r$.
At $r=1$, the continuous extension of the limiting equations in \eqref{eq:equations_alpha_1} becomes
\[
\sigma(cN_1-(1-c)h_1)=\hl h_1,
\qquad
\sigma(-cN_1-(1-c)h_1)=\hl h_1.
\]
The strict monotonicity of the sigmoid implies $N_1=0$, and the first equation then gives
\[
0<\hl h_1=\sigma(-(1-c)h_1)<0.5.
\]
Consequently,
\[
N_r \to 0,
\qquad
h_r \to h_1>0
\qquad\text{as }\rho\uparrow0.5.
\]
Hence, there exists $\rho_0\in(0,0.5)$ such that, for every $\rho\in(\rho_0,0.5)$,
\[
cN_r-(1-c)h_r<0,
\qquad
\hl h_r<0.5.
\]
Since $\alpha=r h_r<h_r$, we also have $\hl\alpha<0.5$.
Equation \eqref{eq:pd_prediction} therefore shows that $y_{\pd}$ misclassifies exactly the samples in $\mathcal{S}_{1,\mathrm{bad}}$ and $\mathcal{S}_{0,\mathrm{bad}}$, and hence has $100(1-\rho)\%$ population accuracy.

\textbf{Mixing achieves $100\%$ accuracy for all $\rho < 0.5$.} We now prove the second statement of the theorem. Under the assumed regime $\hl_n\to\hl\in(0,\infty)$, we can choose $\xi$ so that thresholding the affine score $(1-\xi)\hat y(x)+\xi y_{\pd}(x)$ yields $100\%$ population accuracy as $n\to\infty$. The limiting affine scores are
\[
s_{\m, \xi}(x_i) = (1 - \xi) \hat y_i + \xi y_{\pd}(x_i) =
\begin{cases}
\xi \hl\ha & \text{for } i \in \mathcal{S}_{1,\mathrm{bad}}, \\
1 - \xi \hl\alpha & \text{for } i \in \mathcal{S}_{1,\mathrm{good}}, \\
1 - \xi \hl \ha & \text{for } i \in \mathcal{S}_{0,\mathrm{bad}}, \\
\xi \hl\alpha & \text{for } i \in \mathcal{S}_{0,\mathrm{good}}.
\end{cases}
\]

Since $\ha>\alpha$, there are three algebraic cases:
\begin{itemize}[leftmargin=7mm]
    \item If $\hl \ha > 0.5 > \hl \alpha$, then \eqref{eq:pd_prediction} shows that the PD student already achieves $100\%$ accuracy as $n\to\infty$, so we choose $\xi=1$.
    \item If $0.5 \geq \hl \ha > \hl \alpha$, choose $\xi>1$ such that $\xi\hl\ha>0.5$ and $\xi\hl\alpha<0.5$. Such a choice exists because $\ha>\alpha$.
    \item The case $\hl \ha > \hl \alpha \geq 0.5$ is impossible because $\hl(\ha+\alpha)\leq1$.
\end{itemize}
\end{proof}

\subsection{Proof of \Cref{thm:class_mixing_2}}

\begin{proof}[Proof of \Cref{thm:class_mixing_2}]
As established in the proof of \Cref{thm:class_mixing},
\[
\alpha=r\ha,
\qquad
r:=\frac{\rho}{1-\rho}\in(0,1),
\qquad
\hl(\ha+\alpha)\leq1.
\]
In particular, $\ha>\alpha>0$.
Write
\[
P:=\hl\ha,
\qquad
Q:=\hl\alpha=rP.
\]

If $P>0.5$, then $Q<0.5$ because $P+Q\leq1$.
Thus, $y_{\pd}$ already achieves $100\%$ accuracy, and choosing $\xi=0$ recovers the classifier induced by $y_{\pd}$.
It remains to consider
\begin{equation}
\label{eq:second_student_nonperfect_case}
0<Q<P\leq0.5.
\end{equation}
This includes the boundary case $P=0.5$.

The equations in \eqref{eq:equations_beta_n} imply
\[
0<\hl_n(\ha_n+\hb_n)<1,
\qquad
0<\hl_n(\alpha_n+\beta_n)<1.
\]
Hence, $(\beta_n,\hb_n)$ is bounded.
Let $(\beta,\hb)$ be an arbitrary subsequential limit, and define
\[
W_n:=\beta_n-(\beta_n+\hb_n)\rho,
\qquad
W:=\lim_nW_n=(1-\rho)\beta-\rho\hb.
\]
We claim that $W=0$.
If $W>0$, both sigmoid arguments in \eqref{eq:equations_beta_n} diverge to $+\infty$, giving
\[
P+\hl\hb=1,
\qquad
Q+\hl\beta=0.
\]
Thus, $\hb>0$ and $\beta<0$, contradicting $W>0$.
If $W<0$, both arguments diverge to $-\infty$, giving
\[
P+\hl\hb=0,
\qquad
Q+\hl\beta=1.
\]
Thus, $\hb<0$ and $\beta>0$, contradicting $W<0$.
Therefore, every subsequential limit satisfies
\begin{equation}
\label{eq:beta_hat_beta_relation}
\beta=r\hb.
\end{equation}

The sequence $nW_n$ is bounded as well.
Otherwise, pass to a subsequence along which it diverges to $+\infty$ or $-\infty$ and then to a further subsequence along which $(\beta_n,\hb_n)$ converges.
The limiting equations would again force $\beta$ and $\hb$ to have opposite signs, contradicting \eqref{eq:beta_hat_beta_relation}.
Consequently, every subsequence of $(\beta_n,\hb_n,nW_n)$ has a further convergent subsequence.

Let $(\beta,\hb,M)$ be any such subsequential limit.
Using \eqref{eq:alpha_hat_alpha_relation} and \eqref{eq:beta_hat_beta_relation}, the limiting equations are
\begin{align}
\label{eq:beta_limiting_system}
\left\{
\begin{aligned}
\sigma(cM-(1-c)\hb)&=\hl(\ha+\hb),\\
\sigma(-cM-(1-c)r\hb)&=\hl r(\ha+\hb).
\end{aligned}
\right.
\end{align}
In particular, $0<\hl(\ha+\hb)<1$, so the logit below is well defined.
The first equation implies
\[
cM=\operatorname{logit}\!\big(\hl(\ha+\hb)\big)+(1-c)\hb.
\]
For $h\in(-\ha,1/\hl-\ha)$, define
\begin{equation}
\label{eq:beta_scalar_limit}
k_r(h):=
\sigma\!\bigl(
-\operatorname{logit}\!\big(\hl(\ha+h)\big)
-(1-c)(1+r)h
\bigr)
-\hl r(\ha+h).
\end{equation}
Substituting the expression for $M$ into the second equation in \eqref{eq:beta_limiting_system} shows that $k_r(\hb)=0$.
The function $k_r$ in \eqref{eq:beta_scalar_limit} is strictly decreasing, with
\[
\lim_{h\downarrow-\ha}k_r(h)=1,
\qquad
\lim_{h\uparrow1/\hl-\ha}k_r(h)=-r.
\]
Thus, $k_r(h)=0$ has a unique solution, denoted $b_r$.
Moreover,
\[
k_r(0)=1-P-Q>0,
\]
where the strict inequality follows from \eqref{eq:second_student_nonperfect_case} and $Q=rP<P$.
Therefore,
\[
b_r>0,
\qquad
\hb=b_r,
\qquad
\beta=r b_r<b_r.
\]
The value of $M$ is then uniquely determined by the first equation in \eqref{eq:beta_limiting_system}.
Hence, the full sequence $(\beta_n,\hb_n,nW_n)$ converges to this unique limit.

The limiting affine scores are
\[
s_{\m,\xi}^{(2)}(x_i)=
\begin{cases}
P+\xi\hl b_r & \text{for } i \in \mathcal{S}_{1,\mathrm{bad}}, \\
1-Q-\xi\hl r b_r & \text{for } i \in \mathcal{S}_{1,\mathrm{good}}, \\
1-P-\xi\hl b_r & \text{for } i \in \mathcal{S}_{0,\mathrm{bad}}, \\
Q+\xi\hl r b_r & \text{for } i \in \mathcal{S}_{0,\mathrm{good}}.
\end{cases}
\]
Choose $\xi$ such that
\[
\frac{0.5-P}{\hl b_r}
<\xi<
\frac{0.5-Q}{\hl r b_r}.
\]
This interval is nonempty because $P>Q$ and $b_r>r b_r>0$; its lower endpoint is zero when $P=0.5$.
This choice makes the first two scores exceed $0.5$ and the last two fall below $0.5$, proving the claim.
\end{proof}

\subsection{Proof of \Cref{thm:class_mixing_large_p}}

\begin{proof}[Proof of \Cref{thm:class_mixing_large_p}]
    For this proof, write $\lambda_n$ for the regularization parameter at sample size $n$, so that $\hl_n=2n\lambda_n$ and $\lambda_n=\Theta(1)$.
    Let $T_n:=\alpha_n-(\alpha_n+\ha_n)\rho$. The equations in \eqref{eq:equations_alpha_n} force $\alpha_n,\ha_n>0$. We first show that $\alpha_n>\ha_n$ when $\rho>0.5$. Suppose instead that $\alpha_n \leq \ha_n$. Using $\sigma(z) = 1 - \sigma(-z)$ and the monotonicity of the sigmoid, we have
    \begin{align}
        &\sigma(- cn T_n - (1-c)\alpha_n) = \hl_n \alpha_n \leq \hl_n \ha_n = \sigma(cn T_n - (1-c) \ha_n)
        \nonumber \\ 
        &\Rightarrow \quad
        -cn T_n - (1 - c) \alpha_n \leq  cn T_n - (1 - c) \ha_n
        \nonumber \\
        &\Rightarrow \quad
        2 c n T_n \geq (1 - c)(\ha_n - \alpha_n) \geq 0,
        \nonumber
    \end{align}
    which is a contradiction because $T_n = (1 - \rho) \alpha_n - \rho \ha_n < 0$ (recall that $\rho > 0.5$). Thus, we must have $\boxed{\alpha_n > \ha_n}$.

    Because $\lambda_n=\Theta(1)$, we have $\hl_n\to\infty$.
    Define the scaled coefficients
    \[
    A_n:=\hl_n\alpha_n,
    \qquad
    \widehat A_n:=\hl_n\ha_n,
    \qquad
    N_n:=nT_n.
    \]
    The equations in \eqref{eq:equations_alpha_n} imply $A_n,\widehat A_n\in(0,1)$, so $\alpha_n,\ha_n\to0$.
    Moreover,
    \begin{equation}
    \label{eq:large_alpha_scaled_relation}
    2\lambda_nN_n=(1-\rho)A_n-\rho\widehat A_n.
    \end{equation}
    Hence, $(A_n,\widehat A_n,N_n)$ is bounded.

    Starting from an arbitrary subsequence, pass to a further subsequence along which
    \[
    (A_n,\widehat A_n,N_n,\lambda_n)
    \to
    (A,\widehat A,N,\lambda_{\star}),
    \qquad
    \lambda_{\star}\in(0,\infty).
    \]
    Taking limits in \eqref{eq:equations_alpha_n} gives
    \begin{equation}
    \label{eq:equations_alpha_2}
    \widehat A=\sigma(cN),
    \qquad
    A=\sigma(-cN),
    \qquad
    A+\widehat A=1.
    \end{equation}
    Since $\alpha_n>\ha_n$, we have $A\geq\widehat A$.
    If $N>0$, then \eqref{eq:equations_alpha_2} gives $\widehat A>A$, a contradiction.
    If $N=0$, then $A=\widehat A=0.5$, whereas \eqref{eq:large_alpha_scaled_relation} gives
    \[
    0=\frac{1-2\rho}{2},
    \]
    again a contradiction because $\rho>0.5$.
    Therefore,
    \begin{equation}
    \label{eq:large_alpha_signs}
    N<0,
    \qquad
    \widehat A<0.5<A.
    \end{equation}

    The original subsequence was arbitrary, so every subsequential limit satisfies \eqref{eq:large_alpha_signs}.
    Consequently, for all sufficiently large $n$, $\widehat A_n<0.5<A_n$.
    By \eqref{eq:pd_prediction}, the first PD student therefore has asymptotic $0\%$ population accuracy.

    For the second student, define
    \[
    B_n:=\hl_n\beta_n,
    \qquad
    \widehat B_n:=\hl_n\hb_n,
    \qquad
    M_n:=n\big((1-\rho)\beta_n-\rho\hb_n\big).
    \]
    Equation \eqref{eq:equations_beta_n} implies
    \[
    0<\widehat A_n+\widehat B_n<1,
    \qquad
    0<A_n+B_n<1,
    \]
    so $(B_n,\widehat B_n)$ is bounded and $\beta_n,\hb_n\to0$.
    We also have the exact identity
    \begin{equation}
    \label{eq:large_beta_scaled_relation}
    2\lambda_nM_n=(1-\rho)B_n-\rho\widehat B_n,
    \end{equation}
    which shows that $M_n$ is bounded.

    Passing to a further subsequence if necessary, suppose that
    \[
    (B_n,\widehat B_n,M_n)\to(B,\widehat B,M).
    \]
    Taking limits in \eqref{eq:equations_beta_n} gives
    \begin{equation}
    \label{eq:equations_beta}
    \widehat A+\widehat B=\sigma(cM),
    \qquad
    A+B=\sigma(-cM).
    \end{equation}
    Combining these equations with $A+\widehat A=1$ yields
    \begin{equation}
    \label{eq:large_beta_sum_zero}
    B+\widehat B=0.
    \end{equation}
    Taking limits in \eqref{eq:large_beta_scaled_relation} and using \eqref{eq:large_beta_sum_zero} gives
    \begin{equation}
    \label{eq:large_M_relation}
    2\lambda_{\star}M=B.
    \end{equation}

    We claim that $B<0<\widehat B$.
    If $B>0$, then \eqref{eq:large_M_relation} gives $M>0$.
    On the other hand, $A+B>A>0.5$, so \eqref{eq:equations_beta} gives $M<0$, a contradiction.
    If $B=0$, then $\widehat B=M=0$, and \eqref{eq:equations_beta} gives $A=\widehat A=0.5$, contradicting \eqref{eq:large_alpha_signs}.
    Therefore, $B<0<\widehat B$, and \eqref{eq:large_M_relation} gives $M<0$.
    It follows from \eqref{eq:equations_beta} that
    \begin{equation}
    \label{eq:large_second_student_signs}
    \widehat A+\widehat B<0.5<A+B.
    \end{equation}
    Because the original subsequence was arbitrary, for all sufficiently large $n$,
    $\widehat A_n+\widehat B_n<0.5<A_n+B_n$.
    Equation \eqref{eq:pd_2_prediction} therefore shows that the second PD student also has asymptotic $0\%$ population accuracy.

    At any subsequential limit, the affine scores are
    \[
    s_{\m,\xi}^{(2)}(x_i)=
    \begin{cases}
    \widehat A+\xi\widehat B & \text{for } i \in \mathcal{S}_{1,\mathrm{bad}}, \\
    1-A-\xi B & \text{for } i \in \mathcal{S}_{1,\mathrm{good}}, \\
    1-\widehat A-\xi\widehat B & \text{for } i \in \mathcal{S}_{0,\mathrm{bad}}, \\
    A+\xi B & \text{for } i \in \mathcal{S}_{0,\mathrm{good}}.
    \end{cases}
    \]
    Thus, it suffices to choose
    \begin{equation}
    \label{eq:large_mixing_lower_bound}
    \xi>
    \max\biggl\{
    \frac{0.5-\widehat A}{\widehat B},
    \frac{A-0.5}{-B}
    \biggr\}.
    \end{equation}
    Both ratios in \eqref{eq:large_mixing_lower_bound} exceed $1$ because
    \[
    \widehat A+\widehat B<0.5<A+B.
    \]

    Finally, let $\mathcal K$ denote the compact set of all subsequential limits of $(A_n,\widehat A_n,B_n,\widehat B_n)$.
    On $\mathcal K$, we have $\widehat B>0$ and $-B>0$, so the two ratios in \eqref{eq:large_mixing_lower_bound} are continuous and have a finite maximum.
    Choosing $\xi$ larger than this maximum gives $\xi>1$ and makes all four groups classify correctly for all sufficiently large $n$.
    Hence, the PMSD student has asymptotic $100\%$ population accuracy.
\end{proof}

\section{Proof in \Cref{sec:extensions}}

\subsection{Proof of \Cref{prop:optimize_Sigma_s}}

\begin{proof}[Proof of \Cref{prop:optimize_Sigma_s}]
Let
\[
    W := \Sigma_s^{-1}
    =
    U^{\top}\operatorname{diag}(w_1,\ldots,w_p)U.
\]

For \(a\in\{t,s\}\), the fixed-point equation can be rewritten as
\[
    \lambda_a
    =
    \kappa_a
    \bigl(
        1
        -
        \gamma_a
        \otr\!\bigl(
            \Sigma_a(\Sigma_a+\kappa_a I_p)^{-1}
        \bigr)
    \bigr).
\]
Because
\[
0\leq
\otr\!\bigl(\Sigma_a(\Sigma_a+\kappa_a I_p)^{-1}\bigr)
\leq1
\]
and $\gamma_a<1$, the factor in parentheses is bounded below by $1-\gamma_a>0$. Hence,
\[
0<\kappa_a\leq\frac{\lambda_a}{1-\gamma_a},
\]
so
\[
    \kappa_t,\kappa_s\to 0
    \qquad
    \text{as }
    \lambda_t,\lambda_s\to0.
\]
It follows that
\[
    G_t\to\Sigma_t^{-1},
    \qquad
    G_s\to\Sigma_s^{-1}=W,
    \qquad
    b_t\to\frac{1}{1-\gamma_t}.
\]
Moreover,
\begin{align}
    b_s
    &=
    \frac{
        \gamma_s\otr(\Sigma_s\Sigma_tG_s^2)
    }{
        1-\gamma_s\otr(\Sigma_s^2G_s^2)
    } 
    \to
    \frac{\gamma_s}{1-\gamma_s}
    \otr(\Sigma_tW).
\end{align}

Recall that the risk reduction has the form $(\sR_{\te}-\sC)^2/\sD$. Substituting these limits into the expressions for
\(\sR_{\te}-\sC\) and $\sD$
in \Cref{thm:general_ridge}, we obtain
\begin{align}
    \label{eq:optimize_Sigma_s_R-C}
        \lim_{\lambda_t,\lambda_s\to0}
    \frac{\sR_{\te}-\sC}{\kappa_s}
    =
    \frac{\sigma^2 \gamma_t}{1 - \gamma_t}
    \otr(W),
\end{align}
and
\begin{align}
    \label{eq:optimize_Sigma_s_D}
    \lim_{\lambda_t,\lambda_s\to0}
    \frac{\sD}{\kappa_s^2}
    &=
    \beta^{\top}W\Sigma_tW\beta
    +
    \frac{\gamma_s}{1 - \gamma_s} \otr(\Sigma_tW)\beta^{\top}W\beta \nonumber \\
    &\quad +
    \frac{\sigma^2 \gamma_t}{1 - \gamma_t}
    \Bigl\{
        \otr(W^2)
        +
        \frac{\gamma_s}{1 - \gamma_s}
        \otr(\Sigma_tW)
        \otr(\Sigma_t^{-1}W)
    \Bigr\}.
\end{align}

After canceling the common factor $\kappa_s^2$, the limiting risk reduction is the square of the right-hand side of \eqref{eq:optimize_Sigma_s_R-C} divided by the right-hand side of \eqref{eq:optimize_Sigma_s_D}.

We next express the right-hand side of \eqref{eq:optimize_Sigma_s_D} in terms of \(w\). In the common eigenbasis of \(\Sigma_t\) and \(\Sigma_s\), we have
\[
    \beta^{\top}W\Sigma_tW\beta
    =
    \sum_{i=1}^p \sigma_i v_i w_i^2,
    \qquad
    \beta^{\top}W\beta
    =
    \sum_{i=1}^p v_iw_i,
\]
and
\[
    \otr(\Sigma_tW)
    =
    \frac{1}{p}\sum_{i=1}^p\sigma_iw_i,
    \qquad
    \otr(W^2)
    =
    \frac{1}{p}\sum_{i=1}^pw_i^2,
    \qquad 
    \otr(\Sigma_t^{-1}W)
    =
    \frac{1}{p}\sum_{i=1}^p\frac{w_i}{\sigma_i}.
\]

Substituting these identities into \eqref{eq:optimize_Sigma_s_D} gives
\[
\begin{aligned}
    &\sum_{i=1}^p
    \biggl(
        \sigma_iv_i+\frac{\sigma^2 \gamma_t}{p(1 - \gamma_t)}
    \biggr)w_i^2
    +
    \frac{\gamma_s}{p(1-\gamma_s)}
    \biggl(
        \sum_{i=1}^p\sigma_iw_i
    \biggr)
    \biggl(
        \sum_{i=1}^pv_iw_i
    \biggr)
    +
    \frac{\sigma^2 \gamma_t}{p(1 - \gamma_t)} 
    \cdot
    \frac{\gamma_s}{p(1-\gamma_s)}
    \biggl(
        \sum_{i=1}^p\sigma_iw_i
    \biggr)
    \biggl(
        \sum_{i=1}^p\frac{w_i}{\sigma_i}
    \biggr).
\end{aligned}
\]

By the definitions of \(S\), \(t\), and \(\theta\), this expression equals
\[
w^{\top}
        \bigl(
            S+t\theta^{\top}+\theta t^{\top}
        \bigr)w.
\]

The squared numerator determined by \eqref{eq:optimize_Sigma_s_R-C} and the denominator in \eqref{eq:optimize_Sigma_s_D} are both homogeneous of degree two in
\(w\). We may therefore impose the normalization $\mathbf 1_{p}^{\top} w = p$, under which $\otr(W)=1$ and the squared numerator equals the positive constant $\tau_t^2$. Hence, maximizing the limiting risk reduction is equivalent to
\[
\begin{aligned}
    \underset{w\in\mathbb R_{+}^p}{\operatorname{minimize}}
    \quad&
    w^{\top}
    \bigl(
        S+t\theta^{\top}+\theta t^{\top}
    \bigr)w,\\
    \operatorname{subject\ to}
    \quad&
    \mathbf 1_p^{\top}w=p.
\end{aligned}
\]
\end{proof}

\section{Additional theoretical results}
\label{sec:additional_theory}

\subsection{Non-monotonicity in the fresh unlabeled sample size}

\label{sec:appendix_monotonicity}

In this subsection, we consider the isotropic and same-$\lambda$ setting, i.e., $\lambda_t = \lambda_s = \lambda$. We demonstrate that the optimal fresh-$X$ PMSD risk is \emph{not} globally monotone in $\gamma_s$. Instead, it has a simple one-dimensional structure and is unimodal in $\gamma_s$.

Assume $\Sigma_t = \Sigma_s = I_p$ and $\beta\sim\cN(0,(r^2/p)I_p)$, where $r^2>0$, and let
\[
  \lambda^\star:=\gamma_t\frac{\sigma^2}{r^2}.
\]
For the labeled sample, let $\kappa_t=\kappa_t(\lambda)$ be the unique positive solution to
\[
  \kappa_t
  =
  \lambda+\frac{\gamma_t\kappa_t}{1+\kappa_t},
\]
and set
\[
  b_t:=\Big(1-\frac{\gamma_t}{(1+\kappa_t)^2}\Big)^{-1}.
\]
For the fresh unlabeled sample, let $\kappa_s=\kappa_s(\lambda)$ be the unique positive solution to
\[
  \kappa_s
  =
  \lambda+\frac{\gamma_s\kappa_s}{1+\kappa_s},
\]
and define
\[
  c_s:=\Big(1-\frac{\gamma_s}{(1+\kappa_s)^2}\Big)^{-1}.
\]

Let $\bar \sR_{\te}(\lambda):=\sR_{\te}(\lambda)-\sigma^2$ denote the excess teacher risk. Define the excess PD-student risk $\bar \sR_{\pd}$ and the excess correlation $\bar \sC$ similarly.

In the isotropic model, the limiting teacher risk in \eqref{eq:teacher_risk} simplifies to
\begin{equation}
\label{eq:barR_teacher_iso_again}
  \bar \sR_{\te}(\lambda)
  =
  \frac{b_t}{(1+\kappa_t)^2}
  \big(r^2\kappa_t^2+\sigma^2\gamma_t\big).
\end{equation}
Define also
\begin{equation}
\label{eq:Theta_T_labeled}
  \Theta_t(\lambda)
  :=
  \bar \sR_{\te}(\lambda)-r^2\frac{\kappa_t}{1+\kappa_t},
  \qquad
  T_t(\lambda)
  :=
  \bar \sR_{\te}(\lambda)+r^2\frac{1-\kappa_t}{1+\kappa_t}.
\end{equation}

We are now ready to state the key result in this subsection.

\begin{proposition}[Dependence of optimal PMSD risk on $\gamma_s$]
\label{prop:freshX_unlabeled_effect_iso}
In the isotropic setting, for each fixed $\lambda>0$,
\begin{equation}
\label{eq:Rsd_fresh_gamma_u_formula}
  \bar \sR_{\m}^{\star}(\lambda;\gamma_s)
  :=
  \sR_{\m}^{\star}(\lambda;\gamma_s) - \sigma^2
  =
  \bar \sR_{\te}(\lambda)
  -
  \frac{\Theta_t(\lambda)^2}{T_t(\lambda)}\,
  \frac{1}{c_s(\lambda)}.
\end{equation}
Equivalently,
\begin{equation}
\label{eq:Rsd_fresh_gamma_u_formula_K}
  \bar \sR_{\m}^{\star}(\lambda;\gamma_s)
  =
  \bar \sR_{\te}(\lambda)-K_t(\lambda)\,\frac{1}{c_s(\lambda)},
  \qquad
  K_t(\lambda):=\frac{\Theta_t(\lambda)^2}{T_t(\lambda)}\ge 0.
\end{equation}

Moreover:
\begin{enumerate}[label=(\alph*),leftmargin=7mm]
  \item if $\lambda=\lambda^\star$, then $K_t(\lambda)=0$ and
  \[
    \sR_{\m}^{\star}(\lambda;\gamma_s) \equiv \sR_{\te}(\lambda)
    \qquad\text{for all }\gamma_s>0;
  \]
  \item if $\lambda\neq \lambda^\star$, then
  \begin{equation}
  \label{eq:dRsd_dgamma_u}
    \frac{\partial}{\partial \gamma_s} \bar \sR_{\m}^{\star}(\lambda;\gamma_s)
    =
    -K_t(\lambda)\,
    \frac{
      4(\gamma_s-\lambda-1)
    }{\big(\gamma_s+\lambda+1+\sqrt{(\gamma_s+\lambda-1)^2+4\lambda}\big)^2
      \sqrt{(\gamma_s+\lambda-1)^2+4\lambda}
    }.
  \end{equation}
  Hence $\gamma_s\mapsto \bar \sR_{\m}^{\star}(\lambda;\gamma_s)$ is
  strictly increasing on $(0,1+\lambda)$, strictly decreasing on $(1+\lambda,\infty)$,
  and has a unique maximum at
  \[
    \gamma_s=1+\lambda.
  \]
\end{enumerate}
In particular, the optimal PMSD risk is \emph{not} globally monotone in the amount of unlabeled data.
\end{proposition}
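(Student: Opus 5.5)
The plan is to specialize \Cref{thm:general_ridge} to $\Sigma_t=\Sigma_s=I_p$ with $\lambda_t=\lambda_s=\lambda$. Everything then reduces to scalars: $G_t=(1+\kappa_t)^{-1}I_p$ and $G_s=(1+\kappa_s)^{-1}I_p$. Since $\beta\sim\cN(0,(r^2/p)I_p)$ and every quadratic form involved is in scalar matrices, $\beta^\top f(\cdot)\beta\to r^2 f$, so one may take $q_{i,j}=\tilde q_{i,j}=r^2(1+\kappa_t)^{-i}(1+\kappa_s)^{-j}$ and $t_{i,j}=\gamma_t(1+\kappa_t)^{-i}(1+\kappa_s)^{-j}$. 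Two observations simplify the rest:
\[
1+b_s=\Bigl(1-\tfrac{\gamma_s}{(1+\kappa_s)^2}\Bigr)^{-1}=c_s,
\qquad
\bar\sR_{\te}=\frac{b_t(\kappa_t^2 q_{2,0}+\sigma^2)\gamma_t}{(1+\kappa_t)^2}.
\]
The second identity, matching \eqref{eq:barR_teacher_iso_again}, lets every trace term be rewritten in terms of $\bar\sR_{\te}$.

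\textbf{Step 1: the numerator $\sR_{\te}-\sC$.} Substitute into \Cref{lem:fresh_num_DE_general}. The $\kappa_s$-dependence factors out as $\kappa_s/(1+\kappa_s)$, and the remaining bracket is
\[
-\kappa_t\frac{r^2}{1+\kappa_t}+\frac{\kappa_t^2 r^2}{(1+\kappa_t)^2}+\bar\sR_{\te}.
\]
I then want to regroup this bracket into $\Theta_t(\lambda)$ of \eqref{eq:Theta_T_labeled}, giving $\sR_{\te}-\sC=\frac{\kappa_s}{1+\kappa_s}\,\Theta_t(\lambda)$.

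\textbf{Step 2: the denominator $\sD$.} Substitute into \Cref{lem:freshX_DE_delta_D_Rpd}. Every bracketed term carries a common factor $(1+b_s)(1+\kappa_s)^{-2}=c_s(1+\kappa_s)^{-2}$. The signal part collapses, as in the proof of \Cref{thm:general_ridge}, to $r^2\sigma_i^2/(\sigma_i+\kappa_t)^2=r^2/(1+\kappa_t)^2$, and the noise part to $\bar\sR_{\te}$. Hence
\[
\sD=\frac{\kappa_s^2 c_s}{(1+\kappa_s)^2}\,T,
\]
where $T$ depends only on the teacher quantities $(\kappa_t,\bar\sR_{\te},r^2)$.

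\textbf{Main obstacle.} Matching this $T$ exactly with the $T_t$ written in \eqref{eq:Theta_T_labeled} is where I expect trouble; a first pass gives $\bar\sR_{\te}+r^2/(1+\kappa_t)^2$. I would recheck this carefully, in particular whether the cross terms coupling $\Theta_t$ and $T$ regroup differently, and fix the constant accordingly. Only the structural fact matters downstream: $T$ is a positive quantity independent of $\gamma_s$.

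\textbf{Step 3: the risk formula.} Plug Steps 1 and 2 into \eqref{eq:R_star_asymp}. The factors $\kappa_s^2/(1+\kappa_s)^2$ cancel, leaving
\[
\bar\sR_{\m}^\star=\bar\sR_{\te}-\frac{\Theta_t^2}{T}\cdot\frac{1}{c_s},
\]
which is \eqref{eq:Rsd_fresh_gamma_u_formula} with $K_t=\Theta_t^2/T\ge0$.

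\textbf{Part (a).} Show $\Theta_t(\lambda)=0$ iff $\lambda=\lambda^\star$. Specialize \eqref{eq:R-C} to $\Sigma_s=\Sigma_t=I_p$: then $\sR_{\te}-\sC=\sum_i c_i/(1+\kappa_s)$, so $\Theta_t$ is a positive multiple of $\sum_i c_i$. The computation in the proof of \Cref{cor:strict_improv_special_cases}(a) shows $\sum_i c_i=0$ exactly at $\lambda=\gamma_t\sigma^2/r^2$. Alternatively, $\Theta_t$ is proportional to $\sR_{\te}'$ by \Cref{cor:balanced_regime}-type algebra, and in the isotropic case the unique stationary point of $\sR_{\te}$ is $\lambda^\star$.

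\textbf{Part (b).} Since $\bar\sR_{\te}$ and $K_t$ do not depend on $\gamma_s$, only $1/c_s=1-\gamma_s/(1+\kappa_s)^2$ varies. Solving the quadratic fixed point explicitly gives
\[
1+\kappa_s=\tfrac12\Bigl(\gamma_s+\lambda+1+\sqrt{(\gamma_s+\lambda-1)^2+4\lambda}\Bigr).
\]
Write $g(\gamma_s):=\gamma_s/(1+\kappa_s)^2$; then $\partial_{\gamma_s}\bar\sR_{\m}^\star=K_t\,g'(\gamma_s)$. Differentiating $g$ with this closed form and simplifying with $\partial_{\gamma_s}\sqrt{\cdot}=(\gamma_s+\lambda-1)/\sqrt{\cdot}$ should produce \eqref{eq:dRsd_dgamma_u}. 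The sign of $g'$ is the sign of $1+\lambda-\gamma_s$, so $\bar\sR_{\m}^\star$ increases on $(0,1+\lambda)$ and decreases on $(1+\lambda,\infty)$ whenever $K_t>0$, with unique maximum at $\gamma_s=1+\lambda$. This establishes the non-monotonicity.
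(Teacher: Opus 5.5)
Specializing \Cref{thm:general_ridge} to $\Sigma_t=\Sigma_s=I_p$ is a workable route, but the proposal as written relies on a false identity. That identity is exactly what produces your ``main obstacle'', and it also silently breaks Step 1. The claim $\bar\sR_{\te}=b_t(\kappa_t^2q_{2,0}+\sigma^2)\gamma_t/(1+\kappa_t)^2$ is wrong. From $\bar\sR_{\te}=\kappa_t^2b_tq_{2,0}+\sigma^2b_tt_{2,0}$ and $b_t=1+b_tt_{2,0}$ one gets
\[
b_t\,t_{2,0}\,(\kappa_t^2q_{2,0}+\sigma^2)=\bar\sR_{\te}-\kappa_t^2q_{2,0}=\bar\sR_{\te}-\frac{\kappa_t^2r^2}{(1+\kappa_t)^2}.
\]
With your substitution, two things go wrong:
\begin{itemize}
\item The Step 1 bracket equals $\Theta_t+\kappa_t^2r^2/(1+\kappa_t)^2$. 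This cannot be regrouped into $\Theta_t$ and does not vanish at $\lambda^\star$, so part (a) would fail.
\item The Step 2 bracket becomes $\bar\sR_{\te}+r^2/(1+\kappa_t)^2$ rather than $T_t$.
\end{itemize}
The proposition asserts an explicit formula with the specific $\Theta_t$ and $T_t$ of \eqref{eq:Theta_T_labeled}. Saying ``only the structural fact matters'' therefore does not prove it.

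With the correct identity, both steps close exactly. The Step 1 bracket becomes $\bar\sR_{\te}-\kappa_tr^2/(1+\kappa_t)=\Theta_t$. The Step 2 bracket becomes
\[
\frac{r^2}{(1+\kappa_t)^2}+\bar\sR_{\te}-\frac{\kappa_t^2r^2}{(1+\kappa_t)^2}=\bar\sR_{\te}+\frac{r^2(1-\kappa_t)}{1+\kappa_t}=T_t.
\]
The same computation gives $T_t=\bigl(r^2+\gamma_tb_t(\kappa_t^2q_{2,0}+\sigma^2)\bigr)/(1+\kappa_t)^2>0$. You need this for $K_t$ to be well defined and strictly positive when $\lambda\neq\lambda^\star$; positivity is not evident from the form $\bar\sR_{\te}+r^2(1-\kappa_t)/(1+\kappa_t)$ when $\kappa_t>1$.

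Once this is fixed, the rest holds:
\begin{itemize}
\item $\Theta_t=\sum_ic_i$ exactly in the isotropic case, so your reduction of part (a) to \Cref{cor:strict_improv_special_cases}(a) is valid.
\item Your differentiation of $\gamma_s/(1+\kappa_s)^2$ in part (b) yields the stated formula and sign pattern.
\end{itemize}

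For comparison, the paper does not specialize \Cref{thm:general_ridge}. It uses orthogonal invariance of $M_u=\widetilde\Sigma(\widetilde\Sigma+\lambda I_p)^{-1}$ to get $\bar\sR_{\te}-\bar\sC=(1-s_u)\Theta_t$ and $\bar\sD=(1-2s_u+s_{2,u})T_t$, with $T_t=\lim\|\beta^{\te}_\lambda\|_2^2$. It then proves (a) via the Marchenko--Pastur factorization $\Theta_t=(\sigma^2\gamma_t-r^2\lambda)(m_{1,t}-\lambda m_{2,t})$. Your route is more economical given the earlier results; the paper's makes the meaning of $T_t$ transparent.
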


Since smaller $\gamma_s=p/n_s$ corresponds to more unlabeled data, \Cref{prop:freshX_unlabeled_effect_iso}
shows that adding more fresh unlabeled data is \emph{not} uniformly beneficial.
For fixed $\lambda\neq\lambda^\star$:
\begin{itemize}[leftmargin=7mm]
  \item if $\gamma_s<1+\lambda$, then decreasing $\gamma_s$ (adding more unlabeled data) decreases the optimal PMSD risk;
  \item if $\gamma_s>1+\lambda$, then decreasing $\gamma_s$ initially \emph{increases} the optimal PMSD risk, until the turning point $\gamma_s=1+\lambda$ is reached.
\end{itemize}
Thus, the fresh-$X$ PMSD risk is unimodal, not monotone, in the amount of unlabeled data.

\begin{proof}[Proof of \Cref{prop:freshX_unlabeled_effect_iso}]
Let
\[
  \Delta_\lambda:=\beta_\lambda^{\te}-\beta,
  \qquad
  \Delta_{\pd,\lambda}:=\beta_{\pd,\lambda} - \beta.
\]
All limits below are taken in the proportional asymptotic regime with $\gamma_t$ and $\gamma_s$ fixed.
In the isotropic in-distribution setting,
\[
  \bar \sR_{\te}(\lambda)=\lim \E\|\Delta_\lambda\|_2^2,
  \qquad
  \bar \sC(\lambda)=\lim \E\langle \Delta_\lambda,\Delta_{\pd,\lambda}\rangle,
  \qquad
  \bar \sR_{\pd}(\lambda)=\lim \E\|\Delta_{\pd,\lambda}\|_2^2.
\]

Let $M_u:=\widetilde\Sigma(\widetilde\Sigma+\lambda I_p)^{-1}$. Then
\[
\beta_{\pd, \lambda}=M_u \beta_\lambda^{\te},
  \qquad
  \Delta_{\pd,\lambda}
  =
  M_u\Delta_\lambda+(M_u-I_p)\beta.
\]
Because $\widetilde\Sigma$ is independent of the labeled sample, $M_u$ is independent of
$(\Delta_\lambda,\beta)$.
Under isotropy, $M_u$ is orthogonally invariant, and therefore
\[
  \Delta_\lambda^\top M_u\Delta_\lambda
  \to s_u\,\|\Delta_\lambda\|_2^2,
  \qquad
  \Delta_\lambda^\top M_u^2\Delta_\lambda
  \to s_{2,u}\,\|\Delta_\lambda\|_2^2,
\]
\[
  \Delta_\lambda^\top M_u\beta
  \to s_u\,\langle \Delta_\lambda,\beta\rangle,
  \qquad
  \beta^\top(M_u-I_p)^2\beta
  \to (1-2s_u+s_{2,u})\,\|\beta\|_2^2,
\]
where
\[
  s_u=\lim\frac1p\tr(M_u)=\frac{1}{1+\kappa_s},
  \quad
  s_{2,u} = \lim \frac1p\tr(M_u^2),
\]
and
\[
  1-2s_u+s_{2,u}
  =
  \lim \frac1p\tr\!\big((I_p-M_u)^2\big)
  =
  \frac{\kappa_s^2 c_s}{(1+\kappa_s)^2}.
\]

Similarly, on the labeled side,
\[
  s_t=\frac{1}{1+\kappa_t},
  \qquad
  \langle \Delta_\lambda,\beta\rangle
  \to
  -r^2(1-s_t)
  =
  -r^2\frac{\kappa_t}{1+\kappa_t}.
\]
Consequently,
\[
  T_t(\lambda)
  =
  \lim \E\|\beta_\lambda^{\te}\|_2^2
  >0.
\]

Expanding the fresh-$X$ correlation term gives
\[
  \bar \sC(\lambda)
  =
  s_u\,\bar \sR_{\te}(\lambda)
  +(s_u-1)\lim \langle \Delta_\lambda,\beta\rangle
  =
  s_u\,\bar \sR_{\te}(\lambda)+r^2(1-s_u)(1-s_t).
\]
Therefore,
\begin{align}
\label{eq:A_minus_C_fresh_unlabeled}
  \bar \sR_{\te}(\lambda)-\bar \sC(\lambda)
  &=
  (1-s_u)(\bar \sR_{\te}(\lambda)-r^2(1-s_t))
  \nonumber\\
  &=
  (1-s_u)\,\Theta_t(\lambda).
\end{align}

Next, expanding the fresh-$X$ PD risk gives
\[
  \bar \sR_{\pd}(\lambda)
  =
  s_{2,u}\bar \sR_{\te}(\lambda)
  +r^2\big(1-s_{2,u}+2s_t(s_{2,u}-s_u)\big).
\]
Hence the discrepancy denominator is
\begin{align*}
  \bar \sD(\lambda)
  &:=
  \bar \sR_{\te}(\lambda)+\bar \sR_{\pd}(\lambda)-2\bar \sC(\lambda)
  \\
  &=
  (1-2s_u+s_{2,u})
  (\bar \sR_{\te}(\lambda)+r^2(2s_t-1))
  \\
  &=
  (1-2s_u+s_{2,u})\,T_t(\lambda).
\end{align*}
Using
\[
  1-s_u=\frac{\kappa_s}{1+\kappa_s},
  \qquad
  1-2s_u+s_{2,u}=\frac{\kappa_s^2 c_s}{(1+\kappa_s)^2},
\]
we obtain
\[
  \frac{(\bar \sR_{\te}(\lambda)-\bar \sC(\lambda))^2}{\bar \sD(\lambda)}
  =
  \frac{\Theta_t(\lambda)^2}{T_t(\lambda)}\cdot \frac{1}{c_s(\lambda)}.
\]
Substituting this into the oracle decomposition
\[
  \bar \sR_{\m}^{\star}(\lambda)
  =
  \bar \sR_{\te}(\lambda)
  -
  \frac{(\bar \sR_{\te}(\lambda)-\bar \sC(\lambda))^2}{\bar \sD(\lambda)}
\]
proves \eqref{eq:Rsd_fresh_gamma_u_formula}.

To identify the flat case, note that
\[
  \Theta_t(\lambda)
  =
  \bar \sR_{\te}(\lambda)-r^2(1-s_t)
  =
  r^2\lambda^2 m_{2,t}(\lambda)
  +\sigma^2\gamma_t\big(m_{1,t}(\lambda)-\lambda m_{2,t}(\lambda)\big)
  -r^2\lambda m_{1,t}(\lambda),
\]
where
\[
  m_{k,t}(\lambda)
  :=
  \int \frac{1}{(u+\lambda)^k}\,dF_{\mathrm{MP},\gamma_t}(u),
  \qquad k\in\{1,2\},
\]
are the labeled resolvent moments. Rearranging,
\[
  \Theta_t(\lambda)
  =
  (\sigma^2\gamma_t-r^2\lambda)\big(m_{1,t}(\lambda)-\lambda m_{2,t}(\lambda)\big).
\]
Since
\[
  m_{1,t}(\lambda)-\lambda m_{2,t}(\lambda)
  =
  \int \frac{u}{(u+\lambda)^2}\,dF_{\mathrm{MP},\gamma_t}(u)
  >0,
\]
we have $\Theta_t(\lambda)=0$ if and only if
\[
  \lambda=\lambda^\star:=\gamma_t \frac{\sigma^2}{r^2}.
\]
This proves part (a).

For part (b), when $\lambda\neq\lambda^\star$ we have $K_t(\lambda)>0$, so it remains to differentiate
$1/c_s$.
Using the explicit isotropic root
\[
  \kappa_s
  =
  \frac{\lambda+\gamma_s-1+\sqrt{(\lambda+\gamma_s-1)^2+4\lambda}}{2},
\]
we obtain
\[
  \frac{1}{c_s}
  =
  1-\frac{\gamma_s}{(1+\kappa_s)^2}
  =
  1-\frac{4\gamma_s}{
    \big(\gamma_s+\lambda+1+\sqrt{(\gamma_s+\lambda-1)^2+4\lambda}\big)^2
  }.
\]
Direct differentiation yields
\[
  \frac{\partial}{\partial\gamma_s}\Big(\frac{1}{c_s}\Big)
  =
  \frac{
    4(\gamma_s-\lambda-1)
  }{
    \big(\gamma_s+\lambda+1+\sqrt{(\gamma_s+\lambda-1)^2+4\lambda}\big)^2
    \sqrt{(\gamma_s+\lambda-1)^2+4\lambda}
  }.
\]
Substituting into
\[
  \frac{\partial}{\partial\gamma_s}\bar \sR_{\m}^{\star}(\lambda;\gamma_s)
  =
  -K_t(\lambda)\frac{\partial}{\partial\gamma_s}\Big(\frac{1}{c_s}\Big)
\]
proves \eqref{eq:dRsd_dgamma_u} and the sign characterization.
\end{proof}

\subsection{Non-identifiability of tuning without labeled data}

\label{sec:appendix_tuning}

Recall that, for squared prediction risk with test covariance $\Sigma\succ 0$, the oracle weight is
\[
  \xi^\star
  =
  \frac{R_{\te}-C}{D}
  =
  \frac{\langle \beta^{\te}_{\lambda_t}-\beta,\ \beta^{\te}_{\lambda_t}-\beta_{\pd, \lambda_s}\rangle_{\Sigma}}
       {\|\beta^{\te}_{\lambda_t}-\beta_{\pd, \lambda_s}\|_{\Sigma}^2},
\]
provided $D(\lambda_t, \lambda_s)>0$.
Thus, for known $\Sigma$, the denominator depends only on the observable pair
$(\beta^{\te}_{\lambda_t},\beta_{\pd, \lambda_s})$, whereas the numerator still depends on the unknown target
parameter $\beta$.

\begin{proposition}[Non-identifiability of $\xi^\star$ without labels]
\label{prop:no_label_tuning_impossible}
Fix $\lambda_t,\lambda_s>0$, a teacher ridge predictor $\beta^{\te}_{\lambda_t}\in\R^p$, and a fresh unlabeled design
$\tilde X\in\R^{n_s \times p}$. Let
\[
  \beta_{\pd, \lambda_s}
  := (\tilde X^\top \tilde X / n_s + \lambda_s I_p)^{-1}(\tilde X^\top \tilde X / n_s )\beta^{\te}_{\lambda_t},
  \qquad
  \delta:=\beta^{\te}_{\lambda_t} -\beta_{\pd, \lambda_s}.
\]
Assume $\|\delta\|_{\Sigma}^2>0$.
For any $\eta\in\R$, define a test model by
\[
  x_0\sim P_x \text{ with } \E[x_0 x_0^\top]=\Sigma,
  \qquad
  y_0 = x_0^\top \beta_{\eta} + \varepsilon_0,
  \qquad
  \beta_{\eta}:=\beta^{\te}_{\lambda_t}-\eta\,\delta,
\]
where $\varepsilon_0$ is independent of $x_0$ with mean $0$ and variance $\sigma^2$.
Then, even if $\Sigma$ is known, the observable pair $(\beta^{\te}_{\lambda_t},\tilde X)$ is the same for every $\eta$, whereas
\[
  \xi^\star_{\eta}(\lambda_t,\lambda_s)=\eta,
  \qquad
  R_{\m,\eta}^{\star}=\sigma^2.
\]
In particular, $\xi^\star(\lambda_t, \lambda_s)$ is not identifiable from $(\beta^{\te}_{\lambda_t},\tilde X)$.
\end{proposition}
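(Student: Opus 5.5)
The argument is a direct computation: the observables $(\beta^{\te}_{\lambda_t},\tilde X)$ do not involve $\eta$, so only the oracle risks $R_{\te}$, $R_{\pd}$, and $C$ change with $\eta$. By construction, $\beta_{\pd,\lambda_s}$ is a deterministic function of $(\beta^{\te}_{\lambda_t},\tilde X)$, and so is $\delta$. Hence the whole family $\{\beta_\eta\}_{\eta\in\R}$ produces the same observable pair, and any estimator built from teacher queries and unlabeled covariates returns the same output for every $\eta$. It therefore remains to compute $\xi^\star_\eta$ and $R^\star_{\m,\eta}$ under each test model.

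Under model $\eta$, the residuals are
\[
e_{\te}=y_0-x_0^\top\beta^{\te}_{\lambda_t}=-\eta\,x_0^\top\delta+\varepsilon_0,
\qquad
e_{\pd}=y_0-x_0^\top\beta_{\pd,\lambda_s}=(1-\eta)\,x_0^\top\delta+\varepsilon_0,
\]
using $\beta_{\pd,\lambda_s}=\beta^{\te}_{\lambda_t}-\delta$. Since $\varepsilon_0$ is independent of $x_0$ and has mean zero, the cross terms vanish, and we obtain
\[
R_{\te}=\eta^2\|\delta\|_\Sigma^2+\sigma^2,
\qquad
R_{\pd}=(1-\eta)^2\|\delta\|_\Sigma^2+\sigma^2,
\qquad
C=-\eta(1-\eta)\|\delta\|_\Sigma^2+\sigma^2.
\]
These give $D=\|\delta\|_\Sigma^2>0$ and $R_{\te}-C=\eta\|\delta\|_\Sigma^2$. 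By \Cref{prop:oracle_affine}, it follows that $\xi^\star_\eta=\eta$ and $R^\star_{\m,\eta}=\eta^2\|\delta\|^2_\Sigma+\sigma^2-\eta^2\|\delta\|_\Sigma^2=\sigma^2$.

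As a sanity check, the mixed predictor at $\xi=\eta$ has coefficient $\beta^{\te}_{\lambda_t}-\eta\delta=\beta_\eta$. So the optimal mixture recovers the true signal exactly, and its risk equals the noise level $\sigma^2$, which matches the computation above.

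Non-identifiability then follows. The map $\eta\mapsto\xi^\star_\eta$ is injective, while the observables are constant in $\eta$. Consequently, no functional of $(\beta^{\te}_{\lambda_t},\tilde X)$, even with $\Sigma$ known, can equal $\xi^\star$ for two distinct values of $\eta$. There is no real obstacle here; the only point requiring care is to note that the ridge penalties and $\tilde X$ fix $\delta$ independently of $\beta_\eta$, so changing the response law leaves the observables untouched.
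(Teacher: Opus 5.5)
Your proposal is correct and takes essentially the same approach as the paper's proof. Both note that $\delta$ and the observables are fixed independently of $\eta$, compute $R_{\te}-C=\eta\|\delta\|_\Sigma^2$ and $D=\|\delta\|_\Sigma^2$ to get $\xi^\star_\eta=\eta$ via \Cref{prop:oracle_affine}, and confirm $R^\star_{\m,\eta}=\sigma^2$ by noting that the mixed coefficient at $\xi=\eta$ equals $\beta_\eta$; your explicit formula for $R_{\pd}$ is a cosmetic addition the paper leaves implicit.
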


As an immediate corollary, there is no uniformly consistent estimator of $\xi^\star$ based only on the
teacher and unlabeled sample. Indeed, any measurable rule
$\hat\xi=\hat\xi(\beta^{\te}_{\lambda_t},\tilde X)$ takes the same value for the setting in
\Cref{prop:no_label_tuning_impossible}, whereas the target $\xi^\star_{\eta}(\lambda_t,\lambda_s)=\eta$ can be changed
arbitrarily.

\begin{proof}[Proof of \Cref{prop:no_label_tuning_impossible}]
    Since $\beta^{\te}_{\lambda_t}$ and $\tilde X$ are fixed, the induced pure-distilled coefficient
$\beta_{\pd, \lambda_s}$ is also fixed, so the observables do not depend on $\eta$.
Now note that
\[
  \beta^{\te}_{\lambda_t} -\beta_{\eta}=\eta\,\delta,
  \qquad
  \beta_{\pd, \lambda_s} -\beta_{\eta}=(\eta-1)\delta.
\]
Therefore,
\[
  R_{\te,\eta}(\lambda_t)-\sigma^2 = \eta^2\|\delta\|_{\Sigma}^2,
  \qquad
  C_{\eta}(\lambda_t, \lambda_s)-\sigma^2 = \eta(\eta-1)\|\delta\|_{\Sigma}^2,
\]
and hence
\[
   R_{\te,\eta}(\lambda_t) -C_{\eta}(\lambda_t, \lambda_s)
  = \eta\|\delta\|_{\Sigma}^2,
  \qquad
  D_{\eta}(\lambda_t, \lambda_s) = \|\delta\|_{\Sigma}^2.
\]
This gives $\xi^\star_{\eta}(\lambda_t,\lambda_s)=\eta$. Finally, the optimally mixed coefficient satisfies
\[
  (1-\xi_\eta^\star)\beta^{\te}_{\lambda_t}
  +\xi_\eta^\star\beta_{\pd,\lambda_s}
  =(1-\eta)\beta^{\te}_{\lambda_t}
  +\eta \beta_{\pd, \lambda_s}
  = \beta_{\eta},
\]
so at $\xi=\eta$ the predictor matches the truth exactly and the risk is the irreducible noise level $\sigma^2$.
\end{proof}

\clearpage

\section{Additional experiments}
\label{sec:additional_exp}

Code for reproducing the results is available at \url{https://github.com/hhd357/prediction_only_mixing_distillation}.

\vspace{-0.5em}
\subsection{Real-world regression tasks with different student regularization levels}
\label{sec:additional_exp_regression}

We report additional ridge regression results on Communities and Crime and Blog Feedback. For each dataset, we compare fresh covariates drawn from the teacher's training distribution with fresh covariates drawn from an isotropic Gaussian distribution, fixing or tuning $\lambda_s$ as in the captions.
\vspace{-1em}
\subsubsection{Communities and Crime}

\begin{figure*}[!ht]
   \centering
    \begin{subfigure}[t]{0.48\textwidth}
    \centering
    \includegraphics[width=\textwidth]{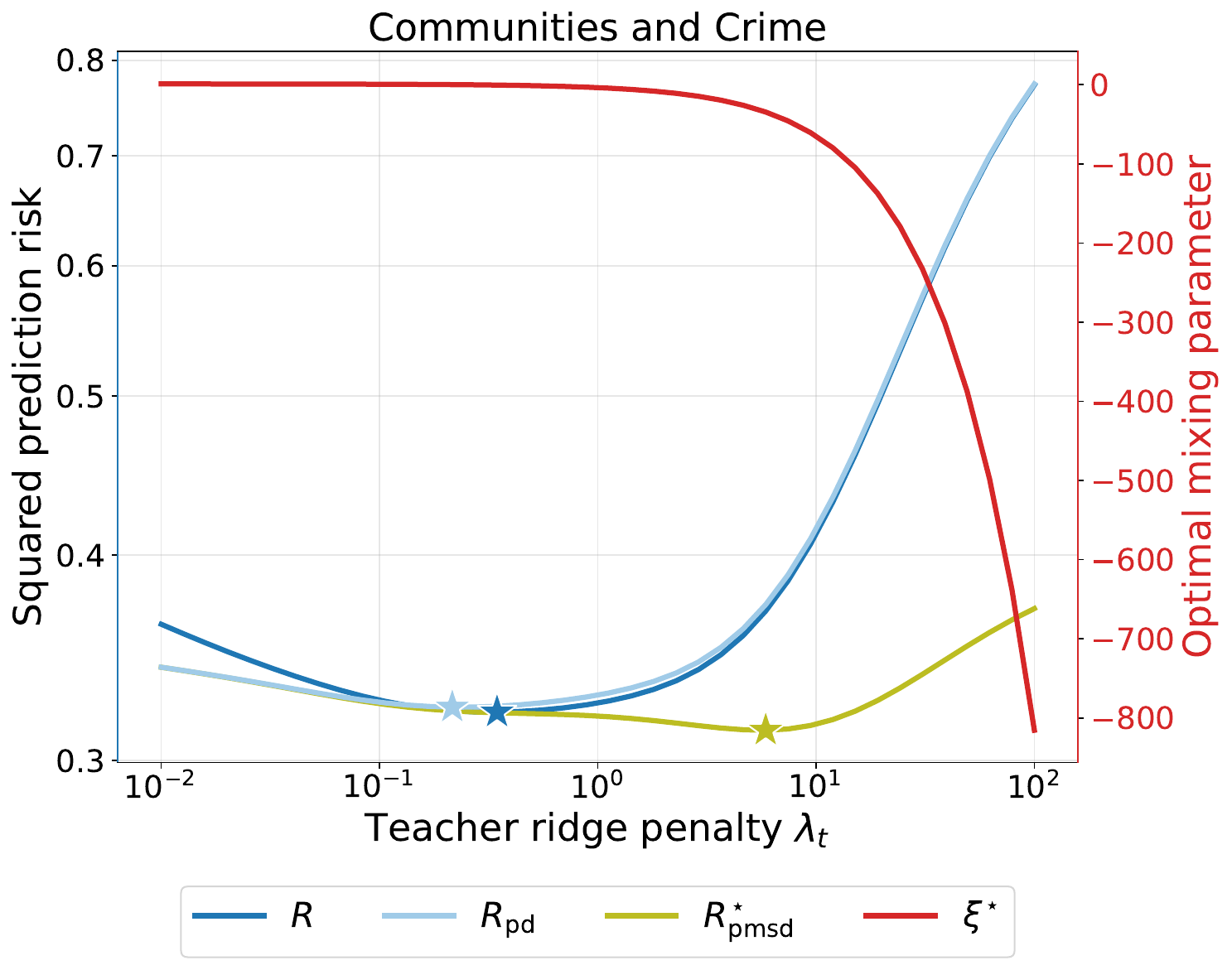}
    \caption{Fresh unlabeled covariates are drawn from the same distribution as the teacher's training covariates.}
 
  \end{subfigure}
  \quad
    \begin{subfigure}[t]{0.48\textwidth}
    \centering
    \includegraphics[width=\textwidth]{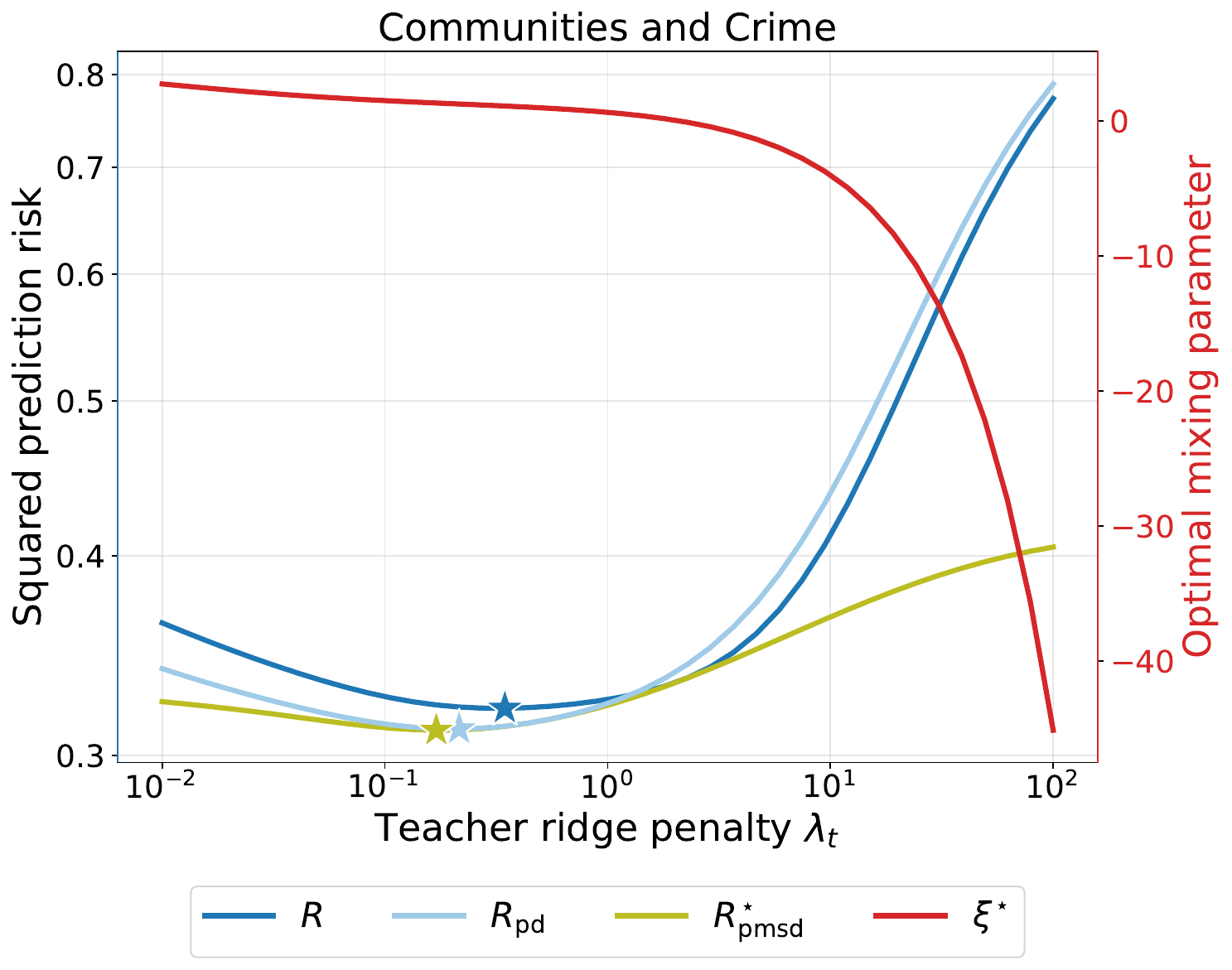}
    \caption{Fresh unlabeled covariates are drawn i.i.d.\ from $\mathcal{N}(0, I_p)$.}
  \end{subfigure}
    \caption{Communities and Crime with $n_t = 400$, $n_s = 800$, and $p = 99$. The student regularization is fixed at $\lambda_s = 0.1$ for all values of $\lambda_t$.}
    \vspace{-1em}
\end{figure*}

\begin{figure*}[!ht]
   \centering
    \begin{subfigure}[t]{0.48\textwidth}
    \centering
    \includegraphics[width=\textwidth]{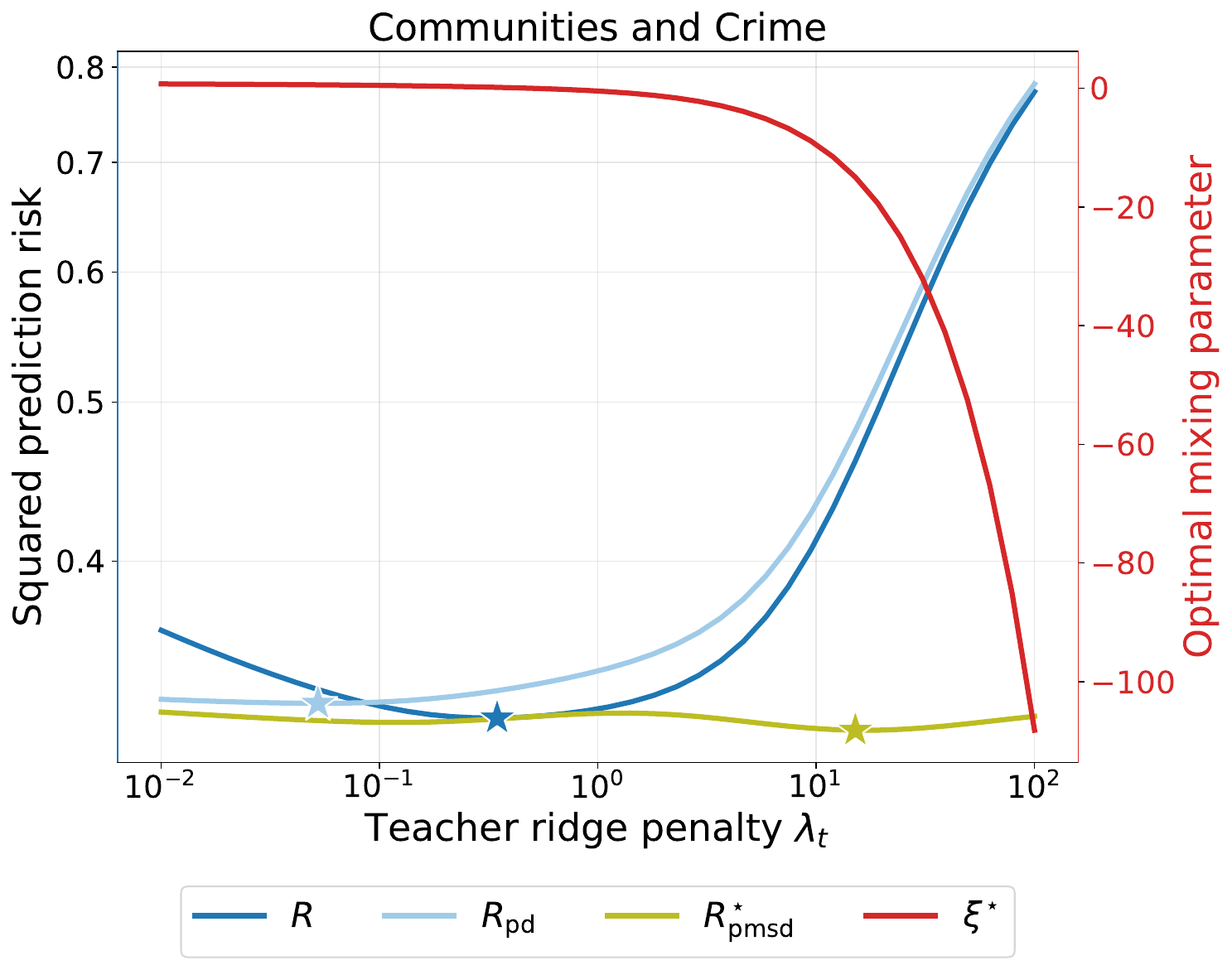}
    \caption{Fresh unlabeled covariates are drawn from the same distribution as the teacher's training covariates.}
  \end{subfigure}
  \quad
    \begin{subfigure}[t]{0.48\textwidth}
    \centering
    \includegraphics[width=\textwidth]{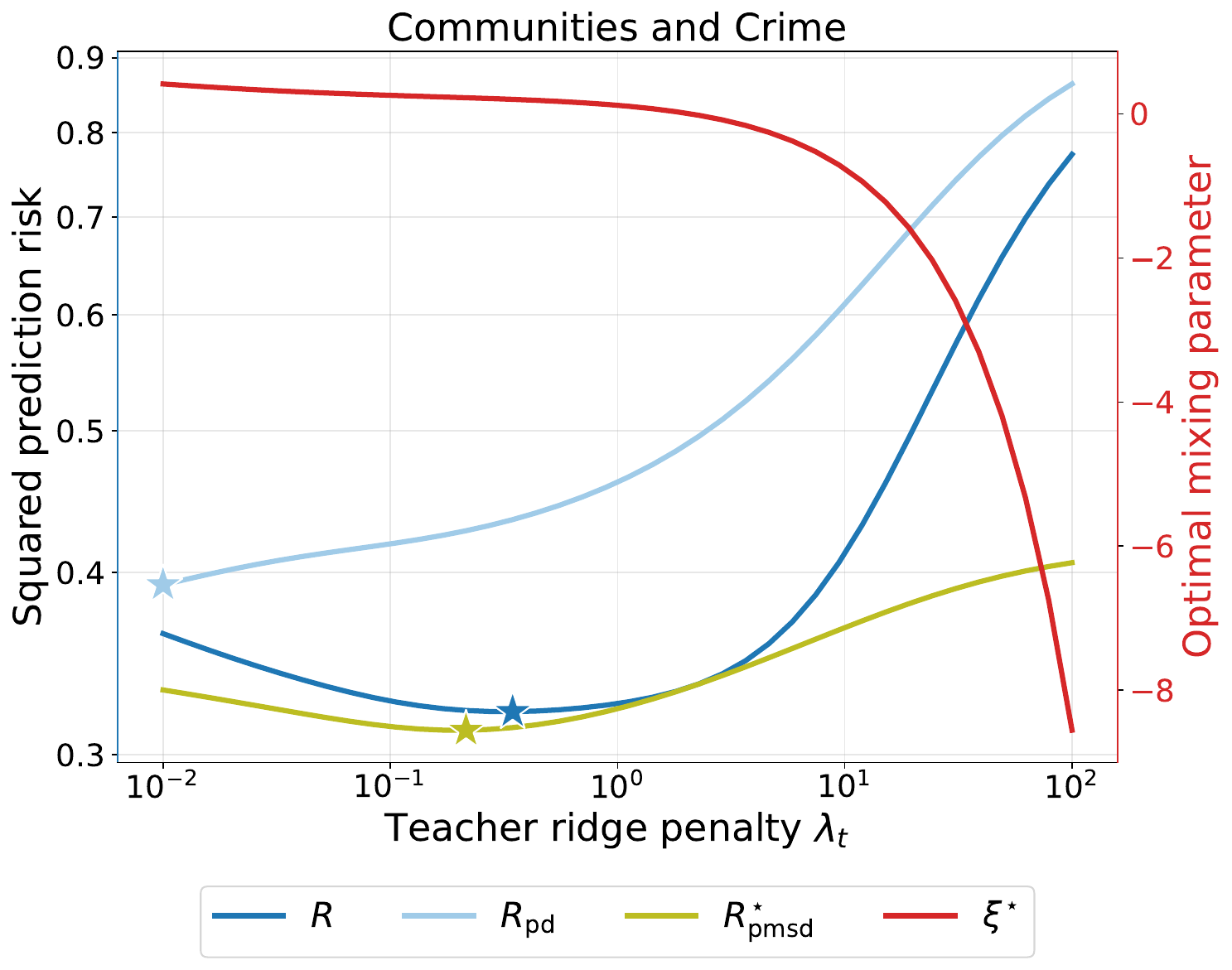}
    \caption{Fresh unlabeled covariates are drawn i.i.d.\ from $\mathcal{N}(0, I_p)$.}
  \end{subfigure}
    \caption{Communities and Crime with $n_t = 400$, $n_s = 800$, and $p = 99$. The student regularization is fixed at $\lambda_s = 1$ for all values of $\lambda_t$.}
\end{figure*}

\begin{figure}[!ht]
   \centering
    \begin{subfigure}[t]{0.48\textwidth}
    \centering
    \includegraphics[width=\textwidth]{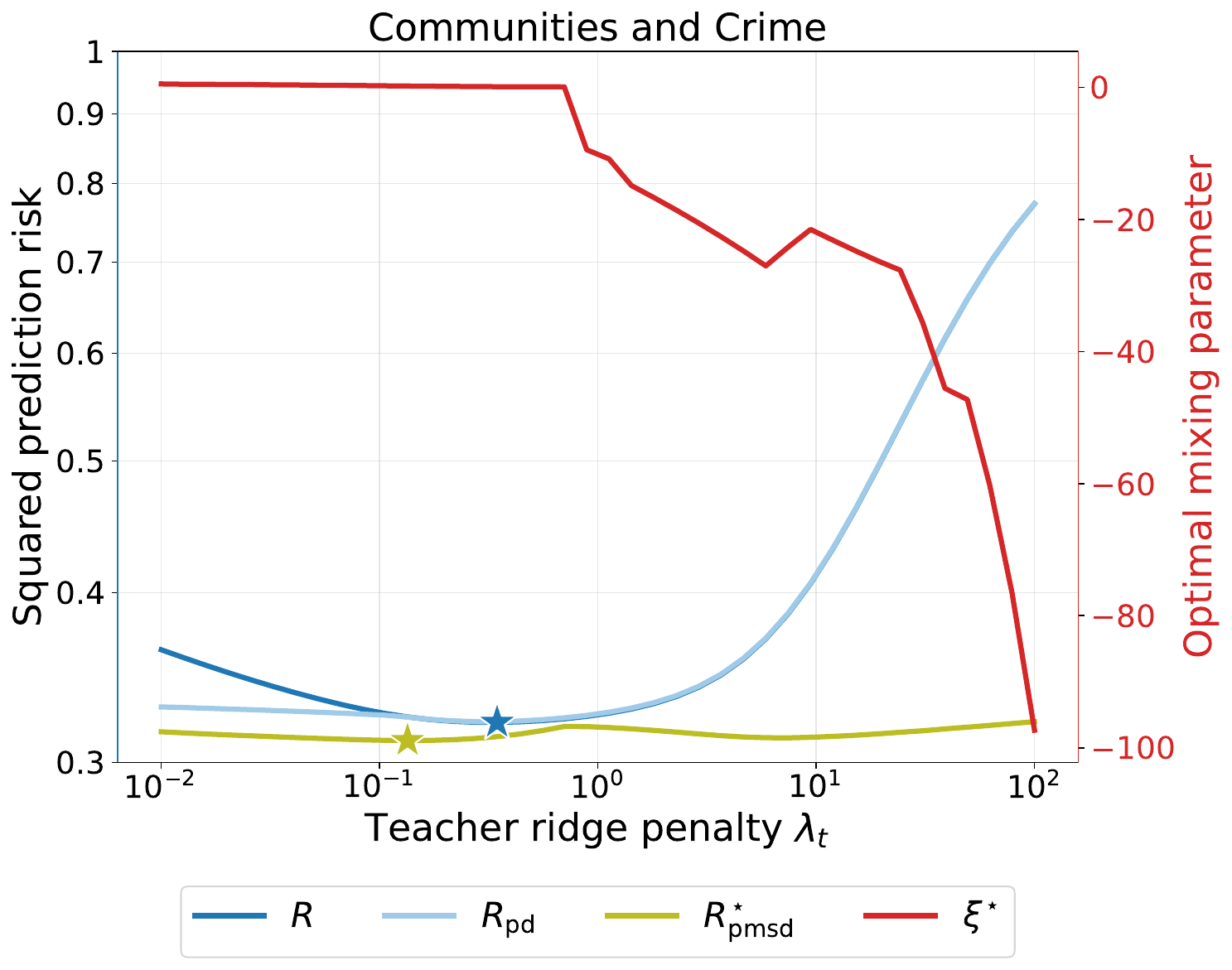}
    \caption{Fresh unlabeled covariates are drawn from the same distribution as the teacher's training covariates.}
  \end{subfigure}
  \quad
    \begin{subfigure}[t]{0.48\textwidth}
    \centering
    \includegraphics[width=\textwidth]{figures/arxiv_new/CC_0.2_0.4_isotropic_fresh_tuned_positive.pdf}
    \caption{Fresh unlabeled covariates are drawn i.i.d.\ from $\mathcal{N}(0, I_p)$.}
  \end{subfigure}
    \caption{Communities and Crime with $n_t = 400$, $n_s = 800$, and $p = 99$. At each value of $\lambda_t$, the student regularization $\lambda_s$ is tuned separately over a grid for the PD and PMSD students.}
\end{figure}

\vspace{4em}

\subsubsection{Blog Feedback}

\begin{figure*}[!ht]
   \centering
    \begin{subfigure}[t]{0.48\textwidth}
    \centering
    \includegraphics[width=\textwidth]{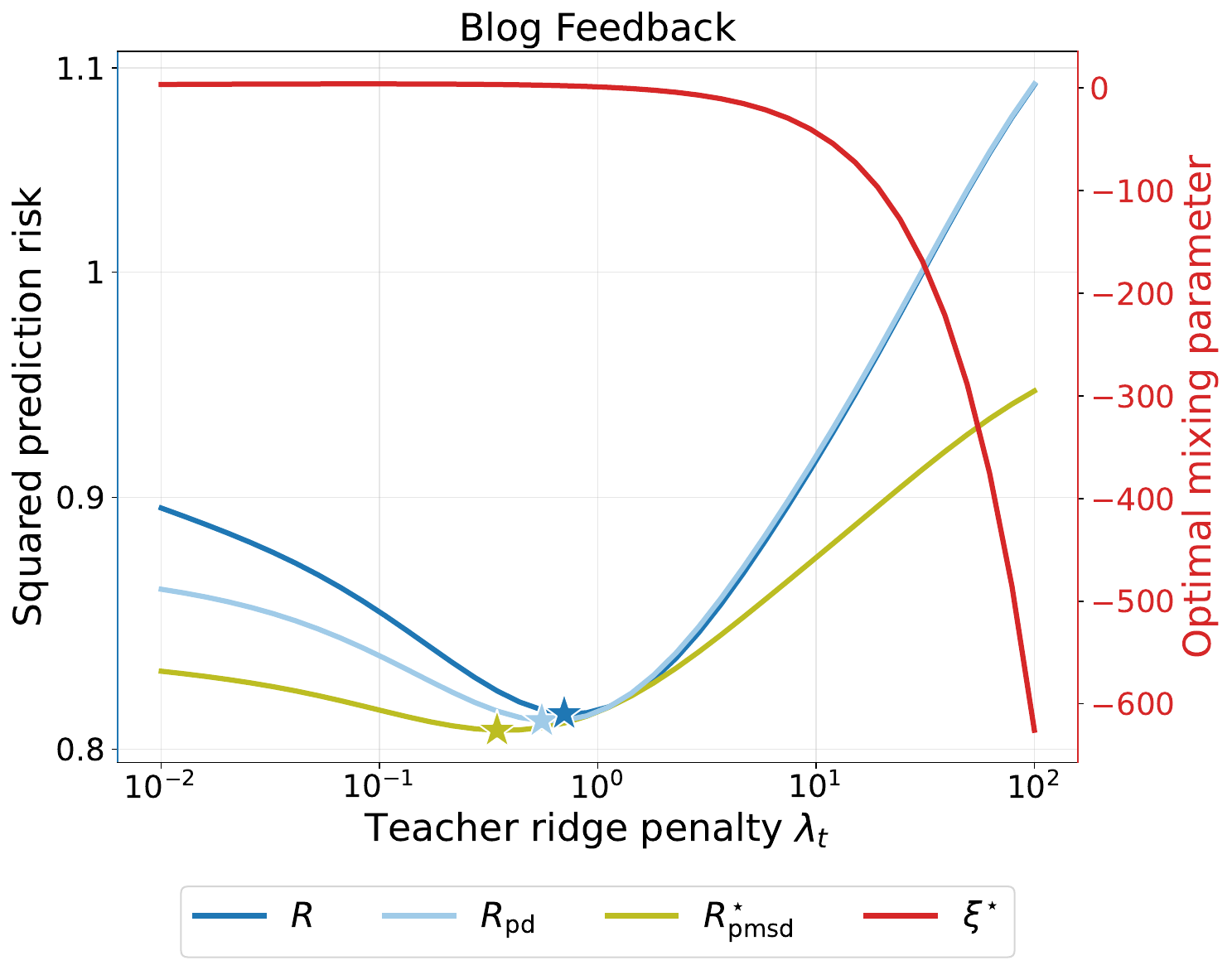}
    \caption{Fresh unlabeled covariates are drawn from the same distribution as the teacher's training covariates.}
  \end{subfigure}
  \quad
    \begin{subfigure}[t]{0.48\textwidth}
    \centering
    \includegraphics[width=\textwidth]{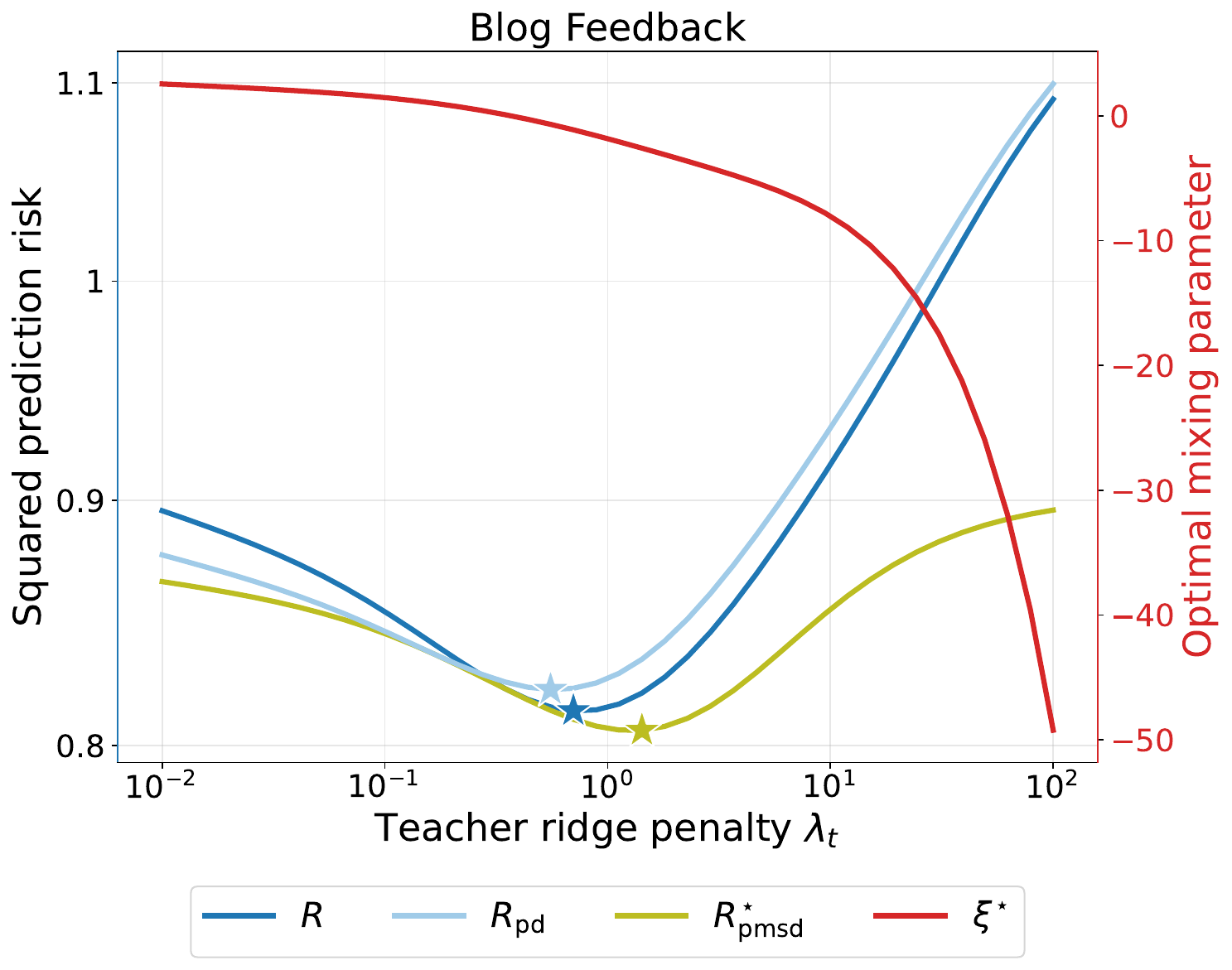}
    \caption{Fresh unlabeled covariates are drawn i.i.d.\ from $\mathcal{N}(0, I_p)$.}
  \end{subfigure}
    \caption{Blog Feedback with $n_t = 2619$, $n_s = 5240$, and $p = 280$. The student regularization is fixed at $\lambda_s = 0.1$ for all values of $\lambda_t$.}
\end{figure*}

\begin{figure*}[!ht]
   \centering
    \begin{subfigure}[t]{0.48\textwidth}
    \centering
    \includegraphics[width=\textwidth]{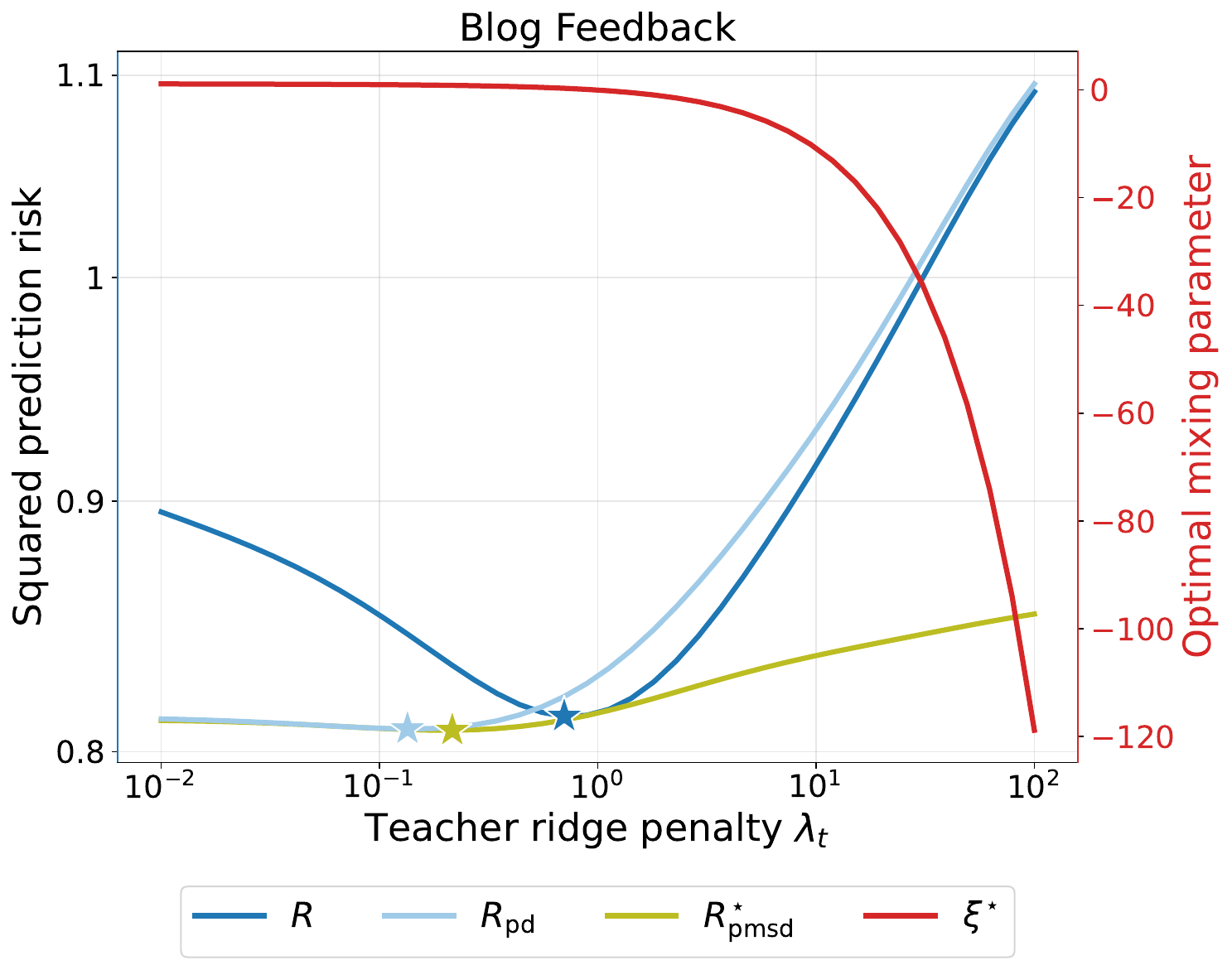}
    \caption{Fresh unlabeled covariates are drawn from the same distribution as the teacher's training covariates.}
  \end{subfigure}
  \quad
    \begin{subfigure}[t]{0.48\textwidth}
    \centering
    \includegraphics[width=\textwidth]{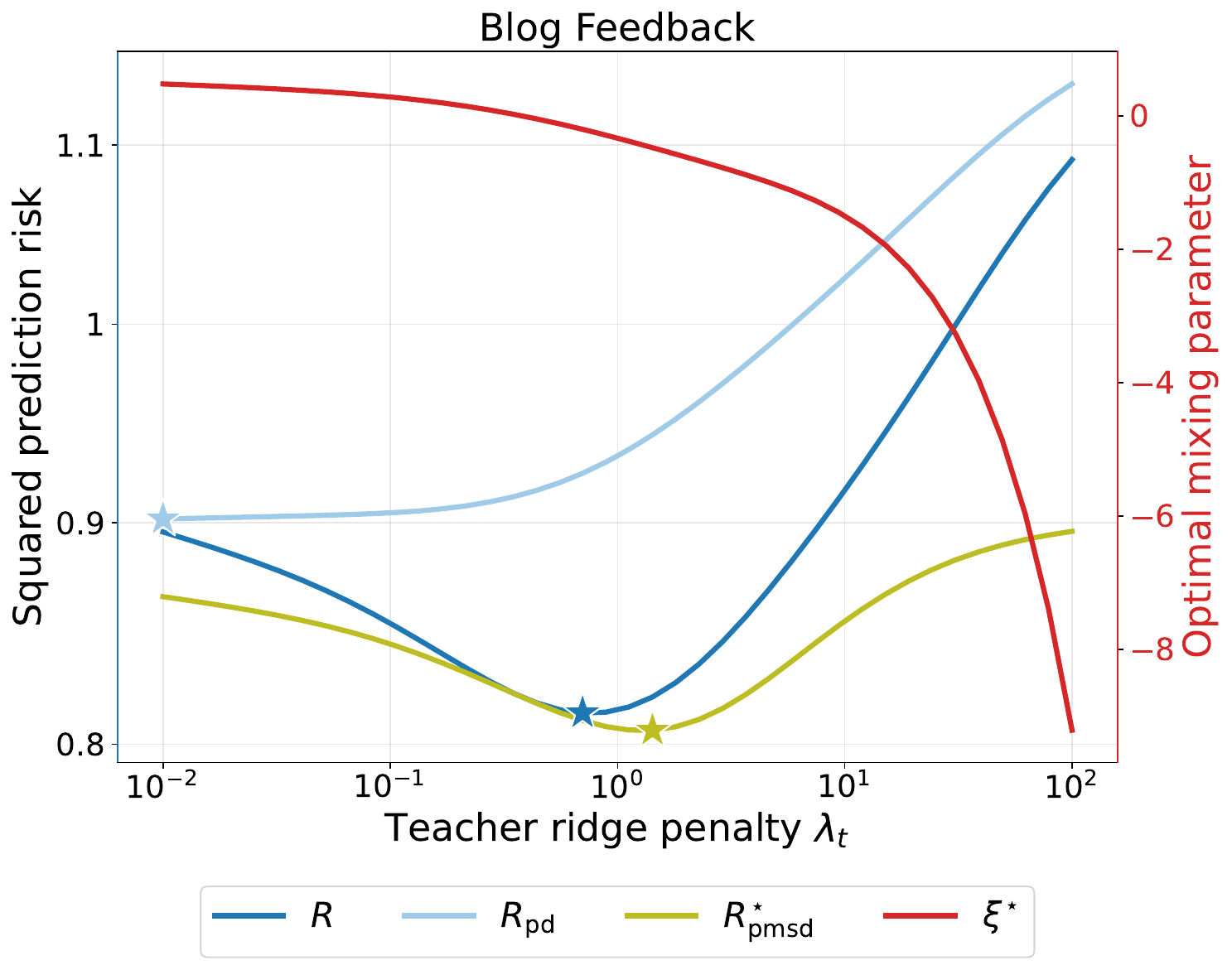}
    \caption{Fresh unlabeled covariates are drawn i.i.d.\ from $\mathcal{N}(0, I_p)$.}
  \end{subfigure}
    \caption{Blog Feedback with $n_t = 2619$, $n_s = 5240$, and $p = 280$. The student regularization is fixed at $\lambda_s = 1$ for all values of $\lambda_t$.}
\end{figure*}

\begin{figure*}[!ht]
   \centering
    \begin{subfigure}[t]{0.48\textwidth}
    \centering
    \includegraphics[width=\textwidth]{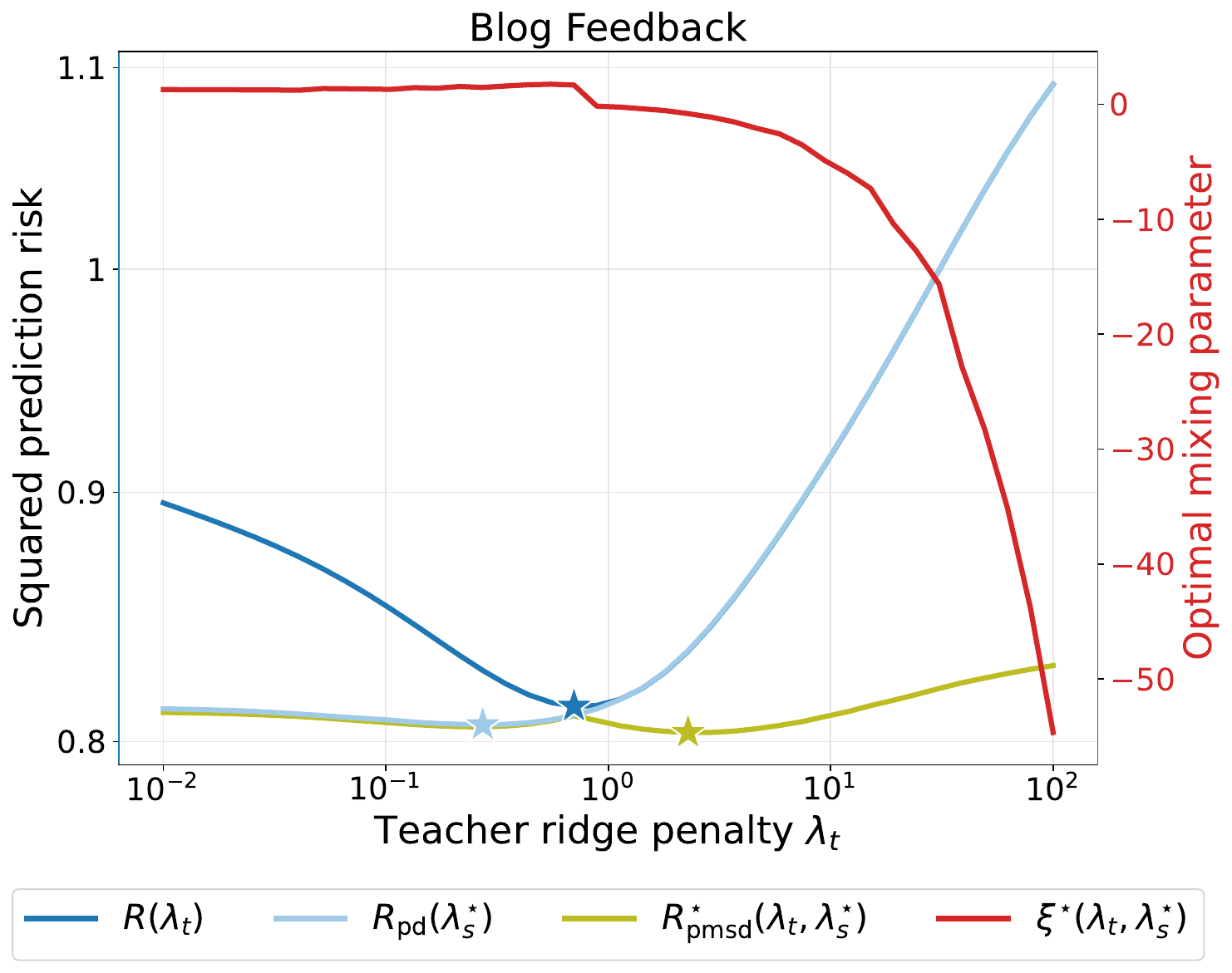}
    \caption{Fresh unlabeled covariates are drawn from the same distribution as the teacher's training covariates.}
  \end{subfigure}
  \quad
    \begin{subfigure}[t]{0.48\textwidth}
    \centering
    \includegraphics[width=\textwidth]{figures/arxiv_new/BF_0.05_0.1_isotropic_fresh_tuned_positive.pdf}
    \caption{Fresh unlabeled covariates are drawn i.i.d.\ from $\mathcal{N}(0, I_p)$.}
  \end{subfigure}
    \caption{Blog Feedback with $n_t = 2619$, $n_s = 5240$, and $p = 280$. At each value of $\lambda_t$, the student regularization $\lambda_s$ is tuned separately over a grid for the PD and PMSD students.}
\end{figure*}

\clearpage

\subsection{Linear probing experiments on real datasets}
\label{sec:linear_probing_tables}

The following tables report test classification accuracy and the estimated optimal mixing weight across regularization levels. In every experiment, the mixing weight is selected using a calibration set over the grid $[-20, 20]$; see \Cref{sec:experiment_details} for the dataset and training details.

\subsubsection{Caltech-101}

\begin{table}[!ht]
\caption{Test classification accuracy on Caltech-101 with random corruption rate $\rho = 0.2$.}
\label{tab:caltech101_0.2}
\centering
\resizebox{\textwidth}{!}{%
\begin{tabular}{lccccccccc}
\toprule
Regularization level $\lambda$ & $10^{-4}$ & $10^{-3.5}$ & $10^{-3}$ & $10^{-2.5}$ & $10^{-2}$ & $10^{-1.5}$ & $10^{-1}$ & $10^{-0.5}$ & $10^{0}$\\
\midrule
Teacher (\%) & $68.7$ & $68.7$ & $68.8$ & $68.9$ & $69.5$ & $70.9$ & $72.8$ & $68.4$ & $46.0$ \\
PD student (\%) & $68.9$ & $69.7$ & $71.4$ & $72.4$ & $73.6$ & $71.9$ & $63.5$ & $42.7$ & $27.0$ \\
\midrule
Optimal PMSD student (\%) & $68.9$ & $70.8$ & $71.5$ & $72.4$ & $73.4$ & $72.5$ & $72.8$ & $76.1$ & $63.0$  \\
Estimated optimal mixing weight $\hat\xi^{\star}$ & $5.5$ & $2.5$ & $1.1$ & $0.9$ & $0.9$ & $0.7$ & $-0.1$ & $-20.0$ & $-17.8$
\\
\bottomrule
\end{tabular}%
}
\end{table}

\begin{table}[ht]
\caption{Test classification accuracy on Caltech-101 with random corruption rate $\rho = 0.4$.}
\label{tab:caltech101_0.4}
\centering
\resizebox{\textwidth}{!}{%
\begin{tabular}{lccccccccc}
\toprule
Regularization level $\lambda$ & $10^{-4}$ & $10^{-3.5}$ & $10^{-3}$ & $10^{-2.5}$ & $10^{-2}$ & $10^{-1.5}$ & $10^{-1}$ & $10^{-0.5}$ & $10^{0}$\\
\midrule
Teacher (\%) & $62.5$ & $62.6$ & $62.6$ & $62.9$ & $63.5$ & $65.3$ & $69.3$ & $67.4$ & $45.6$ \\
PD student (\%) & $62.7$ & $63.3$ & $65.2$ & $67.7$ & $69.4$ & $68.4$ & $61.6$ & $41.3$ & $26.2$ \\
\midrule
Optimal PMSD student (\%) & $63.1$ & $65.6$ & $66.8$ & $68.3$ & $69.6$ & $68.9$ & $69.2$ & $72.7$ & $63.0$ \\
Estimated optimal mixing weight $\hat\xi^{\star}$ & $4.9$ & $5.5$ & $2.2$ & $1.4$ & $1.1$ & $0.9$ & $-0.1$ & $-4.2$ & $-18.6$
\\
\bottomrule
\end{tabular}%
}
\end{table}

\begin{table}[ht]
\caption{Test classification accuracy on Caltech-101 with random corruption rate $\rho = 0.6$.}
\label{tab:caltech101_0.6}
\centering
\resizebox{\textwidth}{!}{%
\begin{tabular}{lccccccccc}
\toprule
Regularization level $\lambda$ & $10^{-4}$ & $10^{-3.5}$ & $10^{-3}$ & $10^{-2.5}$ & $10^{-2}$ & $10^{-1.5}$ & $10^{-1}$ & $10^{-0.5}$ & $10^{0}$\\
\midrule
Teacher (\%) & $49.3$ & $49.3$ & $49.3$ & $49.6$ & $50.1$ & $52.2$ & $57.1$ & $59.8$ & $36.2$ \\
PD student (\%) & $49.6$ & $50.0$ & $51.6$ & $54.1$ & $57.4$ & $58.3$ & $52.5$ & $33.4$ & $23.5$ \\
\midrule
Optimal PMSD student (\%) & $49.5$ & $52.4$ & $54.3$ & $55.9$ & $57.0$ & $58.2$ & $57.2$ & $65.5$ & $60.4$ \\
Estimated optimal mixing weight $\hat\xi^{\star}$ & $0.8$ & $8.4$ & $3.2$ & $1.6$ & $1.7$ & $1.0$ & $0.2$ & $-10.6$ & $-20.0$
\\
\bottomrule
\end{tabular}%
}

\end{table}

\clearpage
\subsubsection{Caltech-256}

\begin{table}[!ht]
\caption{Test classification accuracy on Caltech-256 with random corruption rate $\rho = 0.2$.}
\label{tab:caltech256_0.2}
\centering
\resizebox{\textwidth}{!}{%
\begin{tabular}{lccccccccc}
\toprule
Regularization level $\lambda$ & $10^{-4}$ & $10^{-3.5}$ & $10^{-3}$ & $10^{-2.5}$ & $10^{-2}$ & $10^{-1.5}$ & $10^{-1}$ & $10^{-0.5}$ & $10^{0}$\\
\midrule
Teacher (\%) & $65.0$ & $65.1$ & $65.2$ & $65.5$ & $66.6$ & $67.5$ & $61.5$ & $35.8$ & $14.1$ \\
PD student (\%) & $65.7$ & $66.5$ & $68.0$ & $68.6$ & $67.8$ & $58.4$ & $28.4$ & $11.4$ & $6.0$ \\
\midrule
Optimal PMSD student (\%) & $66.9$ & $67.7$ & $68.2$ & $68.5$ & $68.1$ & $68.0$ & $67.6$ & $50.8$ & $25.6$ \\
Estimated optimal mixing weight $\hat\xi^{\star}$ & $5.6$ & $2.7$ & $1.2$ & $0.9$ & $0.5$ & $-1.9$ & $-20.0$ & $-20.0$ & $-20.0$
\\
\bottomrule
\end{tabular}%
}
\end{table}

\begin{table}[!ht]
\caption{Test classification accuracy on Caltech-256 with random corruption rate $\rho = 0.4$.}
\label{tab:caltech256_0.4}
\centering
\resizebox{\textwidth}{!}{%
\begin{tabular}{lccccccccc}
\toprule
Regularization level $\lambda$ & $10^{-4}$ & $10^{-3.5}$ & $10^{-3}$ & $10^{-2.5}$ & $10^{-2}$ & $10^{-1.5}$ & $10^{-1}$ & $10^{-0.5}$ & $10^{0}$\\
\midrule
Teacher (\%) & $56.8$ & $56.8$ & $57.0$ & $57.5$ & $58.4$ & $60.3$ & $55.0$ & $33.4$ & $13.0$ \\
PD student (\%) & $57.4$ & $58.2$ & $59.7$ & $60.9$ & $60.3$ & $49.3$ & $25.5$ & $10.4$ & $5.4$ \\
\midrule
Optimal PMSD student (\%) & $59.0$ & $59.5$ & $60.3$ & $61.2$ & $60.4$ & $60.6$ & $61.5$ & $45.7$ & $23.5$ \\
Estimated optimal mixing weight $\hat\xi^{\star}$ & $7.4$ & $2.8$ & $1.6$ & $1.2$ & $0.6$ & $-0.8$ & $-20.0$ & $-20.0$ & $-20.0$
\\
\bottomrule
\end{tabular}%
}
\end{table}

\begin{table}[!ht]
\caption{Test classification accuracy on Caltech-256 with random corruption rate $\rho = 0.6$.}
\label{tab:caltech256_0.6}
\centering
\resizebox{\textwidth}{!}{%
\begin{tabular}{lccccccccc}
\toprule
Regularization level $\lambda$ & $10^{-4}$ & $10^{-3.5}$ & $10^{-3}$ & $10^{-2.5}$ & $10^{-2}$ & $10^{-1.5}$ & $10^{-1}$ & $10^{-0.5}$ & $10^{0}$\\
\midrule
Teacher (\%) & $43.5$ & $43.6$ & $43.9$ & $44.4$ & $45.6$ & $48.0$ & $45.1$ & $27.5$ & $11.6$ \\
PD student (\%) & $44.1$ & $44.9$ & $46.7$ & $48.3$ & $48.0$ & $39.4$ & $19.6$ & $9.5$ & $3.8$ \\
\midrule
Optimal PMSD student (\%) & $45.8$ & $46.5$ & $47.5$ & $48.6$ & $47.6$ & $48.3$ & $51.5$ & $39.6$ & $21.5$ \\
Estimated optimal mixing weight $\hat\xi^{\star}$ & $6.0$ & $2.5$ & $1.9$ & $1.1$ & $0.6$ & $-2.1$ & $-20.0$ & $-20.0$ & $-19.4$
\\
\bottomrule
\end{tabular}%
}
\end{table}

\clearpage

\subsubsection{CIFAR-100 (random corruption)}

\begin{table}[ht]
\caption{Test classification accuracy on CIFAR-100 with random corruption rate $\rho = 0.2$.}
\label{tab:cifar100_0.2}
\centering
\resizebox{\textwidth}{!}{%
\begin{tabular}{lccccccccc}
\toprule
Regularization level $\lambda$ & $10^{-4}$ & $10^{-3.5}$ & $10^{-3}$ & $10^{-2.5}$ & $10^{-2}$ & $10^{-1.5}$ & $10^{-1}$ & $10^{-0.5}$ & $10^{0}$\\
\midrule
Teacher (\%) & $53.8$ & $53.9$ & $54.0$ & $54.4$ & $55.2$ & $57.1$ & $58.2$ & $50.6$ & $35.9$ \\
PD student (\%) & $54.0$ & $54.6$ & $55.4$ & $56.8$ & $57.9$ & $56.9$ & $46.4$ & $18.4$ & $2.4$ \\
\midrule
Optimal PMSD student (\%) & $55.0$ & $55.3$ & $55.9$ & $56.9$ & $57.9$ & $57.6$ & $59.7$ & $54.8$ & $44.2$ \\
Estimated optimal mixing weight $\hat\xi^{\star}$ & $6.6$ & $2.4$ & $1.5$ & $1.2$ & $0.9$ & $0.7$ & $-20.0$ & $-20.0$ & $-20.0$
\\
\bottomrule
\end{tabular}%
}
\end{table}

\begin{table}[ht]
\caption{Test classification accuracy on CIFAR-100 with random corruption rate $\rho = 0.4$.}
\label{tab:cifar100_0.4}
\centering
\resizebox{\textwidth}{!}{%
\begin{tabular}{lccccccccc}
\toprule
Regularization level $\lambda$ & $10^{-4}$ & $10^{-3.5}$ & $10^{-3}$ & $10^{-2.5}$ & $10^{-2}$ & $10^{-1.5}$ & $10^{-1}$ & $10^{-0.5}$ & $10^{0}$\\
\midrule
Teacher (\%) & $47.8$ & $47.9$ & $48.0$ & $48.5$ & $49.8$ & $52.7$ & $54.2$ & $45.4$ & $31.7$ \\
PD student (\%) & $48.3$ & $49.3$ & $50.6$ & $52.3$ & $54.0$ & $52.6$ & $39.9$ & $16.0$ & $2.3$ \\
\midrule
Optimal PMSD student (\%) & $50.5$ & $50.6$ & $51.5$ & $53.3$ & $54.0$ & $53.5$ & $56.1$ & $49.5$ & $40.3$ \\
Estimated optimal mixing weight $\hat\xi^{\star}$ & $6.9$ & $4.4$ & $2.1$ & $1.5$ & $1.0$ & $0.8$ & $-20.0$ & $-14.4$ & $-20.0$
\\
\bottomrule
\end{tabular}%
}
\end{table}

\begin{table}[!ht]
\caption{Test classification accuracy on CIFAR-100 with random corruption rate $\rho = 0.6$.}
\label{tab:cifar100_0.6}
\centering
\resizebox{\textwidth}{!}{%
\begin{tabular}{lccccccccc}
\toprule
Regularization level $\lambda$ & $10^{-4}$ & $10^{-3.5}$ & $10^{-3}$ & $10^{-2.5}$ & $10^{-2}$ & $10^{-1.5}$ & $10^{-1}$ & $10^{-0.5}$ & $10^{0}$\\
\midrule
Teacher (\%) & $38.0$ & $38.0$ & $38.1$ & $38.6$ & $40.1$ & $43.9$ & $46.3$ & $37.3$ & $24.3$ \\
PD student (\%) & $38.3$ & $39.1$ & $41.0$ & $43.6$ & $46.4$ & $44.7$ & $30.5$ & $13.6$ & $3.3$ \\
\midrule
Optimal PMSD student (\%) & $40.9$ & $41.6$ & $43.0$ & $45.4$ & $46.9$ & $44.9$ & $48.2$ & $43.1$ & $34.0$ \\
Estimated optimal mixing weight $\hat\xi^{\star}$ & $10.2$ & $4.8$ & $2.5$ & $1.7$ & $1.3$ & $0.5$ & $-1.7$ & $-20.0$ & $-20.0$
\\
\bottomrule
\end{tabular}%
}
\end{table}

\clearpage
\subsubsection{CIFAR-100 (hierarchical corruption)}

\begin{table}[ht]
\caption{Test classification accuracy on CIFAR-100 with hierarchical corruption rate $\rho = 0.4$.}
\label{tab:cifar100_0.4_hierarchical}
\centering
\resizebox{\textwidth}{!}{%
\begin{tabular}{lccccccccc}
\toprule
Regularization level $\lambda$ & $10^{-4}$ & $10^{-3.5}$ & $10^{-3}$ & $10^{-2.5}$ & $10^{-2}$ & $10^{-1.5}$ & $10^{-1}$ & $10^{-0.5}$ & $10^{0}$\\
\midrule
Teacher (\%) & $44.9$ & $45.0$ & $45.0$ & $45.3$ & $46.3$ & $49.5$ & $51.1$ & $42.7$ & $28.4$ \\
PD student (\%) & $45.2$ & $46.1$ & $47.6$ & $50.0$ & $51.6$ & $50.9$ & $39.4$ & $17.5$ & $4.4$ \\
\midrule
Optimal PMSD student (\%) & $47.0$ & $47.6$ & $48.9$ & $49.9$ & $51.6$ & $50.9$ & $53.5$ & $47.0$ & $35.9$ \\
Estimated optimal mixing weight $\hat\xi^{\star}$ & $13.0$ & $5.7$ & $2.5$ & $2.1$ & $1.3$ & $0.9$ & $-15.6$ & $-20.0$ & $-20.0$
\\
\bottomrule
\end{tabular}%
}
\end{table}

\begin{table}[ht]
\caption{Test classification accuracy on CIFAR-100 with hierarchical corruption rate $\rho = 0.6$.}
\label{tab:cifar100_0.6_hierarchical}
\centering
\resizebox{\textwidth}{!}{%
\begin{tabular}{lccccccccc}
\toprule
Regularization level $\lambda$ & $10^{-4}$ & $10^{-3.5}$ & $10^{-3}$ & $10^{-2.5}$ & $10^{-2}$ & $10^{-1.5}$ & $10^{-1}$ & $10^{-0.5}$ & $10^{0}$\\
\midrule
Teacher (\%) & $31.3$ & $31.3$ & $31.5$ & $31.8$ & $32.5$ & $35.8$ & $39.0$ & $32.5$ & $22.4$ \\
PD student (\%) & $31.4$ & $32.2$ & $33.4$ & $35.4$ & $38.4$ & $39.1$ & $29.5$ & $15.3$ & $3.8$ \\
\midrule
Optimal PMSD student (\%) & $33.6$ & $34.0$ & $35.3$ & $37.0$ & $38.7$ & $39.2$ & $40.3$ & $36.9$ & $27.8$ \\
Estimated optimal mixing weight $\hat\xi^{\star}$ & $13.0$ & $4.8$ & $2.4$ & $1.8$ & $1.5$ & $1.0$ & $-4.3$ & $-15.6$ & $-20.0$
\\
\bottomrule
\end{tabular}%
}
\end{table}

\clearpage
\subsection{Synthetic experiments for ridge regression}
\label{sec:additional_exps_synthetic_ridge}

We next compare the empirical performance of PMSD with its theoretical predictions under several covariance and regularization settings.

\subsubsection{Empirical versus theoretical predictions}

\begin{figure*}[!ht]
  \centering
    \begin{subfigure}[t]{0.55\textwidth}
    \centering
    \includegraphics[width=\textwidth]{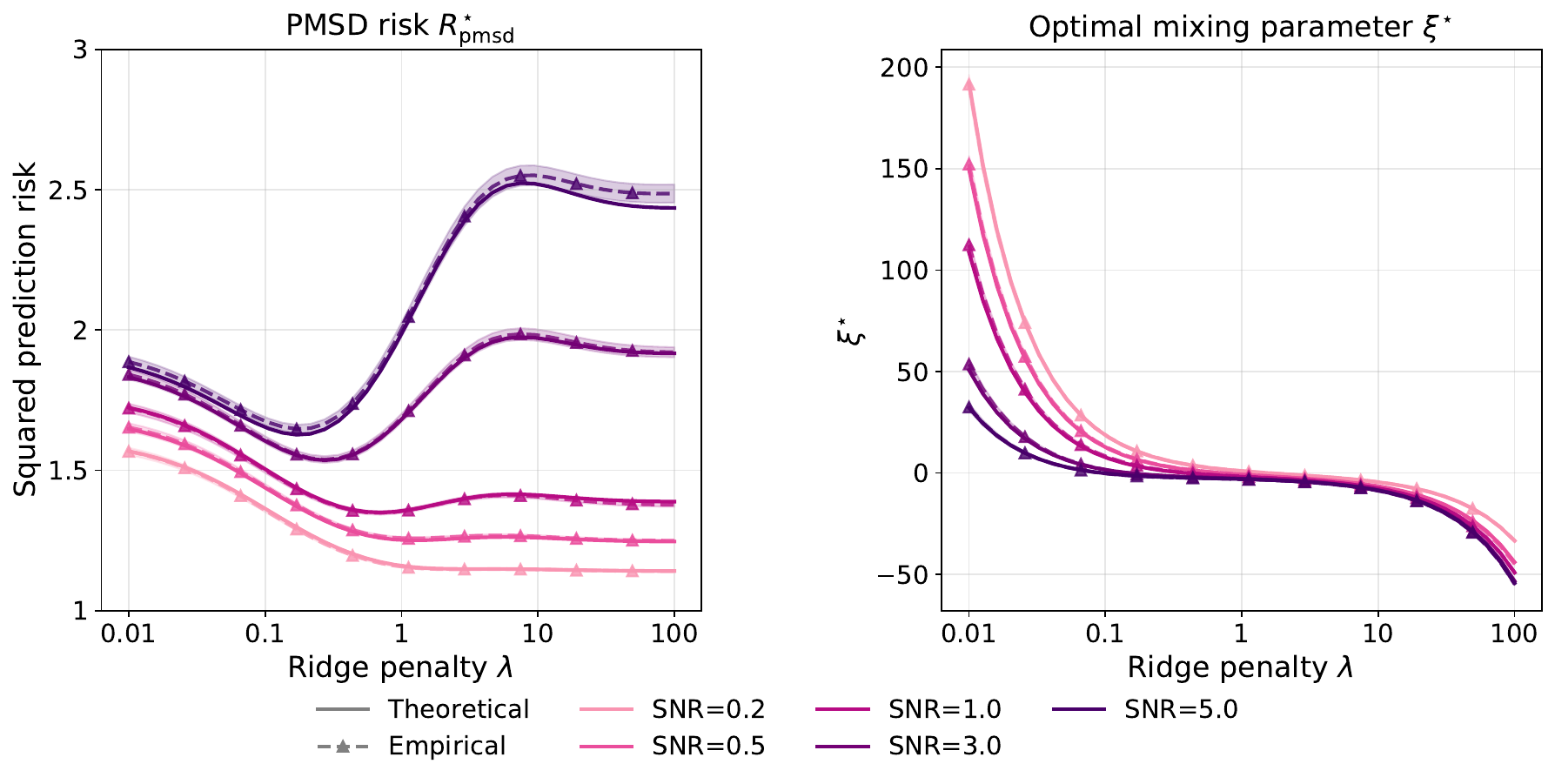}
  \end{subfigure}
  \hfill
    \begin{subfigure}[t]{0.35\textwidth}
    \centering
    \includegraphics[width=\textwidth]{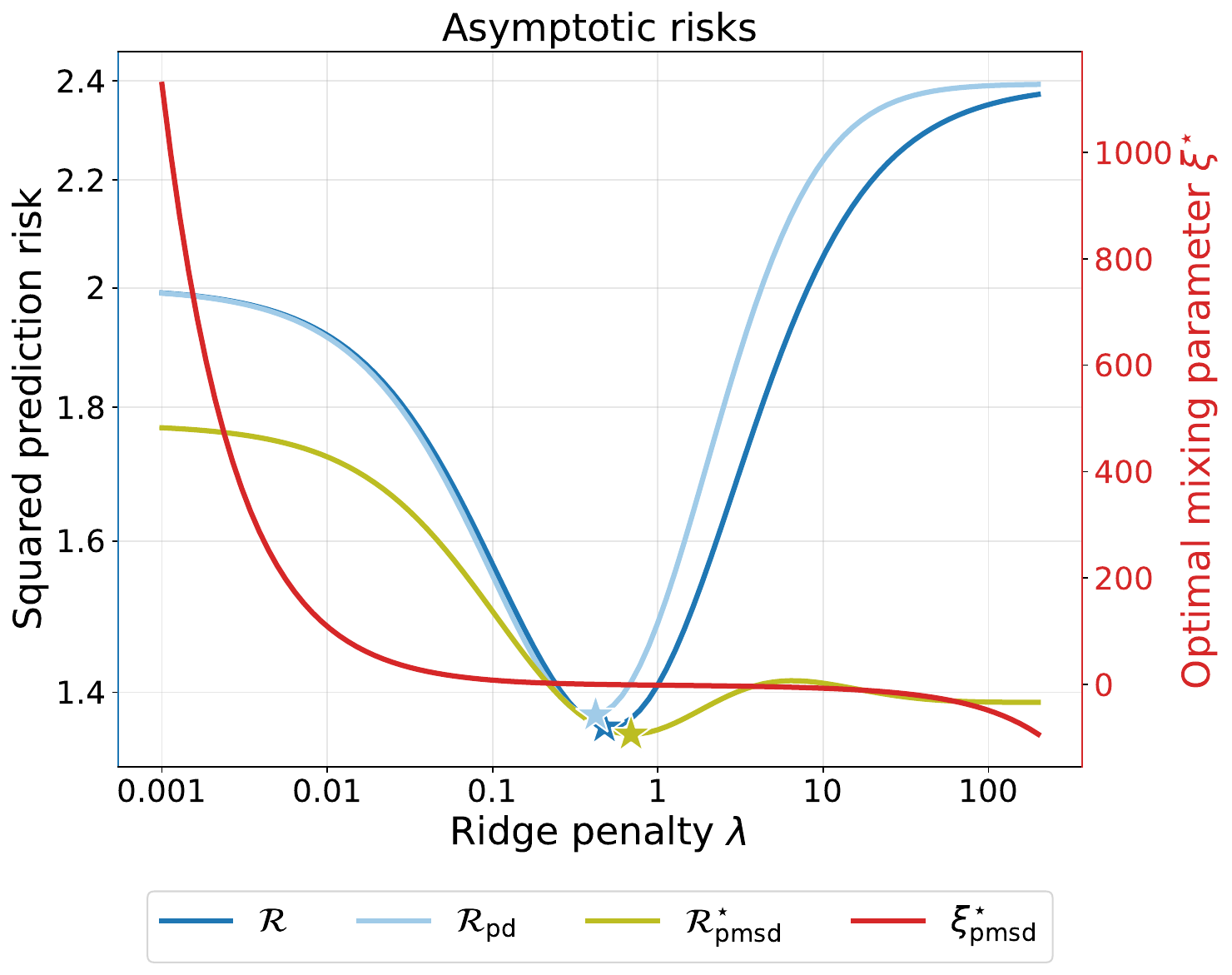}
  \end{subfigure}
  \caption{Empirical and theoretical results when $\Sigma_t$ follows an AR1 covariance model, $\Sigma_s = 10I_p$, and $\lambda_t = \lambda_s$. The signal $\beta$ aligns with the top eigenvectors of $\Sigma_t$. Both panels use $p = 200$, $n_t = n_s = 400$, and $\sigma^2 = 1$; the right panel has $r^2 = 1$.}
\end{figure*}

\begin{figure*}[!ht]
  \centering
    \begin{subfigure}[t]{0.55\textwidth}
    \centering
    \includegraphics[width=\textwidth]{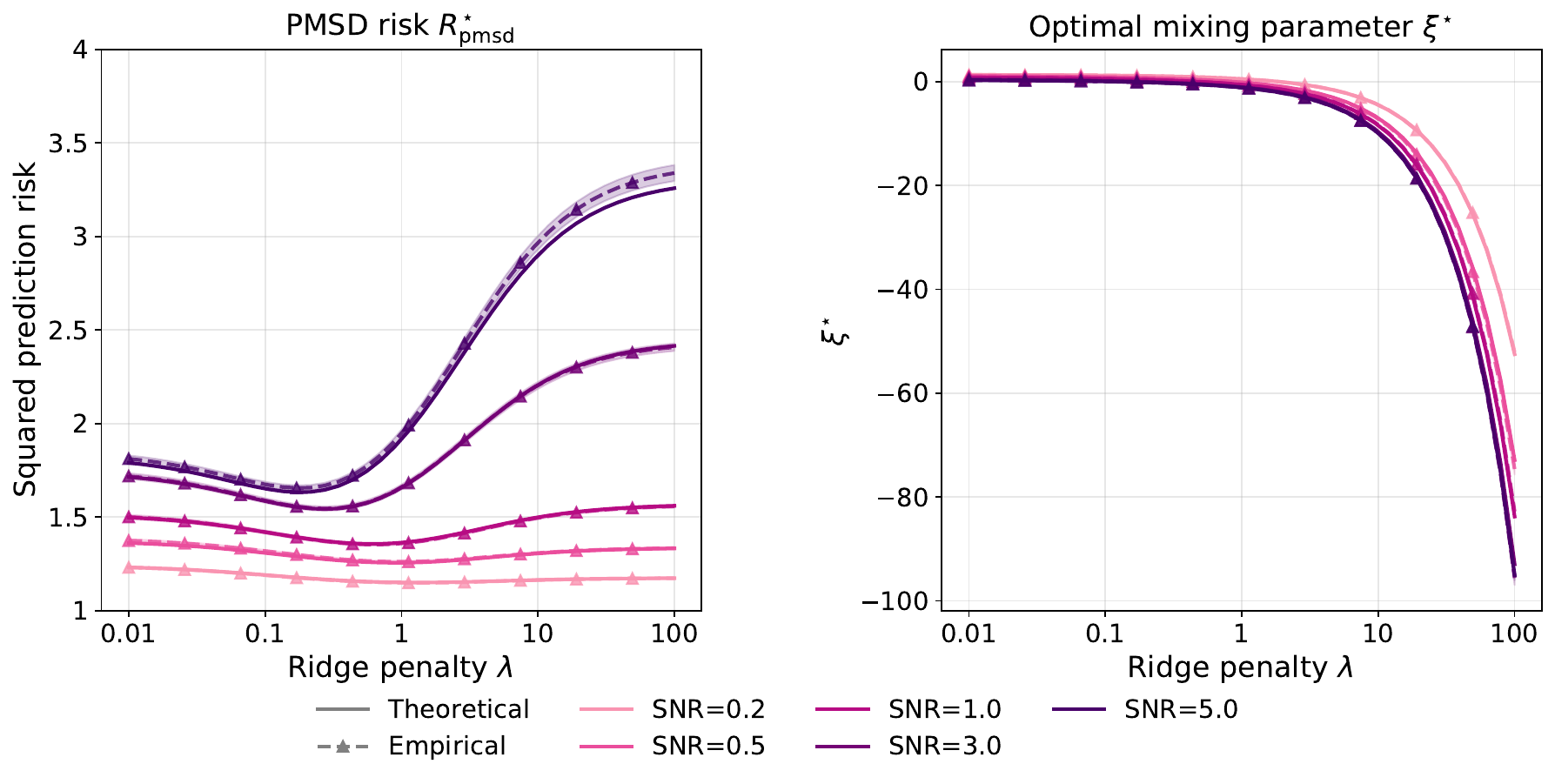}
  \end{subfigure}
  \hfill
    \begin{subfigure}[t]{0.35\textwidth}
    \centering
    \includegraphics[width=\textwidth]{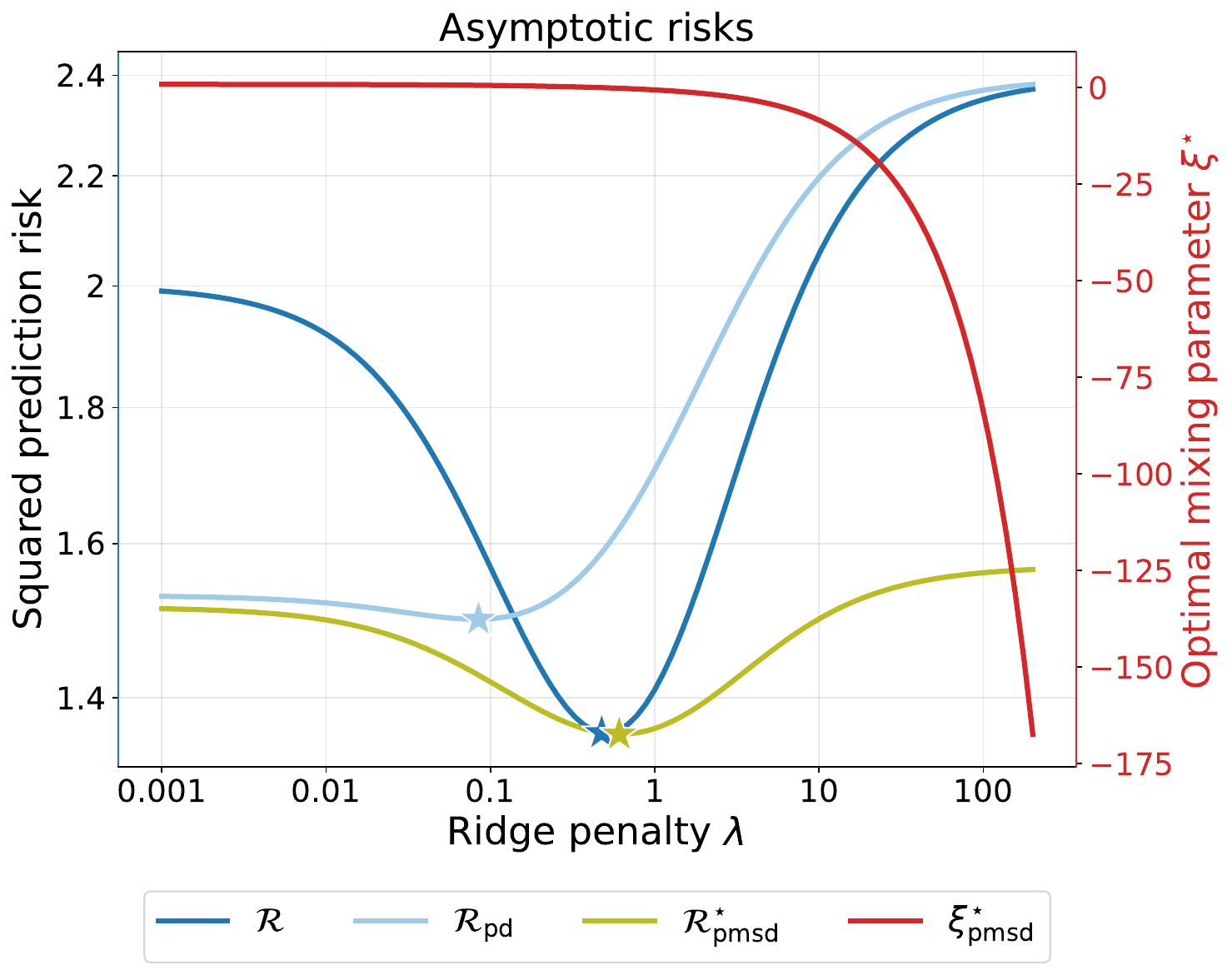}
  \end{subfigure}
  \caption{Empirical and theoretical results when $\Sigma_t = \Sigma_s$ follow the same AR1 covariance model and $\lambda_s = 1$. The signal $\beta$ aligns with the top eigenvectors of $\Sigma_t$. Both panels use $p = 200$, $n_t = n_s = 400$, and $\sigma^2 = 1$; the right panel has $r^2 = 1$.}
\end{figure*}

\begin{figure}[!ht]
  \centering
    \begin{subfigure}[t]{0.55\textwidth}
    \centering
    \includegraphics[width=\textwidth]{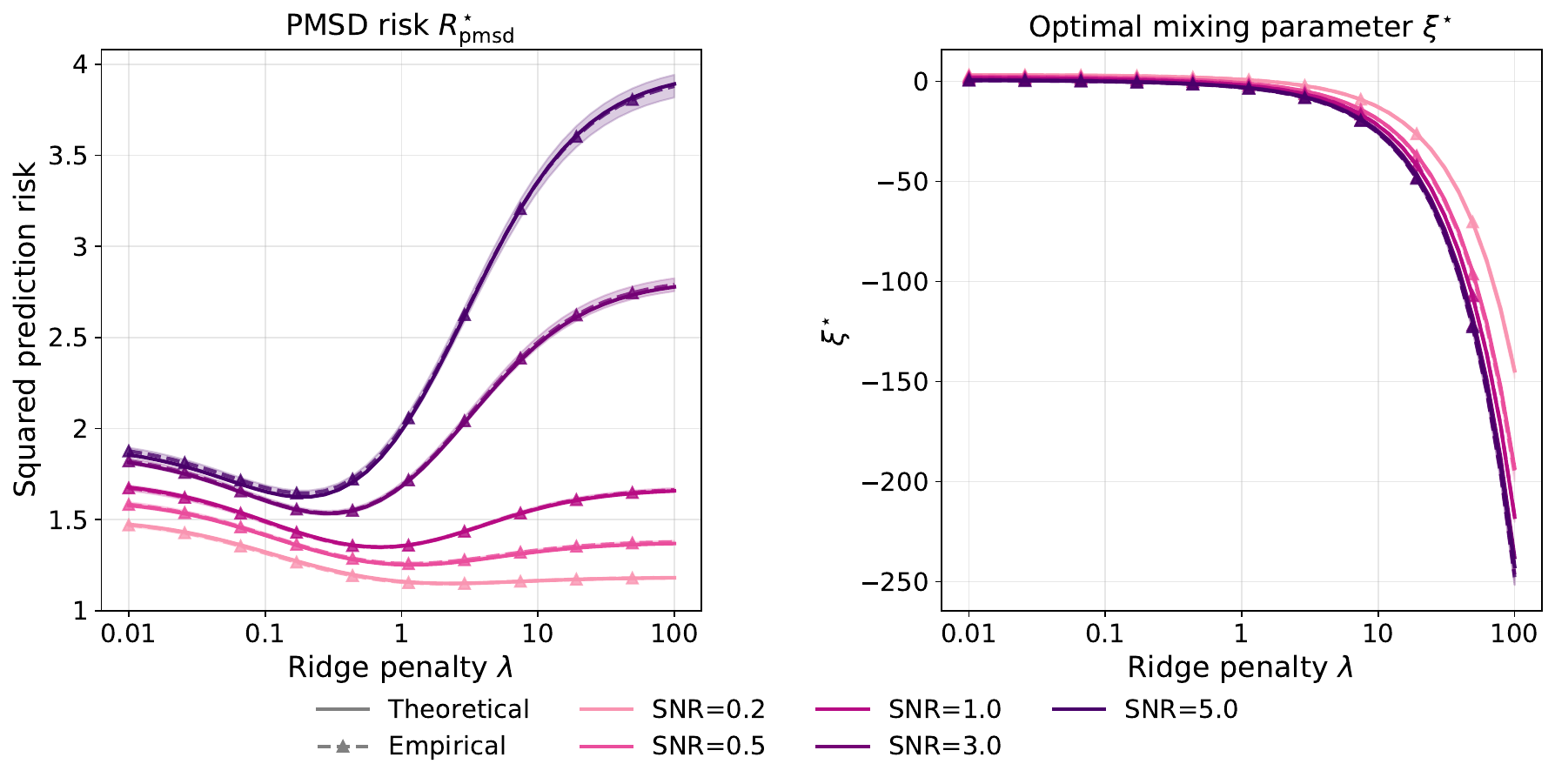}
  \end{subfigure}
  \hfill
    \begin{subfigure}[t]{0.35\textwidth}
    \centering
    \includegraphics[width=\textwidth]{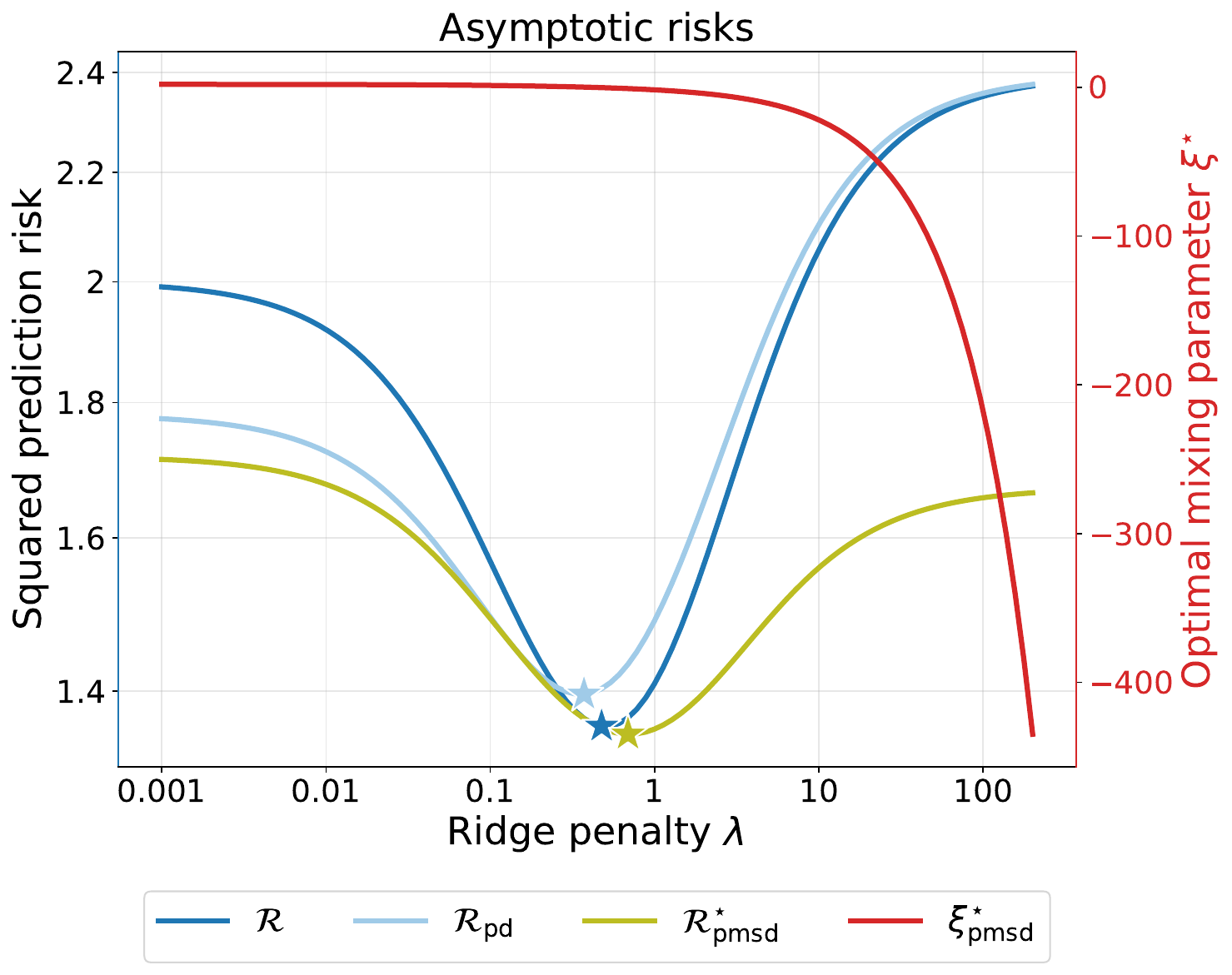}
  \end{subfigure}
  \caption{Empirical and theoretical results when $\Sigma_t$ follows an AR1 covariance model, $\Sigma_s = 10I_p$, and $\lambda_s = 1$. The signal $\beta$ aligns with the top eigenvectors of $\Sigma_t$. Both panels use $p = 200$, $n_t = n_s = 400$, and $\sigma^2 = 1$; the right panel has $r^2 = 1$.}
\end{figure}

\FloatBarrier

\subsubsection{Comparison with the same-$X$ SD student}

\begin{figure}[!ht]
    \centering
    \includegraphics[width=0.65\linewidth]{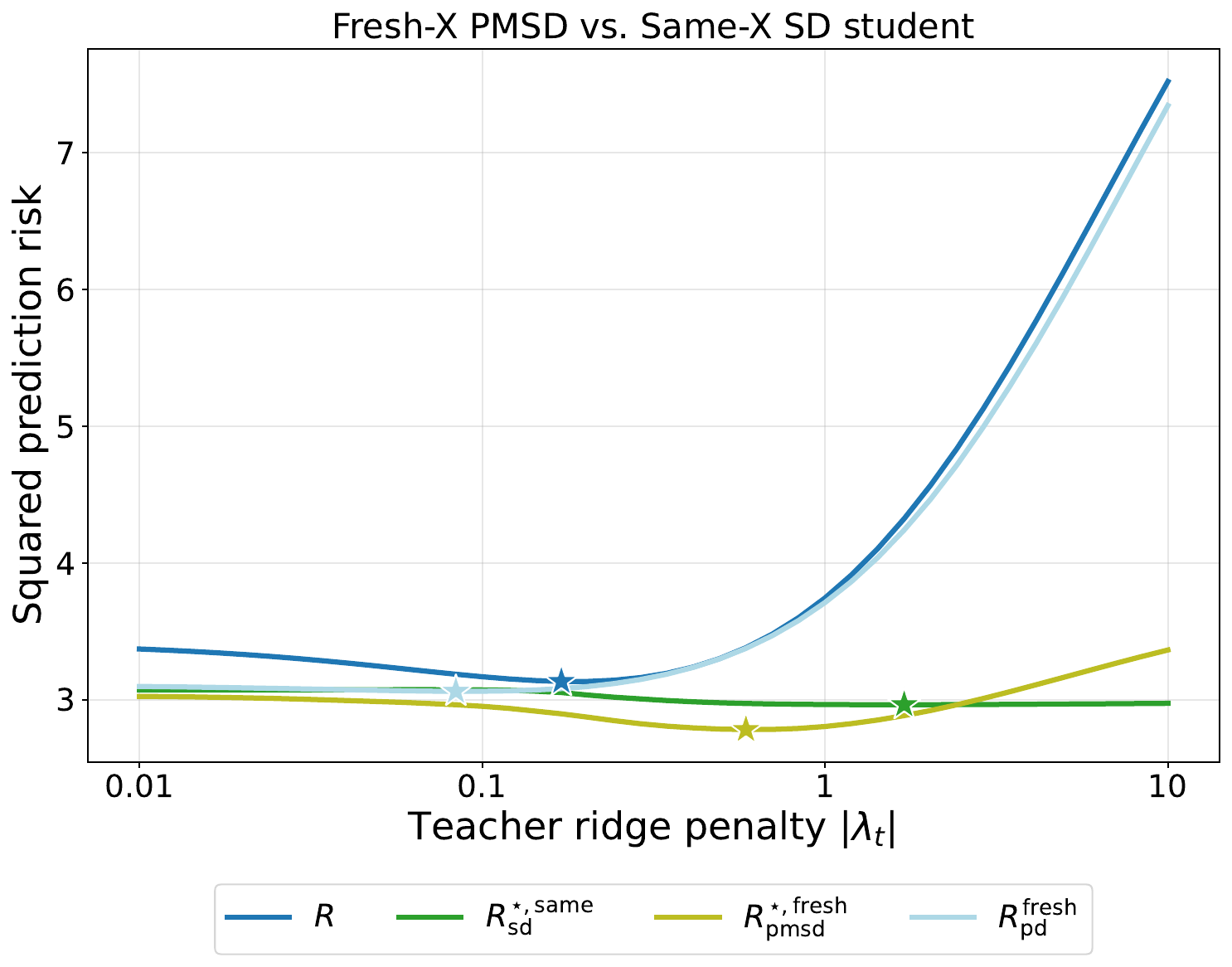}
    \caption{Comparison with the same-$X$ SD student when $p = 400$, $n_t = 200$, $n_s = 4000$, and $\Sigma_t = \Sigma_s$ follow an AR1 covariance model. At each value of $|\lambda_t|$ on the x-axis, we evaluate the teacher regularizations $\lambda_t$ and $-\lambda_t$, tune $\lambda_s$ over a grid on $[-100, 100]$ in each case, and report the lower risks. The signal is $\beta = a u_1 + \sqrt{r^2-a^2}v$, where $a = 1.7$, $r = 2$, $u_1$ is the leading eigenvector of $\Sigma_t$, and $v$ is orthogonal to $u_1$.}
    \label{fig:vs_same_x_ar1}
\end{figure}

\clearpage

\subsection{Synthetic experiments for logistic regression}

\subsubsection{Constant feature correlation}

\bigskip

\begin{figure*}[!ht]
  \centering
    \begin{subfigure}[t]{0.45\textwidth}
    \centering
    \includegraphics[width=\textwidth]{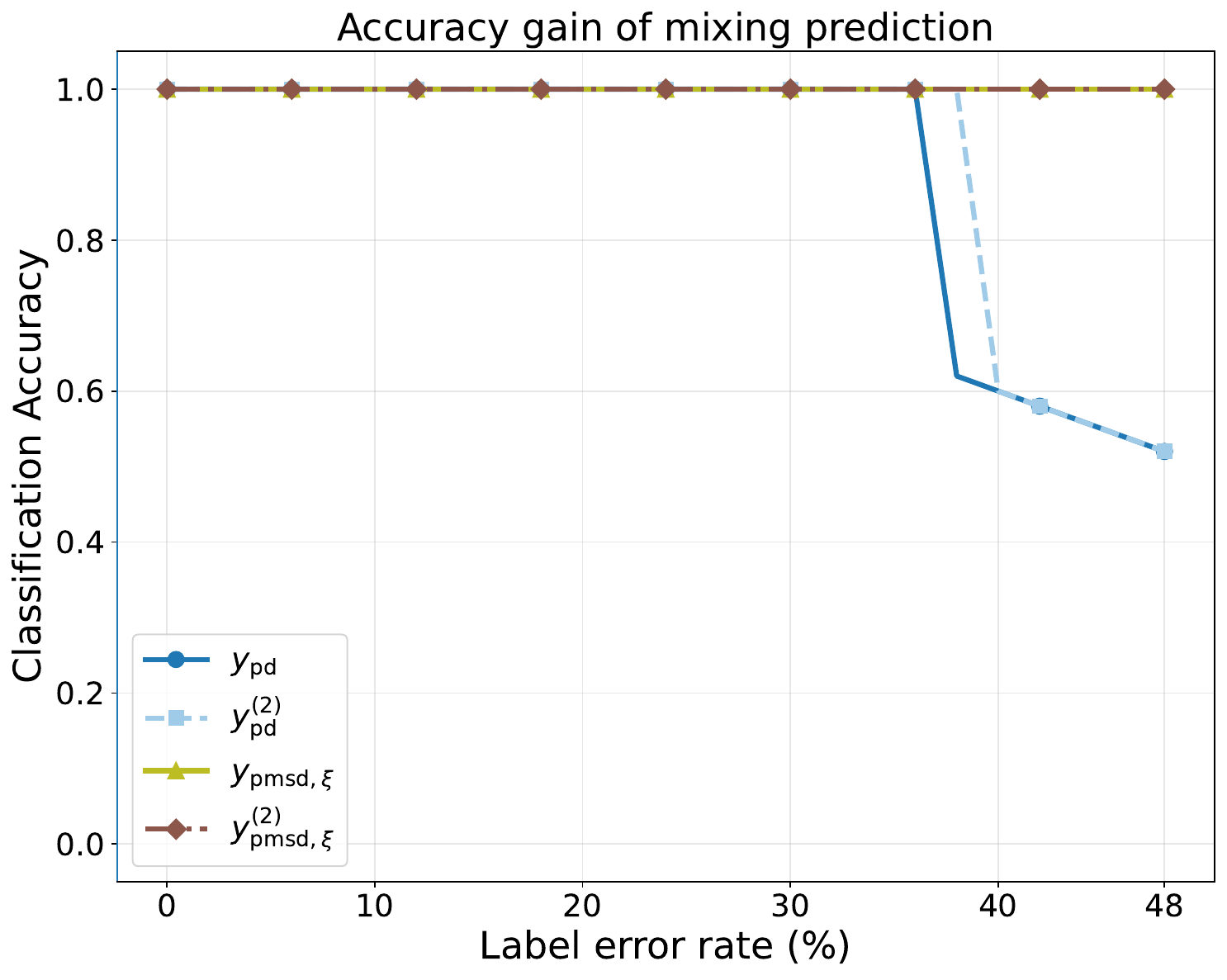}
  \end{subfigure}
  \hfill
  \quad
    \begin{subfigure}[t]{0.45\textwidth}
    \centering  \includegraphics[width=\textwidth]{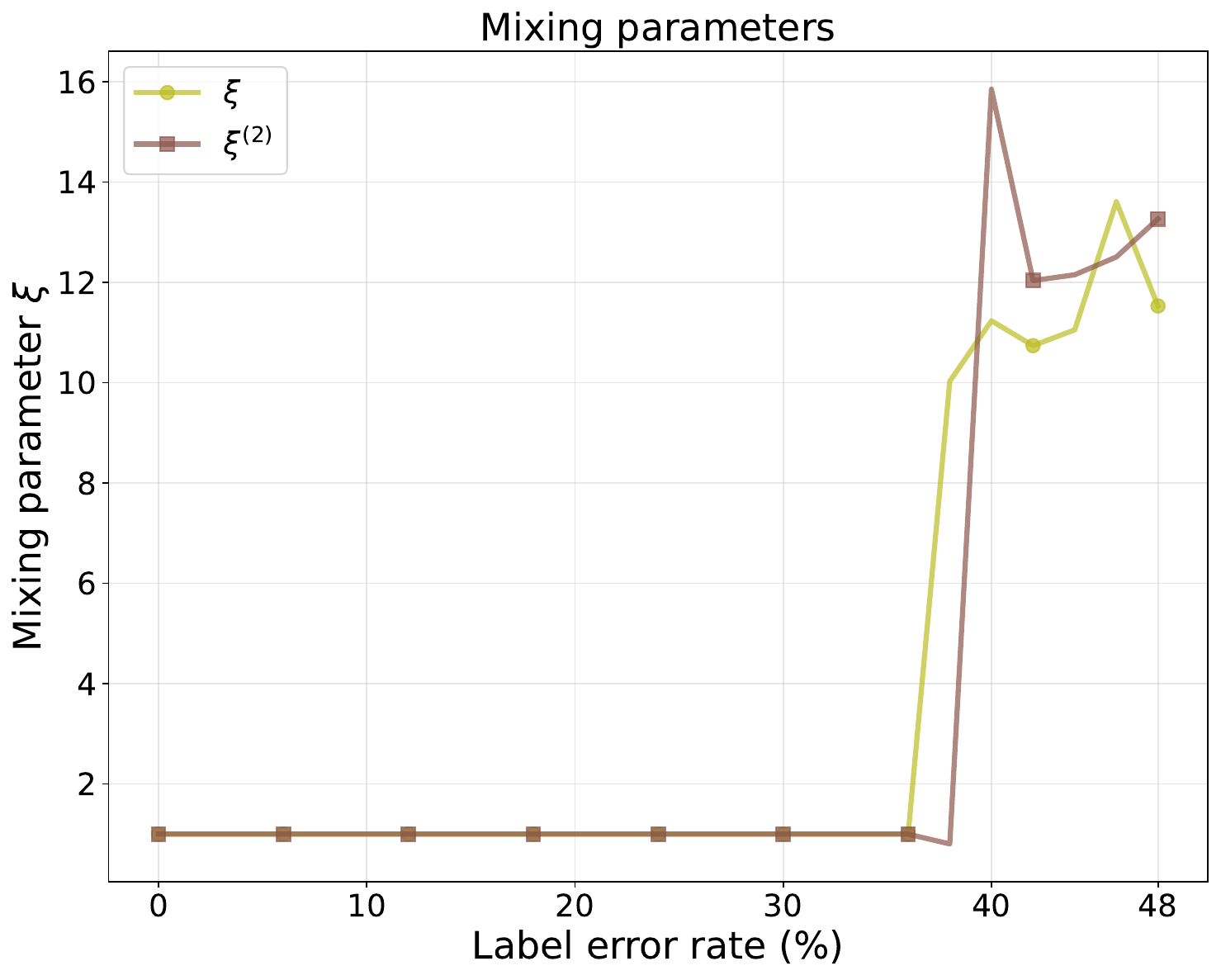}
  \end{subfigure}
  \caption{Synthetic logistic regression with constant feature correlation $c = 0.25$, $\lambda = 0.01$, and $n = 500$ samples per class; here, $\rho < 0.5$.}
  \label{fig:synthetic_constant_small}
\end{figure*}

\begin{figure*}[!ht]
  \centering
    \begin{subfigure}[t]{0.45\textwidth}
    \centering
    \includegraphics[width=\textwidth]{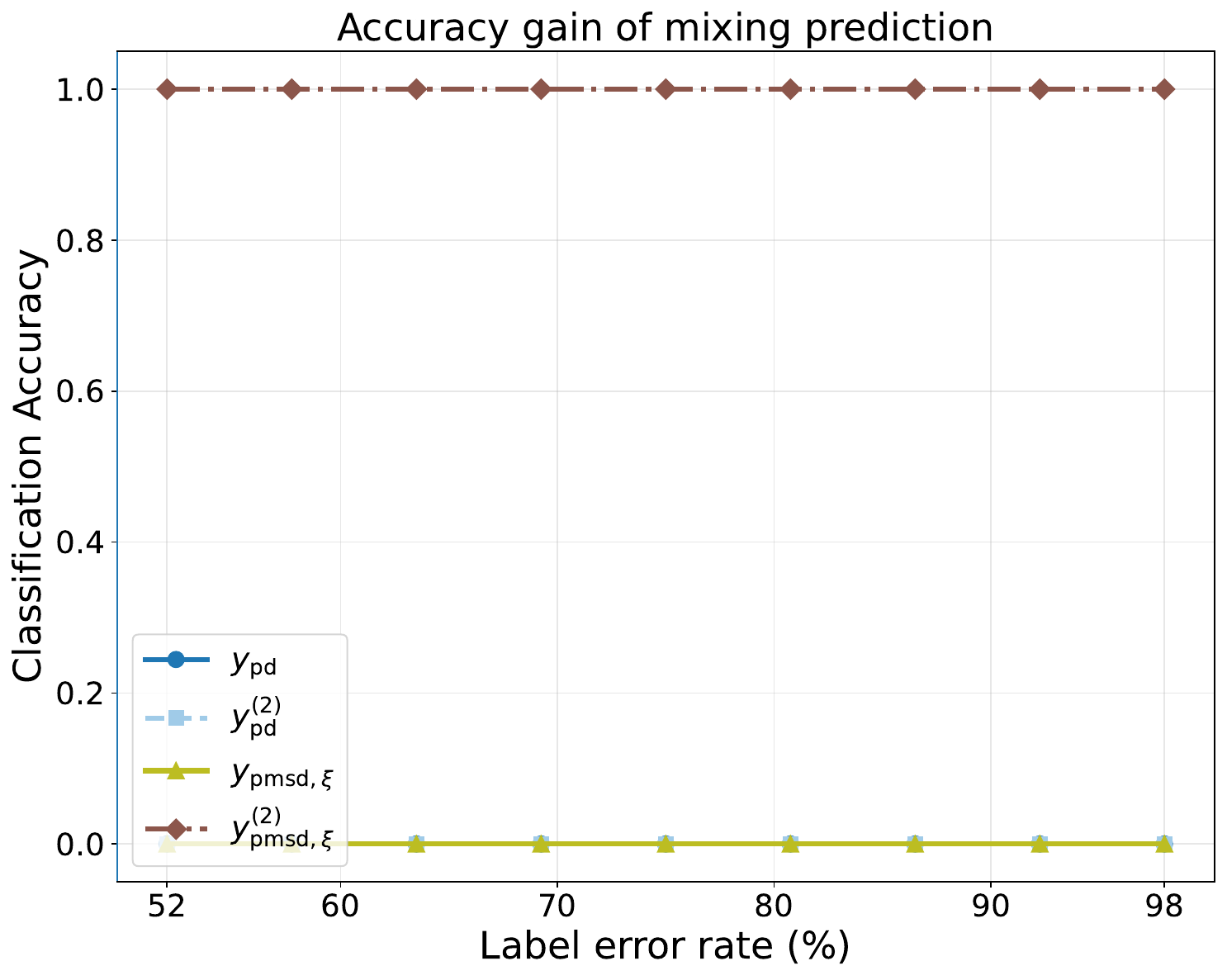}
  \end{subfigure}
  \hfill
  \quad
    \begin{subfigure}[t]{0.45\textwidth}
    \centering  \includegraphics[width=\textwidth]{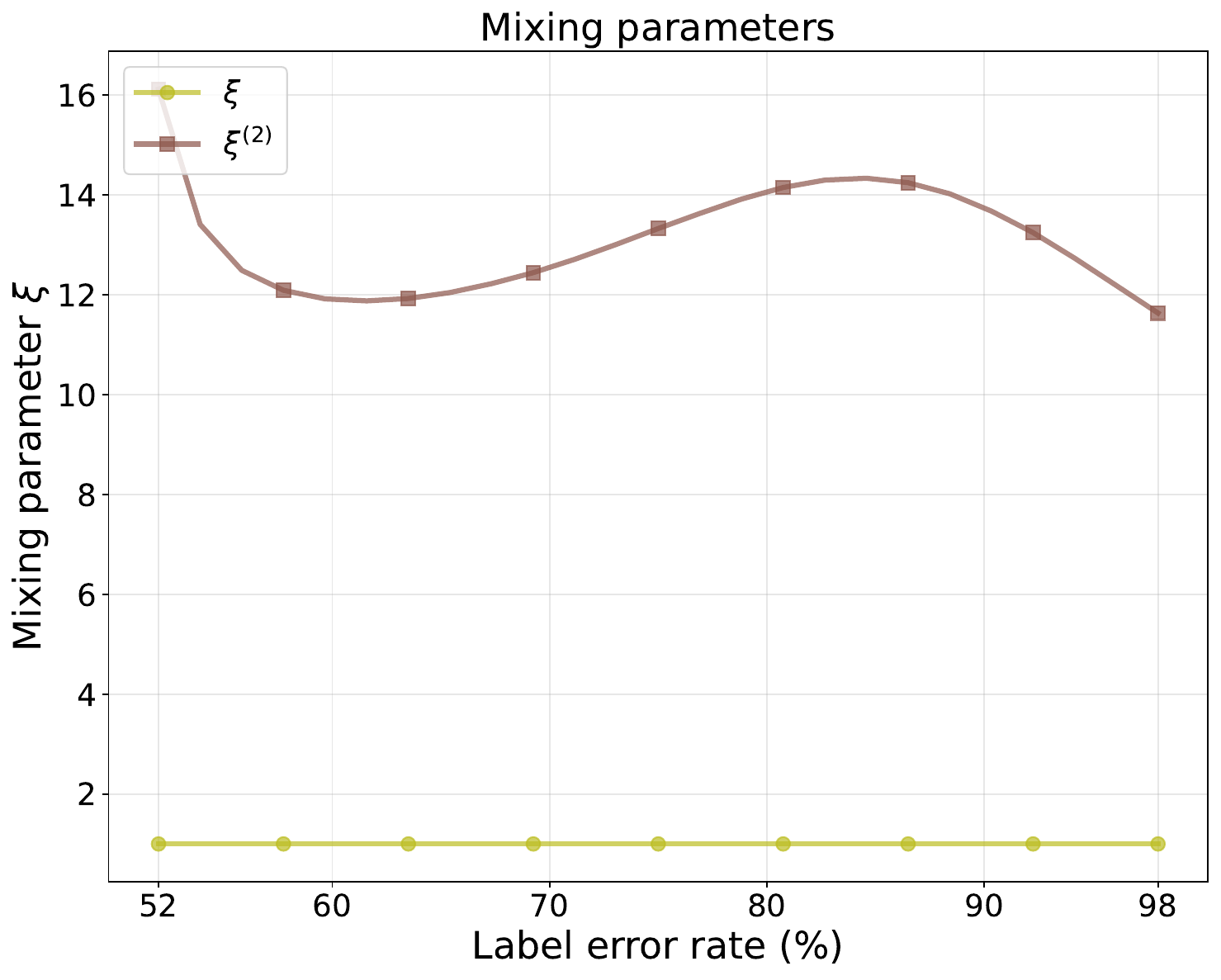}
  \end{subfigure}
  \caption{Synthetic logistic regression with constant feature correlation $c = 0.25$, $\lambda = 10$, and $n = 1500$ samples per class; here, $\rho > 0.5$.}
  \label{fig:synthetic_constant_large}
\end{figure*}

\clearpage
\subsubsection{Uniform feature correlation}

\bigskip

\begin{figure*}[!ht]
  \centering
    \begin{subfigure}[t]{0.45\textwidth}
    \centering
    \includegraphics[width=\textwidth]{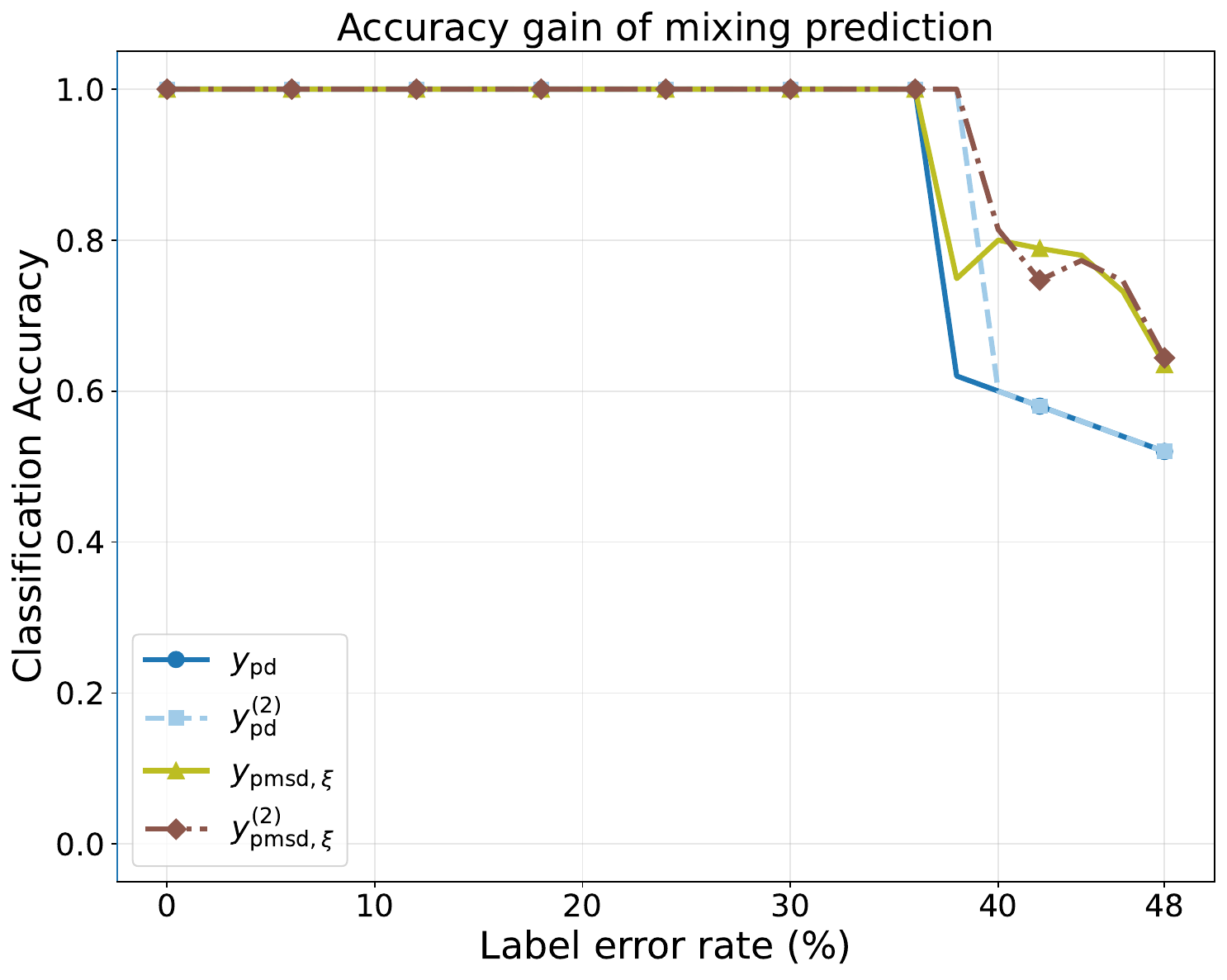}
  \end{subfigure}
  \hfill
  \quad
    \begin{subfigure}[t]{0.45\textwidth}
    \centering  \includegraphics[width=\textwidth]{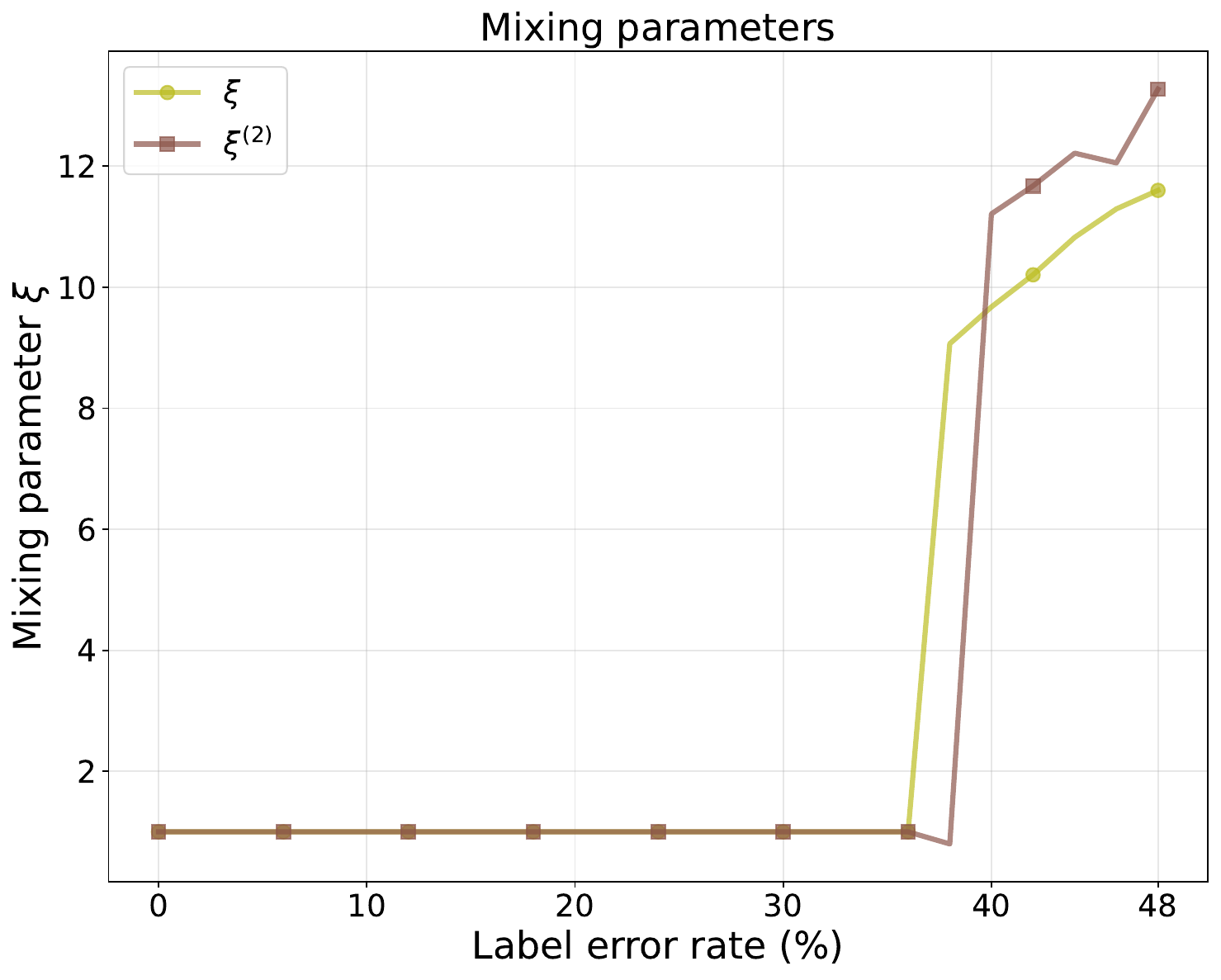}
  \end{subfigure}
  \caption{Synthetic logistic regression with uniform feature correlation (defined in \Cref{sec:synthetic_details}), $\lambda = 0.01$, and $n = 500$ samples per class; here, $\rho < 0.5$.}
  \label{fig:synthetic_uniform_small}
\end{figure*}

\begin{figure*}[!ht]
  \centering
    \begin{subfigure}[t]{0.45\textwidth}
    \centering
    \includegraphics[width=\textwidth]{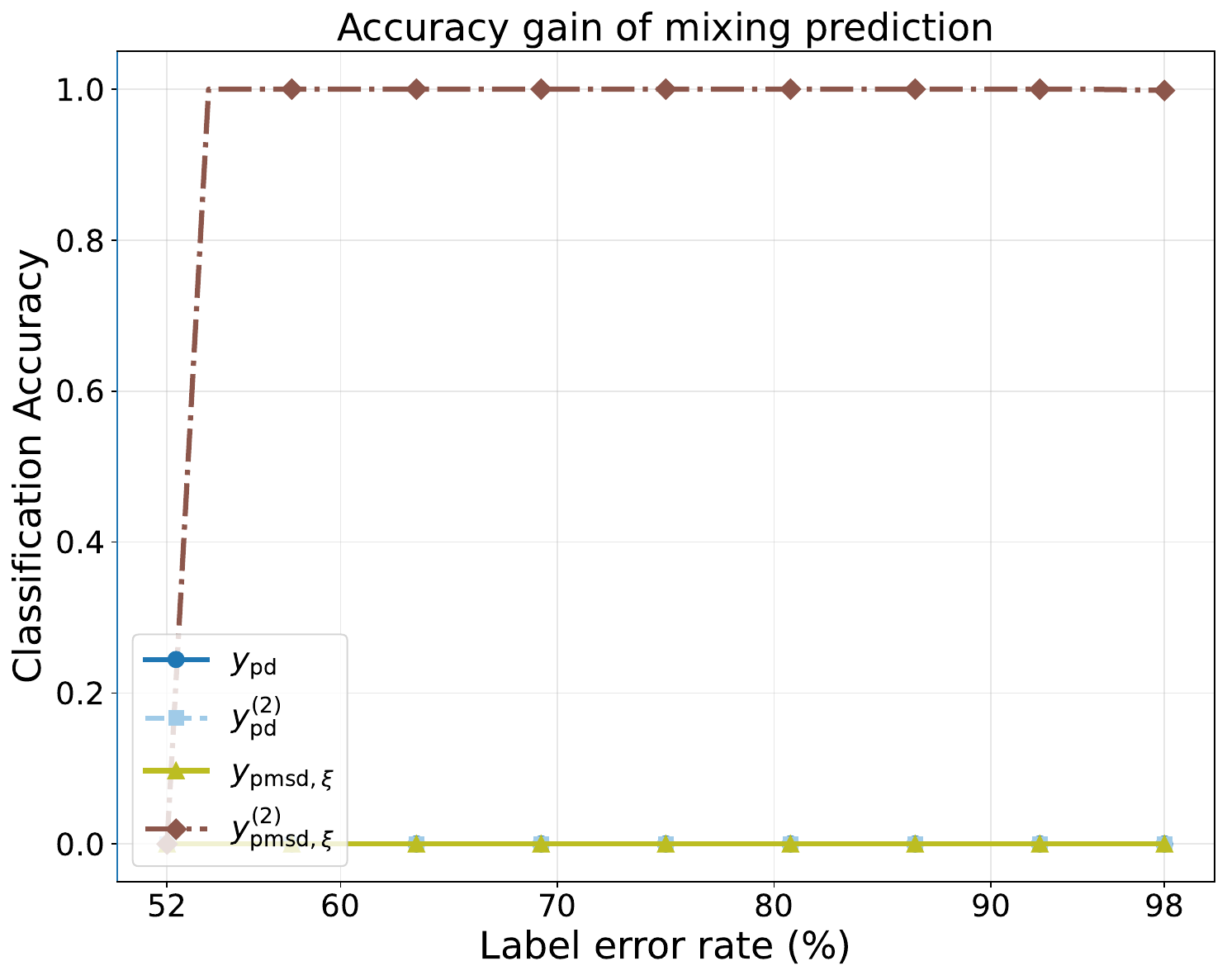}
  \end{subfigure}
  \hfill
  \quad
    \begin{subfigure}[t]{0.45\textwidth}
    \centering  \includegraphics[width=\textwidth]{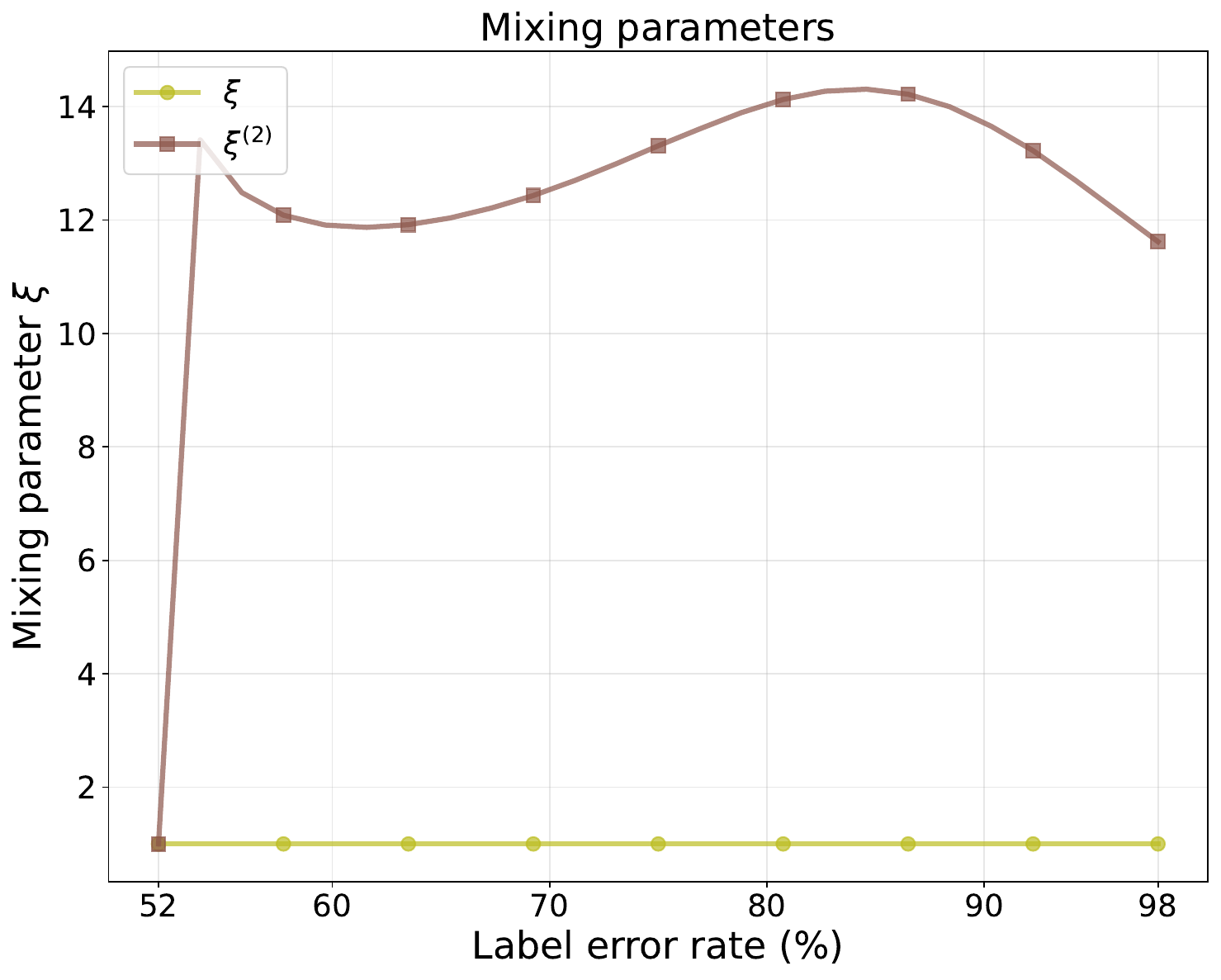}
  \end{subfigure}
  \caption{Synthetic logistic regression with uniform feature correlation (defined in \Cref{sec:synthetic_details}), $\lambda = 10$, and $n = 1500$ samples per class; here, $\rho > 0.5$.}
  \label{fig:synthetic_uniform_large}
\end{figure*}

\clearpage

\section{Experiment details}
\label{sec:experiment_details}

\subsection{Ridge regression experiments}

For the real-world ridge regression experiments in the left panel of \Cref{fig:teaser} and the middle and right panels of \Cref{fig:ridge_tuning}, we use UCI Communities and Crime \citep{communities_and_crime}, UCI Blog Feedback \citep{blogfeedback}, and UCI Airfoil \citep{airfoil}. We divide each dataset into a teacher training set, a fresh unlabeled set, a calibration set, and a test set, with the sizes specified below or in the corresponding figure captions. After handling missing values as described below, we center and standardize all variables using the means and standard deviations computed on the teacher training set.

\textbf{UCI Communities and Crime.}
The UCI Communities and Crime dataset \citep{communities_and_crime} combines socioeconomic data from multiple sources. It consists of 1,994 samples with 127 covariates. The response, ``ViolentCrimesPerPop,'' is the rate of violent crimes per 100,000 population. We remove 5 nonpredictive identifiers and 23 covariates from the LEMAS survey that contain 1,675 missing values across the 1,994 samples. The remaining $99$ covariates have no missing values and are used in our experiments.

\textbf{UCI Blog Feedback.} The UCI Blog Feedback dataset \citep{blogfeedback} was constructed from crawled and processed HTML documents of blog posts. We use its training split, which contains 52,397 samples, and reserve 30\% of these samples for testing. The sizes of the teacher training, fresh unlabeled, and calibration sets are given in the corresponding figure captions.

\textbf{UCI Airfoil.} The UCI Airfoil Self-Noise dataset \citep{airfoil} contains aerodynamic and acoustic measurements from NASA tests of two- and three-dimensional airfoil blade sections. The continuous response is the sound pressure level, measured in decibels. The dataset contains $n = 1503$ samples and $p = 5$ covariates: \texttt{frequency}, \texttt{attack-angle}, \texttt{free-stream-velocity}, \texttt{chord-length}, and \texttt{suction-side-displacement-thickness}. We reserve 30\% of the samples for testing. The sizes of the teacher training, fresh unlabeled, and calibration sets are given in the corresponding figure captions.

\textbf{CIFAR-10.} The CIFAR-10 dataset \citep{krizhevsky2009learning} consists of $32 \times 32$ color images from 10 classes, with 50,000 training images and 10,000 test images. We use subsets of the training and test splits, with sizes specified in \Cref{fig:ridge_tuning}.

For the CIFAR-10 ridge regression experiment in the left panel of \Cref{fig:ridge_tuning}, we extract 512-dimensional features using a ResNet-18 \citep{he2016deep} pretrained on ImageNet and available in PyTorch. We perform no additional data preprocessing and fit ridge regression to these frozen representations. The resulting predictor is a vector-valued function $f:\RR^{512}\to\RR^{10}$. Using the one-hot label vector $y\in\RR^{10}$ as the response, we define the aggregate squared risk by summing over the 10 output dimensions:
\begin{align}
R(f) = \EE_{(x,y)}\biggl[
\sum_{k=1}^{10} (y_k-f_k(x))^2
\biggr],
\end{align}
where $f_k:\RR^{512}\to\RR$ is the ridge predictor for the $k$th component of the one-hot response.

\clearpage

\subsection{Linear probing experiments}

For the linear probing experiments on Caltech-101, Caltech-256, and CIFAR-100, we extract features using a ResNet-34 \citep{he2016deep} pretrained on ImageNet and available in PyTorch. We then train a linear classifier with a softmax output on top of these frozen features. Results appear in the right panel of \Cref{fig:teaser}, \Cref{tab:cifar100_0.2_hierarchical}, and \Cref{sec:linear_probing_tables}.

\textbf{Training details.} The teacher is trained by minimizing cross-entropy against the corrupted one-hot labels. The PD student is trained by minimizing cross-entropy against the teacher's softmax predictions. Each linear classifier is trained for 200 epochs using stochastic gradient descent with momentum 0.9 and a learning-rate decay factor of 0.98. The initial learning rates are $0.001$ for the teacher and $0.05$ for the student. We perform no data preprocessing, and all experiments are run on an NVIDIA T4 GPU.

\textbf{Caltech-101.} Caltech-101 \citep{griffin2007caltech} is an object recognition dataset with 101 object categories plus a background category. It contains approximately 9,000 images, with 40 to 800 images per category, and is therefore highly imbalanced. We draw disjoint subsets containing $n_t = 1500$ teacher training samples, $n_s = 4500$ fresh unlabeled samples, $n_{\mathrm{cal}} = 500$ calibration samples, and 2,000 test samples.

\textbf{Caltech-256.} Caltech-256 \citep{griffin2007caltech} contains 30,607 images spanning 256 object categories. We draw disjoint subsets containing $n_t = 5000$ teacher training samples, $n_s = 15000$ fresh unlabeled samples, $n_{\mathrm{cal}} = 1000$ calibration samples, and 6,000 test samples.

\textbf{CIFAR-100.} CIFAR-100 \citep{krizhevsky2009learning} contains 60,000 images from 100 classes. These classes are grouped into 20 superclasses, each containing five related fine-grained classes; for example, beaver, dolphin, otter, seal, and whale belong to the same superclass. We draw disjoint subsets containing $n_t = 5000$ teacher training samples, $n_s = 15000$ fresh unlabeled samples, $n_{\mathrm{cal}} = 1000$ calibration samples, and 9,000 test samples.

\textbf{Random corruption.} Under random label corruption with rate $\rho$, we select a fraction $\rho$ of the teacher's training samples uniformly at random and replace each label with one drawn uniformly from the remaining classes.

\textbf{Hierarchical corruption.} This scheme models errors between semantically similar classes. Because only CIFAR-100 provides the required superclass structure, we use hierarchical corruption only for this dataset. At corruption rate $\rho$, we select a fraction $\rho$ of the teacher's training samples uniformly at random and replace each label with one drawn uniformly from the other classes in the same superclass.

\clearpage
\subsection{Synthetic experiments}
\label{sec:synthetic_details}

\subsubsection{Ridge regression}

For the synthetic ridge experiments in \Cref{fig:synthetic_ridge} and the additional experiments in \Cref{sec:additional_exps_synthetic_ridge}, we use the following covariance and signal models.

\begin{itemize}[leftmargin=7mm]
    \item Isotropic covariance: $\Sigma = I_p$.

    \item One-spike covariance: $\Sigma = I_p + 5vv^{\top}$, where $v=g/\|g\|_2$ and $g\sim\mathcal{N}(0,I_p)$.
    
    \item AR1 covariance: $\Sigma_{ij}=\rho^{|i-j|}$ for all $i,j$, with correlation parameter $\rho=0.25$.

    \item Isotropic signal: $\beta\sim\mathcal{N}(\bm{0},(r^2/p)I_p)$.

    \item Top-aligned signal: let $m\in(0,1)$ be the alignment proportion, $a\in(0,1)$ the alignment factor, and $k=\lfloor mp\rfloor$, where $1\leq k\leq p-1$. If $V=[v_1,\ldots,v_p]$ contains the eigenvectors of $\Sigma$ in decreasing order of their eigenvalues, define
    \[
    \Sigma_\beta
    :=pV\operatorname{diag}\!\Bigl(
    \underbrace{\frac{a}{k},\ldots,\frac{a}{k}}_{k},
    \underbrace{\frac{1-a}{p-k},\ldots,\frac{1-a}{p-k}}_{p-k}
    \Bigr)V^\top,
    \qquad
    \beta\sim\mathcal{N}\!\Bigl(\bm{0},\frac{r^2}{p}\Sigma_\beta\Bigr).
    \]
    Thus, a proportion $a$ of the expected signal energy lies in the top $k$ eigendirections. In \Cref{fig:synthetic_ridge}, we set $m=0.1$ and $a=0.9$.
\end{itemize}

The empirical curves in \Cref{fig:synthetic_ridge} are averaged over 30 independent simulations.

\subsubsection{Logistic regression}
For the synthetic logistic regression experiments, we follow \citet{das2023understanding} and construct a block-diagonal kernel matrix
\[
K:=\operatorname{diag}(K_1,K_0)\in\mathbb{R}^{2n\times 2n},
\]
where the two blocks correspond to the ground-truth classes and have zero cross-class correlation. Each block has unit diagonal. In the constant feature-correlation setting, every off-diagonal entry equals $c=0.25$. In the uniform feature-correlation setting, the off-diagonal entries are taken from $(1/n)ZZ^\top$, where the entries of $Z\in\mathbb{R}^{n\times n}$ are drawn independently from the uniform distribution on $[0.3,0.7]$. The second block is constructed analogously. Both constructions yield a positive semidefinite kernel matrix.

We train kernel logistic regression in the dual parameterization. For pseudo-labels $q=(q_1,\ldots,q_{2n})\in[0,1]^{2n}$ and dual variables $\alpha\in\mathbb{R}^{2n}$, let $f=K\alpha$ and apply the sigmoid componentwise to obtain $\hat p=\sigma(f)$. We minimize the regularized binary cross-entropy objective
\begin{equation}
    \mathcal{L}_q(\alpha)
    \;=\;
    -\frac{1}{2n}\sum_{i=1}^{2n}
    \Bigl[
        q_i\log \hat{p}_i + (1-q_i)\log(1-\hat{p}_i)
    \Bigr]
    +\frac{\lambda}{2}\,\alpha^\top K\alpha.
\end{equation}
For the first PD student, $q_i=\hat y_i$ is the teacher's hard pseudo-label; for the second PD student, $q_i=y_{\pd}(x_i)$ is the first student's soft prediction. This is the dual form of the training objectives in \Cref{eq:class_training_loss,eq:class_training_loss_2}. We minimize it using L-BFGS-B \citep{byrd1995limited} with analytic gradients and initialize at $\alpha=\mathbf{0}$.

\end{document}